\documentclass[11pt, a4paper]{amsart}
\usepackage[dvips]{epsfig}
\usepackage{amsmath}
\usepackage{amssymb}
\usepackage{amsfonts}
\usepackage{amsthm}
\usepackage{amsbsy}
\usepackage{amsgen}
\usepackage{amscd}
\usepackage{amsopn}
\usepackage{amstext}
\usepackage{amsxtra}
\usepackage{enumerate}
\usepackage[bookmarks=false]{hyperref}

\usepackage{times, graphicx}
\usepackage{eso-pic}
\usepackage{lastpage}
\usepackage{mathtools}

\usepackage{mathrsfs}

\newif\ifanswers
\answerstrue

\setcounter{MaxMatrixCols}{10}

\newtheorem{theorem}{Theorem}[section]
\newtheorem{proposition}[theorem]{Proposition}
\newtheorem{lemma}[theorem]{Lemma}
\newtheorem{corollary}[theorem]{Corollary}
\theoremstyle{definition}
\newtheorem{definition}[theorem]{Definition}
\newtheorem{notation}[theorem]{Notation}
\newtheorem{remark}[theorem]{Remark}

\newcommand{\supp}{\operatorname{supp}}
\newcommand{\vol}{\operatorname{Vol}}
\newcommand{\prob}{\mathcal{P}}
\newcommand{\diam}{\operatorname{diam}}

\newcommand{\cov}{\operatorname{Cov}}

\numberwithin{equation}{section}

\newcommand {\E} {\mathbb{E}}
\newcommand {\M} {\mathcal{M}}
\newcommand {\R} {\mathbb{R}}
\newcommand {\N} {\mathbb{N}}
\newcommand {\Z} {\mathbb{Z}}
\newcommand {\C} {\mathbb{C}}

\newcommand {\Cc} {\mathcal{C}}
\newcommand {\Ec} {\mathcal{E}}
\newcommand {\Tc} {\mathcal{T}}
\newcommand {\Fc} {\mathcal{F}}
\newcommand {\Sc} {\mathcal{S}}
\newcommand {\Dc} {\mathcal{D}}
\newcommand {\Gc} {\mathcal{G}}
\newcommand {\Zc} {\mathcal{Z}}
\newcommand {\Pc} {\mathcal{P}}
\newcommand {\Hc} {\mathcal{H}}

\newcommand{\gfr}{\mathfrak{g}}

\newcommand {\Zf} {\mathfrak{Z}}

\newcommand{\nod}{\mathcal{N}}
\newcommand{\Ac}{\mathcal{A}}
\newcommand{\Bc}{\mathcal{B}}

\DeclareMathAlphabet{\pazocal}{OMS}{zplm}{m}{n}

\begin{document}

\title{Topologies of nodal sets of random band limited functions}

\author{Peter Sarnak}
\thanks{Research of P.S. is supported by an NSF grant.}
\address{Department of Mathematics, Princeton University and the Institute for Advanced Study, US}

\author{Igor Wigman}
\thanks{
The research leading to these results has received funding from the
European Research Council under the European Union's Seventh
Framework Programme (FP7/2007-2013), ERC grant agreement
n$^{\text{o}}$ 335141 (I.W.)}

\address{Department of Mathematics, King's College London, UK}


\begin{abstract}
It is shown that the topologies and nestings of the zero and nodal
sets of random (Gaussian) band limited functions have universal
laws of distribution. Qualitative features of the
supports of these distributions are determined. In
particular the results apply to random monochromatic waves and to
random real algebraic hyper-surfaces in projective space.
\end{abstract}

\maketitle

\section{Introduction}

\label{sec:intro}

Nazarov and Sodin ~\cite{NS,So} and very recently in ~\cite{NS2015}
have developed some powerful general techniques
to study the zero (``nodal") sets of functions of several variables coming from
Gaussian ensembles. Specifically they show that the number of connected components
of such nodal sets obey an asymptotic law. In ~\cite{Sa} we pointed
out that these may be applied to ovals of a random real plane curve, and in
~\cite{LL} this is extended to real hypersurfaces in $\mathbb{P}^{n}$. In
~\cite{GW} the barrier technique from ~\cite{NS} is used
to show that ``all topologies" occur with positive probability in the
context of real sections of high tensor powers of a holomorphic line
bundle of positive curvature, on a real projective manifold.

\subsection{Gaussian band-limited functions}

In this paper we apply these techniques to study the laws of distribution of
the topologies of a random band limited function.
Let $\M=(\Sc^{n},g)$ denote the $n$-sphere with a smooth Riemannian metric $g$.
Choose an orthonormal basis $\{ \phi_{j}\}_{j=0}^{\infty}$ of eigenfunctions of its
Laplacian
\begin{equation}
\label{eq:Delta phi+t^2phi = 0}
\Delta \phi_{i}+t_{i}^{2}\phi_{i}=0,
\end{equation}
\begin{equation*}
0=t_{0}<t_{1}\le t_{2} \ldots .
\end{equation*}

Fix $\alpha\in [0,1]$ and denote by $\mathcal{E}_{\M,\alpha}(T)$
($T$ a large parameter) the finite dimensional Gaussian ensemble of functions on $\M$ given
by
\begin{equation}
\label{eq:f = sum cj phij}
f(x) =f_{\alpha;T}(x)= \sum\limits_{\alpha T \le t_{j} \le T} c_{j}\phi_{j}(x),
\end{equation}
where $c_{j}$ are independent real Gaussian variables of mean $0$ and variance $1$. If $\alpha = 1$,
which is the important case of ``monochromatic"
random functions, we interpret \eqref{eq:f = sum cj phij} as
\begin{equation}
\label{eq:f = sum cj phij mono}
f(x) = \sum\limits_{T-\eta(T) \le t_{j} \le T} c_{j}\phi_{j}(x),
\end{equation}
where $\eta(T) \rightarrow \infty$ with $T$, and $\eta(T)=o(T)$.
The Gaussian ensembles $\mathcal{E}_{\M,\alpha}(T)$ are our
$\alpha$-band limited functions, and they do not depend on the choice
of the o.n.b. $\{\phi_{j}\}$. The aim is to study the nodal sets of a typical
$f$ in $\mathcal{E}_{\M,\alpha}(T)$ as $T\rightarrow\infty$.

\subsection{Nodal set of $f$ and its measures}

Let $V(f)$ denote the nodal set of $f$, that is
$$V(f) = \{ x:\: f(x)=0\}.$$ For almost all $f$'s in $\mathcal{E}_{\M,\alpha}(T)$
with $T$ large, $V(f)$ is a smooth $(n-1)$-dimensional compact manifold. We decompose
$V(f)$ as a disjoint union $\bigsqcup\limits_{c\in\mathcal{C}(f)}c$ of its connected
components. The set $\Sc^{n}\setminus V(f)$ is a disjoint union of connected components
$\bigsqcup\limits_{\omega\in\Omega(f)}\omega$, where each $\omega$ is a smooth
compact $n$-dimensional manifold with smooth boundary. The components $\omega$ in
$\Omega(f)$ are called the nodal domains of $f$. The nesting relations between
the $c$'s and $\omega$'s are captured by the tree $X(f)$ (see \S\ref{sec:bas conventions}),
whose vertices are the points $\omega\in\Omega(f)$ and edges $e$ run
from $\omega$ to $\omega'$ if $\omega$ and $\omega'$ have a (unique!) common
boundary $c\in\mathcal{C}(f)$ (see Figure \ref{fig:nesting tree}). Thus the edges $E(X(f))$ of $X(f)$ correspond to
$\mathcal{C}(f)$.

\begin{figure}[ht]
\centering
\includegraphics[height=50mm]{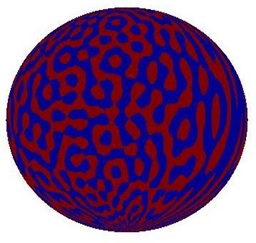}
\caption{A nodal picture of a spherical harmonic. The blue and red are positive
and negative domains respectively, and the nodal set is the interface between these.}
\label{fig:sphere}
\end{figure}

\begin{figure}[ht]
\centering
\includegraphics[height=54mm]{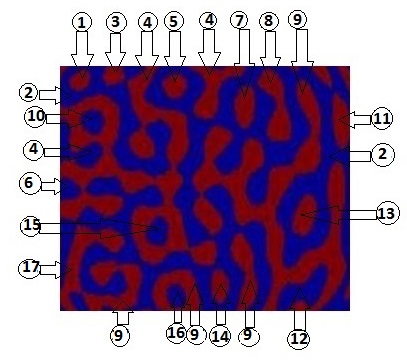}
\includegraphics[height=54mm]{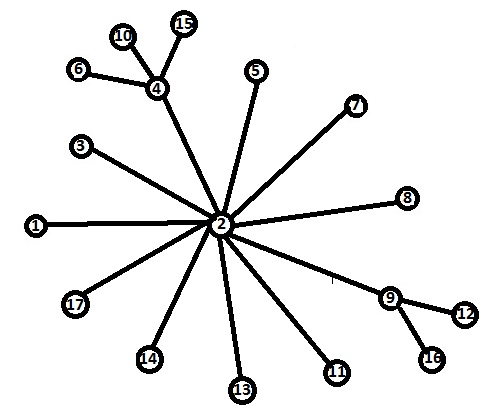}
\caption{
To the right: the nesting tree $X(f)$ corresponding to a fragment of the nodal picture
in Figure \ref{fig:sphere}, to the left, containing
$17$ nodal domains (where we neglected some small ones lying next to the boundary).
Figure \ref{fig:sphere} is essential
for deciding which components merge on the sphere outside of the fragment.}
\label{fig:nesting tree}
\end{figure}

As mentioned above, Nazarov and Sodin have determined the asymptotic law for the
cardinality $|\mathcal{C}(f)|$ of $\mathcal{C}(f)$ as $T\rightarrow\infty$.
There is a {\em positive} constant $\beta_{n,\alpha}$ depending on $n$ and $\alpha$
(and not on $\M$) such that, with probability tending to $1$ as $T\rightarrow\infty$,
\begin{equation}
\label{eq:|C(f)| beta c vol T^n}
|\mathcal{C}(f)|\sim \beta_{n,\alpha}\frac{\omega_{n}}{(2\pi)^{n}}\vol(\M) T^{n},
\end{equation}
here $\omega_{n}$ is the volume of the unit $n$-ball.
We call these constants $\beta_{n,\alpha}$ the Nazarov-Sodin constants. Except for
$n=1$ when the nodal set is a finite set of points and \eqref{eq:|C(f)| beta c vol T^n}
can be established by the Kac-Rice formula
($\beta_{1,\alpha} = \frac{1}{\sqrt{3}}\cdot \sqrt{1+\alpha+\alpha^{2}}$), these numbers are not known explicitly.

In order to study the topologies of the components of $\Cc(f)$ and of the nesting trees $X(f)$,
we introduce two measure spaces. Let $H(n-1)$ be the countable\footnote{That $H(n-1)$ is countable follows, for example, from Cheeger Finiteness Theorem ~\cite[Theorem 7.11 on p. 340]{Cha} i.e. that there is only finitely many differomorphism types satisfying certain geometric conditions,
see also Lemma \ref{lem:Cheeger} below and its derivation from Cheeger Finiteness Theorem in section \ref{sec:Cheeger proof}.} set of diffeomorphism types of compact
$(n-1)$-dimensional manifolds that can be embedded in $\Sc^{n}$ and let $\Tc$ denote the set of finite rooted
trees. As discrete spaces they carry measures, and define the discrepancy $D$ between $\mu$ and $\nu$ by
\begin{equation}
\label{eq:D dist prob meas def}
D(\mu,\nu)=\sup\limits_{A} \left|\mu(A)-\nu(A) \right|,
\end{equation}
where the supremum is taken w.r.t. all subsets $A$ of $H(n-1)$ (resp. $\Tc$).

Associating to $c\in\Cc(f)$ its topological type\footnotemark $t(c)$ gives a map from $\Cc(f)$ to $H(n-1)$. Similarly,
each $c\in\Cc(f)$ is an edge in the tree $X(f)$ so that removing it leaves two rooted trees. We let $e(c)$
be the smaller one (if they are equal in size choose any one of them) and call it the end of $X(f)$ corresponding
to $c$. Hence $e$ gives a map from $\Cc(f)$ to $\Tc$.

\footnotetext{Throughout the paper by the ``topological type" of $c$ we mean ``diffeomorphism type".}

With these associations we define the key probability measures $\mu_{\Cc(f)}$ and $\mu_{X(f)}$ which
measure the distribution of the topologies of the components of $V(f)$ and of the nesting ends of $X(f)$ by
\begin{equation}
\label{eq:muCf def}
\mu_{\Cc(f)}:=\frac{1}{|\Cc(f)|}\sum\limits_{c\in\Cc(f)}\delta_{t(c)}
\end{equation}
and
\begin{equation}
\label{eq:muXf def}
\mu_{X(f)}:=\frac{1}{|\Cc(f)|} \sum\limits_{c\in\Cc(f)}\delta_{e(c)},
\end{equation}
where $\delta_{\xi}$ is a point mass at $\xi$.

\subsection{Statement of the main result}

Our main result is that as $T\rightarrow\infty$ and for typical $f\in\Ec_{\M,\alpha}(T)$ the above
measures converge to Universal Laws (that is {\em probability} measures) on $H(n-1)$ and $\Tc$ respectively:

\newpage

\begin{theorem}

\label{thm:main thm lim meas}

\begin{enumerate}

\item

\label{it:lim muf exist}
There are universal probability measures $\mu_{\Cc,n,\alpha}$ on $\hbox{H(n-1)}$ and $\mu_{X,n,\alpha}$ on $\Tc$,
depending only on $n$ and $\alpha$ but not on $\M$, such that for every $\epsilon>0$,
\begin{equation}
\label{eq:lim muf exist}
\prob\left\{f\in\Ec_{\M,\alpha}(T):\: \max \left[D(\mu_{\Cc(f)},\mu_{\Cc,n,\alpha}),
D(\mu_{X(f)},\mu_{X,n,\alpha})\right]>\epsilon \right\}
\end{equation}
tends to $0$ as $T\rightarrow\infty$.

\item

\label{it:lim muf supp}

The support of $\mu_{\Cc,n,\alpha}$ is
$$\supp\mu_{\Cc,n,\alpha} = H(n-1)$$ and the support of $\mu_{X,n,\alpha}$ is
$$\supp\mu_{X,n,\alpha} = \Tc.$$

\end{enumerate}

\end{theorem}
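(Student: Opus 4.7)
The plan is to extend the Nazarov--Sodin strategy behind \eqref{eq:|C(f)| beta c vol T^n}. The key point is that $f \in \Ec_{\M,\alpha}(T)$, viewed near a point $x \in \M$ and rescaled by $T$, converges to a universal translation-invariant Gaussian field on $\R^n$. By H\"ormander's local Weyl law for the spectral projector, the normalized covariance
\[
\frac{\E[f(\exp_x(u/T)) f(\exp_x(v/T))]}{\E[f(x)^2]} \longrightarrow K_{n,\alpha}(u-v)
\]
uniformly in $x$ and locally uniformly in $(u,v)$, where $K_{n,\alpha}$ is the Fourier transform of the uniform probability measure on the shell $\{\xi \in \R^n : \alpha \le |\xi| \le 1\}$ (for $\alpha=1$, the sphere). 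The limiting isotropic Gaussian field $F_{n,\alpha}$ on $\R^n$ depends only on $n$ and $\alpha$; convergence also holds in $C^\infty$ on compacts.

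\textbf{Part (1): Universal limit measures.} For a realization $F$ of $F_{n,\alpha}$ and a ball $B_R\subset\R^n$, let $\nod_t(F,R)$ count components of $V(F)$ of diffeomorphism type $t\in H(n-1)$ that are contained in $B_R$, and define $\nod_e(F,R)$ similarly for nesting ends $e\in\Tc$ (with $e(c)$ computed inside the local tree of $F|_{B_R}$). Since $K_{n,\alpha}(u)\to 0$ as $|u|\to\infty$ (Riemann--Lebesgue), $F_{n,\alpha}$ is mixing under $\R^n$-translations; a spatial averaging argument as in \cite{NS,NS2015} yields deterministic constants
\[
\lambda_{t,n,\alpha} = \lim_{R\to\infty} \frac{\E[\nod_t(F_{n,\alpha},R)]}{\vol(B_R)},\qquad \lambda_{e,n,\alpha} = \lim_{R\to\infty} \frac{\E[\nod_e(F_{n,\alpha},R)]}{\vol(B_R)},
\]
with the random quantities converging a.s.\ to the same limits. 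Because components meeting $\partial B_R$ number $O(R^{n-1})$, summing over $t$ (resp.\ $e$) recovers the full Nazarov--Sodin density $\beta_{n,\alpha}\omega_n/(2\pi)^n$, so one obtains probability measures $\mu_{\Cc,n,\alpha}(\{t\}):=\lambda_{t,n,\alpha}(2\pi)^n/(\beta_{n,\alpha}\omega_n)$ and $\mu_{X,n,\alpha}(\{e\})$ defined similarly. To transfer to $\M$, cover $\M$ by a quasi-uniform system of geodesic balls of radius $R/T$; on each such ball, after rescaling, the local convergence plus a variance bound (Chebyshev applied to Kac--Rice second moments) shows the count of components of each type concentrates at $\lambda\cdot\vol T^n$ times the ball volume. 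Summing over all balls and taking $T\to\infty$ followed by $R\to\infty$ gives $\mu_{\Cc(f)}\to\mu_{\Cc,n,\alpha}$ and $\mu_{X(f)}\to\mu_{X,n,\alpha}$ in the discrepancy metric $D$, in probability.

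\textbf{Part (2): Full support via a barrier construction.} To show $\lambda_{t,n,\alpha}>0$ for every $t\in H(n-1)$, pick a smooth compactly supported function $h$ on $\R^n$ whose zero set contains an isolated stable component of diffeomorphism type $t$ (with $\nabla h\ne 0$ on that component). Stability under small $C^1$-perturbations means any function $C^1$-close to $h$ on a fixed compact set also exhibits a component of type $t$. The covariance $K_{n,\alpha}$ is the Fourier transform of a compactly supported measure, so the Cameron--Martin space of $F_{n,\alpha}$ consists of band-limited functions and is $C^k$-dense on compacts in the space of functions realizable as nodal targets (for $\alpha<1$ this is all smooth functions; for $\alpha=1$ one uses the Runge-type Helmholtz-approximation of Gayet--Welschinger \cite{GW}, which suffices since any prescribed embedded hypersurface can be realized locally as a zero set of a solution of $\Delta u+u=0$). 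Gaussian full-support then makes the event ``$F_{n,\alpha}$ is $C^1$-close to $h$ on $\supp h$" of positive probability, and translation invariance upgrades this to $\lambda_{t,n,\alpha}>0$. The same construction, with $h$ built from a concentric nested family of stable hypersurfaces realizing a prescribed rooted tree, yields $\mu_{X,n,\alpha}(\{T\})>0$ for every $T\in\Tc$.

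\textbf{Main obstacle.} The principal difficulty is that topology and nesting are \emph{non-local}: a component $c\in\Cc(f)$ of macroscopic diameter crosses many tiles in the cover of $\M$, and its type is then invisible to any single chart. Showing that such ``long'' components comprise a vanishing fraction of $|\Cc(f)|$ as $T\to\infty$ requires the refined stability/exceptional-set estimates from \cite{NS2015}, bounding both the number of components of diameter $\gg 1/T$ and the number meeting tile boundaries by $o(T^n)$. A related technical point is the passage from convergence for each individual topology to convergence in the discrepancy metric $D$, which ranges over all subsets $A\subset H(n-1)$; this is handled by first truncating to topologies realizable in a ball of bounded radius (a finite set) and then letting the radius diverge, using the tail bound that topologies of arbitrarily high complexity have total mass tending to zero in both $\mu_{\Cc(f)}$ and $\mu_{\Cc,n,\alpha}$.
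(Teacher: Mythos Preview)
Your outline captures the overall architecture correctly (local convergence to the scale-invariant field, ergodic limits for each type, transfer to $\M$, barrier for positivity), but there is a genuine gap at the step where you assert that $\mu_{\Cc,n,\alpha}$ and $\mu_{X,n,\alpha}$ are \emph{probability} measures. You write that ``components meeting $\partial B_R$ number $O(R^{n-1})$, summing over $t$ recovers the full Nazarov--Sodin density''. The boundary bound shows that $\nod(F,R)/\vol(B_R)\to\beta$ and that $\nod_t(F,R)/\vol(B_R)\to\lambda_t$ for each fixed $t$; it does \emph{not} allow you to interchange the infinite sum over $t\in H(n-1)$ with the limit $R\to\infty$. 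Fatou gives only $\sum_t\lambda_t\le\beta$, and without a tightness statement mass can leak to infinity in $H(n-1)$. Your proposed fix in the ``Main obstacle'' paragraph --- truncating to ``topologies realizable in a ball of bounded radius (a finite set)'' --- does not work: every $t\in H(n-1)$ is realizable in a unit ball after rescaling, so that set is all of $H(n-1)$, not finite. The paper treats exactly this point as one of its two technical novelties (see \S\ref{sec:muC esc outline}) and resolves it by showing that for most components the gradient is bounded below (Proposition~\ref{prop:grad bounded away}), the volume and $C^2$-norm are bounded above, and then invoking a Cheeger-type finiteness theorem (Lemma~\ref{lem:Cheeger}) to confine such components to finitely many diffeomorphism types. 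For tree ends the analogous tightness comes from bounding the number of $\xi$-small domains (Lemma~\ref{lem:xi-small bound}). None of this is visible in your sketch.

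A second, smaller gap is in the full-support argument for $\alpha=1$. You invoke a ``Runge-type Helmholtz approximation of Gayet--Welschinger \cite{GW}'' to realize an arbitrary embedded hypersurface as the stable nodal set of a solution of $\Delta u+u=0$. That is not what \cite{GW} proves; the relevant approximation lemma (that restrictions of monochromatic waves to suitable compacts are dense) is in \cite{CS}, and even granting it, producing arbitrary topological types and tree ends as \emph{stable} nodal configurations of Helmholtz solutions is nontrivial --- the paper does the $n=2$ tree-end case by an explicit combinatorial perturbation of $\sin x_1\sin x_2$ and defers the general case to \cite{CS}. Finally, your transfer step on $\M$ via ``Chebyshev applied to Kac--Rice second moments'' is not the route taken here; the paper couples $f_{x;T}$ to $\gfr_\alpha$ directly (Proposition~\ref{prop:bi(R-1,Fx)<=bi(R/L,fL)<=bi(R+1,Fx)}) and uses an integral-geometric sandwich plus $L^1$ estimates, avoiding second moments entirely.
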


\begin{remark}
The most difficult case of part \eqref{it:lim muf supp} of Theorem \ref{thm:main thm lim meas}
is the monochromatic case $\alpha=1$. The proof of this for $n>2$
is given in the companion paper ~\cite{CS}.
\end{remark}

\begin{remark}
Though formulated for the sphere with arbitrary smooth metric,
Theorem \ref{thm:main thm lim meas} holds on general compact smooth Riemannian manifolds with no boundary.
Though the Jordan-Brouwer Theorem might fail for other manifolds (hence the nesting graph might fail to be
a tree), it still holds {\em locally}, which is sufficient for all our needs.
\end{remark}

\vspace{5mm}

The measures $\mu_{\Cc(f)}$ and $\mu_{X(f)}$ carry a lot of information. If $$F:H(n-1)\rightarrow P$$ is a $P$-valued
topological invariant then one can define the $P$-distribution of $$f\in \Ec_{\M,\alpha}(T)$$ to be
\begin{equation*}
\mu_{F(f)} = \frac{1}{|\Cc(f)|}\sum\limits_{c\in\Cc(f)}\delta_{F(t(c))},
\end{equation*}
i.e. $\mu_{F(f)}$ is the pushforward of $\mu_{\Cc(f)}$ to $P$. Theorem \ref{thm:main thm lim meas} then
gives the universal distribution of $\mu_{F(f)}$, namely it is simply the pushforward of $\mu_{\Cc,n,\alpha}$.
The same applies to any $Q$-valued map $G:\Tc\rightarrow Q$. Of special interest in this connection
are Betti distributions and domain connectivities.

The first is the vector of Betti numbers given by
\begin{equation*}
F(c) = \text{Betti}(c) = (\beta_{1}(c),\ldots,\beta_{k}(c)),
\end{equation*}
where $\beta_{j}(c)$ is the $j$-th Betti number of $c$. Here
we are assuming $n=2k$ or $2k+1$ with $k>0$ (these Betti numbers give the rest since $c$ is connected and
applying Poincar\'{e} duality). The image of $H(n-1)$ under $F$ can be shown to be $P_{n}$, which is $(\Z_{\ge 0})^{k}$
if $n$ is odd and $(\Z_{\ge 0})^{k-1}\times (2 \Z_{\ge 0})$ if $n$ is even. Thus Theorem \ref{thm:main thm lim meas}
yields the universal distribution of the vector of Betti numbers according to a Law $\mu_{\text{Betti},\alpha,n}$
on $P_{n}$ and whose support is $P_{n}$. \label{pag:Betti}We show that $\mu_{\text{Betti},\alpha,n}$ has finite total mean
(which is not a formal consequence of Theorem \ref{thm:main thm lim meas}):
\begin{equation*}
\sum\limits_{y\in P_{n}}\left(\sum\limits_{j=1}^{k}y_{j}\right)\mu_{\text{Betti},n,\alpha}\left(\left\{ y \right\} \right) < \infty.
\end{equation*}

For the domain connectivity distributions let $G:\Tc\rightarrow\N$ be the function which assigns to each
rooted tree one plus the degree (i.e. number of neighbours) of the root. Now the root of $e(c)$ in $\Tc$
corresponds to a nodal domain $\omega$ of $f$, and $G(e(c))$ is the connectivity $m(\omega)$ of $\omega$,
that counts the number of its boundary components. The measure $\mu_{G(f)}$ is essentially the connectivity measure:
\begin{equation}
\label{eq:muGamma def}
\mu_{\Gamma(f)}:=\frac{1}{|\Omega(f)|} \sum\limits_{\omega\in\Omega(f)}\delta_{m(\omega)}
\end{equation}
on $\N$.

Theorem \ref{thm:main thm lim meas} yields a universal distribution for these connectivities according to a
Law $\mu_{\Gamma,n,\alpha}$ on $\N$, and whose support is $\N$. Since the means over $\N$ of each measure in
\eqref{eq:muGamma def} is (see \S\ref{sec:top V(f) rem}) $$2-\frac{2}{|\Omega(f)|},$$ it follows that the
mean of the law $\mu_{\Gamma,n,\alpha}$ is at most $2$.

\subsection{Applications} The extreme values of $\alpha$, namely $0$ and $1$ are the most
interesting. The case $\alpha=1$ is the monochromatic random wave
(and also corresponds to random spherical harmonics) and it has been suggested
by Berry \cite{Berry 1977} that it models the individual eigenstates of the quantization
of a classically chaotic Hamiltonian. The examination of the count of nodal domains  (for $n=2$)
in this context was initiated by ~\cite{BGS}, and ~\cite{BS}, and the latter suggest some interesting
possible connections to exactly solvable critical percolation models.

The law $\mu_{\Gamma, 2, 1}$ gives
the distribution of connectivities of the nodal domains for monochromatic waves.
Barnett and Jin's numerical experiments ~\cite{BJ} give the following values for its mass on atoms.

\vspace{5mm}

\begin{tabular}{|c|c|c|c|c|c|c|c|c|c|}
\hline
connectivity & 1 & 2 & 3 & 4 & 5 &6 & 7 \\
\hline
$\mu_{\Gamma,2,1}$ &.91171 &.05143 &.01322 &.00628 & .00364 &.00230 &.00159 \\
\hline
\end{tabular}

\vspace{3mm}

\begin{tabular}{|c|c|c|c|c|c|c|c|c|c|}
\hline
connectivity &8 &9 & 10 & 11 & 12 & 13 & 14\\
\hline
$\mu_{\Gamma,2,1}$ &.00117 &.00090 &.00070 &.00058 &.00047 &.00039 & .00034  \\
\hline
\end{tabular}

\vspace{3mm}

\begin{tabular}{|c|c|c|c|c|c|c|c|c|c|}
\hline
connectivity &15 &16 &17 &18 & 19 & 20 & 21\\
\hline
$\mu_{\Gamma,2,1}$ &.00030 &.00026 &.00023 &.00021 &.00018 &.00017 &.00016  \\
\hline
\end{tabular}

\vspace{3mm}

\begin{tabular}{|c|c|c|c|c|c|c|c|c|c|}
\hline
connectivity & 22 & 23 &24 & 25 & 26\\
\hline
$\mu_{\Gamma,2,1}$ &.00014 & .00013 &.00012 &.000098 &.000097 \\
\hline
\end{tabular}

\vspace{5mm}

The case $\alpha=0$ corresponds to the algebro-geometric setting of a random real
projective hypersurface. Let $W_{n+1,t}$ be the vector space of real homogeneous
polynomials of degree $t$ in $n+1$ variables. For $f\in W_{n+1,t}$,
$V(f)$ is a real projective hypersurface in $\mathbb{P}^{n}(\R)$.
We equip $W_{n+1,t}$ with the ``real Fubini-Study" Gaussian coming from the inner
product on $W_{n+1,t}$ given by
\begin{equation}
\label{eq:<f,g> FS}
\langle f,\, g \rangle = \int\limits_{\R^{n+1}} f(x)g(x) e^{-|x|^{2}/2} dx
\end{equation}
(the choice of the Euclidian length $|x|$ plays no role ~\cite{Sa}).
This ensemble
is essentially $\Ec_{\M,0}(t)$ with $\M=(\Sc^{n},\sigma)$ the sphere with its round
metric (see ~\cite{Sa}).

Thus the laws $\mu_{\Cc,n,0}$ describe
the universal distribution of topologies of a random real projective hypersurface
in $\mathbb{P}^{n}$ (w.r.t. the real Fubini-Study Gaussian).
It is interesting to compare
this with the more familiar case of complex hypersurfaces. For those the generic (i.e. on
a Zariski open set) hypersurface is smooth connected and of a fixed topology. Over $\R$ these
hypersurfaces are very complicated and have many components. The main theorem asserts that if
``generic" is replaced by ``random" then order is restored in that the distribution of the topologies
and Betti numbers is universal, at least when $t\rightarrow\infty$.

If $n=2$ the Nazarov-Sodin constant $\beta_{2,0}$ is such that the random oval
is about $4\%$ Harnack, that is it has about $4\%$ of the maximal number of
components that it can have (~\cite{Na}, \cite{Sa}).
The measure $\mu_{\Gamma,2,0}$ gives the distribution of the connectivities of the nodal
domains of a random oval. Barnett and Jin's Montre-Carlo simulation for these yields:

\vspace{5mm}

\begin{tabular}{|c|c|c|c|c|c|c|c|c|c|}
\hline
connectivity & 1 & 2 & 3 & 4 & 5 &6 & 7 \\
\hline
$\mu_{\Gamma,2,0}$ &.94473 &.02820 &.00889 &.00437 & .00261 &.00173 &.00128 \\
\hline
\end{tabular}

\vspace{3mm}

\begin{tabular}{|c|c|c|c|c|c|c|c|c|c|}
\hline
connectivity &8 &9 & 10 & 11 & 12 & 13 & 14\\
\hline
$\mu_{\Gamma,2,0}$ &.00093 &.00072 &.00056 &.00048 &.00039 &.00034 & .00029 \\
\hline
\end{tabular}

\vspace{3mm}

\begin{tabular}{|c|c|c|c|c|c|c|c|c|c|}
\hline
connectivity &15 &16 &17 &18 & 19 & 20 & 21\\
\hline
$\mu_{\Gamma,2,0}$ &.00026 &.00025 &.00021 &.00019 &.00016 &.00014 &.00013  \\
\hline
\end{tabular}

\vspace{3mm}

\begin{tabular}{|c|c|c|c|c|c|c|c|c|c|}
\hline
connectivity & 22 & 23 &24 & 25 & 26\\
\hline
$\mu_{\Gamma,2,0}$ &.00011 & .00011 &.00009 &.00008 &.00008 \\
\hline
\end{tabular}

\vspace{5mm}

From these tables it appears that the decay rates of $\mu_{\Gamma,2,1}(\{m \})$ and
$\mu_{\Gamma,2,0}(\{m \})$ for $m$ large are power laws $m^{-\gamma}$, with $\gamma$
approximately $2.149$ for $\alpha=1$ and $2.057$ for $\alpha=0$. These are close to the universal
Fisher constant $187/91$ which governs related quantities in critical percolation ~\cite{KZ}.

\vspace{5mm}

We note that the only cases for which there is an explicit description of $H(n-1)$ are $n=2$ and $n=3$.
For $n=2$ $H(n-1)$ is a point, namely the circle $\Sc^{1}$. For $n=3$, $H(2)$ consists of all the orientable
compact surfaces and these are determined by their genus $g$, a non-negative integer. Thus $H(2)=\Z_{\ge 0}$,
and $\mu_{\Cc,3,\alpha}$ is a measure on $\Z_{\ge 0}$ which has finite mean (see \S\ref{sec:bas conventions}). It would be very
interesting to Monte-Carlo the distributions $\mu_{\Cc,3,0}$ and $\mu_{\Cc,3,1}$ and to learn more
about their profiles. The only features that we know about the Universal Laws are that they are
{\em probability measures} and that they charge every atom and in some special case that their ``means" are
finite.

\subsection{Outline of the paper}

\label{sec:muC esc outline}

We turn to an outline of the proof of Theorem \ref{thm:main thm lim meas}, and also the content of the various sections.
Most probabilistic calculations for the Gaussian Ensembles $\Ec_{\M,\alpha}(T)$ start with the covariance kernel
\begin{equation}
\label{eq:Kalpha covar f}
K_{\alpha}(T;x,y) : = \E_{\Ec}[f(x)f(y)] = \sum\limits_{\alpha T \le t_{j}\le T} \phi_{j}(x)\phi_{j}(y)
\end{equation}
(with suitable modifications if $\alpha=1$).

The behaviour of $K_{\alpha}$ as $T\rightarrow \infty$ is decisive in the analysis and it can be studied
using micro-local analysis and the wave equation on $\M\times \R$, see section \ref{sec:bas conventions}
for more details. We have
\begin{equation}
\label{eq:Kalpha asymp}
\widetilde{K_{\alpha}} (T;x,y) := \frac{1}{D_{\alpha}(T)}K_{\alpha} (T;x,y) = B_{n,\alpha}(T\cdot d(x,y)) + O\left(T^{-1}\right)
\end{equation}
uniformly for $x,y\in\M$, where $d(x,y)$ is the (geodesic) distance in $\M$ between $x$ and $y$,
\begin{equation*}
D_{\alpha}(T)=\frac{1}{\vol(\M)} \int\limits_{\M}K_{\alpha}(T;x,x)d\vol(x),
\end{equation*}
and for $w\in\R^{n}$
\begin{equation}
\label{eq:Bnalp clean}
B_{n,\alpha}(w) = B_{n,\alpha}(|w|) = \frac{1}{|A_{\alpha}|} \int\limits_{A_{\alpha}}e(\langle w,\xi  \rangle) d\xi
\end{equation}
with $$A_{\alpha}=\left\{ w:\: \alpha\le |w|\le 1 \right\}.$$
Thus for $x$ and $y$ roughly $1/T$ from each other $\widetilde{K_{\alpha}}$ is approximated by the universal
kernel $B_{n,\alpha}$, whereas of $x$ and $y$ are a bit further apart, then the correlation is small.

\vspace{3mm}

The estimate \eqref{eq:Kalpha asymp} allows one to compute local quantities for the typical $f$ in $\Ec_{\M,\alpha}(T)$,
for example, the density of critical points (see the `Kac-Rice' formula in section \ref{sec:bas conventions}),
or the $(n-1)$-dimensional volume of $V(f)$. While these are interesting and, in fact, useful for us, for example
in bounding from above the Betti numbers using Morse Theory, the topology of $V(f)$ is global and lies beyond
these purely local quantities. It was a major insight of Nazarov and Sodin ~\cite{So} that most components $c$
of $V(f)$ and $\omega$ of $\Omega(f)$ are small, that is are of diameter at most $O(1/T)$, and that the components
that are further than this scale of $1/T$ apart are (almost) independent. This simultaneously semi-localizes the
problem of the distribution of the topologies, and also explains the concentration feature that most $f$'s have
the same distribution as well as the universality.

Moreover it separates the analysis into different parts. The first being the study of the problem for the scaling limits
determined by \eqref{eq:Kalpha asymp} and \eqref{eq:Bnalp clean}. We call these the ``scale invariant models", and
they are translation invariant isotropic Gaussian fields on $\R^{n}$ determined by \eqref{eq:Bnalp clean}. We denote
them by $\mathfrak{g}_{n,\alpha}$ and review their properties in section \ref{sec:bas conventions}. Once these are
understood one has to couple the scale invariant theory with the global analysis after decomposing $\M$ into pieces
of size $1/T$.

\vspace{3mm}

In section \ref{sec:bas conventions} we review some `standard' theory such as properties of the fields $\mathfrak{g}_{n,\alpha}$,
the Kac-Rice formula and some elementary topology. Sections \ref{sec:scal invar mod}, \ref{sec:top nest not leak}
and \ref{sec:full supp} are concerned with proving the analogue of Theorem \ref{thm:main thm lim meas} for the fields
$\mathfrak{g}_{n,\alpha}$. The measure spaces $H(n-1)$ and $\Tc$ are non-compact so that weak limits of probability measures
need not be probability measures. In terms of technical novelty this issue is a central one for us. In section \ref{sec:scal invar mod}
we prove the existence as weak limits of $\mu_{\Cc,n,\alpha}$ and $\mu_{X,n,\alpha}$ associated with the
$\mathfrak{g}_{n,\alpha}$'s. The proof is relatively soft and follows closely the component counting analysis of \cite{NS} and
~\cite{So}. The main difference is that our random variables are conditioned to count a given topological type (respectively tree end).
This requires a number of modifications and extensions, especially of various inequalities
(see \S\ref{sec:Int Geom sand}, \S\ref{sec:bi(R-1,Fx)<=bi(R,fxL)<=bi(R+1,Fx)}, \S\ref{sec:glob upper bound}).

Section \ref{sec:top nest not leak} is devoted to a proof that the universal limit measures $\mu_{\Cc,n,\alpha}$ and $\mu_{X,n,\alpha}$
are in fact probability measures. This requires establishing some tightness properties for the tails of our families of measures
(see Proposition \ref{prop:grad bounded away}). The Kac-Rice formula allows us to show that most components $c$ of a typical $V(f)$
are gotten from $\Sc^{n-1}$ with a bounded number of surgeries. However this is not sufficient to control their topologies uniformly.
To limit these we examine further the geometries of the components. We show that for most components there are uniform bounds from above
for their volumes, diameter and curvatures. With this, versions of Cheeger's finiteness theorem (see Lemma \ref{lem:Cheeger}
in \S\ref{sec:proof top not leak}) allows us
to restrict their topological types and hence to establish the desired tightness property. For the case of tree ends a similar
but much simpler analysis gives the desired control on the geometry of nodal domains, yielding the corresponding tightness.

Section \ref{sec:full supp} is concerned with a proof that for the $\mathfrak{g}_{n,\alpha}$'s the limit measures $\mu_{\Cc,n,\alpha}$
and $\mu_{X,n,\alpha}$ have full support (i.e. that they charge every atom). For the cases $0\le \alpha <1$ this is straightforward
(see \S\ref{sec:liminf>0 inter prf}). The case $\alpha=1$ presents us with a second technically novel problem. We resolve it for $n=2$ and for
the measure $\mu_{X,2,1}$ in this section. This makes use of some auxiliary lemmas about approximating functions in $C(K)$, $K\subseteq \R^{2}$
compact, by solutions $u$ of $$\Delta u +u=0$$ in $\R^{2}$. This is combined with a combinatorial analysis of the zero sets of perturbations
of $$u(x_{1},x_{2})=\sin(x_{1})\cdot \sin(x_{2}).$$ The general case of the support of these measures for $n\ge 3$ is established in the companion
paper ~\cite{CS}.

\vspace{3mm}

Section \ref{sec:loc res Riem} gives `semi-local'
analysis concerning the $\mathfrak{g}_{n,\alpha}$'s with a decomposition of $\M$ into pieces at scale $1/T$ to study
the typical members of $\Ec_{\M,\alpha}(T)$. Section \ref{sec:glob res Riem}
ends with a proof of Theorem \ref{thm:main thm lim meas} by combining or `gluing' the `semi-local' pieces of $\M$.
Again we follow closely the analysis of the counting of the number
of components in ~\cite{NS} and ~\cite{So}. This requires a number of modifications and extensions
(for example see the proof of Proposition \ref{prop:bi(R-1,Fx)<=bi(R,fxL)<=bi(R+1,Fx)} in
\S\ref{sec:bi(R-1,Fx)<=bi(R,fxL)<=bi(R+1,Fx)} or the proof of Proposition \ref{prop:glob upper bound}
in \S\ref{sec:glob upper bound}), and we spell these out in some detail.

\subsection{Acknowledgements}

We would like to thank Mikhail Sodin
for sharing freely early versions of his work with Fedor Nazarov and
in particular for the technical discussions with one of us (Wigman) in Trondheim 2013,
and Ze\'{e}v Rudnick for many stimulating discussions.
In addition I.W. would like to thank Dmitri Panov and Yuri Safarov for sharing their expertise on
various topics related to the proofs.
We also thank Alex Barnett for carrying out the numerical experiments
connected with this work and for his figures which we have included, as well
as P. Kleban and R. Ziff for formulating and examining the ``holes of
clusters" in percolation models. We are grateful to Nalini Anantharaman, Yaiza Canzani and Curtis McMullen
for their valuable comments on some earlier versions of this manuscript
and to S. Weinberger for clarifying issues of decidability of topological types in higher dimensions.
Finally, it is
a pleasure to thank the anonymous referee for his careful reading and constructive criticism on
an earlier version of this manuscript, that, in particular, improved its readability significantly.

\section{Basic conventions}

\label{sec:bas conventions}

We review some material that will be used in the text. We begin with a quantitative local Weyl law for $\M$.

\subsection{Quantitative local Weyl law}

The modern treatments of Weyl's law with remainder involve construction of a parametrix for the wave equation on $\M\times\R$
as developed first in ~\cite{Lax} and ~\cite{Horm}. For the spectral window that we treat a parametrix for a small fixed time
interval suffices. The recent papers ~\cite{CH} and ~\cite{CH2}, which we will use as a reference, go beyond what we need in that
they allow $\eta(T)$ to be bounded in the case $\alpha=1$. Their goal is a remainder term of $o(T^{n-1})$, and for that
they assume some properties of the geodesic flow. Since we assume that $\eta(T)\rightarrow\infty$ (in fact, that
$\eta(T)=T^{\beta}$ for some $0<\beta<\frac{1}{2}$) and we are content with a bound of $O(T^{n-1})$ for
the remainder, the analysis from ~\cite{CH} and ~\cite{CH2} is simpler, and we don't need to impose any conditions on
$\M=(\Sc^{n},g)$.

Specifically, as pointed out in equation (5) of ~\cite{CH2}, the remainder $R(x,y,T)$ (our $T$ is their
$\lambda$) in their equation (4) is $O(T^{n-1})$, and this is proved without any assumptions on the geodesic flow (see
Theorem $4.4$ of ~\cite{Horm}). In their analysis of the main term in (4) leading to their Theorem $1$, there is a parameter $\epsilon$,
which they allow to go to zero, and which makes use of the non self-focal condition. If we fix $\epsilon$ a large constant
so that the parametrix constructed in their section $2$ is used only for $|t|< \frac{1}{\epsilon}$, then no condition on the geodesic
flow are used and one obtains their Theorem $1$ with the weaker $O(1)$ replacing their $o(1)$. This leads to:
uniformly for $x,y\in\M$
\begin{equation*}
\begin{split}
K_{T}(x,y) &:= \sum\limits_{t_{j}\le T} \phi_{j}(x)\phi_{j}(y)
\\&=T^{n}B_{n}(T\cdot d(x,y)) + O(T^{n-1}),
\end{split}
\end{equation*}
where
\begin{equation*}
B_{n}(w)=B_{n}(|w|) = \int\limits_{|\xi|\le 1}e(\langle w,\xi \rangle ) d\xi.
\end{equation*}

Hence if $0\le T'<T$,
\begin{equation}
\label{eq:KT-KT' asymp}
\begin{split}
K_{T}(x,y) - K_{T'}(x,y) &=\sum\limits_{T'\le t_{j}\le T} \phi_{j}(x)\phi_{j}(y)
\\&=T^{n}B_{n,T'/T}(T\cdot d(x,y)) + O(T^{n-1}),
\end{split}
\end{equation}
where
\begin{equation*}
B_{n,\gamma}(w)= \int\limits_{\gamma\le |\xi|\le 1}e(\langle w,\xi \rangle ) d\xi.
\end{equation*}
In particular if $0\le \alpha<1$ is fixed then we obtain the desired Weyl asymptotics that
we will use for the $\Ec_{\M,\alpha}(T)$'s.

For the monochromatic case $\alpha=1$, $$T'/T = 1-\eta(T)/T=1-T^{\beta-1}, $$ and hence
\begin{equation}
\label{eq:KT-KT' asymp alpha=1}
\begin{split}
&K_{T}(x,y) - K_{T'}(x,y)
\\&=n\eta(T)T^{n-1}B_{n,1}(T\cdot d(x,y)) + O\left(T^{n-1}+\eta^{2}T^{n-2}(Td(x,y))\right),
\end{split}
\end{equation}
where
\begin{equation*}
B_{n,1}(w)=B_{n,1}(|w|)= \int\limits_{|\xi|=1}e(\langle w,\xi \rangle ) d\nu_{1}(\xi),
\end{equation*}
and $\nu_{1}$ is the spherical Haar measure on $\Sc^{n-1}$.
We need also to control the derivatives of $K_{T}(x,y)$ w.r.t. $x$ and $y$ when $x$ and $y$ are very close
(within $1/T$ of each other). Using geodesic normal coordinates about a point $x_{0}$ in $\M$ and
the exponential map from $T_{x_{0}}(\M)$ identified with $(\R^{n},0)$ to $\M$, ~\cite{CH} show say in the case $\alpha=1$
(which is the most difficult one) that
\begin{equation}
\label{eq:KT-KT' asymp der}
K_{T} \left( \exp_{x_{0}}\left(\frac{u}{T}\right),\exp_{x_{0}}\left(\frac{v}{T}\right)   \right) = n\eta T^{n-1}B_{n,1}(u-v)+O(T^{n-1}),
\end{equation}
and this holds uniformly together with any fixed number of derivatives w.r.t. $u$ and $v$. They establish this
for $\M$ real analytic in ~\cite{CH}, and for the general $C^{\infty}$-case in ~\cite{CH2}.

Our discussion above
does not apply directly to the interesting special case of $\M$ being the standard round sphere $\Sc^{n}$ and $\Ec_{\M,1}(T)$
being the space of spherical harmonics of a given degree. In that case the classical Mehler-Heine asymptotics for
Gegenbauer polynomials yield a version of \eqref{eq:KT-KT' asymp der}, and all the results in this paper apply equally well to these
``monochromatic waves".

\subsection{Scale invariant fields}

\label{sec:scal invar gfr}

The semi-classical approximations \eqref{eq:KT-KT' asymp}, \eqref{eq:KT-KT' asymp alpha=1}
and \eqref{eq:KT-KT' asymp der} lead to the study of the scaled Gaussian fields
at a point $x_{0}\in\M$, and these in turn to the scale invariant Gaussian fields $\gfr_{n,\alpha}$ on $\R^{n}$.
They are defined as follows.

Let $\psi_{j}$, $j=1,2,\ldots$ be a real valued o.n.b. of
$L^{2}(A_{\alpha},d\nu_{\alpha})$ with $0\le \alpha<1$ and $L^{2}(\Sc^{n-1},d\nu_{1})$ if $\alpha=1$
(here $A_{\alpha}$ is the annulus $\alpha \le |\xi|\le 1$ and $d\nu_{\alpha}$ its Haar measure).
Set
\begin{equation*}
\widehat{\psi_{j}} = \int\limits_{\R^{n}} \psi_{j}(\xi)e(\langle x,\xi\rangle)d\nu_{\alpha}:=C_{j}(x)+S_{j}(x)
\end{equation*}
to be the real valued cosine and sine transforms. Define a random $f(x)$ by
\begin{equation}
\label{eq:f sim sum ajCj+bjSj}
f(x)\sim \sum\limits_{j=1}^{\infty} (a_{j}C_{j}(x)+b_{j}S_{j}(x) ) ,
\end{equation}
where $a_{j}, b_{j}$ are independent $N(0,1)$ Gaussian variables. The space of such $f$'s is denoted by
$G_{n,\alpha}$, and the corresponding probability measure on the measurable subsets of $G_{n,\alpha}$
by $P_{n,\alpha}$. The Gaussian fields $\gfr_{n,\alpha}$ are then $(G_{n,\alpha},P_{n,\alpha})$.

From the definitions one checks that for $x,y\in\R^{n}$,
\begin{equation}
\label{eq:reprod kern covar}
\begin{split}
&\sum\limits_{j=1}^{\infty} \left( C_{j}(x)C_{j}(y)+S_{j}(x)S_{j}(y)   \right) =
\int\limits_{\R^{n}} e(\langle x-y, \xi\rangle)d\nu_{\alpha}(\xi)\\&=\widehat{\nu_{\alpha}}(x-y):=\cov_{n,\alpha}(x-y)
:=\E_{f}[f(x)\cdot f(y)].
\end{split}
\end{equation}
It follows that $\gfr_{n,\alpha}=(G_{n,\alpha},P_{n,\alpha})$ is a translation and rotation invariant random field (~\cite{Adler-Taylor,AW}).
That is, if $B$ is a (measurable) subset of $G_{n,\alpha}$, then
$$P_{n,\alpha}(B) = P_{n,\alpha}(RB)$$ for any rigid motion $R$ of $\R^{n}$ (here $Rf(x):=f(Rx)$). Moreover, since $\widehat{\nu_{\alpha}}(x)$
is analytic in $x$, almost all $f$'s in $G_{n,\alpha}$ represent analytic functions on $\R^{n}$. This follows for example from the series
\eqref{eq:f sim sum ajCj+bjSj} converging uniformly on compacta in $x$ for a.a. choices of the Gaussian variables $a_{j}$, $b_{j}$, see
Appendix \ref{apx:measurability} for more details.

Of special interest is $\alpha=1$ for which one can choose as o.n.b. of $L^{2}(\Sc^{n-1},\nu_{1})$ the special harmonic $Y_{m}^{l}$,
$l\ge 0$ and $m=1,\ldots d_{l}$. A computation ~\cite{CS} shows that the ensemble $\gfr_{n,1}$ has a representation
\begin{equation*}
f(x)\sim (2\pi)^{n/2}\sum\limits_{l=0}^{\infty}\sum\limits_{m=1}^{d_{l}}b_{l,m}Y_{m}^{l}\left(\frac{x}{|x|}\right) \frac{J_{l+\nu}(|x|)}{|x|^{\nu}}
\end{equation*}
with $$\nu = \frac{n-2}{2}$$ and $b_{l,m}$ independent $N(0,1)$ Gaussians.

\subsection{Kac-Rice}

To illustrate the use of the covariance and its asymptotic approximation above, we review the computation of expected zero volume
and or critical points number for random fields. These are computed using the ``Kac-Rice" formula.
Let $m\le n$, $H:\Dc\rightarrow\R^{m}$ be a smooth random field
on a domain $\Dc\subseteq \R^{n}$, and $\Zc(H;\overline{\Dc})$ be either the $(n-m)$-volume of $H^{-1}(0)$
(for $m<n$), or the number of the discrete zeros (for $m=n$).

We set $J_{H}(x)$ to be the (random) Jacobi matrix of $H$ at $x$
and define the ``zero density" of $H$ at $x\in \Dc$ as the conditional Gaussian expectation
\begin{equation}
\label{eq:K1 density def}
K_{1}(x) = K_{1;H}(x)=\varphi_{H(x)}(0)\cdot \E[|\det J_{H}(x)| \big| H(x)=0],
\end{equation}
where $\varphi_{H(x)}(0)$ is the probability density function of $H(x)$ evaluated at $0\in \R^{m}$.
With the notation above the Kac-Rice formula (meta-theorem) states that, under some non-degeneracy condition on $H$,
$$\E[\Zc(H;\Dc)]=\int\limits_{\Dc}K_{1}(x)dx.$$ Concerning the sufficient conditions that guarantee that \eqref{eq:K1 density def} holds,
a variety of results is known ~\cite{Adler-Taylor,AW}. The following version of Kac-Rice merely requires the non-degeneracy
of the values of $H(x)$ (vs. the non-degeneracy of $(H(x),J_{H}(x))$ in the appropriate sense, as in
the other sources), to our best knowledge, the mildest sufficient condition.

\begin{lemma}[Standard Kac-Rice ~\cite{AW}, Theorem $6.3$]
\label{lem:Kac-Rice precise}
Let $H:\Dc\rightarrow\R^{m}$ be an a.s. smooth Gaussian field, such that for every $x\in \Dc$ the distribution of
the random vector $H(x)\in \R^{m}$ is non-degenerate Gaussian. Then
\begin{equation}
\label{eq:E[ZH]=int K1}
\E[\Zc(H;\Dc)]=\int\limits_{\Dc}K_{1}(x)dx
\end{equation}
with the zero density $K_{1}(x)$ as in \eqref{eq:K1 density def}.
\end{lemma}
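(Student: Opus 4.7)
The strategy is the standard one: approximate the point mass at $0 \in \R^m$ by smooth bumps, apply the area/coarea formula pathwise, and then interchange limit and expectation. Fix a non-negative $C^\infty$ bump $\rho\colon\R^m\to\R$ with compact support and $\int\rho=1$, set $\rho_\epsilon(y)=\epsilon^{-m}\rho(y/\epsilon)$, and introduce the smoothed random variable
\begin{equation*}
N_\epsilon(H) := \int_{\Dc} \rho_\epsilon(H(x))\, |J_H(x)|_m \, dx,
\end{equation*}
where $|J_H(x)|_m := \sqrt{\det(J_H(x) J_H(x)^T)}$ reduces to $|\det J_H(x)|$ when $m=n$.

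First I would establish the pathwise limit $N_\epsilon(H)\to\Zc(H;\Dc)$ almost surely as $\epsilon\to 0^+$. By the non-degeneracy hypothesis, $H(x)$ admits a bounded Gaussian density for every fixed $x\in\Dc$, and Bulinskaya's lemma together with the a.s.\ smoothness of $H$ ensures that $0$ is a regular value of $H|_{\Dc}$ almost surely; on this event the coarea formula gives $N_\epsilon(H)\to\Zc(H;\Dc)$. Next, Fubini and Gaussian regression yield, for each $x\in\Dc$,
\begin{equation*}
\E\!\left[\rho_\epsilon(H(x))\,|J_H(x)|_m\right] = \int_{\R^m} \rho_\epsilon(y)\, \phi_{H(x)}(y)\, \E\!\left[|J_H(x)|_m \,\big|\, H(x) = y\right] dy,
\end{equation*}
where $\phi_{H(x)}$ is the Gaussian density of $H(x)$. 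The conditional expectation is continuous in $y$ because conditioning a jointly Gaussian pair on $H(x)=y$ shifts the mean of $J_H(x)$ affinely in $y$ without altering its covariance, so the approximate-identity property gives the pointwise limit
\begin{equation*}
\lim_{\epsilon \to 0^+} \E\!\left[\rho_\epsilon(H(x))\,|J_H(x)|_m\right] = \phi_{H(x)}(0)\,\E\!\left[|J_H(x)|_m \mid H(x) = 0\right] = K_1(x),
\end{equation*}
with the density factor $\phi_{H(x)}(0)$ absorbed into the notation \eqref{eq:K1 density def}.

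The main obstacle is justifying the exchange of $\lim_{\epsilon\to 0^+}$ with $\E$ and with $\int_{\Dc}$, and this is precisely where the mildness of the hypothesis (only $H(x)$, not $(H(x),J_H(x))$, is assumed non-degenerate) matters. For the interchange on the right-hand side I would produce a locally uniform upper bound on $\phi_{H(x)}(y)\,\E[|J_H(x)|_m\mid H(x)=y]$ valid for $y$ near $0$ and $x$ in compact subsets of $\Dc$: the entries of $J_H(x)$ are Gaussian with covariance depending continuously on $x$, hence all moments of $|J_H(x)|_m$ are locally uniformly bounded, and $\phi_{H(x)}$ is continuous and locally bounded at $y=0$. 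Dominated convergence on a compact exhaustion then gives $\lim_\epsilon \E[N_\epsilon] = \int_{\Dc} K_1(x)\,dx$. On the other hand Fatou's lemma applied to the pathwise limit yields $\E[\Zc(H;\Dc)]\le \liminf_\epsilon \E[N_\epsilon]$, and a matching reverse inequality is obtained either via uniform integrability of $N_\epsilon$ (derived from the same moment bounds on $J_H$) or, alternatively, by replacing the single bump $\rho$ by a monotone family and taking suprema before passing to the expectation. Assembling these three pieces produces \eqref{eq:E[ZH]=int K1}. The delicate point throughout is that without non-degeneracy of $J_H(x)$ the conditional quantity $\E[|J_H(x)|_m\mid H(x)=0]$ may vanish on a large set, so the argument must proceed by $L^1$-type control built from the pathwise limit rather than a direct density-level identification.
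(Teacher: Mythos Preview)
The paper does not supply a proof of this lemma at all: it is quoted verbatim as Theorem~6.3 of Aza\"is--Wschebor \cite{AW} and used as a black box. So there is no ``paper's own proof'' to compare against; your outline is an attempt to reprove a cited result.

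That said, your sketch is essentially the standard mollifier argument that underlies the Aza\"is--Wschebor proof, and the architecture (coarea formula pathwise, Gaussian regression for $\E[N_\epsilon]$, then interchange of limits) is correct. Two points deserve more care if you want this to stand as an independent proof. First, the invocation of Bulinskaya's lemma to conclude that $0$ is almost surely a regular value of $H$ needs the vector-valued version: for $H\colon\R^n\to\R^m$ with $m\le n$ one must show that the event $\{\exists\,x:\ H(x)=0,\ \operatorname{rank} J_H(x)<m\}$ has probability zero, which is not literally the classical (scalar) Bulinskaya statement and requires an additional argument (e.g.\ applying the scalar version to appropriate minors, or the dimension-counting argument in \cite[Proposition~6.5]{AW}). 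Second, the reverse inequality complementing Fatou is the genuinely delicate step under these weak hypotheses; ``uniform integrability of $N_\epsilon$'' does follow, but not immediately from moment bounds on $J_H$ alone --- one needs either a localization (working first on compact $\Dc$ and bounding $N_\epsilon$ by the number of zeros in a slightly larger set, which has finite expectation by a direct covering argument) or the monotone-family trick you allude to, and either route should be spelled out. With those two gaps filled your argument would be complete and would indeed recover the cited theorem.
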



The Gaussian density \eqref{eq:K1 density def} is a Gaussian integral that, in principle, could be evaluated explicitly.
However, in practice, it is not easy to control $K_{1}(x)$ uniformly in both $x$ and the field $H$.
The following lemma was
proved in ~\cite{So}; it offers an easy and explicit upper bound for the discrete number of zeros (in case $m=n$),
uniformly w.r.t. $H$.

\begin{lemma}[{\cite[Lemma $2$]{So}}]
\label{lem:Kac-Rice upper bnd gen}
Let $m\ge 1$, $B\subseteq \R^{m}$ a ball and
$H:\overline{B}\rightarrow \R^{m}$ an a.s. $C^{1}$-smooth random Gaussian field. Then we have
\begin{equation}
\label{eq:Kac-Rice upper bnd gen}
\E[\Zc(H;\overline{B})] \le C\sup\limits_{x\in\overline{B}}\frac{\E[|\nabla H(x)|^{2}]^{m/2}}{(\det\E[H(x)\cdot H(x)^{t}])^{1/2}}\cdot \vol(B).
\end{equation}
for some universal constant $C=C(m)>0$, where $$|\nabla H(x)|^{2} = \sum\limits_{i,j}|\partial_{i}H_{j}|^{2}$$ is the Hilbert-Schmidt norm.
\end{lemma}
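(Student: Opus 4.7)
The plan is to apply the standard Kac-Rice formula (Lemma \ref{lem:Kac-Rice precise}) and then bound the integrand pointwise and uniformly. If the right-hand side of \eqref{eq:Kac-Rice upper bnd gen} is infinite there is nothing to prove, so we may assume $\det \E[H(x) H(x)^t] > 0$ for all $x \in \overline{B}$, which is exactly the non-degeneracy hypothesis needed to invoke Lemma \ref{lem:Kac-Rice precise}. That lemma writes
\begin{equation*}
\E[\Zc(H;\overline{B})] = \int_{B} \phi_{H(x)}(0) \cdot \E[|\det J_{H}(x)| \,\big|\, H(x)=0]\, dx,
\end{equation*}
where $\phi_{H(x)}(\cdot)$ is the Gaussian density of $H(x)$ (so the definition of $K_1$ implicitly carries this factor). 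The density at $0$ equals $(2\pi)^{-m/2} (\det \E[H(x)H(x)^t])^{-1/2}$, which already accounts for the denominator in \eqref{eq:Kac-Rice upper bnd gen}.

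It remains to bound the conditional expectation of $|\det J_H(x)|$ in terms of $\E[|\nabla H(x)|^2]^{m/2}$. First, since $|\det J_H(x)|$ is the absolute value of the determinant of the matrix whose rows are $\nabla H_j(x)$, Hadamard's inequality yields $|\det J_H(x)| \le \prod_{j=1}^{m} |\nabla H_j(x)|$; applying the AM-GM inequality to $a_j = |\nabla H_j(x)|^2$ then gives
\begin{equation*}
|\det J_H(x)| \le m^{-m/2} |\nabla H(x)|^m.
\end{equation*}
Taking conditional expectations and applying Jensen-type Gaussian moment comparison (see below) reduces matters to controlling $\E[|\nabla H(x)|^2 \mid H(x)=0]$, and the standard fact that conditioning jointly Gaussian variables on $H(x)=0$ only reduces covariances in the positive semidefinite order then gives
\begin{equation*}
\E[|\nabla H(x)|^2 \mid H(x) = 0] \le \E[|\nabla H(x)|^2].
\end{equation*}

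The one non-trivial technical point is the moment comparison $\E[|\nabla H(x)|^{m} \mid H(x)=0] \le C_m \E[|\nabla H(x)|^2 \mid H(x)=0]^{m/2}$. For this I would note that, conditionally on $H(x)=0$, the entries $\partial_i H_j(x)$ still form a centered Gaussian vector $X$ in $\R^{mn}$ with covariance $\Sigma \preceq \Sigma_0$, the unconditional one. Diagonalising $\Sigma = U \mathrm{diag}(\lambda_k^2) U^t$ and writing $X = \sum_k \lambda_k Z_k e_k$ with $Z_k$ i.i.d.\ standard normal, one has $|X|^2 = \sum_k \lambda_k^2 Z_k^2$, so Minkowski's inequality for $L^{m/2}$ (with $m\ge 1$, splitting the case $m<2$ by Jensen) gives $\E[|X|^m]^{2/m} \le c_m \sum_k \lambda_k^2 = c_m \E[|X|^2]$, i.e. the required comparison with a constant depending only on $m$.

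Putting these pieces together,
\begin{equation*}
\phi_{H(x)}(0)\,\E[|\det J_H(x)| \mid H(x)=0] \le \frac{C(m)\, \E[|\nabla H(x)|^2]^{m/2}}{(\det \E[H(x) H(x)^t])^{1/2}},
\end{equation*}
and integrating over $B$ and bounding the integrand by its supremum yields \eqref{eq:Kac-Rice upper bnd gen}. The main obstacle is really the Gaussian moment comparison in the previous paragraph, together with the conceptually small but important observation that Gaussian conditioning does not inflate the variance of $\nabla H(x)$; everything else is bookkeeping around Hadamard and AM-GM.
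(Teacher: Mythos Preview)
The paper does not prove this lemma; it is quoted verbatim from \cite{So} without argument, so there is no ``paper's own proof'' to compare against. Your proof is correct and is essentially the standard one. The only comments are cosmetic:

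\begin{itemize}
\item You invoke Lemma~\ref{lem:Kac-Rice precise}, which as stated asks for $H$ a.s.\ smooth, whereas the present lemma assumes only $C^{1}$. This is harmless for an \emph{upper} bound (the Kac-Rice inequality holds under $C^{1}$ and non-degeneracy of $H(x)$), but you should say so rather than cite the equality form.
\item In the moment-comparison paragraph you write ``Gaussian vector $X$ in $\R^{mn}$''; in this lemma domain and target have the same dimension $m$, so the ambient space for the entries of $J_{H}(x)$ is $\R^{m^{2}}$. The argument is of course unaffected, since your constant depends only on $m$ and not on the dimension of the vector.
\end{itemize}

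Otherwise the chain Hadamard $\Rightarrow$ AM--GM $\Rightarrow$ Gaussian hypercontractivity-type moment bound $\Rightarrow$ variance reduction under Gaussian conditioning is exactly right, and assembling with the density factor $(2\pi)^{-m/2}(\det\E[H(x)H(x)^{t}])^{-1/2}$ gives \eqref{eq:Kac-Rice upper bnd gen}.
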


Note that both the denominator and the numerator of the r.h.s. of \eqref{eq:Kac-Rice upper bnd gen}
may be expressed in terms of the covariance matrix $(r_{ij}(x,x))_{ij}$ and its second mixed derivatives
$(\partial_{lm}r_{ij}(x,x))_{ijlm}$ evaluated on the diagonal $x=y$. If $H$ is stationary, then
the fraction on the r.h.s. of \eqref{eq:Kac-Rice upper bnd gen} does not depend on $x$,
so that in the latter case \eqref{eq:Kac-Rice upper bnd gen} is
\begin{equation}
\label{eq:Kac-Rice upper bnd stat}
\E[\Zc(H;\overline{B})] \le C(H)\cdot \vol(B)
\end{equation}
with $C(H)$ expressed in terms of the covariance of $H$
and a couple of its derivatives at the origin $x=0$.

We now apply Lemma \ref{lem:Kac-Rice upper bnd gen} for counting critical points of a given stationary field $F$
by using $H=\nabla F$.

\begin{corollary}[Kac-Rice upper bound]
\label{cor:Kac Rice crit ball sphere}

Let $\Dc\subseteq\R^{m}$ be a domain and $F:\Dc\rightarrow\R^{m}$ an a.s. $C^{2}$-smooth stationary Gaussian random field,
such that for $x\in \Dc$ the distribution of $\nabla F(x)$ is non-degenerate Gaussian.

\begin{enumerate}

\item For $r>0$ let $\Ac(F;r)$ be the number of critical points of $F$ inside $B(r)\subseteq\Dc$. Then
$$\E[\Ac(F;r)] = O(\vol(B(r))),$$ where the constant involved in the `$O$'-notation depends on the law of $F$ only.

\item For $r>0$ let $\widetilde{\Ac}(F;r)$ be the number of critical points of
the restriction $F|_{\partial B(r)}$ of $F$ to the sphere $\partial B(r)\subseteq\Dc $. Then
$$\E[\widetilde{\Ac}(F;r)]= O(\vol(\partial B(r))),$$
where the constant involved in the `$O$'-notation depends on the law of $F$ only.

\end{enumerate}

\end{corollary}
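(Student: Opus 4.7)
The plan is to deduce both parts from Lemma~\ref{lem:Kac-Rice upper bnd gen}, applied in each case to a suitable Gaussian field whose zeros are precisely the critical points we want to count.

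For part (1), set $H := \nabla F : \Dc \to \R^{m}$. Since $F$ is stationary and a.s. $C^{2}$-smooth, the field $H$ is stationary and a.s. $C^{1}$-smooth, and by hypothesis its values are non-degenerate Gaussian. The critical points of $F$ in $B(r)$ are precisely the zeros of $H$ there, so $\Ac(F;r) = \Zc(H;\overline{B(r)})$. Applying the stationary bound~\eqref{eq:Kac-Rice upper bnd stat} to $H$ gives $\E[\Ac(F;r)] \le C(H)\cdot \vol(B(r))$, where $C(H)$ is expressed only through the covariance of $\nabla F$ and a few of its derivatives at the origin, hence only through the law of $F$.

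For part (2), I would cover the sphere $\partial B(r)$ by finitely many coordinate patches $\{(U_{i},\phi_{i})\}$ with diffeomorphisms $\phi_{i}: V_{i} \to U_{i}$ from open balls $V_{i}\subseteq \R^{m-1}$, chosen so that $\sum_{i}\vol(V_{i}) = O(\vol(\partial B(r)))$. On each $V_{i}$ define the pulled-back Euclidean gradient field $H_{i} := \nabla(F\circ\phi_{i})$, whose zeros in $V_{i}$ are in bijection with the critical points of $F|_{\partial B(r)}$ lying in $U_{i}$. Applying Lemma~\ref{lem:Kac-Rice upper bnd gen} to each $H_{i}$ and summing yields
\begin{equation*}
\E[\widetilde{\Ac}(F;r)] \;\le\; \bigl(\max_{i} C_{i}\bigr)\cdot \sum_{i} \vol(V_{i}) \;=\; O(\vol(\partial B(r))),
\end{equation*}
provided the constants $C_{i}$ can be bounded uniformly in $i$ and in $r$ by a quantity depending only on the law of $F$.

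The main obstacle is exactly this uniformity, since $F|_{\partial B(r)}$ is \emph{not} stationary, so a priori the Kac-Rice density at a point $x_{0}\in\partial B(r)$ depends on $x_{0}$. The point is that stationarity of $F$ on $\R^{m}$ forces the joint law of the 2-jet $(F(x_{0}),\nabla F(x_{0}),\nabla^{2} F(x_{0}))$ to be independent of $x_{0}$; the covariance of $H_{i}$ and its second mixed derivatives at any point of $V_{i}$ are then determined by this jet together with the derivatives of the chart map $\phi_{i}$ at that point. Exploiting the rotational symmetry of $\partial B(r)$, I would choose the charts $\phi_{i}$ to be rigid rotations of a single fixed local parameterization so that the chart-derivative contributions are identical across patches; the non-degeneracy of $\nabla F$ and continuity of the resulting Kac-Rice density as a function of the direction $x_{0}/|x_{0}|\in\Sc^{m-1}$, together with compactness of $\Sc^{m-1}$, then give the desired uniform bound on the $C_{i}$.
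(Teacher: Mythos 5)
Your proof is correct and follows essentially the same route as the paper: part (1) applies the stationary Kac--Rice bound \eqref{eq:Kac-Rice upper bnd stat} to $H=\nabla F$, and part (2) decomposes $\partial B(r)$ into finitely many coordinate patches and applies Lemma \ref{lem:Kac-Rice upper bnd gen} patchwise, observing that non-degeneracy of the (tangential) gradient survives the loss of stationarity in the local charts. The paper leaves the uniformity of the patchwise constants implicit, whereas you supply the supporting argument (constancy of the $2$-jet law under stationarity, rotated charts, continuity plus compactness), which is exactly what the paper tacitly relies on.
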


Note that the total number $\nod_{\Cc}(F;r,u) $ of nodal components of $F$ lying in $B_{u}(r)$ is bounded by
the number $\Ac(F;r,u)$ of critical points of $F$ in $B_{u}(r)$. Hence Corollary \ref{cor:Kac Rice crit ball sphere}
allows us to control the expected number of the former by the volume of the ball $B(r)$
(bearing in mind the stationarity of $F$). Similarly, the second part of Corollary \ref{cor:Kac Rice crit ball sphere}
allows us to control the expected number of nodal components intersecting $\partial B_{u}(r)$ in terms of
the volume of $\partial B_{u}(r)$. This approach is pursued in \S\ref{sec:scal invar exist proof}.

\begin{proof}[Proof of Corollary \ref{cor:Kac Rice crit ball sphere}]

The first part of Corollary \ref{cor:Kac Rice crit ball sphere} is merely an
application\footnote{Alternatively, it follows directly from Lemma \ref{lem:Kac-Rice precise}.} of
\eqref{eq:Kac-Rice upper bnd stat} (following from Lemma \ref{lem:Kac-Rice upper bnd gen}) on the stationary field $H=\nabla F$.
For the second part we decompose the sphere $\partial B(r)$ into (universally) finitely many coordinate patches, thus
reducing the problem to the Euclidian case, and apply Lemma \ref{lem:Kac-Rice upper bnd gen} on
the restrictions of the gradient of $F|_{\partial B(r)}$ on each of the coordinate patches separately.
Note that the total volume is of the same order of magnitude $r^{n-1}$ as $\partial B(r)$,
so that the second statement of the present corollary follows from summing up the individual estimates
\eqref{eq:Kac-Rice upper bnd gen}, bearing in mind that upon passing to the Euclidian coordinates we are losing stationarity
of the underlying random field (though the non-degeneracy of the gradient stays unimpaired).

\end{proof}

\subsection{Some remarks on topology of $V(f)$}

\label{sec:top V(f) rem}

We end this background material section with some elementary remarks about the topology of $V(f)$. For the random
$f\in\Ec_{T,\alpha}(\M)$ (and $T$ large) a component $c$ of $V(f)$ is a smooth hypersurface in $\Sc^{n}$; hence it
can be embedded in $\R^{n}$ and gives a point in $H(n-1)$. It is known that $c$ separates $\Sc^{n}$ into two connected
components ~\cite{E.Lima}. From this it follows that the nesting graph $X(f)$ is a tree and that
\begin{equation*}
|X(f)|=|\Omega(f)|=|E(X(f))|+1=|\Cc(f)|+1.
\end{equation*}

The mean of the connectivity measure $\mu_{\Gamma(f)}$ from \eqref{eq:muGamma def} is equal to
\begin{equation*}
\sum\limits_{m=1}^{\infty}m\cdot \mu_{\Gamma(f)}(m) = \frac{1}{|\Omega(f)|} \sum\limits_{v\in X(f)} d(v),
\end{equation*}
where $d(v)$ is the degree of $v$. Now
\begin{equation*}
2|E(X(f))| = \sum\limits_{v\in X(f)} d(v),
\end{equation*}
and hence
\begin{equation*}
\sum\limits_{m=1}^{\infty}m\cdot \mu_{\Gamma(f)}(m) = \frac{2|E(X(f))|}{|\Omega(f)|} = 2-\frac{2}{|\Omega(f)|}.
\end{equation*}
It follows that the means of the universal domain connectivity measures $\mu_{\Gamma,n,\alpha}$ are at most $2$.
We do not know whether these are equal to $2$ or not.

The proof that the universal Betti measures $\mu_{\text{Betti},\alpha,n}$ (page \pageref{pag:Betti})
has finite (total) mean
also follows from a finite individual bound. If $\M$ is $\Sc^{n}$ with its round metric, the eigenfunctions are
spherical harmonics and any element of $\Ec_{\M,\alpha}(T)$ is a homogeneous polynomial of degree $t=t(T)$.
According to ~\cite{Milnor} the total Betti number of the full zero set $V(f)$ of $f$
(which we can assume is nonsingular) is at most $t^{n}$. Nazarov and Sodin's Theorem \eqref{eq:|C(f)| beta c vol T^n}
asserts that the number of connected components of a typical $V(f)$ for $f$ in $\Ec_{\M,\alpha}(T)$ is
$c_{n,\alpha}\cdot t^{n}$, from which the finiteness of the total mean of the $\mu_{\text{Betti},\alpha,n}$
on $(\Z_{\ge 0})^{k}$ follows.

\section{Scale-invariant model}

\label{sec:scal invar mod}

\subsection{Statement of the main result}

Let $F:\R^{n}\rightarrow \R$ be an a.s. smooth stationary Gaussian random field. Here the relevant
limit is considering the restriction $F|_{B(R)}$ of $F$ to the centred radius-$R$ ball
$B(R)$, and taking $R\rightarrow \infty$.
The covariance function of $F$ is $r_{F}:\R^{n}\times \R^{n}\rightarrow \R$,
defined by the standard abuse of notation as
\begin{equation*}
r_{F}(x,y)=r_{F}(x-y)=\E[F(x)F(y)],
\end{equation*}
and the spectral measure (density) $d\rho_{F}$ is the Fourier transform of $r_{F}$ on $\R^{n}$.

\begin{notation}
\label{not:counting vars}

Let $\Gamma\subseteq\R^{n}$ be a (deterministic) smooth hypersurface and $R>0$ a (large) parameter.

\begin{enumerate}

\item For $H \in H(n-1)$ let $\nod_{\Cc}(\Gamma,H;R)$ be the number of connected components of $\Gamma$ lying
entirely in $B(R)$, diffeomorphic to $H$.

\item For $c\in \Cc(\Gamma)$ let $e(c)$ be the rooted subtree of $X(\Gamma)$ cut by $c$,
with vertices corresponding to domains $\omega\in\Omega(\Gamma)$ lying {\em inside} $c$.

\item For $G\in \Tc$ let $\nod_{X}(\Gamma,G;R)$ be the number of edges $c\in\Cc(f)$ in the nesting tree of $\Gamma$,
corresponding to components $c\in \Cc(\Gamma)$ lying entirely in $B(R)$ with $e(c)$ isomorphic to $G$.

\item We use the shorthand $$\nod_{\cdot}(F;\cdot,\cdot):=\nod_{\cdot}(F^{-1}(0),\cdot,\cdot)$$ in either of the cases above.

\end{enumerate}

\end{notation}

Our principal result of this section (Theorem \ref{thm:scal invar count} below)
asserts that, under some assumptions on $F$, as $R\rightarrow\infty$, the numbers
$$\nod_{\cdot}(F;\cdot,R),$$ suitably normalized, converge {\em in mean} (i.e. in $L^{1}$).

\begin{definition}[Axioms on $F$]
\label{def:rho1-4 axioms}

\begin{enumerate}

\item [$(\rho 1)$] The measure $d\rho$ has no atoms.

\item [$(\rho 2)$] For some $p>6$, $$\int\limits_{\R^{n}}\|\lambda\|^{p}d\rho(\lambda) < \infty.$$

\item [$(\rho 3)$] $\supp \rho$ does not lie in a linear hyperplane.

\end{enumerate}

\end{definition}

Axiom $(\rho 2)$ implies \cite[page 30]{AW} that $F$ is a.s. $C^{3}$-smooth, and axiom $(\rho 3)$
implies that the distribution of $(F(x),\nabla F(x))$ is a.s. non-degenerate.
Finally, axiom $(\rho 1)$ guarantees that
the action of translations (or shifts) on $F$ are {\em ergodic},
which is a crucial ingredient in Nazarov-Sodin theory
(see Theorem \ref{thm:ergodicity} below).
From \eqref{eq:reprod kern covar} it is clear that axioms $(\rho 1)$, $(\rho 2)$ and $(\rho 3)$ hold for the
$\gfr_{n,\alpha}$'s considered in \S\ref{sec:scal invar gfr}.

For this {\em scale-invariant} model Nazarov and Sodin proved \cite[Theorem $1$]{So}
that under axioms $(\rho 1)-(\rho 3)$ on $F$ there exists
a constant $\beta=\beta(F)$ such that
\begin{equation}
\label{eq:NS L1 conv scal invar}
\E\left[\left|\frac{\nod_{\Cc}(F;R)}{\vol(B(R))} -\beta\frac{\omega_{n}}{(2\pi)^{n}}\right|\right] \rightarrow 0,
\end{equation}
and in particular, for every $\epsilon>0$
\begin{equation}
\label{eq:NS scal invar}
\prob\left\{\left|\frac{\nod_{\Cc}(F;R)}{\vol(B(R))} -\beta\frac{\omega_{n}}{(2\pi)^{n}}\right|>\epsilon\right\} \rightarrow 0,
\end{equation}
and gave some sufficient conditions on $F$ for the positivity of $\beta$.
The following theorem refines the latter result;
it will imply the existence of the limiting measures in Theorem \ref{thm:scal invar no leak}, part \eqref{it:conv prob} below.

\begin{theorem}[cf. {\cite[Theorem $1$]{So}}]
\label{thm:scal invar count}

Let $F:\R^{n}\rightarrow\R$ be a random field whose spectral density $\rho$
satisfies the axioms $(\rho 1)-(\rho 3)$ above.
Then for every $H\in H(n-1)$ and $G\in \Tc$
there exist constants, $c_{\Cc}(H)=c_{\Cc;F}(H) = c_{\Cc;\rho}(H)$ and
$c_{X}(G)=c_{X;F}(G) = c_{X;\rho}(G)$ so that as $R\rightarrow\infty$,
\begin{equation}
\label{eq:cnt scal invar lim}
\begin{split}
&\E\left[\left| \frac{\nod_{\Cc}(F,H;R)}{\vol(B(R))} - c_{\Cc;F}(H) \right|\right]\rightarrow  0,\\
&\E\left[\left| \frac{\nod_{X}(F,G;R)}{\vol(B(R))} - c_{X;F}(G) \right|\right]\rightarrow  0  .
\end{split}
\end{equation}

\end{theorem}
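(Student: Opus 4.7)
The plan is to adapt the Nazarov--Sodin $L^{1}$ convergence argument for \eqref{eq:NS L1 conv scal invar} so as to count only components whose diffeomorphism type is a prescribed $H\in H(n-1)$, or whose rooted ``inside'' subtree is a prescribed $G\in\Tc$. The decisive observation is that whenever a component $c\in\Cc(F)$ is contained in a ball $B(x,r)$, both its topological type $t(c)$ and its inside tree end $e(c)$ are determined by the restriction $F|_{B(x,r)}$; consequently the local counts
$$N^{\mathrm{loc}}_{\Cc,H}(F;x,r):=\#\{c\in\Cc(F):c\subseteq B(x,r),\ t(c)=H\},\qquad N^{\mathrm{loc}}_{X,G}(F;x,r):=\#\{c\in\Cc(F):c\subseteq B(x,r),\ e(c)\cong G\}$$
are stationary in $x$ and, for any fixed $r$, dominated by the total local count of components in $B(x,r)$, which has finite expectation by Corollary~\ref{cor:Kac Rice crit ball sphere}.

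First I would set up a two-sided integral-geometric sandwich at a mesoscopic scale $r$. Take a maximal $r$-separated set $\{x_{i}\}\subseteq B(R)$, so that the balls $B(x_{i},r/2)$ are pairwise disjoint while the balls $B(x_{i},r)$ cover $B(R)$. Disjointness of the smaller balls gives
$$\nod_{\Cc}(F,H;R)\ \ge\ \sum_{i}N^{\mathrm{loc}}_{\Cc,H}(F;x_{i},r/2),$$
and every component in $B(R)$ that is missed by this lower bound either lies in some $B(x_{i},r)$ or meets a sphere $\partial B(x_{i},r/2)$; applying the second part of Corollary~\ref{cor:Kac Rice crit ball sphere} to $F$ restricted to each such sphere bounds the expected number of the latter by $O(r^{n-1}\cdot\#\{x_{i}\})=O(\vol(B(R))/r)$. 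This yields the matching upper bound
$$\nod_{\Cc}(F,H;R)\ \le\ \sum_{i}N^{\mathrm{loc}}_{\Cc,H}(F;x_{i},r)+E(F;R,r),\qquad \E[E(F;R,r)]=O(\vol(B(R))/r).$$

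Next I would apply the multi-parameter $L^{1}$ Wiener ergodic theorem to the stationary functional $F\mapsto N^{\mathrm{loc}}_{\Cc,H}(F;0,r)$, invoking the ergodicity of translations on $(G_{n,\alpha},P_{n,\alpha})$ that follows from axiom $(\rho 1)$ (see Theorem~\ref{thm:ergodicity}). This produces
$$\frac{1}{\vol(B(R))}\sum_{i}N^{\mathrm{loc}}_{\Cc,H}(F;x_{i},r)\ \xrightarrow[R\to\infty]{L^{1}}\ \gamma_{H}(r):=\frac{\E[N^{\mathrm{loc}}_{\Cc,H}(F;0,r)]}{\vol(B(r))}.$$
Combined with the sandwich this reduces the theorem to convergence of $\gamma_{H}(r)$ as $r\to\infty$. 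Summing over $H$ and invoking the Nazarov--Sodin limit \eqref{eq:NS L1 conv scal invar} yields $\sum_{H}\gamma_{H}(r)\to\beta(F)\,\omega_{n}/(2\pi)^{n}$, so the sequence is uniformly bounded; running the same sandwich at a pair of scales $r<r'$ and invoking the Kac--Rice boundary estimate again shows that $\gamma_{H}(\cdot)$ is Cauchy, hence convergent to the required constant $c_{\Cc;F}(H)$.

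The statement for the nesting edges $\nod_{X}(F,G;R)$ follows by the identical scheme, with $\{t(c)=H\}$ replaced by $\{e(c)\cong G\}$ throughout: the locality of $e(c)$, namely that the rooted subtree of domains nested inside $c$ is determined by $F$ on any ball containing $c$, is exactly what makes $N^{\mathrm{loc}}_{X,G}$ a stationary local functional to which the same sandwich and ergodic theorem apply. The main obstacle I anticipate is the last step above, namely passing from existence of the $R\to\infty$ limit at each fixed $r$ to convergence of $\gamma_{H}(r)$ as $r\to\infty$: one must show that the sandwich error coming from components that cross the boundaries of the mesoscopic balls is lower order even after restricting to a single topological type or tree end, which is where a careful extension of the Nazarov--Sodin counting inequalities in \S\ref{sec:Int Geom sand}--\S\ref{sec:glob upper bound} is needed.
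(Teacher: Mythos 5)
Your key structural insight is exactly the one the paper exploits: both $t(c)$ and $e(c)$ are determined by the restriction of $F$ to any ball containing $c$, so the conditioned counts are local, stationary functionals with finite expectation (Corollary \ref{cor:Kac Rice crit ball sphere}), and the argument reduces to a sandwich plus ergodicity plus a Kac--Rice bound on boundary crossings. The final step --- deducing that $\gamma_{H}(r)$ (i.e.\ $c(H;r)$ in the paper) is Cauchy as $r\to\infty$ because any two values are within $O(1/r)$ of the same random variable --- is also the paper's argument. So the overall route is the intended one.

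However, your sandwich is set up over a discrete maximal $r$-separated net $\{x_{i}\}\subseteq B(R)$, and this creates a genuine gap. The Wiener ergodic theorem quoted in the paper (Theorem \ref{thm:ergodicity}) produces an $L^{1}$ limit for the \emph{continuous} averages $\frac{1}{\vol B(R)}\int_{B(R)}\Phi(\tau_{u}F)\,du$; it does not directly cover sums $\sum_{i}\Phi(\tau_{x_{i}}F)$ over a net that changes with $R$ (and even for a fixed lattice $r\Z^{n}$ one would have to justify ergodicity of the restricted $\Z^{n}$-action, which does not automatically follow from axiom $(\rho1)$). Your covering scales also don't quite cohere: a maximal $r$-separated set makes $B(x_{i},r/2)$ disjoint and $B(x_{i},r)$ a cover, but a component of small diameter contained in the union $\bigcup_{i}B(x_{i},r)$ need not lie inside any single $B(x_{i},r)$, and the balls $B(x_{i},r/2)$ for $x_{i}$ near $\partial B(R)$ stick out of $B(R)$. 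The paper's Lemma \ref{lem:int geom sandwitch euclidian} avoids all of this: for each component $\gamma$ one compares $\vol(B(r))$ to the volumes of $G_{*}(\gamma)=\{u:\gamma\subseteq B(u,r)\}$ and $G^{*}(\gamma)=\{u:\gamma\cap\overline{B(u,r)}\neq\emptyset\}$, sums over $\gamma$ of a fixed type, and applies Fubini, yielding the deterministic two-sided inequality \eqref{eq:geom int scal invar comp} with a \emph{continuous} translation average on each side --- exactly the object the Wiener theorem handles. If you replace your discrete net by this continuous sandwich, the rest of your outline (bounding $\nod_{\Cc}^{*}-\nod_{\Cc}$ by $\widetilde{\Ac}(\tau_{u}F;r)$ via the critical-point count on spheres, then letting $r\to\infty$) closes correctly.
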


The statement \eqref{eq:cnt scal invar lim} is to say that, as $R\rightarrow\infty$, we have the limits
\begin{equation*}
\frac{\nod_{\cdot}(F,\cdot;R)}{\vol(B(R))}\rightarrow c_{\cdot;F}(\cdot)
\end{equation*}
in $L^{1}$. Using the same methods as in the present manuscript (and ~\cite{So})
it is possible to prove that these limits also valid a.s.; however, we were not able to infer the analogues of the latter statement
for the Riemannian case \eqref{eq:f = sum cj phij}.
The rest of the present section is dedicated to the proof of Theorem \ref{thm:scal invar count}, eventually given in
\S\ref{sec:scal invar exist proof}.

\subsection{Integral-Geometric sandwiches}

\label{sec:Int Geom sand}

Let $\tau_{u}$ be
the translation operator $$\tau_{u}:\R^{n}\rightarrow\R^{n}$$
acting on (random) functions, by $(\tau_{u}F)(x)=F(x-u)$.
More precisely, we consider a Gaussian random field $F:\R^{n}\rightarrow\R^{n}$ as a probability space
$(\Delta=C(\R^{n}),\mathscr{A},\prob)$, where $\mathscr{A}$ is the $\sigma$-algebra
generated by the cylinder sets of the form $$ \{f\in \Delta :\: f(x_{j})\in A_{j}:\: j=1,\ldots k \}$$ with some
$x_{j}\in \R^{n}$, and intervals $A_{j}\subseteq \R$, $j=1,\ldots, k$,
and $\prob$ the corresponding Gaussian measure, as prescribed by Kolmogorov's Theorem. Under axiom $(\rho 2)$, $\prob$
is supported on the smooth functions (e.g. $C^{1}(\R^{n})$).

In this section we reduce the various nodal counts into a purely ergodic
question; the latter is addressed using the following result
(after Wiener, Grenander-Fomin-Maruyama, see ~\cite[Theorem 3]{So}
and references therein):

\begin{theorem}
\label{thm:ergodicity}

\begin{enumerate}

\item Let $F$ be a random stationary Gaussian field with spectral measure $d\rho$. Then
if $d\rho$ contains no atoms, the action of the translations group
$$(\tau_{u}F)(x) = F(x-u)$$ is ergodic (``$F$ is ergodic").

\item Suppose that $F$ is ergodic, and the translation map $\R^{n}\times C(\R^{n})\rightarrow C(\R^{n})$
\begin{equation}
\label{eq:translation 2var}
(u,F)\mapsto \tau_{u}F
\end{equation}
is measurable w.r.t. the product $\sigma$-algebra $\mathscr{B}(\R^{n})\times \mathscr{A}$ and $\mathscr{A}$.
Then every random variable $\Phi(F)$ with finite expectation $\E[|\Phi(F)|]<\infty$
satisfies
\begin{equation*}
\lim\limits_{R\rightarrow\infty}\frac{1}{\vol(B(R))}\int\limits_{B(R)}\Phi(\tau_{u}F)du \rightarrow \E[\Phi(F)],
\end{equation*}
convergence a.s. and in $L^{1}$.

\end{enumerate}

\end{theorem}

To reduce the nodal counting questions into
an ergodic question we formulate the Integral-Geometric Sandwich below. To present it
it we need the following notation first.

\begin{notation}
\label{not:counting vars intsect}

\begin{enumerate}

\item For $u\in \R^{n}$, $\Gamma$ a smooth hypersurface
let $$\nod_{\cdot}(\Gamma,\cdot;R,u):=\nod_{\cdot}(\tau_{u}\Gamma,\cdot;R),$$ i.e.
the centred ball $B(R)$ in Notation \ref{not:counting vars} is replaced by
$B_{u}(R)$, and use the shortcut $$\nod_{\cdot}(F,\cdot;R,u):=\nod_{\cdot}(F^{-1}(0),\cdot;R,u).$$

\item For each of the (random or deterministic) variables $\nod_{\cdot}(\cdots)$
already defined, $\nod_{\cdot}^{*}(\cdots)$ is defined along the same lines with ``lying
entirely in $B(R)$" replaced by ``intersects $B(R)$".

\end{enumerate}

\end{notation}

\begin{remark}
Our approach will eventually show that the difference between
$$\nod_{\cdot}(F,\cdot;R,u)$$ and $$\nod_{\cdot}^{*}(F,\cdot;R,u)$$ is typically {\em negligible}
(see the proof of Theorem \ref{thm:scal invar count} below).
It will imply ``semi-locality", i.e. that ``most" of the nodal components (resp.
tree ends) of $F$ are contained in balls of sufficiently big radius $R\gg 1$;
bearing in mind the natural scaling these correspond to
nodal components (resp. tree ends) of the band-limited functions
$f$ in \eqref{eq:f = sum cj phij} lying in radius-$R/T$ geodesic balls on $\M$
(see also Lemma \ref{lem:E[ND-long/L^{n}]->0}).
\end{remark}

\begin{lemma}[cf. {\cite[Lemma $1$]{So}}]
\label{lem:int geom sandwitch euclidian}
Let $\Gamma$ be a (deterministic) closed hypersurface. Then for $0<r<R$,
$H\in H(n-1)$, $G\in \Tc$, one has
\begin{equation}
\label{eq:geom int scal invar}
\begin{split}
&\frac{1}{\vol (B(r))}\int\limits_{B(R-r)} \nod_{\cdot}(\Gamma,\cdot; r,u)du
\le \nod_{\cdot}(\Gamma,\cdot;R) \\&\le \frac{1}{\vol (B(r))}\int\limits_{B(R+r)} \nod_{\cdot}^{*}(\Gamma,\cdot; r,u)du
\end{split}
\end{equation}
with $\nod_{\cdot}(\Gamma,\cdot;R)$ standing for $\nod_{\Cc}(\Gamma,H;R)$
(resp. $\nod_{X}(\Gamma,G;R)$),
and similarly for the other counts involved in \eqref{eq:geom int scal invar}.
That is, \eqref{eq:geom int scal invar} asserts $2$ different sandwich inequalities corresponding
to $\nod_{\Cc,X}(\cdots)$ respectively.
\end{lemma}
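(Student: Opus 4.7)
The plan is to follow Nazarov--Sodin's original integral-geometric sandwich (\cite[Lemma~$1$]{So}) essentially verbatim, the only new feature being that we carry along a ``label'' which is intrinsic to each component $c \in \Cc(\Gamma)$: either the topological type $t(c) \in H(n-1)$ or the rooted tree end $e(c) \in \Tc$. Since $t(c)$ and $e(c)$ are both read off from $\Gamma$ globally (and $e(c)$ from the full tree $X(\Gamma)$), and since translations act equivariantly on both, $\nod_{X}(\tau_{u}\Gamma, G; r)$ simply counts those $c \in \Cc(\Gamma)$ with $c \subseteq B_{u}(r)$ and $e(c) \cong G$, and similarly for the topological variant. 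This allows me to prove both sandwich inequalities in parallel, writing ``\emph{marked}'' to mean either ``of type $H$'' or ``with $e(c) \cong G$''.

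For the lower bound I would exchange sum and integral, writing
\begin{equation*}
\int_{B(R-r)} \nod_{\cdot}(\Gamma, \cdot; r, u)\, du = \sum_{c \text{ marked}} \vol\bigl(\{u \in B(R-r) : c \subseteq B_{u}(r)\}\bigr),
\end{equation*}
and then use two elementary facts: any $u$ in the indicator set forces $c \subseteq B_{u}(r) \subseteq B(R)$, so $c$ is in fact counted by $\nod_{\cdot}(\Gamma, \cdot; R)$; and, fixing any $x_{0} \in c$, the inclusion $c \subseteq B_{u}(r)$ forces $u \in B_{x_{0}}(r)$, so the indicator set has volume at most $\vol(B(r))$. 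Summing over marked $c \subseteq B(R)$ delivers the left-hand inequality. The upper bound is then the dual: to each marked $c \subseteq B(R)$ and each $x_{0} \in c$, the ball $B_{x_{0}}(r)$ lies in $B(R+r)$ (since $|x_{0}| \le R$) and consists entirely of centers $u$ with $x_{0} \in B_{u}(r)$, so $c$ is counted by $\nod_{\cdot}^{*}(\Gamma, \cdot; r, u)$ for every such $u$, contributing at least $\vol(B(r))$ to $\int_{B(R+r)} \nod_{\cdot}^{*}(\Gamma, \cdot; r, u)\, du$. Summing this contribution over all marked $c \subseteq B(R)$ yields the right-hand inequality.

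The main (and essentially only) thing to verify is the translation-equivariance and ``ball-independence'' of the marking, both of which are immediate from the definitions; in particular no topological input such as Jordan--Brouwer separation is needed, precisely because we never truncate $\Gamma$ to a ball and rebuild its nesting tree. I anticipate no serious obstacle at this stage: the real work of Theorem \ref{thm:scal invar count} will arise later, when this purely deterministic sandwich is coupled with the ergodicity statement of Theorem \ref{thm:ergodicity} to extract the $L^{1}$-limits \eqref{eq:cnt scal invar lim}, and when one must show that the difference between $\nod_{\cdot}$ and $\nod_{\cdot}^{*}$ is negligible on average.
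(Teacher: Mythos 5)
Your proof is correct and follows essentially the same approach as the paper's: both rest on the observation that for any point $v$ on a component $\gamma$, the set $\{u:\gamma\subseteq B_u(r)\}$ is contained in $B_v(r)$, which in turn is contained in $\{u:\gamma\cap\overline{B_u(r)}\neq\emptyset\}$, and then exchange sum and integral (the paper records this as $G_*(\gamma)\subseteq B_v(r)\subseteq G^*(\gamma)$). Your remark that the marking (topological type $t(c)$ or tree end $e(c)$) is an intrinsic, translation-equivariant label on each component of the full hypersurface $\Gamma$ correctly pinpoints why Nazarov--Sodin's unlabeled argument transfers verbatim to the labeled counts.
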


\begin{proof}

We are only going to prove the sandwich inequality corresponding to $\nod_{\Cc}$, namely that
for $H \in H(n-1) $
\begin{equation}
\label{eq:geom int scal invar comp}
\begin{split}
&\frac{1}{\vol (B(r))}\int\limits_{B(R-r)} \nod_{\Cc}(\Gamma,H;r,u)du
\le \nod_{\Cc}(\Gamma,H;R) \\&\le \frac{1}{\vol (B(r))}\int\limits_{B(R+r)} \nod_{\Cc}^{*}(\Gamma,H;r,u)du,
\end{split}
\end{equation}
with the inequality for $\nod_{X}$ proven along similar lines.
Let $\gamma\subseteq B(R)$ be a connected component of $\Gamma$. Put
\begin{equation*}
G_{*}(\gamma) = \bigcap\limits_{v\in\gamma} B(v,r) = \{u:\gamma\subseteq B(u,r) \}
\end{equation*}
and
\begin{equation*}
G^{*}(\gamma) = \bigcup\limits_{v\in\gamma} \overline{B}(v,r) =
\{u:\gamma \cap \overline{B(u,r)}\ne \emptyset \}.
\end{equation*}
We have for every $v\in \gamma$, $$G_{*}(\gamma) \subseteq B_{v}(r) \subseteq G^{*}(\gamma),$$ and thus, in particular,
\begin{equation}
\label{eq:G*<=B(r)<=G*}
\vol(G_{*}(\gamma)) \le \vol(B(r)) \le \vol(G^{*}(\gamma)).
\end{equation}
Summing up \eqref{eq:G*<=B(r)<=G*} for all components $\gamma\subseteq B(R)$ diffeomorphic to $H$, we obtain
\begin{equation}
\label{eq:sum vol*bi(g)}
\begin{split}
\sum\limits_{\substack{\gamma\subseteq B(R)\\ \gamma\cong H}}\vol(G_{*}(\gamma))  &\le
\vol(B(r))\cdot \nod_{\Cc}(\Gamma,H;R) \le
\sum\limits_{\substack{\gamma\subseteq B(R)\\ \gamma\cong H}} \vol(G^{*}(\gamma)) .
\end{split}
\end{equation}
Exchanging the order of summation and the integration
\begin{equation*}
\vol(G_{*}(\gamma)) = \int\limits_{G_{*}(\gamma)}du,
\end{equation*}
we obtain
\begin{equation}
\label{eq:sum vol G_*(g)*bi(g)lower}
\begin{split}
\sum\limits_{\substack{\gamma\subseteq B(R)\\ \gamma\cong H}}\vol(G_{*}(\gamma)) &\ge
\int\limits_{B(R-r)}\left[ \sum\limits_{\substack{\gamma: \gamma\subseteq B_{u}(r)\\ \gamma\cong H}} 1 \right] du
\\&= \int\limits_{B(R-r)} \nod_{\Cc}(u,r;H,\Gamma)  du,
\end{split}
\end{equation}
since if $u\in B(R-r)$ then $B_{u}(r)\subseteq B(R)$.
Similarly,
\begin{equation}
\label{eq:sum vol G^*(g)*bi(g)upper}
\begin{split}
\sum\limits_{_{\substack{\gamma\subseteq B(R)\\ \gamma\cong S}}} \vol(G^{*}(\gamma))
&\le \int\limits_{B(R+r)}\left[ \sum\limits_{\substack{\gamma: \gamma\cap B_{u}(r) \ne \emptyset \\ \gamma\cong H}}1 \right]du
\\&= \int\limits_{B(R+r)}\nod_{\Cc}^{*}(\Gamma,H;r,u)du,
\end{split}
\end{equation}
since if $\gamma\subseteq B(R)$ and for some $u$, $$B_{u}(r)\cap \gamma \ne \emptyset,$$ then
necessarily $u\in B(R+r)$.
The statement \eqref{eq:geom int scal invar comp} of the present lemma for connected components of $\Gamma$
then follows from substituting \eqref{eq:sum vol G_*(g)*bi(g)lower}
and \eqref{eq:sum vol G^*(g)*bi(g)upper} into \eqref{eq:sum vol*bi(g)}, and dividing by $\vol B(r)$.

\end{proof}

The following lemma is instrumental for application of the ergodic theory (Theorem \ref{thm:ergodicity});
its proof will be given in Appendix \ref{apx:measurability}. Recall that, as in the beginning of section
\ref{sec:Int Geom sand}, we understand a Gaussian random field
$F$ as a probability space $(\Delta,\mathscr{A},\prob)$
consisting of a sample space $\Delta=C(\R^{n})$ of continuous functions $f(x)$, equipped with the $\sigma$-algebra $\mathscr{A}$
and the Gaussian measure $\prob$, same as above.

\begin{lemma}
\label{lem:measurability}
Let $F$ be a random field satisfying the assumptions of Theorem \ref{thm:scal invar count}.

\begin{enumerate}

\item Then for every $r>0$, $H\in H(n-1)$ and $\Tc\in X$ the maps $\nod_{\Cc}(F,H;r)$ and $\nod_{X}(F,G;r)$ are random variables
(i.e. the map $\omega \mapsto \nod_{\cdot}(F,\cdot;r)$ is measurable on the sample space $\Delta$).

\item For almost every sample point $\omega\in\Delta$, $r>0$, $H\in H(n-1)$ and $\Tc\in X$,
the function $$x\mapsto\nod_{\cdot}(F,\cdot;x,r)$$ is locally constant, and, in particular, measurable on a compact domain.

\item For every $r,R>0$, $H\in H(n-1)$ and $\Tc\in X$, the function $$(\omega,x)\mapsto\nod_{\cdot}(F,\cdot;x,r)$$ is measurable
on $\Delta\times B(R)$.

\end{enumerate}

\end{lemma}

\subsection{Proof of Theorem \ref{thm:scal invar count}: existence
of the $L^{1}$-limits for topology and nestings counts via ergodicity}

\label{sec:scal invar exist proof}

\begin{proof}

As before, we are only going to prove the result for $\nod_{\Cc}$, the proofs for $\nod_{X}$ being identical
(just replacing the counting variables below respectively).
Let $H\in H(n-1)$, and fix $\epsilon>0$ small, $r>0$ arbitrary, and apply \eqref{eq:geom int scal invar comp} on $\Gamma=F^{-1}(0)$:
\begin{equation*}
\begin{split}
&\left( 1-\frac{r}{R}\right)^{n}\frac{1}{\vol B(R-r)}\int\limits_{B(R-r)} \frac{\nod_{\Cc}(F,H;r,u)}{\vol (B(r))}du
\le \frac{\nod_{\Cc}(F,H;R)}{\vol (B(R))}
\\&\le \left( 1 + \frac{r}{R}\right)^{n}\cdot \frac{1}{\vol B(R+r)} \int\limits_{B(R+r)}
\frac{\nod_{\Cc}^{*}(F,H;r,u)}{\vol (B(r))}du
\\&\le \left( 1 + \frac{r}{R}\right)^{n}\frac{1}{\vol B(R+r)}\int\limits_{B(R+r)} \frac{\nod(F,H;r,u) +
\mathcal{I}(\tau_{u}F;r)}{\vol (B(r))}du,
\end{split}
\end{equation*}
where $\mathcal{I}(\tau_{u}F;r)$ is the total number of components $c$ of $F^{-1}(0)$ intersecting $\partial B_{u}(r)$
(of any topological class), bounded
$$\mathcal{I}(F;r,u) \le \widetilde{\Ac}(\tau_{u}F;r)$$
by the number $\widetilde{\Ac}(\tau_{u}F;r) $ of critical points of $F|_{\partial B_{u}(r)}$
(see Corollary \ref{cor:Kac Rice crit ball sphere} and the remark following it immediately),
and $$\vol(B(R\pm r)) = \vol(B(R))\cdot \left( 1\pm\frac{r}{R} \right)^{n}.$$ Recall the definition
$$\nod_{\Cc}(F,H;r,u) = \nod (\tau_{u} F,H;r),$$ where $\tau_{u}$ is the translation by $u$,
and control $r$ so that $\frac{r}{R}<\epsilon$:
\begin{equation}
\label{eq:sandwitch for N(R;S,F)}
\begin{split}
&\left( 1-\epsilon\right)\frac{1}{\vol B(R-r)}\int\limits_{B(R-r)} \frac{\nod_{\Cc}(\tau_{u}F,H;r)}{\vol (B(r))}du
\le \frac{\nod_{\Cc}(F,H;R)}{\vol (B(R))}
\\&\le \left( 1 + \epsilon\right)\frac{1}{\vol B(R+r)}\int\limits_{B(R+r)} \frac{\nod_{\Cc}(\tau_{u}F,H;r)+
\widetilde{\Ac}(\tau_{u}F;r)}{\vol (B(r))}du.
\end{split}
\end{equation}

Note that, by Corollary \ref{cor:Kac Rice crit ball sphere},
for every $r>0$, $H\in H(n-1)$, the functional
$$F\mapsto \Phi_{H;r}(F):=\frac{\nod_{\Cc} (F,H;r)}{\vol (B(r))}$$
is measurable and is of finite, uniformly bounded, expectation (i.e. $L^{1}$), and hence, by the
stationarity of $F$, so are its translations. Moreover,
the translation map \eqref{eq:translation 2var} is continuous w.r.t. the relevant $\sigma$-algebras
as in part (2) of Theorem \ref{thm:ergodicity}.
It then follows from Theorem \ref{thm:ergodicity} that both $$\frac{1}{\vol B(R+r)}\int\limits_{B(R+r)}
\frac{\nod_{\Cc} (\tau_{u}F,H;r)}{\vol (B(r))}du$$
and $$\frac{1}{\vol B(R-r)}\int\limits_{B(R-r)} \frac{\nod_{\Cc}(\tau_{u}F,H;r)}{\vol (B(r))}du$$ converge
to (the same) limit in $L^1$
$$\frac{1}{\vol B(R)}\int\limits_{B(R)} \frac{\nod_{\Cc}(\tau_{u}F,H;r)}{\vol (B(r))}du \rightarrow c(H;r):=\E[\Phi_{H;r}].$$

Observe that, if we get rid of $\widetilde{\Ac}(\tau_{u}F;r)$ from
the rhs of \ref{eq:sandwitch for N(R;S,F)}, then, up to $\pm \epsilon$, both the lhs and the rhs of
\ref{eq:sandwitch for N(R;S,F)} converge in $L^{1}$ to the same limit $c(H;r)$.
We will be able to get rid of $\widetilde{\Ac}(\tau_{u}F;r)$ for $r$ large; it
will yield that as $r\rightarrow\infty$, we have the limit $$c(H;r)\rightarrow c(H),$$
where the latter constant is the same as $$c_{\Cc;F}(H):=c(H),$$ prescribed
by Theorem \ref{thm:scal invar count}.
To justify the latter we use the same ergodic argument on $$F\mapsto\frac{\widetilde{\Ac}(\tau_{u}F;r)}{\vol (B(r))}:$$
Theorem \ref{thm:ergodicity} yields the $L^{1}$ limit
$$\frac{1}{\vol B(R+r)}\int\limits_{B(R+r)}\frac{\widetilde{\Ac}(\tau_{u}F;r)}{\vol (B(r))}\rightarrow a_{r}$$
as $R\rightarrow\infty$, with
$$a_{r} = \E\left[\frac{\widetilde{\Ac}(\tau_{u}F;r)}{\vol (B(r))}\right] = O\left(\frac{1}{r}\right),$$ by
Corollary \ref{cor:Kac Rice crit ball sphere}. Hence \eqref{eq:sandwitch for N(R;S,F)} implies
\begin{equation*}
\E \left[\left|\frac{\nod_{\Cc}(F,H;R)}{\vol (B(R))} -  c(H;r)\right|\right] = O\left(\epsilon+\frac{1}{r}\right);
\end{equation*}
the latter certainly implies the existence of the $L^{1}$-limits $$c(H)=\lim\limits_{r\rightarrow\infty}c(H;r),$$
the $L^{1}$-convergence $$\frac{\nod_{\Cc}(F,H;R)}{\vol (B(R))}\rightarrow c(H)$$ claimed by Theorem \ref{thm:scal invar count}.

\end{proof}

\section{Topology and nesting not leaking}

\label{sec:top nest not leak}

\subsection{Topology and nesting measures}


Let $F:\R^{n}\rightarrow \R$
be a stationary Gaussian random field, and $R>0$ a big parameter.
We may define the analogous measures to \eqref{eq:muCf def}
and \eqref{eq:muXf def} for $F$ and express them in terms of the counting numbers
$\nod_{\cdot}(F,\cdot;R)$ in Notation \ref{not:counting vars}:
\begin{equation}
\label{eq:muC scal invar def}
\mu_{\Cc(F);R} = \frac{1}{|\Cc(F;R) |} \sum\limits_{c\in \Cc(F;R)}\delta_{t(c)} =
\frac{1}{|\Cc(F;R) |} \sum\limits_{H\in H(n-1)}\nod_{\Cc}(F,H;R)\cdot \delta_{H}
\end{equation}
on $H(n-1)$, and
\begin{equation}
\label{eq:muX scal invar def}
\mu_{X(F);R} = \frac{1}{|\Cc(F;R) |} \sum\limits_{c\in \Cc(F;R)}\delta_{e(c)} =
\frac{1}{|\Cc(F;R) |} \sum\limits_{G\in\Tc} \nod_{X}(F,G;R)\cdot \delta_{G}
\end{equation}
on $\Tc$.

Theorem \ref{thm:scal invar no leak} below first restates Theorem \ref{thm:scal invar count} in terms of convergence of probability measures \eqref{eq:muC scal invar def} and \eqref{eq:muX scal invar def}, and then asserts that there is no mass escape to infinity so that the limiting measures are probability measures.

\begin{notation}
\label{not:scal invar lim top nest meas}

For a Gaussian field $F$ satisfying the assumptions of Theorem \ref{thm:scal invar count}, given
$H\in H(n-1)$, $G\in \Tc$ the latter
theorem yields constants $c_{\cdot;F}(\cdot)$ satisfying \eqref{eq:cnt scal invar lim}.
We may define the measure (cf. \eqref{eq:NS scal invar}):

\begin{equation}
\label{eq:mu C n alpha def scal invar}
\mu_{\Cc(F)} = \frac{(2\pi)^{n}}{\beta_{n,\alpha}\omega_{n}}
\sum\limits_{H\in H(n-1)}c_{\Cc;F}(H)\cdot \delta_{H},
\end{equation}
and similarly
\begin{equation*}
\mu_{X(F)} = \frac{(2\pi)^{n}}{\beta_{n,\alpha}\omega_{n}}
\sum\limits_{G\in \Tc}c_{X;F}(G)\cdot \delta_{G}.
\end{equation*}

\end{notation}

\begin{theorem}
\label{thm:scal invar no leak}

Let $F:\R^{n}\rightarrow\R$ be a stationary Gaussian field whose spectral density $\rho$
satisfies the axioms $(\rho 1)-(\rho 3)$ in Definition \ref{def:rho1-4 axioms}.

\begin{enumerate}

\item
\label{it:conv prob}
For every $H\in H(n-1)$, $G\in \Tc$, and $\epsilon>0$,
\begin{equation}
\label{eq:cnt lim}
\begin{split}
\prob \big\{ \max \big(&\left|\mu_{\Cc(F);R}(H)-\mu_{\Cc(F)}(H) \right|,
\\&\left|\mu_{X(F);R}(G)-\mu_{X(F)}(G) \right|
\big) > \epsilon   \big\} \rightarrow 0
\end{split}
\end{equation}
as $R\rightarrow \infty$.

\item
\label{it:top not leaking}

The limiting topology measure $\mu_{\Cc(F)}$ is a probability measure.

\item

\label{it:nest not leaking}
The limiting nesting measure $\mu_{X(F)}$ is a probability measure.

\end{enumerate}

\end{theorem}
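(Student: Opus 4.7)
Part \eqref{it:conv prob} is a formal consequence of Theorem \ref{thm:scal invar count} combined with the Nazarov--Sodin count \eqref{eq:NS L1 conv scal invar}. Writing
$$\mu_{\Cc(F);R}(H) = \frac{\nod_{\Cc}(F,H;R)/\vol(B(R))}{|\Cc(F;R)|/\vol(B(R))},$$
the numerator converges in $L^1$ (hence in probability) to $c_{\Cc;F}(H)$ by Theorem \ref{thm:scal invar count}, and the denominator converges in $L^1$ to the strictly positive constant $\beta_{n,\alpha}\omega_n/(2\pi)^n$ by \eqref{eq:NS L1 conv scal invar}; Slutsky's lemma then gives convergence of the ratio in probability to $\mu_{\Cc(F)}(H)$, and an identical argument handles $\mu_{X(F);R}(G)$. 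A union bound over the two events yields the joint statement \eqref{eq:cnt lim}. The substantive content is in parts \eqref{it:top not leaking} and \eqref{it:nest not leaking}: applying Fatou to \eqref{eq:NS L1 conv scal invar} already gives $\sum_{H} c_{\Cc;F}(H) \le \beta_{n,\alpha}\omega_n/(2\pi)^n$, so equality (which is exactly the claim that $\mu_{\Cc(F)}$ is a probability measure) is equivalent to tightness: for each $\epsilon > 0$ a \emph{finite} $\mathcal{F}_\epsilon \subset H(n-1)$ with
$$\limsup_{R\to\infty}\vol(B(R))^{-1}\cdot\E\bigl[\#\{c\subset B(R):\, t(c)\notin \mathcal{F}_\epsilon\}\bigr]\le\epsilon,$$
and similarly a finite $\mathcal{G}_\epsilon\subset\Tc$ for the tree side.

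For the topology part \eqref{it:top not leaking}, I follow the strategy announced in \S\ref{sec:muC esc outline}. Call a component $c\subset B(R)$ \emph{tame} if its diameter is at most $D_0$, its $(n-1)$-volume is at most $V_0$, and there is a tubular neighbourhood of $c$ on which $|\nabla F|\ge\delta$ and $\|\nabla^2 F\|\le M_0$. Applying Kac--Rice (Lemma \ref{lem:Kac-Rice upper bnd gen} and Corollary \ref{cor:Kac Rice crit ball sphere}) to $F$ itself and to the auxiliary fields $\nabla F$ and $(F,\nabla F)$, the expected density of components violating any one of the diameter, volume, Hessian, or positivity-of-gradient conditions tends to $0$ as $D_0,V_0,M_0\to\infty$ and $\delta\to 0^+$; the last of these (the density of components meeting the sub-level set $\{|\nabla F|<\delta\}$) is supplied by Proposition \ref{prop:grad bounded away}. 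On a tame $c$ the implicit function theorem gives a pointwise bound $\|\mathrm{II}_c\|\le M_0/\delta$ on its second fundamental form, and Cheeger's finiteness theorem, packaged in Lemma \ref{lem:Cheeger}, then asserts that manifolds of dimension $n-1$ with uniformly bounded diameter, volume, and second fundamental form fall into only finitely many diffeomorphism classes; that finite set serves as $\mathcal{F}_\epsilon$.

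For the nesting part \eqref{it:nest not leaking}, only finitely many rooted trees have at most $N$ vertices (Catalan bound), so it suffices to show the density of $c\subset B(R)$ with $|e(c)|>N$ tends to $0$ as $N\to\infty$. The same tameness machinery, applied to nodal \emph{domains} $\omega\in\Omega(F)$ in place of components $c$, gives uniform control on the diameter and volume of a density-$1$ fraction of domains. Combined with a volume-packing argument and the overall bound $|\Omega(F)\cap B(R)|=O(\vol(B(R)))$ from \eqref{eq:NS L1 conv scal invar}, this forces $|e(c)|$ to be uniformly bounded outside a set of $c$'s of vanishing density, which produces the required $\mathcal{G}_\epsilon$. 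The main obstacle throughout both parts is step (iii): converting a Kac--Rice volume estimate for $\{F=0,\,|\nabla F|<\delta\}$ into a bound on the \emph{number of nodal components} meeting that set, as opposed to its Hausdorff measure; this is precisely where Proposition \ref{prop:grad bounded away} does the heavy lifting, and it is the technical crux of \S\ref{sec:top nest not leak}.
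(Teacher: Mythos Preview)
Your outline for part \eqref{it:conv prob} matches the paper exactly (Chebyshev on the $L^1$ convergences of Theorem \ref{thm:scal invar count} and \eqref{eq:NS L1 conv scal invar}, which is your Slutsky argument rephrased), and for part \eqref{it:top not leaking} you correctly single out Proposition \ref{prop:grad bounded away} and Lemma \ref{lem:Cheeger} as the two load-bearing ingredients. There is, however, one structural step you skip that the paper needs. Before any of the $C^2$/volume/gradient control, the paper applies the Integral-Geometric sandwich (Lemma \ref{lem:int geom sandwitch euclidian}) to transfer the tightness estimate from $B(R)$, $R\to\infty$, to a \emph{fixed} ball $B(r)$; only then does $\E[\|F\|_{C^2(B(r))}]<\infty$ give, via Chebyshev, a Hessian bound holding \emph{everywhere} on $B(r)$ off a small event, so that Lemma \ref{lem:Cheeger} applies to every remaining component simultaneously. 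Your phrasing ``expected density of components violating the Hessian condition tends to $0$'' on $B(R)$ is not directly available: arbitrarily many nodal components can thread through a single small region where $\|\nabla^2 F\|$ spikes, so a volume bound on the bad set does not translate to a component count without extra work. The sandwich reduction is precisely what sidesteps this.

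For part \eqref{it:nest not leaking} the paper does \emph{not} reuse the gradient/Hessian tameness machinery on nodal domains as you propose; it uses the separate Lemma \ref{lem:xi-small bound}, which bounds $\E[\nod_{\xi\text{-sm}}(F;r)]\le C_0\xi^{c_0}r^n$ via a Poincar\'e--Morrey inequality (Lemma \ref{eq:Nsm det}). After the same sandwich reduction to fixed $r$, the packing step is that any $c\subset B(r)$ with $|e(c)|\ge M$ forces at least $M/2$ domains of volume $\le 2\vol B(r)/M$ inside $B(r)$, and with $\xi=2\vol B(r)/M$ this controls $\nod_X(F,\mathcal{S}_M;r)$ by $\nod_{\xi\text{-sm}}(F;r)$. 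So your architecture is right, but the two missing pieces are the sandwich-to-fixed-$r$ reduction (both parts) and the identification of Lemma \ref{lem:xi-small bound} rather than Proposition \ref{prop:grad bounded away} as the engine for nesting tightness.
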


\begin{proof}[Proof of Theorem \ref{thm:scal invar no leak}, part \eqref{it:conv prob}]

To prove the statement of \eqref{eq:cnt lim} on
$$\left|\mu_{\Cc(F);R}(H)-\mu_{\Cc(F)}(H) \right| $$ we notice that
the $L^{1}$-convergence in \eqref{eq:cnt scal invar lim} implies that for every $\epsilon>0$
\begin{equation*}
\prob \left(  \left| \frac{\nod_{\Cc}(F,H;R)}{\vol(B(R))} - c_{\Cc;F}(H) \right| > \epsilon \right) \rightarrow 0
\end{equation*}
via Chebyshev's inequality. This, together with \eqref{eq:NS scal invar} and the definition
\eqref{eq:mu C n alpha def scal invar} of $\mu_{\Cc(F)}$, finally implies the statement \eqref{eq:cnt lim}
of Theorem \ref{thm:scal invar no leak}, part \eqref{it:conv prob} for $\mu_{\Cc(F);R}$ with
the proof for $\mu_{X(F);R}$ being identical to the above.

\end{proof}

The rest of the present section is dedicated to proving the latter parts of Theorem \ref{thm:scal invar no leak},
namely that there is no escape of topology and nesting to infinity. In fact, in the course of the proof we will gain
more information on the possible geometry of typical nodal components, controlling the geometry in terms of
the gradient; in Appendix \ref{apx:top not leaking} we give a shorter proof, at the expense of using more abstract tools
(such as e.g. Monotone Convergence Theorem), and, consequently, more limited understanding of the geometry of nodal components. Note that
part \eqref{it:top not leaking} is equivalent to
\begin{equation*}
\sum\limits_{H\in H(n-1)}c_{\Cc;F}(H) = \beta_{n,\alpha}\omega_{n}(2\pi)^{-n},
\end{equation*}
and similarly for part \eqref{it:nest not leaking}.

\subsection{Proof of Theorem \ref{thm:scal invar no leak}, part \eqref{it:top not leaking}}

\label{sec:proof top not leak}

Let $F:\R^{n}\rightarrow\R$ be a stationary Gaussian field; from this point and until the end
of this section we will assume that $F$ satisfies the assumptions of Theorem \ref{thm:scal invar no leak},
namely axioms $(\rho 1)-(\rho 3)$.
The following proposition, proved in \S\ref{sec:proof grad bnd away},
states that with high probability the
gradient of $F$ is bounded away from $0$ on most of the nodal components of $F$,
i.e. with high probability $F$ is ``stable" in this sense.

\begin{proposition}[Uniform stability of a smooth Gaussian field]
\label{prop:grad bounded away}
For every $\epsilon>0$ and $\eta>0$ there exists a constant $\beta=\beta(\epsilon,\eta)>0$ so that
for $R$ sufficiently big the probability that $|\nabla F(x)|>\beta$ on all but at most $\eta R^{2}$
components of $F^{-1}(0)$ lying in $B(R)$ is $>1-\epsilon$.
\end{proposition}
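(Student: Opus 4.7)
The plan is to upgrade the statement to a first–moment bound: let $N_\beta(R)$ denote the number of connected components $c$ of $V(F)\cap B(R)$ on which $\min_{x\in c}|\nabla F(x)|\le \beta$. Markov's inequality reduces the proposition to showing $\E[N_\beta(R)]\le C\,\beta^{n} R^{n}$ for some constant $C=C(F)$ independent of $R$ and $\beta$; then choosing $\beta=\beta(\epsilon,\eta)$ small enough (and using that $\eta R^{n}$ dominates $\eta R^{2}$ for large $R$ when $n\ge 2$) yields the conclusion.

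The key reduction is from components to discrete points. By axiom $(\rho 3)$ the joint distribution of $(F(x),\nabla F(x))$ is non-degenerate, so with probability one there is no $x\in B(R)$ with $F(x)=0$ and $\nabla F(x)=0$ (Bulinskaya's lemma). Consequently, on every component $c$ contributing to $N_\beta(R)$ the field $F$ is smooth, $c$ is a smooth compact $(n-1)$-manifold, and the function $|\nabla F|^{2}$ restricted to $c$ attains its minimum in the interior at a point $x_c\in c$. At $x_c$ the tangential gradient of $|\nabla F|^{2}$ vanishes, equivalently $\nabla(|\nabla F|^{2})(x_c)$ is parallel to $\nabla F(x_c)$. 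Define
\begin{equation*}
\Psi(x)=\bigl(F(x),\;\nabla(|\nabla F|^{2})(x)-\lambda(x)\nabla F(x)\bigr)\in\R\times \R^{n},
\end{equation*}
where $\lambda(x)=\langle \nabla(|\nabla F|^{2})(x),\nabla F(x)\rangle/|\nabla F(x)|^{2}$ (well defined on $\{\nabla F\ne 0\}$). This is effectively a map $\R^{n}\to\R^{n}$ whose zero set contains $\{x_c\}_c$, so
\begin{equation*}
N_\beta(R)\le \#\bigl\{x\in B(R):\Psi(x)=0,\;|\nabla F(x)|\le \beta\bigr\}.
\end{equation*}

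Now apply the Kac-Rice formula (Lemma \ref{lem:Kac-Rice precise}/\ref{lem:Kac-Rice upper bnd gen}) to the event $\Psi(x)=0$ with the additional pointwise constraint $|\nabla F(x)|\le\beta$. This expresses $\E[N_\beta(R)]$ as an integral over $B(R)$ of a density which, by stationarity, is independent of $x$ and equals (the joint Gaussian density of $(F,\nabla(|\nabla F|^{2})-\lambda\nabla F)$ at $0$) times the conditional expectation of an appropriate Jacobian, multiplied by $\prob(|\nabla F(0)|\le\beta\mid \Psi(0)=0)$. The axioms $(\rho 2)$ (ensuring $C^{3}$ regularity, hence integrable Jacobian moments) and $(\rho 3)$ (ensuring non-degeneracy of the Gaussian vector $(F,\nabla F,\mathrm{Hess}\,F)$ evaluated at $0$) make the conditional density of $\nabla F$ on the constraint set bounded, so
\begin{equation*}
\prob\bigl(|\nabla F(0)|\le\beta\mid \Psi(0)=0\bigr)\le C'\beta^{n},
\end{equation*}
yielding $\E[N_\beta(R)]\le C\beta^{n}\vol(B(R))$ as needed.

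The main technical obstacle is step two — verifying the full non-degeneracy of the joint Gaussian $(F,\nabla F,\mathrm{Hess}\,F)$ at a point, which is what makes the Kac-Rice density finite and the conditional density of $|\nabla F|$ bounded, and then carrying through the resulting Gaussian integral cleanly. Axiom $(\rho 3)$ (that $\mathrm{supp}\,\rho$ is not contained in a linear hyperplane), applied at the level of second–order jets rather than first, is the ingredient that guarantees this. A minor subsidiary issue is that the vector $\Psi$ is defined only on $\{\nabla F\ne 0\}$; since we have already discarded the measure–zero event $\{\nabla F(x_c)=0\}$ and work at points with $|\nabla F|>0$ anyway, this causes no difficulty and $\lambda$ can be smoothly extended or the argument localized accordingly.
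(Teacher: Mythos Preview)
Your Kac--Rice approach has two genuine gaps. First, the field $\Psi$ is not Gaussian: its second component $2(\mathrm{Hess}\,F)\nabla F-\lambda\nabla F$ is quadratic in the $2$-jet of $F$ (and $\lambda$ is a ratio), so Lemmas~\ref{lem:Kac-Rice precise} and~\ref{lem:Kac-Rice upper bnd gen}, both stated for Gaussian fields, do not apply as cited. The target of $\Psi$ is also not a fixed $\R^n$ but the varying hyperplane $(\nabla F(x))^\perp$, so ``effectively a map $\R^n\to\R^n$'' hides real work. Second, and more decisively, the non-degeneracy of the $2$-jet $(F,\nabla F,\mathrm{Hess}\,F)$ does \emph{not} follow from axiom $(\rho 3)$. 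For the monochromatic field $\gfr_{n,1}$ the spectral measure is supported on the unit sphere, so $\Delta F+F=0$ identically, i.e.\ $\mathrm{tr}(\mathrm{Hess}\,F)=-F$; the $2$-jet is degenerate and any Kac--Rice density built from it may blow up. Since $\alpha=1$ is precisely the case the paper cares most about, this is fatal to the argument as written. Even the final estimate $\prob(|\nabla F|\le\beta\mid\Psi=0)\le C'\beta^n$ is suspect: the conditioning event involves $\nabla F$ nonlinearly, and as $\nabla F\to 0$ the constraint $\Psi_2=0$ becomes automatically satisfied, which may bias the conditional law of $\nabla F$ toward $0$ rather than away from it.

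The paper avoids all of this by working only with the $1$-jet. It covers $B(R)$ by balls of a fixed radius $T$, calls a ball ``unstable'' if it contains a point with both $|F|<\alpha$ and $|\nabla F|<\beta$, and shows (Proposition~\ref{prop:gen stability}) via a sublevel-set volume argument combined with Sobolev bounds that at most $\delta R^n$ balls are unstable with high probability---this uses only the non-degeneracy of $(F,\nabla F)$, which is exactly what $(\rho 3)$ provides. Then (Lemma~\ref{lem:stable grad bnd away}) it argues that a component on which $|\nabla F|$ is small either has large diameter, or is adjacent to a small-volume domain, or lies in an unstable ball; the first two classes are separately shown to be rare (Lemmas~\ref{lem:xi-small bound} and the $D$-long estimate), and the third is controlled by $\delta R^n$ times a volume factor.
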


The following lemma, proved in \S\ref{sec:Cheeger proof},
exploits the ``stability" of a function in the sense of Proposition \ref{prop:grad bounded away}
to yield that in this case the topology of a nodal component is essentially constrained to a finite number of topological
classes.

\begin{lemma}
\label{lem:Cheeger}
Given $\beta>0$, $C<\infty$ and $V<\infty$ there exists a finite subset $$K=K(\beta,C,V,n)\subseteq H(n-1)$$
of $H(n-1)$ with the following property.
Suppose that $G:\R^{n}\rightarrow\R$ is a (deterministic) smooth function, and $\Zf$ is a connected component of $G=0$ which is contained
in a ball $\Bc\subseteq\R^{n}$ and satisfies:
\begin{enumerate}[$(i)$]

\item For all $x\in \Zf$ $$|\nabla G(x)| \ge \beta.$$

\item The volume of $\Zf$ is $\vol_{n-1}(\Zf)\le V.$

\item The $C^{2}$ norm of $G$ on $\Bc$ is bounded $$\|G\|_{C^{2}(\Bc)}\le C.$$

\end{enumerate}
Then $\Zf\in K$.

\end{lemma}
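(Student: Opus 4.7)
The plan is to derive the hypotheses of Cheeger's finiteness theorem for $\Zf$ viewed as an abstract closed Riemannian $(n-1)$-manifold with its induced metric, namely a uniform two-sided bound on sectional curvature, a uniform upper bound on diameter, and a uniform lower bound on volume (equivalently, on injectivity radius). Since Cheeger's theorem asserts that closed $(n{-}1)$-manifolds obeying such bounds realize only finitely many diffeomorphism types, and since $\Zf \subseteq \R^{n}$ embeds into $\Sc^{n}$, the finite list of these types will be the required set $K$.

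First I would use hypothesis $(i)$ to note that $\Zf$ is a smooth embedded closed hypersurface, with outward unit normal $\nu = \nabla G/|\nabla G|$ well-defined on $\Zf$. The second fundamental form then satisfies, for tangent $X,Y$,
\[
II(X,Y) = -\frac{(\operatorname{Hess} G)(X,Y)}{|\nabla G|},
\]
so by $(i)$ and $(iii)$,
\[
\|II\|_{L^{\infty}(\Zf)} \le \frac{\|G\|_{C^{2}(\Bc)}}{\beta} \le \frac{C}{\beta} =: A.
\]
Since the ambient $\R^{n}$ is flat, the Gauss equation yields $|\sec(\Zf)| \le A^{2}$, giving the curvature bound. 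Moreover, the uniform bound $|II| \le A$ gives $\Zf$ a uniform lower bound on its reach in $\R^{n}$ (of order $1/A$, by the standard Federer-type argument), which in turn forces the injectivity radius of $(\Zf, g_{\mathrm{ind}})$ to be bounded below by some $\iota_{0}=\iota_{0}(\beta,C,n)>0$ and the intrinsic volume of any geodesic ball of radius $r\le\iota_{0}$ to be $\ge c_{n} r^{n-1}$. In particular $\operatorname{vol}_{n-1}(\Zf) \ge c_{n}\iota_{0}^{n-1}$, supplying the needed lower volume bound.

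The main obstacle will be the diameter bound, since the ambient ball $\Bc$ is not assumed to have any fixed size so extrinsic diameter is a priori uncontrolled. The plan is to pass to intrinsic diameter via a covering argument: cover $\Zf$ by intrinsic geodesic balls of radius $\iota_{0}/2$. Since such balls are embedded and have intrinsic volume $\ge c_{n}(\iota_{0}/2)^{n-1}$, a maximal collection of disjoint $\iota_{0}/4$-balls has cardinality
\[
N \;\le\; \frac{\operatorname{vol}_{n-1}(\Zf)}{c_{n}(\iota_{0}/4)^{n-1}} \;\le\; \frac{V}{c_{n}(\iota_{0}/4)^{n-1}}
\]
by $(ii)$, and the doubled balls of radius $\iota_{0}/2$ cover $\Zf$. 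Since $\Zf$ is connected, the intrinsic diameter is then bounded by $N\cdot \iota_{0}$, a quantity depending only on $\beta,C,V,n$.

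Putting these three estimates together, the pair $(\Zf, g_{\mathrm{ind}})$ lies in the Cheeger class of closed $(n-1)$-Riemannian manifolds satisfying
\[
|\sec|\le A^{2}, \qquad \operatorname{diam} \le D(\beta,C,V,n), \qquad \operatorname{vol} \ge v_{0}(\beta,C,n),
\]
which by Cheeger's finiteness theorem has only finitely many diffeomorphism types. Taking $K$ to be the (finite) image of this class in $H(n-1)$ under the embedding $\Zf \hookrightarrow \R^{n}\hookrightarrow \Sc^{n}$ completes the proof. The only genuinely delicate step is extracting the reach/injectivity bound from the pointwise bound on $II$; for this I would cite standard facts about hypersurfaces of bounded second fundamental form (e.g. Federer's reach estimate together with the tubular neighborhood theorem), rather than reprove them.
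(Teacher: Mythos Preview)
Your overall strategy is the same as the paper's: verify the three hypotheses of Cheeger's finiteness theorem (two-sided sectional curvature bound, upper diameter bound, lower volume bound) for $\Zf$ with its induced metric, and then invoke the theorem. Your curvature bound via the Gauss equation and your packing/covering argument for the diameter are clean and correct, and they match the paper's argument (which uses explicit curvature formulas in terms of $G$ and the same covering idea).

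The one step that is not right as stated is the reach claim. It is \emph{not} a standard fact that $\lvert II\rvert\le A$ alone forces $\operatorname{reach}(\Zf)\ge c/A$: for a compact hypersurface the reach is the minimum of the focal radius (which is indeed $\ge 1/A$) and a global self-distance term, and the latter is not controlled by pointwise second fundamental form. What you actually need, and what \emph{is} true, is a lower bound on the \emph{intrinsic} injectivity radius. One clean route: any closed geodesic $\gamma$ on $\Zf$ is a closed curve in $\R^{n}$ with Euclidean curvature $\lvert\gamma''\rvert=\lvert II(\gamma',\gamma')\rvert\le A$, so by Fenchel's inequality its length is $\ge 2\pi/A$; combined with $\lvert\sec\rvert\le A^{2}$ and Klingenberg's lemma this gives $\operatorname{inj}(\Zf)\ge \pi/A$, and the volume lower bound follows. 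Alternatively, the paper sidesteps both reach and injectivity radius by showing directly that near each point $\Zf$ is a graph over its tangent hyperplane on a ball of radius $\gamma=\gamma(\beta,C)$, with induced metric $(1+\eta)$-quasi-isometric to Euclidean; this immediately gives $\vol_{n-1}(\Zf)\ge c_{n-1}\gamma^{n-1}$ and feeds into the same covering bound for the diameter that you use.
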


\begin{proof}[Proof of Theorem \ref{thm:scal invar no leak}, part \eqref{it:top not leaking} assuming Proposition
\ref{prop:grad bounded away} and Lemma \ref{lem:Cheeger}]

In order to prove that there is no escape of probability we will prove that there exist $\beta,V>0$ and $C>0$ as in Lemma \ref{lem:Cheeger},
so that the expected number of components of $F$ that do not satisfy the conditions of Lemma \ref{lem:Cheeger} on a fixed Euclidian ball
is arbitrarily small. To make this precise, for a collection $A\subseteq H(n-1)$ of topology classes we define $\nod_{\Cc}(F,A;R)$
to be the number of nodal components $c\in \Cc(F)$ of $F$ lying entirely in $B(R)$ of topology class lying in $A$;
in particular for $H\in H(n-1)$ we have $$\nod_{\Cc}(F,H;R):=\nod_{\Cc}(F,\{H\};R).$$
For the limiting measure $\mu_{\Cc(F)}$ to be a probability measure it is sufficient to prove tightness:
for every $\delta>0$ there exists a {\em finite} $$A_{0}=A_{0}(\delta)\subseteq H(n-1)$$ so that
\begin{equation}
\label{eq:E[A]<epsilon}
\E[\nod_{\Cc}(F,H(n-1)\setminus A_{0};R)] < \delta\cdot R^{n};
\end{equation}
$A_{0}$ will be chosen as $A_{0}=K(\beta,C,V,n)$ with some $(\beta,C,V)=(\beta,C,V)(\delta)$ to be determined.

Now let $A\subseteq H(n-1)$ be arbitrary.
We are going to invoke the Integral-Geometric Sandwich \eqref{eq:geom int scal invar} of Lemma \ref{lem:int geom sandwitch euclidian}
again; to this end we also define $$\nod_{\Cc}^{*}(F,A;R)$$ to be the number of those components $c\in \Cc(F)$ of $F^{-1}$
of topological class in $A$ merely {\em intersecting} $B(R)$, and
$$\nod_{\Cc}^{*}(F,A;r,u):=\nod_{X}^{*}(F(\cdot+u),A;r) $$ is the number
of components as above intersecting in a $u$-centred radius-$r$ ball.
Summing up the rhs of \eqref{eq:geom int scal invar} for all $H\in A$ in this setting, we have
for every $$0<r<R$$ the upper bound
\begin{equation}
\label{eq:geom int sand Cc upper*}
\begin{split}
\nod_{\Cc}(F,A;R) &\le \frac{1}{\vol(B(r))}\int\limits_{B(R+r)}\nod_{\Cc}^{*}(F,A;r,u)du \\&\le
\frac{1}{\vol(B(r))}\int\limits_{B(R+r)}\left(\nod_{\Cc}(F,A;r,u)+\widetilde{\Ac}(\tau_{u}F;r)\right)du
\end{split}
\end{equation}
(see Corollary \ref{cor:Kac Rice crit ball sphere}).

Now we take expectation of both sides of \eqref{eq:geom int sand Cc upper*}.
Since by Corollary \ref{cor:Kac Rice crit ball sphere} and the stationarity of $F$, uniformly $$\E[\widetilde{\Ac}(\tau_{u}F;r)]=O_{r\rightarrow\infty}(r^{n-1})$$
with the constant involved in the `O`-notation depending only on $F$, given $\delta>0$,
we can choose a sufficiently big parameter $r>r_{0}(\delta)$ so that after taking the expectation \eqref{eq:geom int sand Cc upper*} is
\begin{equation}
\label{eq:geom int sand Cc upper}
\E[\nod_{\Cc}(F,A;R)] \le \left(\frac{R}{r}+1\right)^{n}\cdot \E[\nod_{\Cc}(F,A;r)] + \frac{\delta}{2}\cdot R^{n}.
\end{equation}
From \eqref{eq:geom int sand Cc upper} it is clear that in order to prove the tightness
\eqref{eq:E[A]<epsilon} it is sufficient to find a finite $A_{0}\subseteq H(n-1)$
so that $$\E[\nod_{\Cc}(F,H(n-1)\setminus A_{0};r)]<\frac{\delta}{4}r^{n}$$ is arbitrarily small;
the upshot is that $r$ is {\em fixed} (though arbitrarily big).

Take $\epsilon=\epsilon(\delta)>0$ a parameter to be chosen later.
We will now define $A_{0}$ to be of the form $$A_{0}=K(\beta,C,V,n),$$ as in Lemma \ref{lem:Cheeger} applied on $\Bc=B(r)$,
with $(\beta,C,V)$ chosen as follows:
\begin{enumerate}

\item Assuming that $r>r_{0}(\delta)$ is sufficiently big so that we may apply Proposition \ref{prop:grad bounded away}
in $\Bc$ with $\eta=\frac{\delta}{16}$ to obtain a number $\beta > 0$ so that outside of an event of probability $<\frac{\epsilon}{6}$
we have $$|\nabla F|>\beta$$ on all but at most $$\frac{\delta}{16}\cdot r^{n}$$ components in $\Bc$.

\item Since for every $r>0$ the expected $C^{2}$-norm $$\|F\|_{C^{2}(\Bc)}<\infty$$ is finite, given $r>0$ and $\epsilon>0$
we may find $C=C(r)>0$ sufficiently big so that $$\prob\{\|F\|_{C^{2}(\Bc)}>C\} < \frac{\epsilon}{6}.$$

\item Since, by Kac-Rice (Lemma \ref{lem:Kac-Rice precise}), the total expected nodal volume of $F$ inside $B(r)$ is finite,
(equals $$\E[\vol_{n-1}(F^{-1}(0)\cap \Bc)] = c\cdot r^{n}$$ with some $c=c(F)>0$), we may find
$V=V(r)$ sufficiently big so that, outside of an event of probability $<\frac{\epsilon}{6}$, the volume
of all but at most $$\frac{\delta}{16}\cdot r^{n}$$ components of $F$ on $\Bc$ is $$\vol_{n-1}(\Zf)<V$$ is smaller than $V$.

\end{enumerate}

Consolidating all above, with the $(\beta,C,V)$ just chosen, we conclude that outside an event
$\Delta_{0}\subseteq\Delta$ in the ambient probability space $\Delta$ of probability
\begin{equation}
\label{eq:prob(Delta)<eps/2}
\prob(\Delta_{0})<\frac{\epsilon}{2}
\end{equation}
we have $\|F\|_{C^{2}(\Bc)}\le C$, and also
$|\nabla F| > \beta$ and $$\vol_{n-1}(\Zf)<V$$ on all but at most $\frac{\delta}{8}\cdot r^{n}$ exceptional nodal components $\Zf$ of $F$
on $\Bc$. Hence, by Lemma \ref{lem:Cheeger}, choosing $$A_{0}=K(\beta,C,V,n),$$
the topological classes of these ``good" components are all lying in $A_{0}$.
That is, on $\Delta\setminus\Delta_{0}$,
the topologies of at most $\frac{\delta}{8}\cdot r^{n}$ nodal components of $F$ on $B(r)$, are in $H(n-1)\setminus A_{0}$; equivalently, on $\Delta\setminus\Delta_{0}$
we have the pointwise bound
\begin{equation}
\label{eq:nodA0<delta/4r^n outside Delt}
\nod_{\Cc}(F,H(n-1)\setminus A_{0};r) < \frac{\delta}{8}\cdot r^{n}.
\end{equation}

One then has ($d\Pc$ being the underlying probability measure on $\Delta$)
\begin{equation}
\label{eq:exp nodA=intDelta+int outside}
\begin{split}
&\E[\nod_{\Cc}(F,H(n-1)\setminus A_{0};r)] \\&= \int\limits_{\Delta_{0}}\nod_{\Cc}(F,H(n-1)\setminus A_{0};r)d\Pc+
\int\limits_{\Delta\setminus\Delta_{0}}\nod_{\Cc}(F,H(n-1)\setminus A_{0};r)d\Pc \\&<
\int\limits_{\Delta_{0}}\nod_{\Cc}(F,H(n-1)\setminus A_{0};r)d\Pc+
\frac{\delta}{8}r^{n},
\end{split}
\end{equation}
by the pointwise bound \eqref{eq:nodA0<delta/4r^n outside Delt} on $\Delta\setminus\Delta_{0}$.

We claim that the exceptional event $\Delta_{0}$ does not contribute significantly to the expectation
on the rhs of \eqref{eq:exp nodA=intDelta+int outside}, more precisely, that
\begin{equation}
\label{eq:exp on Delta small A0}
\int\limits_{\Delta_{0}}\nod_{\Cc}(F,H(n-1)\setminus A_{0};r)d\Pc \le \frac{\delta}{8}\cdot r^{n}
\end{equation}
for $r>r_{0}(\delta)$ sufficiently big independent of $A$.
In fact, we make the evidently stronger claim for the {\em total} number of nodal components
\begin{equation}
\label{eq:exp on Delta small total}
\int\limits_{\Delta_{0}}\nod_{\Cc}(F;r)d\Pc \le \frac{\delta}{8}\cdot r^{n},
\end{equation}
valid for $r>r_{0}(\delta)$ sufficiently big (here the independence of $A$ is self-evident), and
$\Delta_{0}$ satisfying \eqref{eq:prob(Delta)<eps/2} with $\epsilon<\epsilon_{0}(\delta)$ sufficiently small.
However, \eqref{eq:exp on Delta small total} (implying \eqref{eq:exp on Delta small A0}) follows as a simple
conclusion of Nazarov-Sodin's $L^{1}$-convergence \eqref{eq:NS L1 conv scal invar}.
The tightness \eqref{eq:E[A]<epsilon} finally follows upon substituting
\eqref{eq:exp on Delta small A0} into \eqref{eq:exp nodA=intDelta+int outside}, and then into \eqref{eq:geom int sand Cc upper}.

\end{proof}

The rest of the present section is dedicated to the proofs of Proposition \ref{prop:grad bounded away}
(\S \ref{sec:proof grad bnd away}-\S\ref{sec:prf grad bnd most comp}),
part \eqref{it:nest not leaking} of Theorem \ref{thm:scal invar no leak}
(\S\ref{sec:nest no leak}), and also Lemma \ref{lem:Cheeger} (\S\ref{sec:Cheeger proof});
an estimate on small nodal domains (Lemma \ref{lem:xi-small bound}) will be invoked by the two former of the three.

\subsection{Proof of Proposition \ref{prop:grad bounded away}: uniform stability of smooth random fields}

\label{sec:proof grad bnd away}

First we formulate a different notion of {\em stability}, and
prove that $F$ is stable with arbitrarily high probability; in the end we will prove that
a stable function necessarily satisfies the property in Proposition \ref{prop:grad bounded away}.

\begin{notation}
In what follows the letters, $c_{i}$ and $C_{i}$ will designate various positive (``universal")
constants - depending on $F$ only; $c_{i}$ and $C_{i}$
will stand for ``sufficiently small" and ``sufficiently big" constants respectively
and may be different for different lemmas.
\end{notation}

\begin{definition}
\label{def:stability}

Let $F:\R^{n}\rightarrow \R$ be a smooth random Gaussian field,
$\delta>0$, $\alpha,\beta>0$ small, $R>0$ be the (big) radius
of the ball ($R\rightarrow\infty$), and $T>0$ a sufficiently large constant.
Cover $B(R)$ with approximately $(R/T)^{n}$ balls (or squares) $\Dc_{i}$
of radius $T$ so that the covering multiplicity is bounded by a universal constant $\kappa>0$,
i.e. each point $x\in B(R)$ belongs to at most $\kappa$ of the $\{\Dc_{i}\}$. Denote $\Gc_{i}$ to be balls centred at the same points as $\Dc_{i}$, with radii $3T$. Note that
the covering multiplicity of $\{\Gc_{i}\}$ is bounded by $$c_{0}(n)\cdot \kappa.$$

\begin{enumerate}

\item  We say that $F$ is $(\alpha,\beta)$-{\em stable} on a ball $\Gc_{i}$ if it does not contain
a point $x$ with both $F(x)<\alpha$ and $\| \nabla F(x)\| < \beta$, and otherwise $F$ is
$(\alpha,\beta)$-{\em unstable} on $\Gc_{i}$.

\item We say that $F$ is
$(\delta,\alpha,\beta)$-stable if
$F$ is stable on all of $\Gc_{i}$ except for up to $\delta R^{n}$ ones.

\end{enumerate}

The $(\delta,\alpha,\beta)$-stability notion also depends on $T$, and on our covering (and hence on $R$),
but we will control all the various constants in terms of $T$ and $\kappa$ only. The parameter $T$ will be kept constant until
the very end (see Lemma \ref{lem:stable grad bnd away}), and $\kappa=\kappa(n)$ is absolute. We will suppress the dependence on the various parameters from the definition of stability if the latter are clear; typically $\delta$ will be a given small number, and $\alpha$ and $\beta$
will depend on $\delta$.

\end{definition}

\begin{proposition}
\label{prop:gen stability}
For every $\epsilon>0$ and $\delta>0$ there exist $\alpha,\beta>0$ depending on
$\epsilon,\delta$ and the law of $F$, so that $F|_{B(R)}$ is $(\delta,\alpha,\beta)$-stable
with probability $>1-\epsilon$.
\end{proposition}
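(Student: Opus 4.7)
The plan is to reduce Proposition \ref{prop:gen stability} to a per-ball probability estimate on a single $\Gc_{i}$, then combine via linearity of expectation and Markov's inequality. Write
$$q(\alpha,\beta) := \prob\bigl(F \text{ is } (\alpha,\beta)\text{-unstable on } \Gc_{i}\bigr),$$
which by stationarity does not depend on $i$, and aim to show $q(\alpha,\beta)\rightarrow 0$ as $\alpha,\beta\rightarrow 0$ with $T$ held fixed. By axiom $(\rho 2)$, $F$ is a.s. $C^{3}$-smooth, and standard Gaussian supremum estimates (Borell--TIS on the separable Gaussian process given by $F$ and its first two derivatives restricted to $\Gc_{i}$) give Gaussian tails for $\|F\|_{C^{2}(\Gc_{i})}$. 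So, given any $\epsilon'>0$, I would first choose $M=M(\epsilon',T)$ with $\prob(\|F\|_{C^{2}(\Gc_{i})}>M) < \epsilon'$; on the complementary event both $F$ and $\nabla F$ are $M$-Lipschitz on $\Gc_{i}$.

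Next I would discretise: cover $\Gc_{i}$ by a net of spacing $r$ containing $O((T/r)^{n})$ points. If some $x\in \Gc_{i}$ witnesses $(\alpha,\beta)$-instability, then the nearest net point $y$ satisfies
$$|F(y)| < \alpha+Mr, \qquad \|\nabla F(y)\| < \beta+Mr.$$
By axiom $(\rho 3)$ together with stationarity, the joint law of $(F(y),\nabla F(y))\in\R^{n+1}$ is a non-degenerate Gaussian, independent of $y$, and therefore has a bounded density $C_{0}=C_{0}(F)$ near the origin. Hence for any $\alpha',\beta'>0$,
$$\prob\bigl(|F(y)|<\alpha',\, \|\nabla F(y)\|<\beta'\bigr) \le C_{0}\,\alpha'\,(\beta')^{n}.$$
A union bound over the net yields
$$q(\alpha,\beta) \le \epsilon' + C_{1}\,(T/r)^{n}\,(\alpha+Mr)(\beta+Mr)^{n}.$$
Choosing $r = \min(\alpha,\beta)/M$ and then letting $\alpha,\beta\to 0$ makes the right-hand side as small as desired; the key quantitative point is that $T$ and $M$ are fixed constants, so the combinatorial factor $(T/r)^{n}$ is a fixed negative power of $\min(\alpha,\beta)$ that is outweighed by $\alpha\beta^{n}$ for suitably small $\alpha,\beta$.

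Finally, let $N$ denote the number of $\Gc_{i}$ on which $F$ is $(\alpha,\beta)$-unstable. By stationarity and the covering multiplicity bound $c_{0}(n)\cdot\kappa$,
$$\E[N] = q(\alpha,\beta)\cdot \#\{\Gc_{i}\} \;\le\; C_{2}\,q(\alpha,\beta)\,R^{n}/T^{n},$$
and Markov's inequality gives
$$\prob(N>\delta R^{n}) \;\le\; \frac{C_{2}\,q(\alpha,\beta)}{\delta\,T^{n}},$$
which is $<\epsilon$ once $\alpha,\beta$ are small enough (depending only on $\epsilon$, $\delta$, $T$, $\kappa$, and the law of $F$); this is exactly the statement that $F|_{B(R)}$ is $(\delta,\alpha,\beta)$-stable with probability $>1-\epsilon$. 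The main technical obstacle I anticipate is the Lipschitz-to-pointwise interpolation step: one must balance $r$ against $M$, $\alpha$, $\beta$ so that the $(T/r)^{n}$ term does not swamp the Gaussian density bound, and it is precisely the fact that $T$ is a fixed constant (rather than growing with $R$) that makes the argument work.
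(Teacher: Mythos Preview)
Your approach is correct and considerably more direct than the paper's. The paper argues by contradiction: assuming $K > \delta R^n$ unstable balls, it uses Lemma~\ref{lem:bnd covering separated} to extract $\tilde K \asymp K$ of them that are $4$-separated, picks an instability witness $x_i$ in each, and then invokes a global Sobolev bound (Lemma~\ref{lem:Linf bound Sobolev}) to control $\sup|\partial^2 F|$ on most of the $B(x_i,\gamma)$ by $C R^{n/2}/\sqrt{\tilde K}$. This feeds a volume comparison: the set $\{|F|<A,\ |\nabla F|<B\}$ has volume $\gtrsim \gamma^n K$ from below but $\lesssim A B^n R^n$ from above (Chebyshev on the expectation), and a careful choice of $\gamma,\alpha,\beta$ forces $K\le \xi R^n$ with $\xi<\delta$. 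Your route bypasses both auxiliary lemmas: since $T$ is held fixed, the per-ball instability probability $q(\alpha,\beta)$ can be driven to zero by a net-plus-Lipschitz argument on a single $\Gc_i$, and then linearity plus Markov on $\E[N]$ finishes. This is the natural simplification when $T$ is a constant; the paper's global argument would be the thing to keep if one wanted estimates uniform in $T$.

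One small imprecision worth tightening: your claim that ``$(T/r)^n$ is outweighed by $\alpha\beta^n$'' is not automatic for arbitrary jointly vanishing $(\alpha,\beta)$. With $r=\min(\alpha,\beta)/M$ and, say, $\alpha\ll\beta$, the product scales like $\alpha^{1-n}\beta^{n}$, which can blow up for $n\ge 2$. But you are free to choose both parameters, so simply take $\alpha=\beta$: then $(T/r)^n(\alpha+Mr)(\beta+Mr)^n \lesssim (TM)^n\alpha \to 0$, and the rest goes through cleanly.
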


\begin{lemma}
\label{lem:stable grad bnd away}
For every $\epsilon>0$ and $\eta>0$, there exist $T_{0}$, $\delta>0$,
and an event $\Ec$ of probability $\prob(\Ec)<\epsilon$
such that for all $T>T_{0}$ and $(\alpha,\beta)$ if $F\notin \Ec$ and $F$ is $(\delta,\alpha,\beta)$-stable, then
$|\nabla F(x)|>\beta$ on all but $\eta R^{n}$ components of $F^{-1}(0)$.
\end{lemma}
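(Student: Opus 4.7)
The starting point is a combinatorial observation: if $F$ is $(\delta,\alpha,\beta)$-stable and a component $c\subseteq F^{-1}(0)$ contains a point $x$ with $|\nabla F(x)|\le\beta$, then since $F(x)=0<\alpha$, every ball $\Gc_i$ containing $x$ must be unstable. Hence every such ``bad'' component meets at least one of the at most $\delta R^{n}$ unstable balls. Letting $U$ denote the indices of unstable balls and $N_{i}$ the number of nodal components of $F$ meeting $\Gc_{i}$, the total bad-component count is bounded by $\sum_{i\in U}N_{i}$, and the lemma reduces to controlling this quantity off a small exceptional event.

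For a single ball, the Morse-type remark following Corollary \ref{cor:Kac Rice crit ball sphere} gives $N_{i}\le\Ac(F;\Gc_{i})+\widetilde{\Ac}(F;\partial\Gc_{i})$, so stationarity and that corollary yield $\E[N_{i}]=O(T^{n})$ uniformly in $T$. I would then truncate at a threshold $K$ to be tuned:
\begin{equation*}
\sum_{i\in U}N_{i}\;\le\;K|U|+\sum_{i}N_{i}\mathbf{1}_{\{N_{i}>K\}}\;\le\;K\delta R^{n}+S_{K}.
\end{equation*}
Setting $\delta=\eta/(2K)$ makes the first summand $\le\eta R^{n}/2$, so the sole exceptional event becomes $\Ec:=\{S_{K}>\eta R^{n}/2\}$. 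Using a uniform second moment bound $\E[N_{i}^{2}]=O(T^{2n})$ (discussed below), stationarity gives $\E[S_{K}]\le (R/T)^{n}\cdot CT^{2n}/K=O(R^{n}T^{n}/K)$, whence Markov yields $\prob(\Ec)=O(T^{n}/(K\eta))$; this is $<\epsilon$ once $K=K_{0}T^{n}$ with $K_{0}=K_{0}(\epsilon,\eta)$ chosen sufficiently large, and correspondingly $\delta=\eta/(2K_{0}T^{n})$, with $T_{0}$ taken large enough to render the asymptotic constants in the Kac-Rice bounds valid.

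The main obstacle is the uniform-in-$T$ second moment bound $\E[N_{i}^{2}]=O(T^{2n})$. Since $N_{i}$ is majorised by counts of critical points of $F$ in $\overline{\Gc_{i}}$ and of $F|_{\partial\Gc_{i}}$, this reduces to a two-point Kac-Rice-type inequality for the Gaussian field $\nabla F$ (and its restriction to the sphere), whose near-diagonal two-point density is integrable thanks to the non-degeneracy axiom $(\rho 3)$ applied to the joint law of $(F(x),\nabla F(x),F(y),\nabla F(y))$ at distinct $x,y$, together with the $C^{3}$-regularity supplied by axiom $(\rho 2)$. This is standard but technical; once it is in hand, the bookkeeping with $(T_{0},\delta,K)$ outlined above closes the argument.
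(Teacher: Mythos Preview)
Your approach diverges from the paper's, and the divergence exposes a real gap. Your argument hinges on the uniform second moment bound $\E[N_i^{2}]=O(T^{2n})$, which you justify via two-point Kac--Rice for $\nabla F$, invoking axiom $(\rho 3)$ for the requisite non-degeneracy. But $(\rho 3)$ only says that $\supp\rho$ is not contained in a hyperplane; this yields non-degeneracy of $(F(x),\nabla F(x))$ at a \emph{single} point, and says nothing about the joint law of $(\nabla F(x),\nabla F(y))$ at distinct $x,y$. Concretely: take $n=2$ and $\rho$ the (truncated) Lebesgue measure on the union of the two coordinate axes. Then $(\rho 1)$--$(\rho 3)$ all hold, yet $F(x_{1},x_{2})=G_{1}(x_{1})+G_{2}(x_{2})$ with independent one-dimensional processes $G_{j}$, so $\partial_{2}F(0,0)=\partial_{2}F(z_{1},0)$ for every $z_{1}\ne 0$ and the two-point gradient law is degenerate. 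Your route to the second moment is therefore blocked; the bound may still be true under $(\rho 1)$--$(\rho 3)$, but it is not the ``standard but technical'' consequence of the hypotheses you cite, and proving it would require a genuinely different argument. A secondary issue: your exceptional event $\Ec=\{S_{K}>\eta R^{n}/2\}$ and your threshold $K=K_{0}T^{n}$ (hence $\delta$) both depend on the covering parameter $T$, so the quantifier ``for all $T>T_{0}$'' in the statement is not handled cleanly.

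The paper avoids second moments altogether. It separates the bad components into three classes: those of diameter $>T$ (their number is at most a constant times the nodal $(n-1)$-volume divided by $T$, controlled in expectation by Kac--Rice and Chebyshev), those adjacent to a nodal domain of volume $<\xi$ (controlled by Lemma~\ref{lem:xi-small bound}), and the remaining ``normal'' ones. A normal bad component has diameter $\le T$ and hence lies entirely inside some $\Gc_{i}$; since it bounds a domain of volume $\ge\xi$, a single unstable $\Gc_{i}$ can contain at most $O(T^{n}/\xi)$ of them, giving at most $O(\delta T^{n}/\xi)\,R^{n}$ normal bad components in total. The three contributions sum to $C\bigl(T^{-1}+\xi^{c_{0}}+T^{n}\delta/\xi\bigr)R^{n}$, which is made $\le\eta R^{n}$ by first choosing $T$ large, then $\xi$ small, then $\delta$ small. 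This uses only first-moment (Kac--Rice) input and a deterministic volume-packing count; moreover the exceptional event $\Ec$ (large nodal volume, or too many $\xi$-small components) is visibly independent of $T$.
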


The proofs of Proposition \ref{prop:gen stability} and Lemma
\ref{lem:stable grad bnd away} will be given in \S\ref{sec:stability proof} and
\S\ref{sec:prf grad bnd most comp} respectively.

\begin{proof}[Proof of Proposition \ref{prop:grad bounded away} assuming Proposition
\ref{prop:gen stability} and Lemma \ref{lem:stable grad bnd away}]

Given $\epsilon>0$ and $\eta>0$ we invoke Lemma \ref{lem:stable grad bnd away}
with $\epsilon/2$ instead of $\epsilon$ to obtain $T>T_{0}$, $\delta>0$,
and the exceptional event $\Ec$ of probability $\prob(\Ec)<\epsilon/2$, as prescribed. Now
we apply Proposition \ref{prop:gen stability} with $\epsilon$ replaced by $\epsilon/2$ again,
and $\delta$ obtained as above to yield $(\alpha,\beta,\gamma)$ so that $F$ is
$(\delta,\alpha,\beta)$-stable with probability $<1-\epsilon/2$.
It then follows from the way we obtained $\delta$ as result of an application of Lemma \ref{lem:stable grad bnd away} that,
further excising $\Ec$ of probability $\prob(\Ec)<\epsilon/2$ from the stable event of probability $>1-\epsilon/2$,
the number of nodal components $\Gamma$ of $g^{-1}(0)$ failing to satisfy $|\nabla F(x) |>\beta$ is at most
$\eta\cdot R^{n}$, this occurs with probability $>1-\epsilon$.

\end{proof}

\subsection{Proof of Proposition \ref{prop:gen stability}}
\label{sec:stability proof}

We will adopt the standard notation $\partial^{\upsilon}$, $$\upsilon=(\alpha_{1},\ldots,\alpha_{n})\in \Z_{\ge 0}^{n},$$
to denote the corresponding partial derivative; $|\upsilon| = \upsilon_{1}+\ldots+\upsilon_{n}$.
We will need some auxiliary lemmas.

\begin{lemma}
\label{lem:bnd covering separated}
There exists a constant $c_{0}=c_{0}(\kappa)>0 $ depending only on $\kappa$ with
the following property.
Let $\Gc = \{ \Gc_{i}\}_{i\le K}$ be a collection of radius $3T$ balls lying
in $B(R)$ such that each point $x\in B(R)$ lies in at most $\kappa$ of them.
Then $\Gc$ contains $c_{0}\cdot K$ balls that are in addition $4$-separated.
\end{lemma}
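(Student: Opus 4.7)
The plan is a standard greedy independent--set argument driven by a volume packing bound coming from the covering multiplicity hypothesis. Let me outline the three steps.

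First I would translate the separation condition into a graph-theoretic one. Define a graph $\Lambda$ on the vertex set $\{1,\dots,K\}$ by declaring $i\sim j$ iff the balls $\Gc_{i}$ and $\Gc_{j}$ fail the $4$-separation condition (i.e.\ their centres are at distance at most some constant $D=D(T)$ of each other, where $D$ comes from the meaning of ``$4$-separated"; concretely $D$ is a fixed multiple of $T$). A $4$-separated subcollection of $\Gc$ is then precisely an independent set in $\Lambda$, so our task is to find an independent set of size $\ge c_{0}K$.

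Next I would bound the maximum degree $\Delta(\Lambda)$ purely in terms of $\kappa$ (and $n$, which is absolute here). Fix $i$ and let $\nu_{i}$ be the number of indices $j$ with $i\sim j$, $j\ne i$. For each such $j$ the centre of $\Gc_{j}$ lies inside the ball $B(c_{i},D)$, and hence the whole of $\Gc_{j}$ is contained in the enlarged ball $B(c_{i},D+3T)$. The assumption that the $\Gc_{j}$'s cover $B(R)$ with multiplicity at most $c_{0}(n)\cdot \kappa$ gives
\begin{equation*}
\sum_{j\,:\,i\sim j} \vol(\Gc_{j})
=\int_{B(c_{i},D+3T)}\sum_{j\,:\,i\sim j}\mathbf{1}_{\Gc_{j}}(x)\,dx
\le c_{0}(n)\kappa\cdot \vol\bigl(B(c_{i},D+3T)\bigr).
\end{equation*}
Since every $\Gc_{j}$ has the same volume $\omega_{n}(3T)^{n}$, dividing yields
\begin{equation*}
\nu_{i}\le c_{0}(n)\kappa\cdot \frac{(D+3T)^{n}}{(3T)^{n}}=:M(\kappa),
\end{equation*}
a constant depending only on $\kappa$ (and $n$, which is suppressed).

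Finally I would extract a large independent set in $\Lambda$ by greedy selection: pick any vertex, delete it together with its at most $M(\kappa)$ neighbours, and repeat. This produces an independent set of size at least $K/(M(\kappa)+1)$, so setting $c_{0}=1/(M(\kappa)+1)$ gives the desired $4$-separated subcollection of cardinality at least $c_{0}K$.

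The only non-routine ingredient is the degree estimate in the middle step, and it is really just the packing inequality that a bounded-multiplicity family of congruent balls with centres confined to a fixed-size region has bounded cardinality; the greedy step and the graph reformulation are entirely standard. No obstacle beyond carefully tracking the constant $D$ implicit in the phrase ``$4$-separated'' is expected.
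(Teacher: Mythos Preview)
Your proof is correct and essentially identical to the paper's: build the non-separation graph on the $\Gc_{i}$'s, bound its maximum degree via the covering-multiplicity volume estimate, then greedily extract an independent set of size $\ge K/(\text{max degree}+1)$. Two minor corrections: in the lemma as stated the multiplicity of the $\Gc_{j}$'s is $\kappa$ itself (not $c_{0}(n)\kappa$), and ``$4$-separated'' here means distance $>4$ in the ambient metric, so $D=6T+4$ rather than a fixed multiple of $T$; the resulting constant therefore tacitly depends on the fixed parameter $T$ as well---the paper's own bound $c_{1}=\kappa(6T+4)^{n}/(3T)^{n}$ has exactly the same feature.
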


\begin{lemma}
\label{lem:Linf bound Sobolev}
For every $\epsilon>0$ and $r\ge 0$ there exist $C_{0}=C_{0}(\epsilon)>0$ with the following property.
With probability $>1-\epsilon$, for every (possibly random) collection $\{x_{i}\}_{i\le K}$ of points satisfying $d(x_{i},x_{j})>4$ for
$i\ne j$, we may choose $K/2$ points, up to reordering $\{x_{i} \}_{i\le K/2}$, satisfying
\begin{equation*}
\sup\limits_{|\upsilon|\le r,B(x_{i},1)} |\partial^{\upsilon} F (x)| \le C_{0} \frac{R^{n/2}}{\sqrt{K}}.
\end{equation*}

\end{lemma}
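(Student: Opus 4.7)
The plan is to reduce everything to the magnitude of a single global functional of $F$ that deterministically dominates all the local quantities
\begin{equation*}
M(x_i) := \sup_{|\upsilon|\le r,\; y\in B(x_i,1)} |\partial^{\upsilon} F(y)|,
\end{equation*}
then control that functional by Markov, and finally extract $K/2$ indices by pigeonhole. First I would introduce the enlarged analogue
\begin{equation*}
N(x) := \sup_{|\upsilon|\le r,\; y\in B(x,3)} |\partial^{\upsilon} F(y)|,
\end{equation*}
which is stationary by stationarity of $F$, and whose second moment $\E[N(0)^{2}]$ is finite for $r$ in the admissible smoothness range. The latter follows from the a.s.\ $C^{r}$-smoothness of $F$ (a consequence of a spectral moment hypothesis in the spirit of $(\rho 2)$, refined as needed for derivative order $r$) together with standard Gaussian concentration (Borell--TIS) applied to the supremum of the smooth Gaussian field $(\partial^{\upsilon}F)_{|\upsilon|\le r}$ over a unit ball; equivalently, one may pass from an integrated bound on the top derivatives of $F$ to a pointwise $C^{r}$-bound via Sobolev embedding, which is presumably the reason for the lemma's name.

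The heart of the argument is a purely deterministic integral comparison. Since $\{x_{i}\}_{i\le K}$ is $4$-separated and lies in $B(R)$, the balls $B(x_{i},2)$ are pairwise disjoint and contained in $B(R+2)$. Moreover whenever $y\in B(x_{i},2)$ one has $B(x_{i},1)\subseteq B(y,3)$, so $N(y)\ge M(x_{i})$. Integrating this pointwise lower bound over the disjoint balls yields
\begin{equation*}
\vol(B(0,2))\sum_{i=1}^{K} M(x_{i})^{2}
\;\le\; \int_{B(R+2)} N(y)^{2}\,dy \;=:\; I(F),
\end{equation*}
and the key feature is that this bound is uniform over all $4$-separated collections; in particular it applies even to collections that depend on $F$. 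Fubini and stationarity give $\E[I(F)]=\vol(B(R+2))\cdot\E[N(0)^{2}] = O(R^{n})$, so Markov's inequality produces a constant $C_{1}=C_{1}(\epsilon)$ with $\prob\{I(F)>C_{1}R^{n}\}\le\epsilon$.

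On the complementary event (probability $\ge 1-\epsilon$) we have $\sum_{i=1}^{K} M(x_{i})^{2}\le C_{2}R^{n}$ with $C_{2}=C_{1}/\vol(B(0,2))$, and a pigeonhole step finishes the proof: were strictly more than $K/2$ of the $M(x_{i})^{2}$ to exceed $2C_{2}R^{n}/K$, the total would exceed $C_{2}R^{n}$, contradicting the above. Hence at least $K/2$ indices satisfy $M(x_{i})\le \sqrt{2C_{2}}\cdot R^{n/2}/\sqrt{K}$, and after relabelling these are the required $\{x_{i}\}_{i\le K/2}$, with $C_{0}=\sqrt{2C_{2}}$. The main obstacle is the quantitative control of $\E[N(0)^{2}]$ in terms of the spectral data of $F$; once this is secured the rest of the argument is soft. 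The use of the power $2$ and the ball of radius $2$ are not essential and could be replaced by any $p\ge 1$ and any radius strictly greater than $2$.
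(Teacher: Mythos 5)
Your argument is correct and shares the paper's overall strategy: use $4$-separation to get disjoint balls $B(x_i,2)$ inside a ball of radius $R+O(1)$, dominate the sum of local sup-norms by a global quantity, control its expectation by stationarity, apply Markov/Chebyshev, and finish by pigeonhole. The genuine point of departure is the ``local-to-integral'' step. The paper invokes a Sobolev embedding to bound each $\sup_{B(x_i,1)}|\partial^{\upsilon}F|$ by the local Sobolev norm $\|F\|_{H^{m,2}(B(x_i,2))}$ (with $m>n/2+|\upsilon|$), sums those squared over the disjoint balls to get $\|F\|_{H^{m,2}(B(R+O(1)))}^{2}$, and then the expectation reduces to \emph{pointwise} Gaussian moments $\E[|\partial^{\upsilon}F(0)|^{2}]$, which are trivially finite. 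You instead compare directly against the enlarged stationary sup-field $N(y)=\sup_{|\upsilon|\le r,\,B(y,3)}|\partial^{\upsilon}F|$, using the purely geometric inclusion $B(x_i,1)\subseteq B(y,3)$ for $y\in B(x_i,2)$ to obtain $\vol(B(0,2))\sum_i M(x_i)^{2}\le\int_{B(R+2)}N^{2}$; the expectation step then rests on $\E[N(0)^{2}]<\infty$, a second moment of a \emph{supremum}, which you correctly source from Borell--TIS (or, as you note, could be derived by Sobolev embedding, in which case the two arguments essentially coincide). Both routes are valid here since $\gfr_{n,\alpha}$ has compactly supported spectral measure, so all the needed regularity is available. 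Your version is arguably cleaner geometrically and makes the uniformity over random collections $\{x_i\}$ especially transparent, while the paper's version avoids any supremum-moment input at the price of the Sobolev machinery.
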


Typically, $K$ is of the order of magnitude $K\approx R^{n}$; informally, in this case Lemma
\ref{lem:Linf bound Sobolev} states that in this case the derivatives around most of the points are uniformly bounded.

\begin{proof}[Proof of Proposition \ref{prop:gen stability} assuming lemmas
\ref{lem:bnd covering separated} and \ref{lem:Linf bound Sobolev}]

For a given tuple $(\alpha,\beta)$ let $K$ be the number of $(\alpha,\beta)$-unstable balls of $F$.
Our goal is to show that we may choose suitable $\alpha=\alpha(\delta)$
and $\beta=\beta(\delta)$ so that $K<\delta R^{n}$, $\delta>0$ is an arbitrarily small given number.

By Lemma \ref{lem:bnd covering separated} we may choose
\begin{equation}
\label{eq:tildK}
\tilde{K}=c_{1}\cdot K
\end{equation}
unstable balls of $F$ that are in addition $4$-separated, and up to reordering the $\Gc_{i}$
let $\{x_{i}\}_{i\le \tilde{K}}$, $x_{i}\in \Gc_{i}$ be some points satisfying
\begin{equation}
\label{eq:g(xi)<alpha,grad}
|F(x_{i})|<\alpha \text{ and }
|\nabla F(x_{i})| < \beta,
\end{equation}
as postulated by the definition of $F$ being unstable on $\Gc_{i}$.

Now we are going to excise a small neighbourhood around each of the $x_{i}$ where one may control
the values and the gradient, slightly relaxing \eqref{eq:g(xi)<alpha,grad}.
To this end we introduce a small parameter $\gamma=\gamma(\delta)$ to be chosen in the end.
Taylor expanding $F$ around $x_{i}$ shows that on $B(x_{i},\gamma)$
\begin{equation}
\label{eq:g(x)<alpha+beta*gamma}
|F(x)| < \alpha+\beta\cdot\gamma+ C_{2}\sup\limits_{|\upsilon|=2,x\in B(x_{i},\gamma)} | \partial^{\upsilon}F(x) |\cdot \gamma^{2}
\end{equation}
and
\begin{equation}
\label{eq:nabla g(x)<beta+Sobnorm*gamma}
|\nabla F(x)| < \beta+ C_{3} \sup\limits_{|\upsilon|=2,x\in B(x_{i},\gamma)} | \partial^{\upsilon}F(x) |\cdot \gamma.
\end{equation}
Now we invoke Lemma \ref{lem:Linf bound Sobolev} with $\epsilon$ replaced
by $\epsilon/2$ and $r=2$; since the $\Gc_{i}$ are $4$-separated for
$i\le \tilde{K}$ the hypothesis $d(x_{i},x_{j})>2$ for $i\ne j$ of Lemma \ref{lem:Linf bound Sobolev}
is indeed satisfied. Hence, up to reordering, we have for $i\le \tilde{K}/2$:
\begin{equation}
\label{eq:sup bnd Hess Sob}
\sup\limits_{|\upsilon|=2,B(x_{i},\gamma)} |\partial^{\upsilon} F (x)|\le C_{4} \frac{R^{n/2}}{\sqrt{\tilde{K}}},
\end{equation}
with probability $>1-\epsilon/2$.
Substituting \eqref{eq:sup bnd Hess Sob} into \eqref{eq:g(x)<alpha+beta*gamma} and
\eqref{eq:nabla g(x)<beta+Sobnorm*gamma} yields for $x\in B(x_{i},\gamma)$, $i\le \tilde{K}/2$:
\begin{equation*}
|F(x)|< A
\end{equation*}
and
\begin{equation*}
|\nabla F(x)| < B,
\end{equation*}
with
\begin{equation}
\label{eq:A def}
A=\alpha+\beta\cdot \gamma+ c_{3}\frac{R^{n/2}}{\sqrt{\tilde{K}}}\cdot \gamma^{2}
\end{equation}
and
\begin{equation}
\label{eq:B def}
B=\beta+c_{3}\frac{R^{n/2}}{\sqrt{\tilde{K}}}\cdot \gamma.
\end{equation}

Now let $\mathcal{A}_{A,B}$ be the random variable
\begin{equation*}
\mathcal{A}_{A,B} = \vol(\{x:\: |F(x)|<A,\, |\nabla F(x)|<B\}).
\end{equation*}
On recalling \eqref{eq:tildK} and the definition of $K$ as the total number of unstable
balls of $F$ on $B(R)$ (Definition \ref{def:stability}),
our proof above shows that with probability $> 1-\epsilon/2$ for
$A$ and $B$ defined as above we have that
\begin{equation}
\label{eq:A>=gamma^2 K}
\mathcal{A}_{A,B} \ge \frac{1}{2}\tilde{K}\cdot c_{4}\gamma^{n} \ge c_{5}\gamma^{n}\cdot K.
\end{equation}

On the other hand,
by the independence of $F(x)$ and $\nabla F(x)$ for a fixed $x\in\R^{2}$,
and since the distribution of $$(F(x),\nabla F(x))\in \R^{n+1}$$
is non-degenerate Gaussian by axiom $(\rho 3)$ on $F$, for every $x\in B(R)$
\begin{equation*}
\prob(|F(x)|<A,\, |\nabla F(x)|< B) \le C_{4}A\cdot B^{n}.
\end{equation*}
Therefore, as
\begin{equation*}
\mathcal{A}_{A,B}  = \int\limits_{B(R)} \chi_{A,B}(F(x),\nabla F(x))dx
\end{equation*}
with $\chi_{A,B}$ the appropriate indicator, the expectation of $\mathcal{A}_{A,B}$
may be bounded as
$$\E[\mathcal{A}_{A,B}] \le C_{5}AB^{n}\cdot R^{n}.$$
Invoking Chebyshev's inequality, we may find a constant $C_{6}>0$
so that with probability $>1-\epsilon/2$ we may bound
\begin{equation}
\label{eq:A<CR^2}
\mathcal{A}_{A,B} < C_{6}AB^{n}R^{n}.
\end{equation}
Excising both the unlikely events of probability $<\epsilon/2$
as above we may deduce that with probability
$>1-\epsilon$ both \eqref{eq:A>=gamma^2 K} and \eqref{eq:A<CR^2} occur, implying that
\begin{equation*}
\begin{split}
c_{5}\gamma^{2}\cdot K &\le C_{6}AB^{n}R^{n}
\\&\le C_{6}
\left(\alpha+\beta\cdot \gamma+ c_{3}\frac{R^{n/2}}{\sqrt{\tilde{K}}}\cdot \gamma^{2}\right)\cdot
\left(\beta+c_{3}\frac{R^{n/2}}{\sqrt{\tilde{K}}}\cdot \gamma\right)^{n} \cdot R^{n}
\end{split}
\end{equation*}
upon recalling \eqref{eq:A def} and \eqref{eq:B def}.
Using a simple manipulation shows that with probability $>1-\epsilon$
the number of unstable balls of $F$ is bounded by
\begin{equation}
\label{eq:K<=AB^2R^2}
K \le C_{7}\cdot
\gamma^{-n}\left(\alpha+\beta\cdot \gamma+ \frac{R^{n/2}}{\sqrt{\tilde{K}}}\cdot \gamma^{2}\right)\cdot
\left(\beta+\frac{R^{n/2}}{\sqrt{\tilde{K}}}\cdot \gamma\right)^{n} \cdot R^{n},
\end{equation}
valid for every $\gamma < 1$.

Let $\delta>0$ be a given small number, and assume by
contradiction that
\begin{equation}
\label{eq:K>delta R^2 contr}
K > \delta \cdot R^{n}.
\end{equation}
Then \eqref{eq:K<=AB^2R^2} is
\begin{equation}
\label{eq:K<=xi R^{2}}
K \le \xi \cdot R^{n},
\end{equation}
where
\begin{equation*}
\xi = C_{7}\left(\frac{\alpha}{\gamma}+\beta +
\frac{\gamma}{\delta^{1/2}}\right)
\left(\frac{\beta}{\gamma^{(n-1)/n}}+ \frac{\gamma^{1/n}}{\delta^{1/2}}\right)^{2}.
\end{equation*}
It is then easy to make $\xi$ arbitrarily small by first choosing $\gamma$ and
subsequently $\alpha$ and $\beta$ sufficiently small; in particular we
may choose $\alpha$, $\beta$ and $\gamma$ so that $\xi<\delta$,
which, in light of \eqref{eq:K>delta R^2 contr}, contradicts \eqref{eq:K<=xi R^{2}}.

\end{proof}

\subsection{Proofs of the Auxiliary Lemmas
\ref{lem:bnd covering separated}-\ref{lem:Linf bound Sobolev}}
\label{sec:aux lemmas}

\begin{proof}[Proof of Lemma \ref{lem:bnd covering separated}]

Define the graph $G=(V,E)$, where $V=\{ \Gc_{i}\}$ and \\
$(\Gc_{i},\Gc_{j})\in E$ if $d(\Gc_{i},\Gc_{j})\le 4$. Consider an arbitrary node
$\Gc_{i}=B(x_{i},3T)$ and all the balls
$\Gc_{j}$ lying within distance $4$ of $\Gc_{i}$. In this case necessarily
$$\Gc_{i}\subseteq B(x_{i},6T+4)$$ whence, by a volume estimate, there are at most
$$c_{1}=c_{1}(\kappa,T)=\kappa (6T+4)^{n}/(3T)^{n}>0$$ such $\Gc_{j}$, so that the degree of $\Gc_{i}$ in $G$
is at most $d(\Gc_{i})\le c_{1}-1$. Now we start with an arbitrary vertex $v_{1}=\Gc_{i_{1}}$, and delete
all the vertexes connected to $v_{1}$; we then take $v_{2}$ to be any other vertex and continue
this process. When this process terminates (we enumerated all the vertexes of the graph) we end up
we at least $c_{1}^{-1}\cdot K$ vertexes, i.e. we proved our claim with $c:=c_{1}^{-1}$.

\end{proof}

\begin{proof}[Proof of Lemma \ref{lem:Linf bound Sobolev}]

Let $\epsilon>0$ and $r$ be given.
Since there are only finitely many $\upsilon$ with $|\upsilon |\le r$, we may also assume that $\upsilon$ is
given. By Sobolev's Imbedding Theorem ~\cite[Theorem 4.12 on p. 85]{Ad},
there exists an $m\ge 1$ and a constant $C_{1}=C_{1}(m,r)>0$,
so that for every $x\in \R^{n}$:
\begin{equation*}
| \partial^{\upsilon}F(x) | \le
C_{1}\| F\|_{H^{m,2}(B(x,1))},
\end{equation*}
and hence we have
\begin{equation}
\label{eq:sup der Sob}
\sup\limits_{x\in B(x_{i},1)} |\partial^{\upsilon}F(x) | \le C_{1}\| F\|_{H^{m,2}(B(x_{i},2))}.
\end{equation}
Note that the separateness assumption of the present lemma on $\{x_{i}\}$ implies that the
balls $\{B(x_{i},2)\}_{i\le \tilde{K}}$ are disjoint. Therefore summing up the squared rhs of
\eqref{eq:sup der Sob} for $i\le K$, we have:
\begin{equation}
\label{eq:sum sup norm < Sob}
\sum\limits_{i=1}^{K}\sup\limits_{x\in B(x_{i},1)} |\partial^{\upsilon}F(x) |^{2} \le
C_{1}^{2}\sum\limits_{i=1}^{K}\| F\|_{H^{m,2}(B(x_{i},2))} \le C_{1}^{2}\cdot
\| F\|^{2}_{H^{m,2}(B(R+3T))}.
\end{equation}

Now, as
\begin{equation*}
\| F\|^{2}_{H^{m,2}(B(R+3T))} = \sum\limits_{|\upsilon|\le m} \int\limits_{B(R+3T)}|\partial^{\upsilon}F(x)|^{2}dx,
\end{equation*}
bearing in mind the stationarity of $F$, we have
\begin{equation*}
\E[\| F\|^{2}_{H^{m,2}(B(R+3T))}] = C_{2}\cdot \vol(B(R+3T))\le C_{3}\cdot R^{n}
\end{equation*}
where $C_{2}=\sum\limits_{|\upsilon|\le m} \E[|\partial^{\upsilon} F(0)|^{2}] > 0$ is a sum of Gaussian moments.
Therefore, by Chebyshev's inequality, for $C_{4}$ sufficiently big we have
\begin{equation}
\label{eq:exp Sob < R^2}
\| F\|^{2}_{H^{m,2}(B(R+3T))} < C_{4}\cdot R^{n}
\end{equation}
with probability $>1-\epsilon$. Substituting \eqref{eq:exp Sob < R^2} into
\eqref{eq:sum sup norm < Sob} implies via Chebyshev's inequality that at least $K/2$ of the $K$ summands
in \eqref{eq:exp Sob < R^2} are bounded by
\begin{equation}
\label{eq:sup < c R^2/K}
\sup\limits_{x\in B(x_{i},1)} |\partial^{\upsilon}F(x) |^{2} \le 2 C_{1}^{2}\cdot C_{4} R^{n} / K < C_{5} R^{n}/K,
\end{equation}
i.e., up to reordering the indexes, the inequality \eqref{eq:sup < c R^2/K} holds for all $i\le K/2$.
The statement of the present lemma finally follows upon taking the square root on both sides of \eqref{eq:sup < c R^2/K},
bearing in mind that
$$\sup\limits_{x\in B(x_{i},1)} |\partial^{\upsilon}F(x) |^{2} =
\left(\sup\limits_{x\in B(x_{i},1)} |\partial^{\upsilon}F(x) |\right)^{2}.$$

\end{proof}

\subsection{An estimate on the number of small components of smooth random fields}

\label{sec:xi-small bound proof}

In this section we prove an estimate on small components of a field $F$ that is instrumental in pursuing the proof of both
Lemma \ref{lem:stable grad bnd away} and part \eqref{it:nest not leaking} of Theorem \ref{thm:scal invar no leak}.
We start by defining ``small" components of $F$.

\begin{definition}
\label{def:xi-sm def}
\begin{enumerate}

\item We say that a nodal component of $F^{-1}(0)$ is $\xi$-small if it is adjacent
to a domain of volume $<\xi$.

\item For $R>0$ let $\nod_{\xi-sm}(F;R)$ be the number of
$\xi$-small components of $F^{-1}(0)$ lying entirely inside $B(R)$.

\end{enumerate}

\end{definition}

\begin{lemma}[Cf. {\cite[Lemma $9$]{So}}]
\label{lem:xi-small bound}
There exist constants $c_{0},C_{0}>0$ such that
\begin{equation*}
\limsup\limits_{R\rightarrow\infty}\frac{\E[\nod_{\xi-sm}(F;R)]}{R^{n}} \le C_{0}\cdot \xi^{c_{0}}.
\end{equation*}
\end{lemma}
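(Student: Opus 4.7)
The plan is to trace every $\xi$-small component of $F^{-1}(0)$ back to a critical point of $F$ at which $|F|$ is of order $\xi^{1/n}$, and then to count such critical points via Kac-Rice. The deterministic heart of the argument is as follows: if $\omega$ is any nodal domain of $F$ with $\vol(\omega)<\xi$ and $\|\nabla F\|_{L^\infty(\omega)}\le M$, then $\sup_{\omega}|F|\le C_nM\xi^{1/n}$. Indeed, at an extremum $y\in\omega$, the superlevel set $\{z\in\omega:|F(z)|>|F(y)|/2\}$ contains the ball $B(y,|F(y)|/(2M))$ (since $F$ vanishes on $\partial\omega$ and has gradient $\le M$), which must fit inside $\omega$, forcing $\vol(\omega)\ge c_n(|F(y)|/M)^n$. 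In particular, every critical point of $F$ that lies in a $\xi$-small domain satisfies $|F|\le C_nM\xi^{1/n}$.

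Second, Morse theory converts a count of small components into a count of small-value critical points. For any nodal domain $\omega$, Morse inequalities applied to $F|_{\overline{\omega}}$ (with Dirichlet data on $\partial\omega$) yield $b(\omega)\le 1+\#\{\text{critical points of }F\text{ in }\omega\}$, where $b(\omega)$ is the number of boundary components of $\omega$. Every $\xi$-small component bounds some $\xi$-small $\omega$, so summing over such $\omega$ gives $\nod_{\xi-sm}(F;R)\le 2\cdot N_t$, where $N_t$ counts critical points $x\in B(R)$ with $\nabla F(x)=0$ and $|F(x)|\le t:=C_nM\xi^{1/n}$. Applying Kac-Rice to the gradient field $\nabla F$ with the added constraint $\{|F|<t\}$---using axiom $(\rho 3)$ to guarantee that the joint distribution of $(F(x),\nabla F(x))$ is non-degenerate Gaussian, whence the conditional density of $F(x)$ given $\nabla F(x)=0$ is bounded---yields $\E[N_t]\le CtR^n$, with the Hessian moments controlled by axiom $(\rho 2)$.

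Finally, since $M=\|\nabla F\|_\infty$ is itself a random quantity, I invoke the integral-geometric sandwich (Lemma~\ref{lem:int geom sandwitch euclidian}) to localize the estimate: the count $\nod_{\xi-sm}(F;R)$ is bounded, up to boundary corrections, by the ergodic average of $\nod_{\xi-sm}^*(F;r,u)$ for $u\in B(R+r)$ at a fixed radius $r$. On such a ball, $M_r:=\|\nabla F\|_{L^\infty(B_u(r))}$ has Gaussian tails (Borell-TIS), so splitting on $\{M_r\le M_0\}$ versus its complement, the main event contributes $CM_0\xi^{1/n}\vol(B(r))$, while the tail is absorbed by Cauchy-Schwarz against a second-moment bound on the total component count in $B(r)$ (obtained via Kac-Rice on critical points). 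Choosing $M_0=\xi^{-\varepsilon}$ and balancing produces the desired $\limsup$ bound with $c_0=1/n-O(\varepsilon)>0$. The main obstacle is the Morse-theoretic step $b(\omega)\le 1+\#\{\mathrm{crit}(F)\cap\omega\}$ in general dimension $n\ge 3$, which requires Morse theory on manifolds with Dirichlet boundary data and identifying boundary components with generators of $H_{n-1}(\omega,\partial\omega)$; a close second is producing a second-moment bound on the component count sharp enough to beat the Gaussian tail of $M_r$.
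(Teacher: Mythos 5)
Your strategy is genuinely different from the paper's, and parts of it are sound, but there is a gap in the ``tail absorption'' step that the paper's own argument is specifically designed to avoid.

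Your deterministic observations are correct: a nodal domain $\omega$ of volume $<\xi$ on which $|\nabla F|\le M$ forces $\sup_\omega|F|\le C_nM\xi^{1/n}$ (the ball argument is fine), and the Morse-theoretic bound $b(\omega)\le 1+\#\{\mathrm{crit}\,F\cap\omega\}$ does hold in all dimensions---it follows from the Morse inequalities for $(\omega,\partial\omega)$ (with $\nabla F$ pointing inward on $\partial\omega$) combined with the piece $H_1(\omega,\partial\omega)\to H_0(\partial\omega)\to H_0(\omega)\to 0$ of the long exact sequence, which gives $b(\omega)-1\le b_1(\omega,\partial\omega)\le m_1$. So the obstacle you flag is in fact surmountable. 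The constrained Kac--Rice bound $\E[N_t]\le CtR^n$ for critical points with $|F|<t$ is also legitimate, using the independence of $F(x)$ and $\nabla F(x)$ at a single point together with axiom $(\rho 2)$ to control the conditional Hessian moments.

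The genuine gap is the second-moment bound on the critical-point (or component) count in $B(r)$ that you invoke to absorb the event $\{M_r>M_0\}$. Under the paper's axioms $(\rho 1)$--$(\rho 3)$ nothing guarantees $\E[N(B(r))^2]<\infty$: the second factorial-moment Kac--Rice density for critical points involves $p_{\nabla F(x),\nabla F(y)}(0,0)$, whose integrability near the diagonal $x=y$ requires a two-point non-degeneracy hypothesis that the paper deliberately does not assume (this is precisely why Nazarov--Sodin's machinery, and this paper, work only with first moments of counts plus ergodicity). There is also no deterministic replacement: one cannot bound the number of $\xi$-small domains in $B(r)$ by $\xi^{-1}\vol B(r)$, because there is no lower volume bound on them. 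So the tail term $\E[N\cdot\mathds{1}_{\{M_r>M_0\}}]$ cannot be closed off by Cauchy--Schwarz without adding hypotheses.

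The paper's own proof (Lemma~\ref{eq:Nsm det} plus a single application of H\"older) is built exactly to sidestep this. Rather than conditioning on the random $\|\nabla F\|_\infty$, it fixes, inside each small domain, a disjoint ball centred at a critical point with a zero on its boundary, applies the Morrey/Poincar\'e inequality \eqref{eq:Poincare Morrey} on that ball to extract a factor $\vol(B)^{1/n-1/q}$ (this is the source of the $\xi^{c_0}$), and multiplies the resulting pointwise inequalities $\|\partial^2 h\|_{L^q(B)}^{-1}\lesssim\vol(B)^{1/n-1/q}|\nabla h(x)|^{-1}$ and $\|\partial^2 h\|_{L^q(B)}^{-1}\lesssim\vol(B)^{2/n-1/q}|h(x)|^{-1}$. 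Integrating over the disjoint balls and using H\"older yields a purely deterministic bound for $\nod_{\xi-sm}(h;R)$ in terms of $\int|\partial^2 h|^q$ and $\int|h|^{1-\epsilon}|\nabla h|^{-(n-\epsilon)}$. Taking expectations then only needs single-point Gaussian moments---finiteness of $\E|\partial^2 F(0)|^q$, $\E|F(0)|^{1-\epsilon}$, and $\E|\nabla F(0)|^{-(n-\epsilon)}$---which come directly from stationarity and $(\rho 2)$--$(\rho 3)$, with no moments of counts at all. If you want to salvage your route, you would need either to add a two-point non-degeneracy axiom (weakening the result), or to replace the Cauchy--Schwarz step by an argument that never needs more than a first moment of the critical-point count, which in effect pushes you back to something like the paper's H\"older strategy.
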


To prove Lemma \ref{lem:xi-small bound} we first formulate the following auxiliary result, whose proof is going to be
given at the end of this section.

\begin{lemma}[Cf. {\cite[Lemma $13$]{So}}]
\label{eq:Nsm det}
Let $X\subseteq \R^{n}$ be a domain and $h\in C^{2}(X)$ a (deterministic) function,
and denote
\begin{equation*}
|\partial^{2}h(x)| := \max\limits_{|\alpha|=2} |\partial^{\alpha}h(x)|.
\end{equation*}
There exist numbers $q>0$, $0<\epsilon<1$, $s>0$, $c_{0}>0$, and a constant $C_{0}>0$ depending only on $n$ and $\xi_{0}$, such that
if $\xi<\xi_{0}$ is sufficiently small, then
\begin{equation}
\label{eq:Nxism<=moments}
\begin{split}
\nod_{\xi-sm}(h;R) \le C_{0} \xi^{c_{0}} &\left(\int\limits_{B(R)} |\partial^{2}h |^{q}dx\right)^{\frac{s}{s+1}}\times
\\&\times \left( \int\limits_{B(R)} |h|^{1-\epsilon} \cdot \| \nabla h \|^{-(n-\epsilon)} dx\right)^{\frac{1}{s+1}}.
\end{split}
\end{equation}
\end{lemma}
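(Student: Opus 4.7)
The plan is to establish a per-small-domain contribution estimate of the form
$$1 \le C\,\xi^{c_0(s+1)}\Big(\int_{\omega_i}|\partial^2 h|^q\,dx\Big)^{s}\int_{\omega_i}|h|^{1-\epsilon}\|\nabla h\|^{-(n-\epsilon)}\,dx$$
for each $\xi$-small nodal domain $\omega_i$, and then sum over $i$ using H\"older's inequality with conjugate exponents $(s+1)/s$ and $s+1$, exploiting the essential disjointness of the small nodal domains.

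Fix a $\xi$-small component $c$ of $h^{-1}(0)$ with adjacent small domain $\omega$ satisfying $\vol(\omega)<\xi$. Since $h\equiv 0$ on $\partial\omega$, $h$ attains an interior extremum at some $x_*\in\omega$ with $\nabla h(x_*)=0$; set $M:=|h(x_*)|$ and $\Lambda:=\|\partial^2 h\|_{L^{\infty}(\omega)}$. Taylor expanding $h$ at $x_*$ shows that $\omega$ contains a ball of radius $\gtrsim\sqrt{M/\Lambda}$ on which $|h|\ge M/2$; this yields the Faber--Krahn-type comparison
$$M\le C_{n}\Lambda\vol(\omega)^{2/n}\le C_{n}\Lambda\xi^{2/n}.$$
On a core $E\subset\omega$ centred at $x_*$ at scale $r\sim\sqrt{M/\Lambda}$, the Taylor estimate implies simultaneously $|h|\asymp M$ and $\|\nabla h\|\lesssim\sqrt{M\Lambda}$. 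Moreover, the same ellipsoidal lower bound on $\omega$ forces $\det(\mathrm{Hess}\,h(x_*))\gtrsim (M/\vol(\omega)^{2/n})^n$, so by continuity $|\partial^2 h|\gtrsim M/\vol(\omega)^{2/n}$ on a sub-neighbourhood of $x_*$ of volume comparable to $\vol(\omega)$. Integrating on $E$ then produces explicit lower bounds of the shape $M^{a_1}\Lambda^{b_1}\vol(\omega)^{d_1}$ for $\int_E|h|^{1-\epsilon}\|\nabla h\|^{-(n-\epsilon)}\,dx$ and $M^{a_2}\Lambda^{b_2}\vol(\omega)^{d_2}$ for $\int_E|\partial^2 h|^q\,dx$, with exponents linear in $\epsilon$ and $q$.

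One then chooses $(q,\epsilon,s)$ so that, in the product $\bigl(\int_E|\partial^2 h|^q\bigr)^{s}\cdot\int_E|h|^{1-\epsilon}\|\nabla h\|^{-(n-\epsilon)}$, the $M$- and $\Lambda$-exponents cancel and only a positive power of $\vol(\omega)$ survives; since $\vol(\omega)<\xi$, this produces the factor $\xi^{c_0(s+1)}$ and establishes the per-domain inequality. Summing over the $\xi$-small components in $B(R)$ --- noting that distinct small domains are pairwise disjoint as components of $\{h\neq 0\}$ and that the map $c\mapsto\omega(c)$ has multiplicity bounded by a constant depending only on $n$ --- and applying H\"older on the resulting sum recovers exactly \eqref{eq:Nxism<=moments}. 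The principal obstacle is the joint calibration of the four parameters $(q,\epsilon,s,c_0)$: the algebraic requirement that the $M$- and $\Lambda$-exponents cancel imposes a linear relation that must simultaneously be compatible with $q\ge 1$, $0<\epsilon<1$, and the local integrability near $x_*$ of the weight $|h|^{1-\epsilon}\|\nabla h\|^{-(n-\epsilon)}$ (which forces $\epsilon>0$). A secondary difficulty is the reliable construction of the core $E$: after shrinking $r$ by a constant factor to absorb $C^2$ Taylor remainders, one must verify that the pointwise bounds on $|h|$, $\|\nabla h\|$ and $|\partial^2 h|$ really hold throughout $E$, and justify the multiplicity bound for $c\mapsto\omega(c)$.
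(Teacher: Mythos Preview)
Your per-domain strategy and the final H\"older summation are the right skeleton, and they match the paper's overall plan. The gap is in your Step~5, where you claim that the ``ellipsoidal lower bound'' forces $\det(\mathrm{Hess}\,h(x_*))\gtrsim(M/\vol(\omega)^{2/n})^n$ and hence $|\partial^2 h|\gtrsim M/\vol(\omega)^{2/n}$ on a sub-region of volume comparable to $\vol(\omega)$. Neither claim holds for a general $C^2$ function. The second-order Taylor polynomial at $x_*$ defines an ellipsoid $\{(x-x_*)^T(-H)(x-x_*)<2M\}$, but that ellipsoid is contained in $\omega$ only on the scale where $\partial^2 h$ stays close to $H=\mathrm{Hess}\,h(x_*)$, and for merely $C^2$ functions that scale can be arbitrarily small compared to $\omega$. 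Concretely, in $\R^2$ take $h(x,y)=M-y^2-M(x/a)^4$: then $x_*=0$, $\mathrm{Hess}\,h(0)=\mathrm{diag}(0,-2)$ is degenerate, yet $\vol(\omega)\asymp a\sqrt{M}$ can be made as small as you like. Even granting a Hessian bound at the single point $x_*$, propagating it ``by continuity'' to a set of volume $\sim\vol(\omega)$ is not quantitative. Without a pointwise lower bound on $|\partial^2 h|$ over a set of definite measure you cannot produce a lower bound for $\int_E|\partial^2 h|^q$, and your parameter-calibration step has nothing to work with.

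The paper (following Sodin) sidesteps this entirely by reversing the direction of the estimate. Rather than bounding $|\partial^2 h|$ from below, it applies a Morrey--Sobolev inequality $\sup_{\mathbb{B}}|g-g(0)|\le C\|\nabla g\|_{L^q(\mathbb{B})}$ with $q>n$ on a ball $B$ of volume $<\xi$ centred at the critical point, first with $g=\nabla h$ and then with $g=h$. After rescaling this yields
\[
\sup_B\|\nabla h\|\le C\,\vol(B)^{1/n-1/q}\|\partial^2 h\|_{L^q(B)},\qquad
\sup_B|h|\le C\,\vol(B)^{2/n-1/q}\|\partial^2 h\|_{L^q(B)},
\]
and inverting gives pointwise upper bounds on $\|\partial^2 h\|_{L^q(B)}^{-1}$ in terms of $\|\nabla h(x)\|^{-1}$ and $|h(x)|^{-1}$. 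Raising these to powers $(n-\epsilon)$ and $(1-\epsilon)$, multiplying and integrating over $B$ produces the per-ball inequality with a positive power of $\vol(B)<\xi$ left over; no lower bound on $|\partial^2 h|$ is ever required. The disjointness of the balls and the H\"older summation then proceed exactly as you outlined.
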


\begin{proof}[Proof of Lemma \ref{lem:xi-small bound} assuming Lemma \ref{eq:Nsm det}]
We apply Lemma \ref{eq:Nsm det} with $h$ \\replaced by the random field $F$, and $X=B(R)$, and take the expectation to yield
\begin{equation*}
\begin{split}
&\E[N_{\xi-sm}(F;B(R))] \\&\le C_{1} \xi^{c_{0}}
\E\left[\left(\int\limits_{B(R)} |\partial^{2}F |^{q}dx\right)^{\frac{s}{s+1}} \cdot
\left( \int\limits_{B(R)} |F|^{1-\epsilon} \cdot \| \nabla F \|^{-(n-\epsilon)} dx\right)^{\frac{1}{s+1}}\right]
\\&\le
C_{1} \xi^{c_{0}}
\E\left[\int\limits_{B(R)} |\partial^{2}F |^{q}dx\right]^{\frac{s}{s+1}} \cdot
\E\left[\left( \int\limits_{B(R)} |F|^{1-\epsilon} \cdot \| \nabla F \|^{-(n-\epsilon)} dx\right)\right]
^{\frac{1}{s+1}},
\end{split}
\end{equation*}
by H\"{o}lder's inequality $$\E[XY] \le \E[X^{p}]^{1/p}\cdot \E[Y^{q}]^{1/q}$$ with $\frac{1}{p}+\frac{1}{q}=1$,
$p=\frac{s+1}{s}$, $q=s+1$,
and by the independence of $F(x)$ and $\nabla F(x)$ at each $x\in B(R)$, we have
\begin{equation}
\label{eq:small xic moments}
\begin{split}
&\E[N_{\xi-sm}(F;B(F))] \\ \le
C_{1} \xi^{c}
\E\left[\int\limits_{B(R)} |\partial^{2}F |^{q}dx\right]^{\frac{s}{s+1}} &\cdot
\left(\int\limits_{B(R)} \E\left[|F|^{1-\epsilon}\right]\cdot\E\left[\| \nabla F \|^{-(n-\epsilon)}\right] dx\right) ^{\frac{1}{s+1}}
\\&\le C_{2}\xi^{c}\cdot \vol(B(R)),
\end{split}
\end{equation}
by the stationarity of $F$, its smoothness and non-degeneracy, and the finiteness of all the Gaussian moments
involved in the r.h.s of \eqref{eq:small xic moments}, computing in the spherical coordinates.

\end{proof}

\begin{proof}[Proof of Lemma \ref{eq:Nsm det}]

Let $\mathbb{B}\subseteq \R^{n}$ be the unit ball, and $q=q(n)>n$ be a large constant.
There exists (\cite{EG} p. 143, Theorem 3) a constant $C_{1}=C_{1}(q)$
such that
\begin{equation}
\label{eq:Poincare Morrey}
\sup\limits_{x\in \mathbb{B}} |h(x)-h(0)| \le C_{1} \|\nabla h\|_{L^{q}(\mathbb{B})}
\end{equation}

For each of the $N_{\xi-sm}(h;X)$ small nodal domains lying in $X$, we may find a ball of volume $<\xi$, centred at a critical point
of $h$ with at least one zero at its boundary $h(x_{1})=0$ for some $x_{1}\in \partial B$;
by choosing the minimal such a ball, we may assume that it lies entirely inside the domains,
and hence that the balls corresponding to different domains are disjoint. Let $B=B_{x_{0}}(R)$ be an
arbitrary such a ball centred at $x_{0}$; its radius is
\begin{equation}
\label{eq:R<volB^1/n}
R\le C_{2}\vol(B)^{1/n}.
\end{equation}
Applying the inequality \eqref{eq:Poincare Morrey} above on $h(x_{0}+\frac{\cdot}{R})$ to transform $B$
into a unit ball, we have (since, by assumption, the ball centre $x_{0}$ is a critical point of $h$, we have
$\sup\limits_{B} \| \nabla h(x) \|  = \sup\limits_{B} \| \nabla h(x)-\nabla h(x_{0})\| $),
\begin{equation*}
R \cdot \sup\limits_{B} \| \nabla h(x) \| \le C_{3}(q)  R^{-n/q}\cdot R^{2}\| \partial^{2}h\|_{L^{q}(B)},
\end{equation*}
so that, upon recalling \eqref{eq:R<volB^1/n},
\begin{equation}
\label{eq:sup nabla f << D2}
\sup\limits_{B} \| \nabla h(x) \| \le C_{4} \vol(B)^{1/n-1/q} \| \partial^{2}h\|_{L^{q}(B)};
\end{equation}
we rewrite the latter inequality as
\begin{equation}
\label{eq:D2 << sup nabla(f)}
\| \partial^{2}h\|_{L^{q}(B)}^{-1}  \le C_{4} \vol(B)^{1/n-1/q}  \| \nabla h(x) \|^{-1},
\end{equation}
that holds for every $x\in B$.
In addition, we have
\begin{equation*}
\sup\limits_{B} \| h(x) \| \le \sup\limits_{B}\{ \| h(x)-h(x_{0}) \|\} + |h(x_{0})-h(x_{1})| \le
2\sup\limits_{B}\{ \| h(x)-h(x_{0}) \|\},
\end{equation*}
so that, scaling (and translating) the inequality \eqref{eq:Poincare Morrey} as before, we obtain
\begin{equation*}
\begin{split}
\sup\limits_{B} \| h(x) \| &\le C_{5} R \cdot R^{-n/q} \|\nabla h\|_{L^{q}(B)} \le
C_{6} R \sup\limits_{B} \| \nabla h(x) \| \\&\le C_{7}\vol(B)^{2/n-1/q} \|\partial^{2}h\|_{L^{q}(B)},
\end{split}
\end{equation*}
appealing to \eqref{eq:sup nabla f << D2} for the last inequality. As above, we choose to
rewrite the latter inequality as
\begin{equation}
\label{eq:f << sup nabla f}
\|\partial^{2}h\|_{L^{q}(B)}^{-1}\le C_{8}\vol(B)^{2/n-1/q} | h(x) |^{-1} ,
\end{equation}
for all $x\in B$.

Let $\epsilon>0$ be a small but fixed number.
We multiply \eqref{eq:D2 << sup nabla(f)} raised to the power $(n-\epsilon)$
by \eqref{eq:f << sup nabla f}
raised to the power $(1-\epsilon)$ and integrate the resulting inequality on $B$ to obtain
(note that the l.h.s. is constant)
\begin{equation*}
\|\partial^{2}h\|_{L^{q}(B)}^{-s} \vol(B) \le  C_{9} \vol(B)^{\tilde{t}}
\int\limits_{B} |h(x)|^{1-\epsilon} |\nabla h(x)|^{-(n-\epsilon)}dx,
\end{equation*}
where $s =  n+1-2\epsilon $ and $$\tilde{t}= (n-\epsilon)\left(\frac{1}{n}-\frac{1}{q}\right)+
(1-\epsilon)\left( \frac{2}{n}-\frac{1}{q} \right),$$
or, equivalently,
\begin{equation*}
\|\partial^{2}h\|_{L^{q}(B)}^{-s}  \le  C_{9} \vol(B)^{t}
\int\limits_{B} |h(x)|^{1-\epsilon} |\nabla h(x)|^{-(n-\epsilon)}dx
\end{equation*}
with $t=\tilde{t}-1$. It is easy to choose the parameters $q$ and $\epsilon>0$,
so that both $t,s>0$ are positive.

All in all the above shows that there exist positive constants $t,s>0$, $\epsilon>0$ and $C_{0}>0$,
such that if $B$ is a ball centred at $x_{0}\in \R^{n}$
of volume $$\vol(B)<\xi <\xi_{0}$$
such that $\nabla f(x_{0})=0$ and $h$ has at least one zero on the boundary $\partial B$, then
\begin{equation}
\label{eq:1<=delta^t moments}
1 \le  C_{0}\delta^{t}\cdot \left(\int\limits_{B}|\partial^{2}h |^{q}dx\right)^{s/(1+s)}\cdot
\left( \int\limits_{B}|h|^{-(1-\epsilon)}|\nabla h|^{-(n-\epsilon)}dx\right)^{1/(1+s)}
\end{equation}
For each of the $\nod_{\xi-sm}(h;X)$ nodal domains lying in $X$, we may find a ball of volume $<\xi$, centred at a critical point
of $h$ with at least one zero at its boundary; balls corresponding to different domains are disjoint.
Summing up \eqref{eq:1<=delta^t moments} for the various balls as above, and using H\"older's inequality we finally obtain
the statement \eqref{eq:Nxism<=moments} of the present lemma.
\end{proof}

\subsection{Proof of Lemma \ref{lem:stable grad bnd away}: Gradient bounded away from $0$ on most of the components}

\label{sec:prf grad bnd most comp}

\begin{proof}

First, by Kac-Rice (Lemma \ref{lem:Kac-Rice precise}) and the stationarity of $F$,
\begin{equation*}
\E[\vol(F^{-1}(0)\cap B(R))] = c_{F} \cdot R^{n};
\end{equation*}
hence, by Chebyshev, there exists a $C_{2}>0$ such that with probability $>1-\epsilon/2$ we have
\begin{equation*}
\vol(F^{-1}(0)\cap B(R)) < C_{2}\cdot R^{n}.
\end{equation*}
Therefore, with probability $>1-\epsilon/2$ the number of nodal components $\Gamma\subseteq F^{-1}(0)$
of diameter $\diam(\Gamma) > T$ is
\begin{equation}
\label{eq:T-long comp}
\nod_{\text{diam}>T}(g;R)<\frac{C_{2}}{T} \cdot R^{n},
\end{equation}
where we invoked the isoperimitric inequality.
Next, using Lemma \ref{lem:xi-small bound}, with probability $>1-\epsilon/2$ there are at most
\begin{equation}
\label{eq:xi-sm estimate}
C_{3}\cdot \xi^{c_{1}} R^{n}
\end{equation}
components that are $\xi$-small.

In light of the above we are only to deal with components $\Gamma$ of diameter $<T$ that are not $\xi$-small. Assume that $F$ is
$(\delta,\alpha,\beta)$-stable, and let $\nod_{\beta-unstable}(F;R)$ denote the number of components $\Gamma$ of $F^{-1}(0)$
that fail to satisfy $$|\nabla F|_{\Gamma}|>\beta,$$ and let $\Gamma$ be such
a component. Since $\{\Dc_{i}\}$ covers $B(R)$ there exists a ball $\Dc_{i}$ that intersects $\Gamma$;
in this case necessarily $\Gamma\subseteq \Gc_{i}$. Then the gradient $$|\nabla F(x)|_{\Gamma} |>\beta$$ is bounded away
from zero on $\Gamma$, unless $F$ is unstable on $\Gc_{i}$; the stability assumption on $F$ ensures that there are at most $\delta R^{n}$ of
such $\Gc_{i}$. Therefore the total number of components $\Gamma$ that are contained in one of the unstable $\Gc_{i}$ and
are {\em not} $\xi$-small is
\begin{equation}
\label{eq:not small not big}
\le C_{4}\cdot  \frac{T^{n}}{\xi} \cdot \delta R^{n}.
\end{equation}

Finally, we consolidate the various estimates: \eqref{eq:T-long comp} and
\eqref{eq:xi-sm estimate} (each one occurring with probability $>1-\epsilon/2$), and \eqref{eq:not small not big} to deduce that
outside an event of probability $<\epsilon$, if $F$ is $(\delta,\alpha,\beta)$-stable, then
\begin{equation*}
\nod_{\beta-unstable} \le \frac{C_{2}}{T} \cdot R^{n} + C_{3}\cdot \xi^{c_{1}} R^{n} + C_{4}\cdot  \frac{T^{n}}{\xi} \cdot \delta R^{n}
\le \eta \cdot R^{n},
\end{equation*}
where
\begin{equation*}
\eta = C_{5}\cdot \left(\frac{1}{T}+\xi^{c_{1}}+\frac{T^{n}}{\xi}\cdot \delta\right).
\end{equation*}
The constant $\eta$ may be made arbitrarily small by choosing the parameters $T>T_{0}$ sufficiently big,
$\xi$ sufficiently small,
and then $\delta$ sufficiently small, independent of $(\alpha,\beta)$. This concludes
the proof of the present lemma.

\end{proof}

\subsection{Proof of Theorem \ref{thm:scal invar no leak}, part \eqref{it:nest not leaking}}
\label{sec:nest no leak}

\begin{proof}

For every $m$ let $\Tc_{m}$ be the (finite) set of tree ends with $m$ vertices, so that
$$\Tc=\bigcup\limits_{m\ge 1}\Tc_{m}.$$
For a collection $\Sc\subseteq \Tc$ of tree ends we define $\nod_{X}(F,\Sc;R)$
to be the number of nodal components $c\in \Cc(F)$ of $F$ lying entirely in $B(R)$, whose corresponding tree end $e(c)\in \Sc$ is in $\Sc$
(up to isomorphism), in particular for $G\in\Tc$ we have $$\nod_{X}(F,G;R):=\nod_{X}(F,\{G\};R).$$
For $M\ge 1$ let $$\Sc_{M} = \bigcup\limits_{m\ge M}\Tc_{m}$$ be the collection of all tree ends with at least $M$ vertices.
Proving that the limiting measure $\mu_{X(F)}$ is a probability measure in this setup is equivalent to tightness,
i.e. that for every $\epsilon>0$ there exists an $M\gg 0$ sufficiently big so that
\begin{equation}
\label{eq:E[Sn]<epsilon}
\E[\nod_{X}(F,\Sc_{M};R)] < \epsilon\cdot R^{n}.
\end{equation}

We are going to invoke the Integral-Geometric Sandwich \eqref{eq:geom int scal invar} of Lemma \ref{lem:int geom sandwitch euclidian}
again; to this end we also define $$\nod_{X}^{*}(F,\Sc;R)$$ to be the number of those components $c\in \Cc(F)$ of $F^{-1}$
with $e(c)\in \Sc$ merely {\em intersecting} $B(R)$, and
$$\nod_{X}^{*}(F,\Sc;r,u):=\nod_{X}^{*}(F(\cdot+u),\Sc;r) $$ is the number
of components as above intersecting in a $u$-centred radius-$r$ ball.
Summing up the rhs of \eqref{eq:geom int scal invar} for all $G\in \Sc$ in this setting, we have
for every $$0<r<R$$ the upper bound
\begin{equation}
\label{eq:geom int sand upper*}
\begin{split}
\nod_{X}(F,\Sc;R) &\le \frac{1}{\vol(B(r))}\int\limits_{B(R+r)}\nod_{X}^{*}(F,\Sc;r,u)du \\&\le
\frac{1}{\vol(B(r))}\int\limits_{B(R+r)}\left(\nod_{X}(F,\Sc;r,u)+\widetilde{\Ac}(\tau_{u}F;r)\right)du
\end{split}
\end{equation}
(see Corollary \ref{cor:Kac Rice crit ball sphere}).

Now we take expectation of both sides of \eqref{eq:geom int sand upper*}.
Since by Corollary \ref{cor:Kac Rice crit ball sphere} and the stationarity of $F$, uniformly $$\E[\widetilde{\Ac}(\tau_{u}F;r)]=O_{r\rightarrow\infty}(r^{n-1})$$
with the constant involved in the `O`-notation depending only on $F$, given $\epsilon>0$,
we can choose a sufficiently big parameter $r>r_{0}(\epsilon)$ so that, after taking the expectation
of both sides, \eqref{eq:geom int sand upper*} is
\begin{equation}
\label{eq:geom int sand upper}
\E[\nod_{X}(F,\Sc;R)] \le \left(\frac{R}{r}+1\right)^{n}\cdot \E[\nod_{X}(F,\Sc;r)] + \frac{\epsilon}{2}\cdot R^{n}.
\end{equation}

Following Definition \ref{def:xi-sm def}, given a small parameter $\xi>0$ we denote $$\nod_{\xi-sm}(F;r,u)$$ to be the number of
$\xi$-small nodal components lying entirely inside $B(u,r)$.
Now, if a radius-$r$ ball $B$ contains a tree end
with at least $M$ vertices, then there exist at least $M/2$ domains of volume $$\le 2\frac{\vol B(r)}{M}$$
lying entirely in $B$.
Therefore, for the choice of the parameter
\begin{equation}
\label{eq:delta=r^2/n}
\xi = 2\vol B(r)/M,
\end{equation}
this (since, by the above and the local tree structure of the nesting graph,
we can only have as many tree roots corresponding to domains of volume
$\ge \xi$ as those domains in the subtree with volume $<\xi$) implies that
\begin{equation}
\label{eq:NTcn0<2Ndelta-sm}
\nod_{X}(F,\Sc_{M};r,u) < 2\nod_{\xi-sm}(F;r,u).
\end{equation}



On the other hand, Lemma \ref{lem:xi-small bound} states that there exist constants $c_{0},C_{0}>0$ (depending only on the law of $F$)
such that for $r\gg 0$ sufficiently big
\begin{equation}
\label{eq:E[delta-sm]<Cdelta^c r^2}
\E[\nod_{\xi-sm}(F;r,u)] =\E[\nod_{\xi-sm}(F;r)]  < C_{0}\xi^{c_{0}}r^{n}.
\end{equation}
Now given $\epsilon>0$, choose $r>r_{0}(\epsilon)$ sufficiently big so that
\eqref{eq:geom int sand upper} holds. Substituting
\eqref{eq:E[delta-sm]<Cdelta^c r^2} into \eqref{eq:NTcn0<2Ndelta-sm} and then finally into \eqref{eq:geom int sand upper}
now yields
\begin{equation*}
\E[\nod_{X}(F,\Sc_{M};R)] < (R+r)^{n}\cdot 2C_{0}\xi^{c_{0}}  + \frac{\epsilon}{2}\cdot R^{n} < \epsilon\cdot R^{n}
\end{equation*}
provided that $\xi$ is sufficiently small, which is the case if $M$ is sufficiently big \eqref{eq:delta=r^2/n}.

\end{proof}

\subsection{Proof of Lemma \ref{lem:Cheeger}: Cheeger Finiteness Theorem}

\label{sec:Cheeger proof}

\begin{proof}
The version of Cheeger's finiteness theorem given in ~\cite[Theorem 7.11 on p. 340]{Cha} states that, given numbers $d,V,\Lambda>0$,
there exists only finitely
many diffeomorphism classes of compact $(n-1)$-dimensional Riemannian manifolds $\Zf$ with diameter $\diam(\Zf) \le d$, volume
$\vol_{n-1}(\Zf)\ge V$ and whose sectional curvatures $\Lambda(x)=\Lambda_{\Zf;u,v}(x)$ corresponding to a $(u,v)$-plane
at a point $x\in \Zf$ satisfy $|\Lambda(x)|\le \Lambda$.
Here we verify the requisite conditions for this version of Cheeger's finiteness theorem. First
endow $\Zf$ with the Riemannian metric induced as a submanifold of $\R^{n}$, the latter with its standard Euclidian metric.
We need to show that $\beta,V,C$ above control the sectional curvatures (point-wise), diameter and the $(n-1)$-dimensional
volume (from below) of $\Zf$.

For the sectional curvatures one can express them in terms of $G$ and its first two derivatives. For example, for $n=3$
a classical formula ~\cite[pp. 139--140]{Sp} for the (Gauss) curvature $\Lambda$ of $\Zf$ at $x\in\Zf$ is given by
\begin{equation}
\label{eq:Lamb curv n=3}
\Lambda(x) = \frac{-\left| \begin{matrix}
H(G)(x) &\nabla G^{t}(x) \\ \nabla G(x) &0
\end{matrix}\right|}{|\nabla G(x)|^{4}},
\end{equation}
where $H(x)$ is the Hessian
\begin{equation*}
H(G)(x) = \left(\frac{\partial ^{2}G}{\partial x_{i}\partial x_{j}}   \right)_{i,j=1,2,3}.
\end{equation*}
From \eqref{eq:Lamb curv n=3} it is clear that our assumptions imply that
\begin{equation}
\label{eq:sup curv <= B(b,C)}
\sup\limits_{z\in\Zf} |\Lambda(x)| \le B(\beta,C),
\end{equation}
where $B(\beta,C)$ depends explicitly on $\beta$ and $C$.

For dimensions $n\ge 4$ there is a similar formula for the curvatures in terms of the first
and the second derivatives of $G$, the only division being by $|\nabla G(x)|$. The explicit
formula ~\cite{Am} for the Riemannian curvatures at $\Zf$ at a point $x\in\Zf$ shows that
the analogue of \eqref{eq:sup curv <= B(b,C)} is valid in any dimension. That is, the sectional
curvatures $\Lambda_{u,v}(x)$ in the $(u,v)$-plane at a point $x$ satisfy
\begin{equation*}
\max\limits_{x\in\Zf}\max\limits_{u,v} \left|\Lambda_{u,v}(x) \right| \le B_{n}(\beta,C),
\end{equation*}
where again $B_{n}(\beta,C)$ depends explicitly on $\beta$ and $C$.

To bound the diameter of $\Zf$ from above and the $\vol_{n-1}(\Zf)$ from below, we examine $\Zf$
locally near a point $x$. After an isometry of $\R^{n}$ we can assume that $x=0$ and $$ \nabla G(0)=(0,0,\ldots,\xi),$$
where by assumption $|\xi| \ge \beta$. The hypersurface $\Zf$ near $0$ is a graph of $x_{n}$ over $(x_{1},\ldots,x_{n-1})$.
So using these first $n-1$ coordinates to parameterize $\Zf$ we have its line element (first fundamental form)
\begin{equation*}
ds^{2}=\sum\limits_{i,j=1}^{n-1}g_{ij}dx^{i}dx^{j},
\end{equation*}
where
\begin{equation*}
g_{ij}=\delta_{ij}+\frac{\frac{\partial G}{\partial x_{i}}\cdot \frac{\partial G}{\partial x_{j}}}{\left(  \frac{\partial G}{\partial x_{n}} \right)^{2}},
\end{equation*}
$i=1,2,\ldots n-1$, $j=1,2,\ldots n-1$. It follows that for $\eta>0$ there is a $\gamma=\gamma(\beta,C)>0$ such that for $x=(x_{1},\ldots, x_{n-1})$
with $|x|\le \gamma$, the metric $g$ and the Euclidian metric satisfy
\begin{equation*}
(1+\eta)^{-1}E\le g(x)\le (1+\eta)E.
\end{equation*}
That is, this radius-$\gamma$ Euclidian ball is $(1+\eta)$ quasi-isometric to its image $\Zf$.
Thus this image has $(n-1)$-dimensional volume bounded from below by $c_{n-1}\gamma^{n-1}$
(with $c_{n-1}$ a dimensional constant), so that the required lower bound for $\vol_{n-1}(\Zf)$ is satisfied.

For the diameter, we cover $\Zf$ with $N$ such balls which are $(1+\eta)$ quasi-isometric to a Euclidian $(n-1)$-ball of
radius $\gamma$. We can do this in such a way so that each point of $\Zf$ is covered at most $c_{n-1}'$ times
(again, $c_{n-1}'$ depending only on $n-1$). From this it follows that $N$ is at most $c_{n-1}'\vol_{n-1}(\Zf)$,
which in turn is at most $c_{n-1}'\cdot V$. The diameter of $\Zf$ is then at most
$N(1+\eta)\gamma$, which is a quantity depending only on $V,C$ and $\beta$. With this we have all
the requirements to apply Cheeger's Theorem ~\cite[Theorem 7.11 on p. 340]{Cha}, and Lemma \ref{lem:Cheeger} follows.

\end{proof}

\section{Support of the limiting measures}

\label{sec:full supp}

Recall that $\mathfrak{g}_{\alpha}:\R^{n}\rightarrow\R$ are the isotropic Gaussian fields defined in section \ref{sec:bas conventions}.
As the spectral density of $\mathfrak{g}_{\alpha}$ satisfies axioms $(\rho 1)-(\rho 3)$,
Theorem \ref{thm:scal invar no leak} implies that the measures
$$\mu_{\Cc,n,\alpha}=\mu_{\Cc(\mathfrak{g}_{\alpha})}$$ and $$\mu_{\Cc,n,\alpha}=\mu_{X(\mathfrak{g}_{\alpha})},$$
on $H(n-1)$ and $\Tc$ respectively (Notation \ref{not:scal invar lim top nest meas}) are probability measures
satisfying \eqref{eq:cnt lim};
these are the same as in Theorem \ref{thm:main thm lim meas}, as established in section \ref{sec:glob res Riem}.
Theorem \ref{thm:top nest meas supp} below asserts that both have full support for all $n\ge 2$, $\alpha\in [0,1]$.

\begin{theorem}
\label{thm:top nest meas supp}
For $n\ge 2$, $\alpha\in [0,1]$ let $\mu_{\Cc,n,\alpha}$ and $\mu_{X,n,\alpha}$ be the limiting topology and
nesting probability measures corresponding
to $\mathfrak{g}_{\alpha}$, via Theorem \ref{thm:scal invar no leak}.

\begin{enumerate}

\item For all $n\ge 2$, $\alpha\in [0,1]$ the support of $\mu_{\Cc,n,\alpha}$ is $H(n-1)$.

\item For all $n\ge 2$, $\alpha\in [0,1]$ the support of $\mu_{X,n,\alpha}$ is $\Tc$.

\end{enumerate}

\end{theorem}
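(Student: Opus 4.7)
The overall plan is to reduce each support claim to a lower bound on the relevant expected density $c_{\Cc;\mathfrak{g}_\alpha}(H)$ or $c_{X;\mathfrak{g}_\alpha}(G)$ (see Theorem \ref{thm:scal invar count}) and then to prove positivity via a \emph{barrier argument}. More precisely, since $\mu_{\Cc,n,\alpha}(\{H\})$ is a positive multiple of $c_{\Cc;\mathfrak{g}_\alpha}(H)$, it suffices to exhibit, for each $H\in H(n-1)$ (resp.\ $G\in\Tc$), a radius $r=r(H)$ and an open neighbourhood $\mathcal{U}\subseteq C^{2}(\overline{B(r)})$ of some \emph{deterministic} ``barrier function'' $h$ whose zero set has a component diffeomorphic to $H$ (resp.\ a tree end isomorphic to $G$), cut off from $\partial B(r)$ in a topologically stable way, and such that $\prob\{\|\mathfrak{g}_\alpha - h\|_{C^{2}(\overline{B(r)})} < \epsilon\} > 0$ for some suitable $\epsilon$. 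Combined with the ergodic/Kac--Rice machinery of \S\ref{sec:scal invar exist proof} (in the form of the Integral-Geometric Sandwich and Theorem \ref{thm:ergodicity}), this gives $c_{\Cc;\mathfrak{g}_\alpha}(H) \ge \epsilon_0/\vol(B(r)) > 0$ and likewise for $c_{X;\mathfrak{g}_\alpha}(G)$.

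The barrier construction itself is elementary: any $H\in H(n-1)$ embeds in a Euclidean ball $B(r)$ as the zero set of a smooth $h:B(r)\to\R$ with $|\nabla h|>0$ on $h^{-1}(0)$ and with $h$ taking a fixed nonzero sign on a collar of $\partial B(r)$; by Thom's isotopy lemma, any $C^{1}$-perturbation $F$ with $\|F-h\|_{C^{1}(\overline{B(r)})}$ sufficiently small still has a component of its zero set diffeomorphic to $H$ lying in $B(r)$. For the nesting statement, given $G\in\Tc$ with $m$ vertices, realize $G$ as the nesting tree of a \emph{nested family of concentric round spheres} decorated with small bumps to produce branching, and repeat the same stability argument. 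What then remains is to verify that the event $\|\mathfrak{g}_\alpha - h\|_{C^{2}(\overline{B(r)})}<\epsilon$ has positive probability, which, since $\mathfrak{g}_\alpha$ is Gaussian with nondegenerate covariance kernel, is equivalent to $h$ lying in the $C^{2}$-closure of the reproducing-kernel Hilbert space of $\mathfrak{g}_\alpha$ (equivalently, in the $C^{2}$-closure of the set of functions that can be written as real/imaginary parts of $\int e(\langle x,\xi\rangle)\psi(\xi)d\nu_\alpha(\xi)$ for $\psi\in L^{2}(d\nu_\alpha)$).

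For $0\le\alpha<1$ the support of $d\nu_\alpha$ is the annulus $A_\alpha=\{\alpha\le|\xi|\le 1\}$, which has nonempty interior in $\R^{n}$. Hence the Fourier transforms of $L^{2}(A_\alpha)$-functions are dense in $C^{\infty}(K)$ on any compact $K\subseteq\R^{n}$ (e.g.\ by mollifying the Fourier transform of $h$ and cutting off to $A_\alpha$, or by an explicit Runge--Lax--Malgrange argument for the constant-coefficient PDE $\bigl(\prod_j(\Delta+\mu_j^{2})\bigr)u=0$ that cuts out ``frequencies in $A_\alpha$''). This gives the desired $C^{2}$-approximation and completes both support statements in the case $\alpha<1$.

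The main obstacle is the monochromatic case $\alpha=1$, where $\supp d\nu_{1}=S^{n-1}$ has empty interior. Almost every realization of $\mathfrak{g}_{1}$ satisfies $\Delta f + f = 0$, so only Helmholtz solutions are available as approximants, and the RKHS closure in $C^{2}$ is limited to solutions of $\Delta u + u = 0$. The plan is to: (i) establish a Runge-type density lemma saying that solutions of $\Delta u + u = 0$ on $\R^{n}$ are $C^{k}$-dense among solutions on any relatively compact simply connected domain; (ii) for $n=2$, use the explicit product eigenfunction $u(x_{1},x_{2})=\sin(x_{1}/\sqrt2)\sin(x_{2}/\sqrt2)$, whose zero set is a square chessboard; (iii) show combinatorially that small Helmholtz perturbations of $u$ can merge/split chessboard cells to produce any prescribed rooted tree end $G$, by a careful sign analysis of the perturbed zero set near the grid vertices where $u$ vanishes to second order. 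Steps (i)--(iii) together will yield a barrier $h$ that is itself a Helmholtz solution and has nesting tree containing $G$, which suffices as above. The combinatorial analysis in (iii) is the novel and most delicate point; the general $n$ case, as well as $\mu_{\Cc,n,1}$ for $n\ge 3$, is deferred to the companion paper \cite{CS}.
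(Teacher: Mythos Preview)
Your proposal is correct and follows essentially the same architecture as the paper's proof: reduce to positivity of $c_{\Cc;\gfr_\alpha}(H)$ and $c_{X;\gfr_\alpha}(G)$, establish this via a barrier/RKHS argument (the paper's Lemma~\ref{lem:F repr pos prop}), handle $\alpha<1$ by density coming from the nonempty interior of $\supp\rho$ (Proposition~\ref{prop:liminf>0 interior}), and treat $\alpha=1$, $n=2$ by perturbing the sine-product grid and analysing the resolution of its crossings, deferring $n\ge 3$ to~\cite{CS}. The one substantive difference is your step~(i): the paper does not use a Runge-type domain approximation but rather the sharper Lemma~\ref{lem:admissability CS} (density of $E_1$ in $C(K)$ for any compact $K$ of Lebesgue measure zero with connected complement), applied with $K$ the \emph{finite} set of lattice crossings, which directly produces a global $\psi\in E_1$ with prescribed signs $\psi(k)=\eta(k)$ and makes the ``engulf'' and ``join'' combinatorics (Figures~\ref{fig:engulf}--\ref{fig:join}) immediate; your Runge lemma would also work but requires the extra step of first building a local Helmholtz perturbation with the desired sign pattern.
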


To prove Theorem \ref{thm:top nest meas supp} we formulate the following couple of propositions proven below;
the former is applicable on $\mathfrak{g}_{\alpha}$ with $\alpha\in [0,1)$, whereas the latter
deals only with $\mathfrak{g}_{1}$.

\begin{proposition}
\label{prop:liminf>0 interior}
Let $F:\R^{n}\rightarrow\R$ be a stationary random field with spectral density $\rho$ satisfying axioms $(\rho 1)-(\rho 3)$,
$H\in H(n-1)$ and $G\in \Tc$. Assume that the interior of the support $\supp\rho$ of $\rho$ is non-empty. Then
\begin{equation*}
\liminf\limits_{R\rightarrow\infty}\frac{\E[\nod_{\Cc}(F,H;R)]}{\vol B(R)} >0
\end{equation*}
and
\begin{equation*}
\liminf\limits_{R\rightarrow\infty}\frac{\E[\nod_{X}(F,G;R)]}{\vol B(R)} >0
\end{equation*}
\end{proposition}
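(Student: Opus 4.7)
The plan is to adapt the ``barrier method'' of Nazarov--Sodin to the refined counts $\nod_{\Cc}(F,H;R)$ and $\nod_{X}(F,G;R)$. My argument will have four stages: (i) construct a smooth deterministic model function $u:\R^{n}\to\R$ whose zero set exhibits a structurally stable nodal component of the prescribed topological type $H$ (respectively, a stable nesting end of the prescribed combinatorial type $G$) inside $B(1)$; (ii) approximate $u$ in $C^{1}(B(1))$ by an element of the reproducing kernel Hilbert space (RKHS) of $F$, exploiting the hypothesis that $\mathrm{int}(\supp\rho)$ is non-empty; (iii) invoke a Cameron--Martin type lower bound to conclude that $F$ itself lies $C^{1}$-close to $u$ on $B(1)$ with strictly positive probability $p_{0}>0$; and (iv) average the resulting translated events via stationarity to obtain the desired asymptotic lower bound on the expected count.

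For the barrier in (i), given $H\in H(n-1)$ I would embed $H$ smoothly in $B(1/2)\subseteq \R^{n}$, take a tubular neighbourhood with signed normal coordinate $s$, and set $u$ to agree with $s$ in a smaller tubular neighbourhood and equal $\pm 1$ outside, glued by a smooth cut-off. Then $\{u=0\}\cap B(1)$ contains a component diffeomorphic to $H$ along which $|\nabla u|\ge c_{0}>0$, and by the implicit function theorem this diffeomorphism type is preserved under any perturbation of size $<\varepsilon_{0}$ in $C^{1}(B(1))$. For a prescribed tree end $G\in\Tc$, I would realize $G$ combinatorially as a system of nested topological spheres in $B(1)$ with appropriate alternating signs, and apply the same signed-distance gluing, so that both the prescribed nesting structure and the lower gradient bound $|\nabla u|\ge c_{0}$ hold uniformly along the union of the relevant components; thus the whole nesting end is again preserved under $\varepsilon_{0}$-small $C^{1}$-perturbations.

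For step (ii), since $\mathrm{int}(\supp\rho)$ contains an open ball $U$, the Paley--Wiener theorem combined with a Hahn--Banach duality argument shows that Schwartz functions whose Fourier transform is smooth and compactly supported inside $U$ are dense in $C^{\infty}(B(1))$, hence in $C^{1}(B(1))$; each such function lies in the RKHS of $F$, so I can find $h$ in the RKHS with $\|h-u\|_{C^{1}(B(1))}<\varepsilon_{0}/2$. The standard Gaussian small-ball lower bound then yields
\begin{equation*}
p_{0}:=\prob\bigl(\|F-h\|_{C^{1}(B(1))}<\varepsilon_{0}/2\bigr)>0,
\end{equation*}
and on this event, by the stability of the barrier, $F$ has a nodal component in $B(1)$ of exactly the prescribed type $H$ (respectively, of nesting type $G$). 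For step (iv) I pack $B(R)$ with $\asymp \vol(B(R))$ disjoint unit balls $B_{u_{j}}(1)$; by stationarity each contributes an expected count of at least $p_{0}$ to $\nod_{\Cc}(F,H;R)$ (respectively $\nod_{X}(F,G;R)$), so summing gives
\begin{equation*}
\E\bigl[\nod_{\Cc}(F,H;R)\bigr]\ge p_{0}\cdot c_{n}\cdot \vol(B(R))-O(R^{n-1}),
\end{equation*}
and analogously for $\nod_{X}$, which yields the claimed $\liminf>0$.

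The main obstacle I foresee is ensuring the barrier genuinely realises an arbitrary rooted tree end $G\in\Tc$ in a structurally stable way: the iterated signed-distance construction must simultaneously avoid creating spurious extra components inside the $\varepsilon_{0}$-neighbourhood, keep the gradient uniformly bounded below along the entire skeleton of nested zero sets, and preserve the outer component that cuts off the end, so that under any small $C^{1}$-perturbation the full nesting subtree remains isomorphic to $G$. Once this uniform structural stability is secured the RKHS density statement and the Gaussian shift argument transfer essentially verbatim from the component-counting arguments of~\cite{NS,So}.
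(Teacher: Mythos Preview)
Your proposal is correct and follows essentially the same route as the paper's proof. The paper factors the argument through an auxiliary lemma (Lemma~\ref{lem:F repr pos prop}) which packages your steps (iii) and (iv)---the RKHS small-ball bound and the disjoint-ball packing---and then the proof of Proposition~\ref{prop:liminf>0 interior} proper carries out your steps (i) and (ii): it builds the barrier for $H$ via a tubular neighbourhood exactly as you describe, and for the density statement it gives an explicit argument (approximating $x^{\beta}e^{2\pi i\langle\xi_{0},x\rangle}$ by Riemann sums of mollified exponentials with frequencies in the interior ball, then invoking polynomial density) rather than your Hahn--Banach duality, but the content is the same. Your worry about the tree-end barrier is legitimate but the paper treats it just as lightly as you do, remarking only that ``the argument for constructing an $H$ with a given tree end is the same''; the nested-spheres-with-signed-distance construction you sketch is indeed what is meant, and the structural stability follows from the uniform gradient lower bound along all the nested components together with the absence of other zeros in the closed region they bound.
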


\begin{proposition}
\label{prop:liminf>0 g1}
For every $n\ge 2$, $H\in H(n-1)$ and $G\in \Tc$ we have
\begin{equation*}
\liminf\limits_{R\rightarrow\infty}\frac{\E[\nod_{\Cc}(\mathfrak{g}_{1},H;R)]}{\vol B(R)} >0
\end{equation*}
and
\begin{equation*}
\liminf\limits_{R\rightarrow\infty}\frac{\E[\nod_{X}(\mathfrak{g}_{1},G;R)]}{\vol B(R)} >0
\end{equation*}
\end{proposition}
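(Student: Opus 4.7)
The plan is a ``barrier'' argument adapted to the monochromatic setting. Unlike in Proposition \ref{prop:liminf>0 interior}, the spectral measure of $\mathfrak{g}_1$ is supported on the unit sphere $S^{n-1}\subset\R^n$ and hence has empty interior, so arbitrary band-limited functions cannot be approximated inside the support of $\mathfrak{g}_1$; realizations $f$ satisfy almost surely the Helmholtz equation $\Delta f + f = 0$, and any deterministic barrier must satisfy this PDE as well. The argument proceeds in three steps.

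First I would construct, for a given $H\in H(n-1)$ (resp.\ $G\in\Tc$), a deterministic real-valued solution $h:\R^n\to\R$ of $\Delta h + h = 0$ such that on some ball $B(R_0)$ the set $h^{-1}(0)$ contains a connected component $\Sigma$ diffeomorphic to $H$ (resp.\ a component $c$ whose tree end $e(c)$ is isomorphic to $G$), lying entirely inside $B(R_0)$, with $|\nabla h|>0$ on $\Sigma$ (structural stability). The strategy is to first build an arbitrary smooth model $\tilde h$, unconstrained by any PDE, exhibiting the prescribed nodal topology or nesting on a compact set $K\subset B(R_0)$, and then to approximate $\tilde h$ in $C^1(K)$ by an entire solution of $\Delta h + h = 0$ via a Runge-type approximation theorem for the operator $\Delta + 1$ on $\R^n$. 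The implicit function theorem then guarantees that a sufficiently good $C^1$-approximant inherits the desired topology/nesting class.

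Second I would show that $h$ lies in the support of $\mathfrak{g}_1$ in $C^1(\overline{B(R_0)})$. The field $\mathfrak{g}_1$ is a centred Gaussian whose covariance $\widehat{\nu_1}(x-y)$ is real-analytic and positive-definite on the space of Helmholtz eigenfunctions on $\R^n$, and finite linear combinations of the basis $\{C_j,S_j\}$ of \eqref{eq:f sim sum ajCj+bjSj} are dense in this space (equivalently, spherical harmonic expansions on $S^{n-1}$ are dense in $L^2(S^{n-1},\nu_1)$). Hence every $C^1(\overline{B(R_0)})$-neighbourhood of $h$ has strictly positive $P_{n,1}$-mass, and combined with the stability from the previous step this yields a constant $p=p(H,R_0)>0$ (resp.\ $p=p(G,R_0)>0$) such that
\begin{equation*}
\prob\bigl\{\nod_{\Cc}(\mathfrak{g}_1,H;R_0,0)\ge 1\bigr\}\ge p,\qquad \prob\bigl\{\nod_{X}(\mathfrak{g}_1,G;R_0,0)\ge 1\bigr\}\ge p.
\end{equation*}
Now applying the lower half of the Integral-Geometric Sandwich (Lemma \ref{lem:int geom sandwitch euclidian}) with $r=R_0$, together with the stationarity of $\mathfrak{g}_1$ to identify $\E[\nod(\mathfrak{g}_1,\cdot;R_0,u)]$ with $\E[\nod(\mathfrak{g}_1,\cdot;R_0)]\ge p$, gives
\begin{equation*}
\E[\nod_{\Cc}(\mathfrak{g}_1,H;R)]\ \ge\ \frac{\vol B(R-R_0)}{\vol B(R_0)}\cdot p,
\end{equation*}
and the analogous inequality for $\nod_X$. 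Dividing by $\vol B(R)$ and letting $R\to\infty$ yields the required positive liminf.

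The main obstacle is step one: producing a global Helmholtz eigenfunction that exhibits a prescribed topology class or tree end inside a fixed compact set. The PDE is rigid --- one cannot prescribe $h$ locally at will --- so the required Runge-type density of entire Helmholtz eigenfunctions in $C^1$ on compacta is a non-trivial approximation result. For $n=2$ this is carried out via the dedicated auxiliary lemmas of \S\ref{sec:full supp}, where the nesting case is further handled by an explicit combinatorial analysis of small perturbations of the product eigenfunction $\sin(x_1)\sin(x_2)$, a grid-like template from which arbitrary finite tree ends can be extracted by local surgeries. The general case for $n\ge 3$ is deferred to the companion paper \cite{CS}.
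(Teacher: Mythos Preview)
Your reduction in Steps~2--3 to the existence of a single deterministic barrier $h\in\Fc_{\gfr_1}$ with the prescribed nodal topology or nesting and nonvanishing gradient is exactly the content of Lemma~\ref{lem:F repr pos prop}, and the packing/stationarity argument you sketch there matches the paper's.

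The formulation of Step~1, however, is off. You propose to build an arbitrary smooth model $\tilde h$ and then approximate it in $C^1(K)$ by a Helmholtz eigenfunction via a Runge-type theorem. This cannot work once $K$ has nonempty interior: even $C^0$ limits on an open set of solutions of $\Delta u+u=0$ are again solutions, so a generic $\tilde h$ is out of reach --- the paper states this obstruction explicitly at the start of the $\alpha=1$ discussion. The actual approximation input (Lemma~\ref{lem:admissability CS}) is much weaker: density in $C(K)$ only for $K$ compact of \emph{measure zero} with connected complement, and in the proof it is invoked only with $K$ a \emph{finite} set of lattice points, to prescribe the signs of a perturbation $\psi$ at the singular crossings of the explicit eigenfunction $\phi=\sin(\pi x_1)\sin(\pi x_2)$. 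The desired tree end is then obtained not by approximating a prebuilt $\tilde h$, but by a combinatorial analysis (the ``engulf'' and ``join'' operations) of how those crossings resolve under $\phi+\epsilon\psi$. You do describe this correctly in your final paragraph, and you correctly defer $n\ge 3$ to \cite{CS}; but your Step~1 as written suggests a direct $C^1$ Runge route that is simply not available here and is not what the paper does.
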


\begin{proof}[Proof of Theorem \ref{thm:top nest meas supp} assuming propositions \ref{prop:liminf>0 interior} and
\ref{prop:liminf>0 g1}]

Recall that \newline $\mu_{\Cc(\mathfrak{g}_{\alpha})}$ and $\mu_{X(\mathfrak{g}_{\alpha})}$ are as in
Notation \ref{not:scal invar lim top nest meas}. Propositions \ref{prop:liminf>0 interior} and
\ref{prop:liminf>0 g1} imply that the numbers $$\{c_{\Cc,\mathfrak{g}_{\alpha}}(H)\}_{H\in H(n-1)}$$
and $$\{c_{X,\mathfrak{g}_{\alpha}}(G)\}_{G\in \Tc}$$ are all positive for either $\alpha\in [0,1)$ or $\alpha=1$ respectively.

\end{proof}

\subsection{Towards the proof of propositions \ref{prop:liminf>0 interior} and
\ref{prop:liminf>0 g1}}

Here we formulate a result (Lemma \ref{lem:F repr pos prop} below)
asserting that if it is possible to represent a certain topology or nesting at all for a random field $F$,
then it will be represented by a positive proportion of components of $F$. First a bit of notation.

\begin{notation}

\begin{enumerate}

\item Let $\widehat{\Sigma}\subseteq\R^{n}$ be a symmetric set (i.e. invariant w.r.t. $\xi\mapsto -\xi$). We define
the space of $\widehat{\Sigma}$-band limited real-valued functions
\begin{equation}
\label{eq:Fcrho fin sum def}
\Fc_{\widehat{\Sigma}} =
\left\{ h(x)=\sum\limits_{\substack{\xi\in\widehat{\Sigma}\\ \text{finite}}} c_{\xi}e^{2\pi i \langle \xi,x\rangle}: \:
\forall\xi\in\widehat{\Sigma}.\, c_{-\xi}=\overline{c_{\xi}} \right\}
\end{equation}
of functions on $\R^{n}$.

\item Let $F:\R^{n}\rightarrow\R$ be a Gaussian field, and $\rho$ its spectral measure. Denote
$$\Fc_{F} := \Fc_{\supp{\rho}}, $$ where $\supp{\rho}$ is the support of $\rho$.

\end{enumerate}

\end{notation}

\begin{lemma}

\label{lem:F repr pos prop}

Let $F:\R^{n}\rightarrow\R$ be a smooth Gaussian field, and $H\in H(n-1)$ (resp. $G\in\Tc$)
such that there exists a
ball $D\subseteq \R^{n}$ and a (deterministic) function $h \in \Fc_{F}$
with a nodal component $c \in \Cc(h)$, $c\cong H$ (resp. $e(c) \cong G$)
lying entirely in $D$, and, in addition, $\nabla h$ does not vanish on $h^{-1}(0)\cap D$.
Then
\begin{equation*}
\liminf\limits_{R\rightarrow\infty}\frac{\E\left[\nod_{\Cc}(F,H;R)\right]}{\vol(B(R))} > 0
\end{equation*}
(resp.
$$\liminf\limits_{R\rightarrow\infty}\frac{\E[\nod_{X}(F,G;R)]}{\vol(B(R))}> 0).$$
\end{lemma}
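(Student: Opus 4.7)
The plan is to combine a support theorem for the Gaussian measure of $F$ on $C^{1}$ with a structural-stability argument for the nodal set of $h$, and then produce many instances via stationarity and disjoint tiling of $B(R)$.

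First I would show that $h$ lies in the $C^{1}(K)$-closure of the reproducing kernel Hilbert space
$$\Hc_{F} = \left\{h_{\phi}(x) = \int_{\R^{n}} e^{2\pi i \langle\xi,x\rangle}\phi(\xi)\,d\rho(\xi) \;:\; \phi\in L^{2}(\rho)\right\}$$
for any compact $K\supset\overline{D}$. For each frequency $\xi_{0}\in\supp\rho$ appearing in $h$, pick shrinking symmetric neighborhoods $U_{m}\downarrow\{\pm\xi_{0}\}$ with $\rho(U_{m})>0$ (nonempty because $\xi_{0}\in\supp\rho$); then $h_{\mathbf{1}_{U_{m}}/\rho(U_{m})}$ converges to $e^{2\pi i\langle\xi_{0},\cdot\rangle}$ uniformly on $K$ together with any fixed number of derivatives, the moment $\int\|\xi\|^{r}\,d\rho<\infty$ supplied by axiom $(\rho 2)$ providing the uniform control on differentiation under the integral. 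By linearity $h\in\overline{\Hc_{F}}^{C^{1}(K)}$. Axiom $(\rho 2)$ also guarantees that $F|_{K}$ induces a centered Gaussian measure on the separable Banach space $C^{1}(K)$, so the support theorem for Gaussian measures (support equals the $C^{1}(K)$-closure of the Cameron--Martin space) yields, for every $\delta>0$,
$$p := \prob\{\|F-h\|_{C^{1}(K)}<\delta\}>0.$$

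Next, the non-vanishing of $\nabla h$ on $h^{-1}(0)\cap D$ makes this zero set a transversal compact $(n-1)$-submanifold of $D$. A standard application of the implicit function theorem (or Thom's isotopy lemma on a tubular neighborhood of $h^{-1}(0)\cap D$) furnishes $\delta>0$ such that any $g\in C^{1}(K)$ with $\|g-h\|_{C^{1}(K)}<\delta$ has $g^{-1}(0)\cap D$ ambient $C^{1}$-isotopic to $h^{-1}(0)\cap D$ through an isotopy supported in $D$. Applied to $g=F$ on the event of probability $p$, this produces a nodal component $c_{F}\subset D$ of $F$ diffeomorphic to $H$; since the isotopy is supported in $D$, the collection of components and domains of $F$ nested inside $c_{F}$ is in bijection with the corresponding collection for $h$ inside $c$, so $e(c_{F})\cong G$.

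Finally, fix $\rho_{0}=\diam K$ and pack $B(R-\rho_{0})$ with $N(R)\sim c_{n}\,\vol(B(R))/\vol(K)$ disjoint translates $\{K+u_{j}\}_{j\le N(R)}$. By stationarity of $F$, each event $E_{j}:=\{\|\tau_{u_{j}}F - h\|_{C^{1}(K)}<\delta\}$ has probability $p$, and on $E_{j}$ the field $F$ has a component of type $H$ (resp.\ nesting $G$) inside the disjoint ball $D+u_{j}\subset B(R)$. Linearity of expectation therefore yields
$$\E[\nod_{\Cc}(F,H;R)] \;\ge\; p\cdot N(R) \;\ge\; c_{n}'\, p\, \vol(B(R))/\vol(K),$$
and the same lower bound holds with $\nod_{X}(F,G;R)$ in place of $\nod_{\Cc}(F,H;R)$; dividing by $\vol(B(R))$ produces the claimed positive $\liminf$. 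The main technical obstacle is the first paragraph: carefully verifying that $L^{2}(\rho)$-approximation of exponentials translates to $C^{1}$-approximation on $K$ (using $(\rho 2)$), and correctly invoking the support theorem for the Gaussian measure of $F$ on $C^{1}(K)$; once this is secured, the topological stability and tiling steps are essentially routine.
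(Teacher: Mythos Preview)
Your proof is correct and follows the same three-step route as the paper: establish that $h$ lies in the $C^{1}$-support of the law of $F$ via the reproducing kernel Hilbert space, invoke structural stability of the nodal set under small $C^{1}$ perturbations, and then tile $B(R)$ with disjoint translates using stationarity and linearity of expectation. Your handling of the first step is in fact more careful than the paper's terse assertion that $\Fc_{F}\subseteq\Hc(\rho)$, which is not literally true when $\rho$ is atomless---what is actually needed, and what you supply, is that $\Fc_{F}$ lies in the $C^{1}(K)$-closure of $\Hc(\rho)$.
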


\begin{proof}[Proof of Lemma \ref{lem:F repr pos prop}]

Let $r_{0}>0$ be the radius of $D$.
We claim that the assumptions of the present
lemma imply that for some $r_{0}>0$, the expected number of nodal components of type $H$ inside a radius $r_{0}$ ball is
\begin{equation}
\label{eq:E[nodC(F,H;r)>0]}
\E[\nod_{\Cc}(F,H;r_{0})]>0,
\end{equation}
and
\begin{equation*}
\E[\nod_{X}(F,G;r_{0})]>0.
\end{equation*}
With \eqref{eq:E[nodC(F,H;r)>0]} established, we may find $$I>\kappa(r_{0}) R^{n}$$ radius-$r_{0}$ disjoint balls
$$\{B(x_{i},r_{0})\}_{i\le I},$$ so that
$$\nod_{\Cc}(F,H;R) \ge \sum\limits_{i=1}^{I} \nod_{\Cc}(F,H;x_{i},r_{0}).$$
Hence, by the linearity of the expectation and the translation invariance of $F$ we have
\begin{equation*}
\E[\nod_{\Cc}(F,H;R)] \ge \sum\limits_{i=1}^{I}\E[\nod_{\Cc}(F,H;x_{i},r_{0})]
=I\cdot \E[\nod_{\Cc}(F,H;r_{0})],
\end{equation*}
and therefore
\begin{equation*}
\liminf\limits_{R\rightarrow\infty}\frac{\E[\nod_{\Cc}(F,H;R)]}{R^{n}} \ge \kappa(r_{0})\cdot \E[\nod_{\Cc}(F,H;r_{0})]>0,
\end{equation*}
and the same holds for $\nod_{X}(F,G;R)$.

\vspace{2mm}

Now to see \eqref{eq:E[nodC(F,H;r)>0]} let $\Hc(\rho)$ be the reproducing kernel Hilbert space, i.e.
$$\Hc(\rho)=\Fc(L^{2}(\rho)),$$ the image under Fourier transform of the space of square summable
Hermitian functions
$$g:\R^{n}\rightarrow\R$$ with
\begin{equation}
\label{eq:seminorm rho def}
\| g\|_{L^{2}(\rho)}:=\int\limits_{\R^{n}}|g(x)|^{2}d\rho(x)<\infty,
\end{equation}
$g(-x)=\overline{g(x)}$, endowed with the inner product
$$\langle \widehat{g_{1}},\widehat{g_{2}} \rangle_{\Hc(\rho)} = \langle g_{1},g_{2}\rangle_{L^{2}(\rho)}$$
(see section \ref{sec:bas conventions}). Let $\{ e_{k}\}_{k\ge 1}$ be any orthonormal basis of $\Hc(\rho)$,
so that for every $g\in \Hc(\rho)$ we have the equality
\begin{equation}
\label{eq:g=Fourier conv Hilbert}
g = \sum\limits_{k=1}^{\infty}\langle g, e_{k}\rangle \cdot e_{k},
\end{equation}
with the series on the r.h.s. of \ref{eq:g=Fourier conv Hilbert} converging in $\Hc(\rho)$; the equality \eqref{eq:g=Fourier conv Hilbert}
is $\Hc(\rho)$, i.e. modulo the equivalence relation induced by $\|\cdot \|_{L^{2}(\rho)} = 0$ with the semi-norm \eqref{eq:seminorm rho def}.

Since by the axiom $(\rho 2)$
(equivalent to the a.s. smoothness of $F$), as $k\rightarrow\infty$, the $\{e_{k}(x)\}$
are sufficiently rapidly decaying uniformly on compact subsets of $\R^{n}$, the equality \eqref{eq:g=Fourier conv Hilbert} also holds
in $C^{m}(B)$, where $m\ge 1$ and $B\subseteq \R^{n}$ is an arbitrary compact domain.
We may write
\begin{equation}
\label{eq:F=sum xik ek}
F(x)=\sum\limits_{k=1}^{\infty}\xi_{k}e_{k}(x)
\end{equation}
with $\{\xi_{k}\}_{k\ge 1}$ i.i.d. standard Gaussians. While the series on the r.h.s. of \eqref{eq:F=sum xik ek}
a.s. does not converge in the Hilbert space $\Hc(\rho)$, by the
aforementioned uniform rapid decay of $e_{k}$ on compacta, the series on the r.h.s. of
\eqref{eq:F=sum xik ek} converges uniformly on compacta, together with all the derivatives, i.e. here
we can differentiate the equality \eqref{eq:F=sum xik ek} term-wise.

Now, given a function $h\in \Fc_{F}$ and a ball $D\subseteq\R^{n}$ as appear in Lemma \ref{lem:F repr pos prop},
and $\epsilon>0$, using a standard mollifier, we may find an element $g\in \Hc(\rho)$ of the Hilbert space such that
\begin{equation}
\label{eq:||h-g||<eps/2}
\|h-g\|_{C^{1}(D)}<\frac{\epsilon}{2}.
\end{equation}
Taking into account the rapid decay of $\{e_{k}\}$
on $D$, and comparing \eqref{eq:g=Fourier conv Hilbert} to the representation \eqref{eq:F=sum xik ek}, we obtain that
\begin{equation*}
\prob(\|F-g \|_{C^{1}(D)}<\epsilon/2)>0,
\end{equation*}
and, combining it with \eqref{eq:||h-g||<eps/2}, finally
\begin{equation}
\label{eq:|F-h|C1<eps}
\prob(\|F-h \|_{C^{1}(D)}<\epsilon)>0.
\end{equation}

Let $c\in\Cc(h)$ be as in the assumptions of Lemma \ref{lem:F repr pos prop}. Since $\nabla h$ does not vanish on $c$, by an application
of the standard Morse theory, any sufficiently small $C^{1}$-perturbation of $h$ would admit a nodal component
diffeomorphic to $c$ (that is, of diffeomorphism class $H$), still lying in $D$. In other words,
there exists an $\epsilon_{0}>0$, such that if for some smooth function $g$ defined on $D$ we have
$\|g-h \|_{C^{1}(D)}<\epsilon_{0}$, then there exists a component $c'\in\Cc(g)$, $c'\cong c\cong H$,
$c'$ is lying in $D$. An application of \eqref{eq:|F-h|C1<eps} with $\epsilon=\epsilon_{0}>0$ as above
yields that the probability of $F^{-1}(0)$ containing a nodal component $c\cong H$ diffeomorphic to $H$ lying in $D$
is strictly positive, which, in its turn certainly implies \eqref{eq:E[nodC(F,H;r)>0]},
sufficient for the conclusion of the present lemma.

\end{proof}

\subsection{Proof of Proposition \ref{prop:liminf>0 interior}}

\label{sec:liminf>0 inter prf}

\begin{proof}

According to Lemma \ref{lem:F repr pos prop} it suffices to produce a $C^{2}$-function $h$ in $\Fc_{F}$ with the required properties.
We are assuming that $\supp{\rho}$ has non-empty interior (specifically for $F=\gfr_{\alpha}$, $0\le \alpha <1$).
We first show that in this case the restriction of $\Fc_{F}$ to $\overline{B}$, where $B$ is a ball centred at $0$
and of some (finite) radius, and $m\ge 0$, is dense in $C^{m}(\overline{B})$. Let $B(\xi_{0},r_{0})$ be an open ball
contained in $\supp\rho$, and let $\phi$ be a smooth non-negative function supported in $B(0,1)$ with
$$\int\limits_{\R^{n}}\phi(\xi)d\xi = 1.$$

For $0<\epsilon\le 1$ and $\beta$ a multi-index the function
\begin{equation*}
\frac{\partial^{\beta}}{\partial\xi^{\beta}} \left[ \frac{1}{(r_{0}\epsilon)^{n}}
\phi\left( \frac{\xi-\xi_{0}}{\epsilon r_{0}} \right)   \right]
\end{equation*}
are supported in $B(\xi_{0},r_{0})$. Now
\begin{equation}
\label{eq:phi exp deriv}
\begin{split}
&\int\limits_{\R^{n}}\frac{\partial^{\beta}}{\partial \xi^{\beta}}
\left[ \frac{1}{(r_{0}\epsilon)^{n}} \phi\left( \frac{\xi-\xi_{0}}{\epsilon r_{0}}  \right) \right]
e^{2\pi i \langle\xi,x \rangle} d\xi \\&
=(-1)^{\sum \beta_{j}} (2\pi x_{1})^{\beta_{1}}\cdot \ldots\cdot (2\pi x_{n})^{\beta_{n}}
\int\limits_{\R^{n}}\frac{1}{(r_{0}\epsilon)^{n}} \phi\left( \frac{\xi-\xi_{0}}{\epsilon r_{0}}  \right)
e^{2\pi i \langle\xi,x \rangle} d\xi.
\end{split}
\end{equation}
As $\epsilon\rightarrow 0$ the rhs of \eqref{eq:phi exp deriv} converges to
\begin{equation}
\label{eq:lim func 2pi xi}
(-1)^{\sum \beta_{j}} (2\pi x_{1})^{\beta_{1}}\cdot \ldots\cdot (2\pi x_{n})^{\beta_{n}}\cdot e^{2\pi i \langle\xi_{0},x \rangle},
\end{equation}
uniformly on compacta in $x$.

From the above it follows by replacing the integral on the r.h.s. of \eqref{eq:phi exp deriv}
by Riemann sums that the functions in \eqref{eq:lim func 2pi xi} can be approximated uniformly on $\overline{B}$
by elements of $\Fc_{F}$. In fact, the same holds in $C^{m}(\overline{B})$ for any fixed $m\ge 0$.
On the other hand, it is well known that finite linear combinations of $x^{\beta}$, that is polynomials, are dense in
$C^{m}(\overline{B})$.

With this the required $h$ can be found as follows. Given an $H\in H(n-1)$ we can construct a tubular neighbourhood of $H$
in $\R^{n}$ (we first realize $H$ as differentiably embedded by definition), and then a $C^{2}$-extension $f$ to $\R^{n}$,
such that $V(f)=H$ and $\nabla f \ne 0$ on $H$. Now apply the approximation above to obtain an $h\in\Fc_{F}$ for which
$V(h)$ has a nonsingular component diffeomorphic to $H$. The argument for constructing an $h$ with a given tree end is the same.
This completes the proof of Proposition \ref{prop:liminf>0 interior}.

\end{proof}

\subsection{Proof of Proposition \ref{prop:liminf>0 g1} for $n=2$: monochromatic waves attain all nesting trees}

In the case that $\alpha = 1$ it is no longer true that the restrictions of the functions in
$$E_{1} :=\Fc_{\gfr_{1}} $$ are dense in $C^{m}(\overline{B})$. In fact, any member of
$\Fc_{\gfr_{1}}$ satisfies $$\Delta u + u = 0,$$ and hence any uniform limit of such functions will
satisfy the same equation. Now for $\alpha=1$ and $n=2$, $H(1)$ consists of a single point
and the only issue, as discussed in ~\cite{NS}, in proving that their constant $\beta_{2,1}$
is positive, is to produce one function in $E_{1}$ with a nonsingular component. As they note
the $J$-Bessel function does the job. What remains for $n=2$ is the case of tree ends and to show
that we can find an $h\in E_{1}$ with a given tree end. The construction is in two steps.
First we need a modified Approximation Lemma for restriction of $E_{1}$ functions to finite sets;
this result follows from the general result in ~\cite{CS},
necessary for dealing with the higher dimensional cases,
but for dimension $2$ and finite sets $K$, one can give a simple proof. The one given
below was suggested by the referee.

\begin{lemma}[~\cite{CS}]
\label{lem:admissability CS}
If $K\subseteq \R^{2}$ is finite, then the restrictions of functions in $E_{1}$ to $K$
attain the whole of $C(K)$.
\end{lemma}

\begin{figure}[ht]
\centering
\ifanswers
\includegraphics[height=60mm]{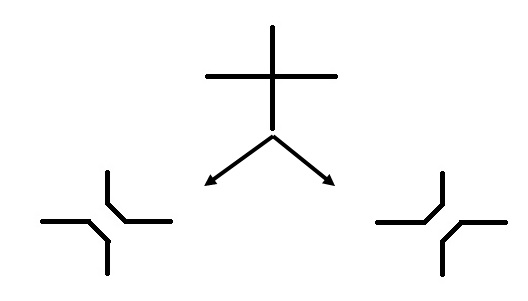}
\fi
\caption{A singularity resolution.}
\label{fig:singularity resolution}
\end{figure}

\begin{figure}[ht]
\centering
\ifanswers
\includegraphics[height=40mm]{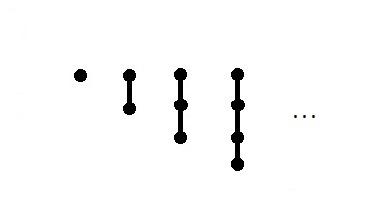}
\fi
\caption{Chain tree ends.}
\label{fig:chain nesting}
\end{figure}

\begin{proof}

By linear algebra, the statement of Lemma \ref{lem:admissability CS} is equivalent to
showing that there are no non-trivial linear relations between point evaluations at different points. Suppose that
\begin{equation}
\label{eq:sum ak f(xk)=0}
\sum\limits_{k=1}^{n}a_{k}f(x_{k}) = 0
\end{equation}
is satisfied for every $f\in E_{1}$, for some $\{a_{k}\}_{1\le k\le n}\subseteq \R$. Now define the function $A(\cdot):\R^{2}\rightarrow\R$ as
\begin{equation*}
A(\xi):= \sum\limits_{k=1}^{n}a_{k}e^{2\pi i \langle \xi, x_{k}\rangle};
\end{equation*}
$A(\cdot)$ naturally extends to an analytic function $A:\C^{2}\rightarrow\C$.
Recalling the standard notation $e(y)=e^{2\pi i y}$,
since, by the definition of $E_{1}$, the equality \eqref{eq:sum ak f(xk)=0} holds for every function of the form
\begin{equation}
\label{eq:f monochrom fin}
f(x) = \sum\limits_{\substack{\xi\in \Sc^{1}\\ \text{finite}}} c_{\xi}e(\langle \xi,x\rangle),
\end{equation}
for some $\{c_{\xi}\} \subseteq \C$ satisfying $c_{-\xi}=\overline{c_{\xi}}$, by appropriately choosing $f$'s of the form \eqref{eq:f monochrom fin}
we may deduce that $A(\cdot)$ vanishes on the unit circle $\Sc^{1}\subseteq \R^{2}$.

In what follows we show that $A$ is the zero function on $\C^{2}$,
sufficient to yield the statement of Lemma \ref{lem:admissability CS}. First
consider the connected curve $$\Cc=\{\xi= (\xi_{1},\xi_{2})\in\C^{2}: \xi_{1}^{2}+\xi_{2}^{2}=1\};$$ since the function
$A$ is analytic on $\C$ and vanishes on $\Sc^{1}=\Cc\cap \R^{2}$, it must vanish on the whole of $\Cc$.
For $1\le k\le n$ we write $x_{k}=(b_{k},c_{k}) \in \R^{2}$, and with no loss of generality we may assume that all the $b_{k}$ are
distinct (otherwise we rotate the plane), and that $\max\limits_{k\le n}b_{k}=b_{n}$. Now choose
$\mu\in\R$, take $\xi=(-i\mu,\sqrt{1+\mu^{2}})\in \Cc$, and let $\mu\rightarrow \infty$. As, by above, $A(\xi)=0$ on $\Cc$,
we have
\begin{equation*}
\begin{split}
0&=A(\xi) = \sum\limits_{k=1}^{n}a_{k}e^{2\pi \mu b_{k}} \cdot e( \sqrt{1+\mu}c_{k}) \\&=
(1+o_{\mu\rightarrow \infty}(1)) a_{n}e^{2\pi \mu b_{n}} e( \sqrt{1+\mu}c_{n}).
\end{split}
\end{equation*}
This certainly implies that $a_{n}=0$, and, continuing by induction, we may conclude that all the $a_{k}=0$ must vanish,
as claimed above.

\end{proof}

To prove Proposition \ref{prop:liminf>0 g1} we will apply Lemma \ref{lem:admissability CS}. To produce our $h$ (this being the second step below) we perturb a specific function $\phi(x_{1},x_{2})$ in $E_{1}$:
\begin{equation}
\label{eq:phi sinsin def}
\phi(x_{1},x_{2}) = \sin(\pi x_{1})\cdot \sin(\pi x_{2}).
\end{equation}

\begin{proof}[Proof of Proposition \ref{prop:liminf>0 g1}, $n=2$]

For any finite $K\subseteq\Z^{2}$ and $$\eta:K\rightarrow \{-1,1\}$$ we can find $\psi\in E_{1}$ such that
\begin{equation}
\label{eq:psi(k)=eta(k) signs}
\psi(k)=\eta(k)
\end{equation}
for every $k\in K$.
For $\epsilon>0$ sufficiently small
the function
\begin{equation}
\label{eq:phi+eps*psi}
\varphi(x_{1},x_{2})=\varphi_{\phi,\psi;\epsilon}(x_{1},x_{2}):=\phi(x_{1},x_{2})+\epsilon\cdot\psi(x_{1},x_{2}),
\end{equation}
with $\phi$ given by \eqref{eq:phi sinsin def}
will have its nodal lines
in a big compact ball containing $K$ close to those of $\phi$. The manner in which the simple crossing
in Figure \ref{fig:singularity resolution}, above, of the nodal lines at each $k\in K$
will resolve for $\epsilon$ small, that is into one of those in Figure \ref{fig:singularity resolution} below will
depend on the sign of $\psi(k)$ (and the sign of $\phi(k)$). In what follows we show that by prescribing the signs of $\psi$
at the cross points it is possible to impose that the function \eqref{eq:phi+eps*psi} attains a given $G\in\Tc$, for $\epsilon>0$ sufficiently
small.

\begin{figure}[ht]
\centering
\ifanswers
\includegraphics[height=80mm]{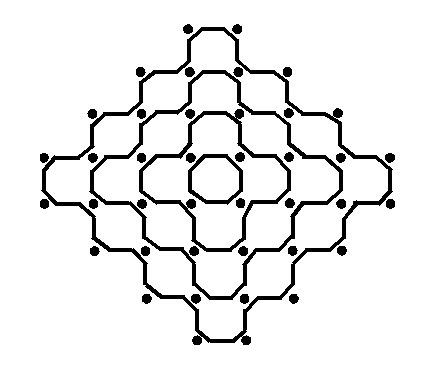}
\fi
\caption{Chain of length $4$ implementation.}
\label{fig:chain implement}
\end{figure}

\begin{figure}[ht]
\centering
\ifanswers
\includegraphics[height=20mm]{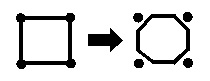}
\fi
\caption{Creating the trivial tree.}
\label{fig:perturbed square}
\end{figure}

More precisely, we prove by induction on $m\ge 1$ the following statement: for every $G\in\Tc$ with $m$ vertices there exist
a finite $K\subseteq \Z^{2}$ and a selection of signs $\{\eta_{k}\}_{k\in K}$, and a compact domain $D\subseteq \R^{2}$, so that
prescribing the signs \eqref{eq:psi(k)=eta(k) signs} for $\psi$ on $K$ yields, for $\epsilon>0$ sufficiently small, a tree end of
$\varphi$ (as in \eqref{eq:phi+eps*psi}) restricted to $D$, isomorphic to $G$.
First we build any end of the ``chain" form, as in Figure \ref{fig:chain nesting}; that includes the induction basis $m=1$
(the trivial tree as in Figure \ref{fig:perturbed square}). As it is obvious, this is
clearly possible in view of the picture in Figure \ref{fig:chain implement}, where our chain is
grown with the set of $k$'s involved highlighted.

Now we assume by induction hypothesis that all the tree ends $G\in \Tc$ with $m<M$ vertices could be attained (in the sense
of the induction statement above), and we are going to prove now that the same is true for $G\in \Tc$ with $M$ vertices.
To this end we introduce two operations: engulf and join, that would be instrumental in order for ``constructing" $G$
from trees with smaller number of vertices (that is, prescribing the appropriate signs via \eqref{eq:psi(k)=eta(k) signs}, provided that
the corresponding signs were readily constructed for smaller trees). These operations are carried out on certain figures connected to
finite subsets, $K\subseteq\Z^{2}$ and can be achieved by choosing $\{\eta(k)\}_{k\in K}$ suitably.

\begin{figure}[ht]
\centering
\ifanswers
\includegraphics[height=60mm]{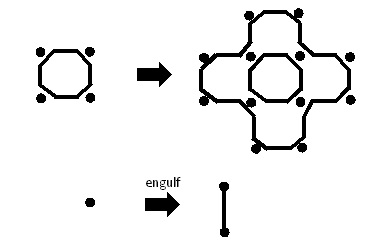}
\fi
\caption{Engulfing a single square (above). The corresponding trees are exhibited below.}
\label{fig:engulf}
\end{figure}

\begin{figure}[ht]
\centering
\ifanswers
\includegraphics[height=60mm]{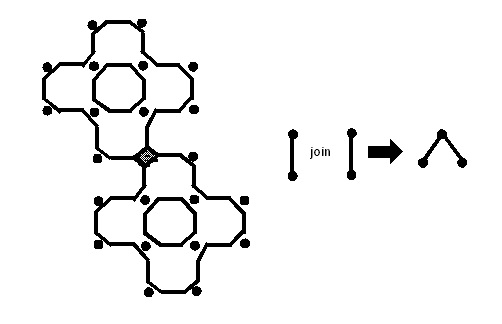}
\fi
\caption{Joining two figures. The corresponding trees are exhibited to the right.}
\label{fig:join}
\end{figure}

Start with the box of $4$ lattice points (see Figure \ref{fig:perturbed square}, to the left), which by choosing $\eta(k)$
to be suitable $\pm 1$ at the vertices yields the picture in Figure \ref{fig:perturbed square} to the right,
represented by a single point in $\Tc$ (the leftmost tree in Figure \ref{fig:chain nesting}). Engulf is the
operation of enclosing the figure at hand with one new oval. This is done by choosing (uniquely) the squares
on the boundary of the figure as indicated in the picture in Figure \ref{fig:engulf}.
The join is done by taking two figures, and joining the right lowest corner of one to the left highest corner of
the other, as in Figure \ref{fig:join}.

\begin{figure}[ht]
\centering
\ifanswers
\includegraphics[height=30mm]{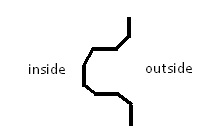}
\fi
\caption{A ``kink". It does not pose a serious problem.}
\label{fig:kink}
\end{figure}

\begin{figure}[ht]
\centering
\ifanswers
\includegraphics[height=60mm]{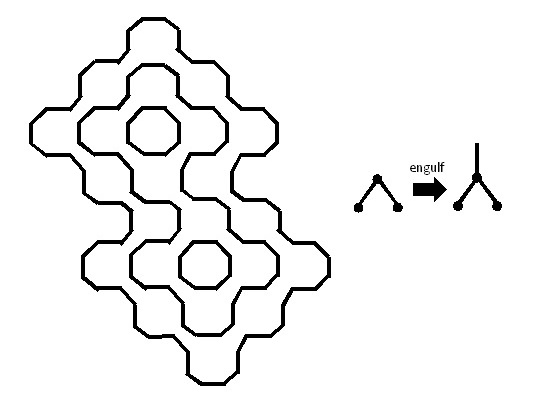}
\fi
\caption{Engulfing a figure with a ``kink".}
\label{fig:engulf kink}
\end{figure}

The figures formed will always have a highest single square at the top and a lowest single square at the
bottom, so that there is no ambiguity (this property is true to start off, and is preserved by the two operations).
A second point is that engulfing any figure that arises is always possible. The only potential problem
is that the join may introduce a non-convex ``kink" of the shape, as in Figure \ref{fig:kink}. This could lead
to a block in engulfing. However, as indicated in the example in Figure \ref{fig:engulf kink},
this does not cause a problem. At any further stage these kinks don't interact, and one can always engulf.

Let us now formally perform the induction step.
Given a tree end $G\in \Tc$ with $M\ge 2$ vertices, it is either the engulf of a smaller tree ends $G'$ (with $M-1$ vertices), or the join
of two tree ends $G'$ of $M'\ge 2$ vertices and $G''$ of $M''\ge 2$ vertices whence $M=M'+M''-1$ and we have $M',M''<M$; denote
$\{\eta_{G'}(k)\}_{k\in K'}$ (resp. $\{\eta_{G'}(k)\}_{k\in K'}$ and $\{\eta_{G''}(k)\}_{k\in K''}$) the corresponding signs obtained from the induction hypothesis applied on $G'$ (resp. $G'$, $G''$). Then, by the definition of the engulf (resp. join) procedure we obtain a prescription
of the signs $\{\eta_{G}(k)\}_{k\in K}$ on a bigger set $K$ that yields a tree end isomorphic to $G$ on a corresponding domain $D\subseteq \R^{2}$,
which concludes the induction step, and therefore also the present proof.

\end{proof}

\section{Semi-local nodal counts on $\M$}

\label{sec:loc res Riem}

\subsection{Local results}

Here we formulate a local result (Theorem \ref{thm:loc scal} below)
around a point $x\in \M$, after blowing up the coordinates according to the natural
scaling of $$f=f_{\alpha;T}:\M\rightarrow\R,$$ the band-limited Gaussian functions \eqref{eq:f = sum cj phij}.
Recall that for $H\in H(n-1)$ (resp. $G\in\Tc$) Theorem \ref{thm:loc scal} yields constants $c_{\Cc;\mathfrak{g_{\alpha}}}(H)$
(resp. $c_{X;\mathfrak{g_{\alpha}}}(G)$) corresponding to the limiting random fields $\mathfrak{g}_{\alpha}$
of $f$, under the same natural scaling around $x$.

\begin{notation}

\label{not:counting var manifold}

\begin{enumerate}

\item For $x\in\M$, $r>0$ let $B(x,r)\subseteq \M$ be the geodesic ball in $\M$ of radius $r$.

\item For $H\in H(n-1)$ (resp. $G\in\Tc$) let $$\nod_{\Cc}\left(f_{\alpha;T},H; x,r\right)$$
(resp. $\nod_{X}\left(f_{\alpha;T},G; x,r\right)$)
be the number of components of $f_{\alpha;T}^{-1}(0)$ lying in $B(x,r)$ of class $H$ (resp. corresponding
to nesting tree isomorphic to $G$).

\item In the situation as above $\nod_{\cdot}^{*}(f_{\alpha;T},\cdot;x,r)$ is the number of those merely
{\em intersecting} $B(x,r)$.

\end{enumerate}

\end{notation}

\begin{theorem}[Cf. {\cite[Theorem 5]{So}}]
\label{thm:loc scal}
For every $H \in H(n-1)$, $G\in\Tc$, and $x\in\M$ we have
\begin{equation*}
\lim\limits_{R\rightarrow\infty}\limsup\limits_{T\rightarrow\infty} \prob \left\{
\left| \frac{\nod_{\Cc}\left(f_{\alpha;T},H; x,\frac{R}{T}\right)}{\vol(B(R))} -c_{\Cc;\mathfrak{g_{\alpha}}}(H)  \right| >\epsilon \right\} = 0,
\end{equation*}
\begin{equation*}
\lim\limits_{R\rightarrow\infty}\limsup\limits_{T\rightarrow\infty} \prob \left\{
\left| \frac{\nod_{\Cc}\left(f_{\alpha;T},G; x,\frac{R}{T}\right)}{\vol(B(R))} -c_{X;\mathfrak{g_{\alpha}}}(G)  \right| >\epsilon \right\} = 0.
\end{equation*}
\end{theorem}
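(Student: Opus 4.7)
The plan is to rescale the random function $f_{\alpha;T}$ at the scale $1/T$ around the point $x$, check weak convergence of the rescaled field to the scale-invariant field $\mathfrak{g}_{n,\alpha}$ in a strong enough topology, and then transfer the $L^{1}$ scale-invariant convergence from Theorem \ref{thm:scal invar count} to an ``in probability'' statement for the band-limited functions via the portmanteau theorem.

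First I would set
\[
F_{T,x}(u):=\frac{1}{\sqrt{D_{\alpha}(T)}}\,f_{\alpha;T}\!\left(\exp_{x}\!\left(\tfrac{u}{T}\right)\right),\qquad u\in\R^{n},
\]
and observe that by \eqref{eq:Kalpha asymp} together with its derivative version \eqref{eq:KT-KT' asymp der}, the covariance of $F_{T,x}$ converges to $B_{n,\alpha}(u-v)$ uniformly together with all partial derivatives on compact subsets of $\R^{n}\times\R^{n}$. Because $F_{T,x}$ is Gaussian, this covariance convergence upgrades to weak convergence $F_{T,x}\Rightarrow \mathfrak{g}_{n,\alpha}$ in the Polish space $C^{k}_{\mathrm{loc}}(\R^{n})$ for every $k\ge 1$ (tightness following from the Gaussian moment bounds on derivatives via Kolmogorov's criterion). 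Since $\exp_{x}$ is a diffeomorphism on a fixed normal neighbourhood of $0\in T_{x}\M$ and since diffeomorphisms preserve both the diffeomorphism class and nesting-tree isomorphism class of components, we may identify
\[
\nod_{\Cc}\!\left(f_{\alpha;T},H;x,\tfrac{R}{T}\right)=\nod_{\Cc}(F_{T,x},H;R(1+o_{T}(1))),
\]
and similarly for $\nod_{X}$, so it suffices to prove the statement with $f_{\alpha;T}$ replaced by $F_{T,x}$ and $B(x,R/T)\subset\M$ replaced by the Euclidean ball $B(R)\subset \R^{n}$.

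Next I would import the scale-invariant conclusion. Theorem \ref{thm:scal invar count} applied to $\mathfrak{g}_{n,\alpha}$ (whose spectral density satisfies axioms $(\rho 1)$--$(\rho 3)$) gives
\[
\E\!\left[\left|\frac{\nod_{\Cc}(\mathfrak{g}_{n,\alpha},H;R)}{\vol(B(R))}-c_{\Cc;\mathfrak{g}_{\alpha}}(H)\right|\right]\xrightarrow{R\to\infty}0,
\]
and similarly for $\nod_{X}$. Given $\epsilon>0$ I fix $R=R(\epsilon)$ so large that Chebyshev converts this into
\[
\prob\!\left\{\left|\tfrac{\nod_{\Cc}(\mathfrak{g}_{n,\alpha},H;R)}{\vol(B(R))}-c_{\Cc;\mathfrak{g}_{\alpha}}(H)\right|\ge\epsilon\right\}<\tfrac{\epsilon}{2}.
\]
With this $R$ now frozen, I transfer the estimate to $F_{T,x}$ by the portmanteau theorem (closed-set form): the event $\{|\nod_{\Cc}(g,H;R)/\vol(B(R))-c_{\Cc;\mathfrak{g}_{\alpha}}(H)|\ge\epsilon\}$ is closed in $C^{2}(\overline{B(R+1)})$ at every $g$ that is Morse there, has $0$ as a regular value, and has no nodal component tangent to $\partial B(R)$. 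By the non-degeneracy axiom $(\rho 3)$, together with the real-analyticity of the covariance $B_{n,\alpha}$, the field $\mathfrak{g}_{n,\alpha}$ almost surely lies in this ``good'' set, so the continuity of the counting functional combined with $F_{T,x}\Rightarrow\mathfrak{g}_{n,\alpha}$ yields $\limsup_{T\to\infty}\prob\{\cdots F_{T,x}\cdots\}\le\prob\{\cdots \mathfrak{g}_{n,\alpha}\cdots\}<\epsilon/2$. The argument for $\nod_{X}$ is identical.

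The main obstacle is the continuity step: counting only components that lie entirely inside $B(R)$ is intrinsically discontinuous under $C^{0}$ perturbations, because a component may be tangent to $\partial B(R)$ or a critical value may sit exactly at $0$. To avoid a direct continuity argument I would actually proceed via the Integral-Geometric Sandwich (Lemma \ref{lem:int geom sandwitch euclidian}): sandwich $\nod_{\Cc}(g,H;R)$ between the corresponding counts on $B(R-1)$ and $B(R+1)$ using $\nod_{\Cc}$ and $\nod_{\Cc}^{*}$, and control the discrepancy by $\widetilde{\Ac}(g;R)=O_{r\to\infty}(R^{n-1})$ in expectation by Corollary \ref{cor:Kac Rice crit ball sphere}, which is negligible against $\vol(B(R))\sim R^{n}$. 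The sandwiched quantity is then genuinely continuous in $C^{2}(\overline{B(R+1)})$ at every good $g$, and the proof proceeds exactly as in the scale-invariant case treated in \S\ref{sec:scal invar exist proof}. Everything else is a direct adaptation of Nazarov--Sodin's Theorem~5 in \cite{So}, with the band-limited covariance asymptotic \eqref{eq:KT-KT' asymp der} playing the role of exact stationarity.
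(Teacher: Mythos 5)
Your overall scaffolding is right and matches the paper's plan: rescale by $1/T$ at $x$, use the local Weyl asymptotics \eqref{eq:KT-KT' asymp der} to compare covariances of $F_{T,x}$ with those of $\gfr_{n,\alpha}$, and transfer the scale-invariant $L^1$ statement of Theorem \ref{thm:scal invar count} back to $f_{\alpha;T}$. The paper, however, implements the transfer not via the portmanteau theorem but via an \emph{explicit coupling} (Lemma \ref{lem:E[|fxL-Fx|<alpha]}, from Sodin), which gives a pathwise estimate $\|f_{x;T}-\gfr_\alpha\|_{C^1(\overline{B}(2R))}<\beta$ outside an event of small probability, and then a pathwise two-sided inequality (Proposition \ref{prop:bi(R-1,Fx)<=bi(R,fxL)<=bi(R+1,Fx)}) of the form
\begin{equation*}
\nod_{\cdot}(\gfr_{\alpha},\cdot;R-1)\le \nod_{\cdot}(f_{x;T},\cdot;R)\le \nod_{\cdot}(\gfr_{\alpha},\cdot;R+1).
\end{equation*}
Your portmanteau route is a legitimate reformulation of this, since the Gaussian coupling is essentially a Skorokhod representation in disguise; but what you leave implicit is exactly what makes both routes work.

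The genuine gap is the last paragraph. The Integral-Geometric Sandwich (Lemma \ref{lem:int geom sandwitch euclidian}) is an averaging-over-translates identity for a \emph{single} function, used in \S\ref{sec:scal invar exist proof} for the ergodic step; it does not make any counting functional continuous, and it does not help compare $F_{T,x}$ with $\gfr_\alpha$. Invoking $\widetilde{\Ac}(g;R)=O(R^{n-1})$ controls boundary contributions \emph{in expectation}, which is the wrong currency for a portmanteau argument that needs a deterministic continuity set for the limiting law. What is actually required is a perturbation-stability statement: if $\gfr_\alpha$ has no critical zeros in $\overline{B}(2R)$ (the event $\Delta_4^c$ of the paper) and $\|g-\gfr_\alpha\|_{C^1(\overline{B}(2R))}$ is small, then each nodal component of $\gfr_\alpha$ in $B(R-1)$ corresponds injectively to a diffeomorphic component of $g$ in $B(R)$, \emph{and the correspondence preserves the nesting tree}. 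This is the homotopy argument of Proposition \ref{prop:bi(R-1,Fx)<=bi(R,fxL)<=bi(R+1,Fx)} (flowing along $g_t=\gfr_\alpha+\tfrac{t}{\beta}(f_{x;T}-\gfr_\alpha)$ and showing no critical zeros appear), and it is precisely the new content beyond Nazarov--Sodin's component-count Theorem~5, since one must check that ovals neither merge, split, nor change their nesting as $t$ varies. Your write-up defers this to ``a direct adaptation of Nazarov--Sodin,'' but their argument controls only the count, not the diffeomorphism type of a component or the isomorphism type of its tree end; this preservation step is where the paper does genuinely new work and it needs to be supplied explicitly. Once you have it, your use of portmanteau (or, equivalently, the paper's coupling) goes through.
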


\subsection{Proof of Theorem \ref{thm:loc scal}}

Let $f$ be defined as in \eqref{eq:f = sum cj phij}. For a fixed point $x\in \M$ we blow up the coordinates
around $x$, and consider $f$ on a small geodesic ball $B(x,\epsilon_{0})\subseteq \M$. That is, we define the Gaussian field
\begin{equation*}
f_{x;T}(u) = f\left(\exp_{x}(uT^{-1})\right),
\end{equation*}
$u\in T_{x}(\M)\cong \R^{n}$ lying a the Euclidian ball $$u\in B(\epsilon_{0}\cdot T)\subseteq\R^{n}.$$
Since $\M$ is a {\em compact} manifold, for $r$ sufficiently small, independent of $x$,
\begin{equation}
\label{eq:bi(x,r,fL)=bi(rL,fxL)}
\nod_{\cdot} (f,\cdot;x,r) = \nod_{\cdot}(f_{x;T},\cdot; rT),
\end{equation}
the rhs of the latter equality being of a random field defined on the Euclidian space.

The random field $f_{x;T}$ is Gaussian with covariance kernel
\begin{equation*}
\begin{split}
K_{x;T}(u,v)&:=\E[f_{x;T}(u)\cdot f_{x;T}(v)] = \E[f(\exp_{x}(uT^{-1}))\cdot f(\exp_{x}(vT^{-1}))]
\\&= K_{\alpha}(T;uT^{-1},vT^{-1}),
\end{split}
\end{equation*}
(cf. \eqref{eq:Kalpha covar f}), and \eqref{eq:Kalpha asymp} implies that, as $T\rightarrow\infty$,
\begin{equation}
\label{eq:rf(u,v) aprx covar galp}
r_{f_{x;T}}(u,v) \approx B_{n,\alpha}(|u-v|) = \E[\gfr_{n,\alpha}(u)\cdot \gfr_{n,\alpha}(v)].
\end{equation}

Hence, for every $x\in \M$ fixed, the fields $\{f_{x;T}\}_{T}$, defined on growing domains,
{\em converge} on $\R^{n}$ to $\gfr_{\alpha}$, to be formulated more precisely.
In particular, as $f$ and $\gfr_{\alpha}$
are defined on different probability spaces, we need to couple them, i.e. define on the same probability space.
We now formulate the following proposition that will imply Theorem \ref{thm:loc scal} proved immediately after.

\begin{proposition}
\label{prop:bi(R-1,Fx)<=bi(R/L,fL)<=bi(R+1,Fx)}
Let $x\in\M$, $R>0$ be sufficiently big, $\delta>0$ be given, $H\in H(n-1)$ and $G\in \Tc$.
Then there exists a coupling of $f$ and $\gfr_{\alpha}$ and $T_{0}=T_{0}(R,\delta)$ sufficiently big, so that for
all $T>T_{0}$ outside an event of probability $<\delta$ we have
\begin{equation*}
\nod_{\cdot}(\gfr_{\alpha},\cdot;R-1) \le
\nod_{\cdot} \left(f,\cdot; x,\frac{R}{T}\right) \le \nod_{\cdot} (\gfr_{\alpha},\cdot;R+1).
\end{equation*}
\end{proposition}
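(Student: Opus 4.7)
The plan is to construct a coupling of $f=f_{\alpha;T}$ and $\gfr_{\alpha}$ under which the rescaled field $f_{x;T}(u)=f(\exp_{x}(u/T))$ is uniformly $C^{2}$-close to $\gfr_{\alpha}$ on the slightly enlarged Euclidean ball $B(R+2)\subseteq T_{x}\M\cong\R^{n}$, and then to deduce the sandwich from a nodal stability analysis applied to the coupled fields.

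First I would invoke the quantitative local Weyl law (\eqref{eq:KT-KT' asymp der} and its $\alpha<1$ analogue) to conclude that the covariance $K_{x;T}(u,v) = K_{\alpha}(T;u/T,v/T)$, together with all its mixed partial derivatives up to any fixed order, converges uniformly on $B(R+2)\times B(R+2)$ and uniformly in $x\in\M$ to the covariance $B_{n,\alpha}(|u-v|)$ of $\gfr_{\alpha}$, with error $O(T^{-1})$. Expanding both fields in a common reproducing-kernel basis, truncating after $N=N(R,\delta,\epsilon)$ modes, matching the Gaussian coefficients of the truncated parts, and letting the tails be independent, one obtains a joint realization on which, with probability $>1-\delta/3$ and for $T>T_{0}(R,\delta,\epsilon)$, one has
\begin{equation*}
\|f_{x;T}-\gfr_{\alpha}\|_{C^{2}(B(R+2))} < \epsilon.
\end{equation*}

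Next, on a further event of probability $>1-\delta/3$, Proposition \ref{prop:grad bounded away} applied to $\gfr_{\alpha}$ on $B(R+2)$ yields a constant $\beta>0$ such that $|\nabla \gfr_{\alpha}|>\beta$ on all but at most $\eta R^{n}$ nodal components in $B(R+2)$, while Corollary \ref{cor:Kac Rice crit ball sphere} controls (in expectation, hence by Chebyshev on a further small event) the number of components crossing the spheres $\partial B(R\pm 1)$, which is $O(R^{n-1})$. Choosing $\epsilon<\beta/2$, the implicit function theorem applied transversally along each stable component $c\subseteq \gfr_{\alpha}^{-1}(0)$ in $B(R+2)$ produces a unique nearby component $c'$ of $f_{x;T}$ diffeomorphic to $c$ and contained in a tubular neighbourhood of $c$ of width $O(\epsilon/\beta)$. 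Stable components of $\gfr_{\alpha}$ lying in $B(R-1)$ therefore give rise to components of $f_{x;T}$ in $B(R)$ of identical diffeomorphism type, and conversely. Summing this bijection over the prescribed $H\in H(n-1)$ yields the sandwich for $\nod_{\Cc}$.

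For the nesting counts $\nod_{X}(\cdot,G;\cdot)$, the sub-tree end $e(c)$ depends on every nodal component enclosed inside $c$, so the stability transfer must apply uniformly across that entire enclosed sub-family. To this end I would further excise an event of probability $<\delta/3$ on which too many enclosed components are $\xi$-small (via Lemma \ref{lem:xi-small bound}); on the complement, the $C^{2}$-closeness transfers the rooted-tree isomorphism class $e(c)\mapsto e(c')$ component by component inside the bijection of the previous step. The main obstacle is precisely this last step: unlike the total-component count of Nazarov--Sodin, the nesting invariant $e(c)$ is a global combinatorial object, and even a single unstable enclosed component can alter $e(c)$. The argument closes only because the conjunction of Proposition \ref{prop:grad bounded away}, Lemma \ref{lem:xi-small bound}, and a $C^{2}$ (rather than merely $C^{1}$) coupling forces the exceptional event of bad enclosed components to have probability $<\delta$, uniformly in how deeply $c$ sits inside the tree.
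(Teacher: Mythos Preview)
Your proposal has a genuine gap in the stability step. You invoke Proposition \ref{prop:grad bounded away} to get $|\nabla \gfr_{\alpha}|>\beta$ on all but at most $\eta R^{n}$ components, and then try to patch the remaining exceptional components using Lemma \ref{lem:xi-small bound}. But the proposition demands an \emph{exact} sandwich inequality outside the bad event, not an inequality up to an $\eta R^{n}$ error. A single unstable component of $\gfr_{\alpha}$ in $B(R-1)$ of type $H$ could disappear under perturbation, violating the lower bound; and as you yourself note, for the nesting count a single unstable component \emph{anywhere} inside $c$ can change $e(c)$. Your excision of $\xi$-small components does not repair this: smallness of a domain is not the same as instability of its boundary, and in any case you would still only control ``most'' tree ends, not all.

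The paper's mechanism is different and cleaner. Instead of controlling the gradient on most components, it excises the event
\[
\Delta_{4}=\Bigl\{\min_{u\in\overline{B}(2R)}\max\{|\gfr_{\alpha}(u)|,|\nabla\gfr_{\alpha}(u)|\}\le 2\beta\Bigr\},
\]
which is a uniform condition on \emph{every point} of the enlarged ball, not on components. Lemma \ref{lem:Delta4 small} (quoted from \cite{So}) shows $\prob(\Delta_{4})$ can be made $<\delta$ for suitable $\beta$, using the $C^{2}$ bounds on $\gfr_{\alpha}$ and $f_{x;T}$ (this is the only place $C^{2}$ enters; the coupling itself is only $C^{1}$, via Lemma \ref{lem:E[|fxL-Fx|<alpha]}). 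On the complement of $\Delta_{1}\cup\Delta_{4}$ one runs the linear homotopy $g_{t}=\gfr_{\alpha}+(t/\beta)(f_{x;T}-\gfr_{\alpha})$ and checks directly that no $g_{t}$ has a critical zero anywhere in $B(2R)$. Hence no components are born, die, merge, or split along the homotopy, and the entire nodal and nesting structure is transported intact. This gives the exact sandwich in one stroke, with no need to treat enclosed components separately.
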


Proposition \ref{prop:bi(R-1,Fx)<=bi(R/L,fL)<=bi(R+1,Fx)} will be proven in \S\ref{sec:bi(R-1,Fx)<=bi(R/L,fL)<=bi(R+1,Fx)}
immediately below.

\begin{proof}[Proof of Theorem \ref{thm:loc scal} assuming Proposition \ref{prop:bi(R-1,Fx)<=bi(R/L,fL)<=bi(R+1,Fx)}]

Theorem \ref{thm:loc scal} follows from Proposition \ref{prop:bi(R-1,Fx)<=bi(R/L,fL)<=bi(R+1,Fx)} and
Theorem \ref{thm:scal invar count} applied on $\gfr_{\alpha}$ at once.

\end{proof}

\subsection{Some preparatory results towards the proof of Proposition \ref{prop:bi(R-1,Fx)<=bi(R/L,fL)<=bi(R+1,Fx)}}

\label{sec:bi(R-1,Fx)<=bi(R/L,fL)<=bi(R+1,Fx)}

The proof of Proposition \ref{prop:bi(R-1,Fx)<=bi(R/L,fL)<=bi(R+1,Fx)} is quite similar to the proof of
the analogous statement on nodal count from ~\cite[Lemmas $6-7$]{So}; here we need to check that
the topological and the nesting nodal structure rather than merely the nodal count
is stable under the perturbation. In order
to prove Proposition \ref{prop:bi(R-1,Fx)<=bi(R/L,fL)<=bi(R+1,Fx)}
we need to excise the following exceptional events $\Delta_{i}$, $1\le i \le 4$.

Let $\delta>0$ be a small parameter that will control the probabilities
of the discarded events, $\beta>0$ a small parameter that will control the quality of the various approximations,
and $M>0$ a large parameter. Given $R$ and $T$ big we define
\begin{equation*}
\Delta_{1}=\Delta_{1}(R,T;\beta)= \{ \| f_{x;T}-\gfr_{\alpha}\|_{C^{1}(\overline{B}(2R))} \ge \beta\},
\end{equation*}
\begin{equation*}
\Delta_{2}=\Delta_{2}(R,T;\delta,M) = \left\{ \| f_{x;T}\|_{C^{2}(\overline{B}(2R))} \ge \delta^{-1}M  \right\} ,
\end{equation*}
\begin{equation*}
\Delta_{3}=\Delta_{3}(R;\delta,M) = \left\{ \| \gfr_{\alpha}\|_{C^{2}(\overline{B}(2R))} \ge \delta^{-1}M  \right\}
\end{equation*}
and the ``unstable event"
\begin{equation*}
\Delta_{4}(R,T;\beta) =
\left\{ \min\limits_{u\in \overline{B}(2R)}\max \{|f_{x;T}(u)|, |\nabla f_{x;T}(u)|\} \le 2\beta  \right\}.
\end{equation*}

The following lemma from ~\cite{So} is instrumental in proving that
for $f$ and $\gfr_{\alpha}$ suitably coupled the exceptional events have small probability.
Recall that we have \eqref{eq:rf(u,v) aprx covar galp}; more precisely
\eqref{eq:Kalpha asymp} implies that the covariance function of $f_{x;T}$ together with its derivatives converge uniformly to
the covariance function of $\gfr_{\alpha}$ and its respective derivatives.

\begin{lemma}[{\cite[Lemma $4$]{So}}]
\label{lem:E[|fxL-Fx|<alpha]}
Given $x\in\M$, $R>0$ and $\upsilon>0$ there exists a coupling of $f$ and $\gfr_{\alpha}$ and
$T_{0}=T_{0}(R,\upsilon)$ with
\begin{equation*}
\E\left[\|f_{x;T}-\gfr_{\alpha}\|_{C^{1}(\overline{B}(2R))}\right] < \upsilon
\end{equation*}
for all $T\ge T_{0}$.
\end{lemma}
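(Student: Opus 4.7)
The strategy is to realise both Gaussian fields as images of a common Gaussian white noise under two linear maps that are close to each other; the closeness will be inherited from the $C^{2}$-closeness of the covariance kernels, supplied by the quantitative local Weyl law of \S\ref{sec:bas conventions}.

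First I would upgrade \eqref{eq:rf(u,v) aprx covar galp} to include derivatives. Writing $K_{\alpha}=K_{T}-K_{\alpha T}$ and combining \eqref{eq:KT-KT' asymp} with its derivative version \eqref{eq:KT-KT' asymp der} (after passing from geodesic normal coordinates about $x$ to Euclidean coordinates on $\R^{n}$ and absorbing the normalisation implicit in the setup), a direct computation yields that the rescaled covariance
$$K_{x;T}(u,v)=\E\bigl[f_{x;T}(u)\,f_{x;T}(v)\bigr]$$
converges to $B_{n,\alpha}(|u-v|)=r_{\gfr_{\alpha}}(u-v)$ uniformly on $\overline{B}(2R)\times\overline{B}(2R)$, together with all mixed partial derivatives up to order $2$, with error $O(T^{-1})$ depending on $R$.

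Next I would set up the coupling in a Hilbert space. Fix $s>1+n/2$ so that the Sobolev embedding $H^{s}(B(2R))\hookrightarrow C^{1}(\overline{B}(2R))$ is continuous, and regard both $f_{x;T}|_{\overline{B}(2R)}$ and $\gfr_{\alpha}|_{\overline{B}(2R)}$ as centred Gaussian random elements of $\Hc:=H^{s}(B(2R))$. The $C^{2}$-closeness of the kernels translates into Hilbert--Schmidt closeness of their covariance operators $C_{T},C_{\infty}\colon\Hc\to\Hc$, with $\|C_{T}-C_{\infty}\|_{\mathrm{HS}}\to 0$ as $T\to\infty$. Writing $A_{T}=C_{T}^{1/2}$ and $A_{\infty}=C_{\infty}^{1/2}$, and letting $W$ be a single cylindrical Gaussian on $\Hc$, define the coupling
$$f_{x;T}:=A_{T}W,\qquad \gfr_{\alpha}:=A_{\infty}W.$$
A standard perturbation bound for the operator square root gives $\|A_{T}-A_{\infty}\|_{\mathrm{HS}}\to 0$, whence
$$\E\bigl[\|f_{x;T}-\gfr_{\alpha}\|_{\Hc}^{2}\bigr]=\|A_{T}-A_{\infty}\|_{\mathrm{HS}}^{2}\longrightarrow 0,$$
and the Sobolev embedding combined with the Cauchy--Schwarz inequality converts this into the required estimate $\E[\|f_{x;T}-\gfr_{\alpha}\|_{C^{1}(\overline{B}(2R))}]<\upsilon$ for $T>T_{0}(R,\upsilon)$.

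The principal technical point is the transfer from operator-norm convergence to $C^{1}$-norm convergence. One needs enough derivatives of the covariance kernel converging so that the covariance operators act boundedly on a Sobolev space embedded in $C^{1}$, and one needs a quantitative perturbation estimate for the operator square root (for instance via the integral formula $\sqrt{A}-\sqrt{B}=\frac{1}{\pi}\int_{0}^{\infty}\lambda^{-1/2}\bigl((A+\lambda)^{-1}-(B+\lambda)^{-1}\bigr)d\lambda$). An alternative that bypasses square-root perturbation altogether is to truncate the Karhunen--Lo\`eve expansions of both fields to a common finite dimension, apply the explicit Gelbrich optimal-transport coupling of finite-dimensional Gaussians, and pass to the limit using the uniform kernel estimate to control the truncation error.
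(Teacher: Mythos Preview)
The paper does not give its own proof of this lemma; it is quoted verbatim from Sodin's lecture notes \cite{So}, Lemma~4, and used as a black box. So there is nothing in the present paper to compare your argument against line by line.

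That said, your main route (embed both fields in a Sobolev space $H^{s}\hookrightarrow C^{1}$, write $f_{x;T}=C_{T}^{1/2}W$ and $\gfr_{\alpha}=C_{\infty}^{1/2}W$ for a common cylindrical noise $W$) is a legitimate coupling strategy, and the $C^{2}$-kernel convergence from \eqref{eq:KT-KT' asymp der} is indeed the right input. The soft spot is the step ``$\|C_{T}-C_{\infty}\|_{\mathrm{HS}}\to0\Rightarrow\|C_{T}^{1/2}-C_{\infty}^{1/2}\|_{\mathrm{HS}}\to0$'': the square root is not Lipschitz in the Hilbert--Schmidt norm. What you actually need is a Powers--St\o rmer type bound $\|C_{T}^{1/2}-C_{\infty}^{1/2}\|_{\mathrm{HS}}^{2}\le\|C_{T}-C_{\infty}\|_{1}$, so you must upgrade kernel convergence to \emph{trace-class} convergence of the covariance operators on $H^{s}$. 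On a fixed ball with smooth kernels this is attainable, but it is a genuine extra step and should be stated. A second point to tidy up is that you are identifying the covariance kernel $K_{x;T}(u,v)$ with the covariance \emph{operator} on $H^{s}$; these are related but not identical objects, and the translation costs a few lines.

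Your ``alternative'' at the end --- truncate to a finite grid, apply the explicit optimal coupling of finite-dimensional Gaussians (where the matrix square-root perturbation is elementary), and control the remainder uniformly via the kernel estimate --- is in fact essentially the argument Sodin gives in \cite{So}. It is more hands-on but avoids the infinite-dimensional operator-theoretic issues above, which is presumably why the paper is content to cite it rather than reprove it.
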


From this point we will avoid mentioning coupling that will be understood implicitly.
The following is a simple corollary from Lemma \ref{lem:E[|fxL-Fx|<alpha]}.

\begin{lemma}
\label{lem:Delta1 small}
For $R>0$ sufficiently big given, and $\beta,\delta>0$ there exists $T_{0}=T_{0}(R,\beta,\delta)$ so
that for $T>T_{0}$ the probability of $\Delta_{1}$ is
\begin{equation}
\label{eq:Delta1 small}
\prob(\Delta_{1}(R,T;\beta))<\delta
\end{equation}
arbitrarily small.
\end{lemma}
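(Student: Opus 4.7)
The plan is to deduce this immediately from Lemma \ref{lem:E[|fxL-Fx|<alpha]} via Markov's inequality, since the non-negative random variable $\|f_{x;T} - \gfr_\alpha\|_{C^1(\overline{B}(2R))}$ has small expectation for $T$ large and we only want to bound the probability that it exceeds a fixed threshold $\beta$.

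Concretely, given $R>0$ and $\beta,\delta>0$, I would set $\upsilon := \beta\delta$ and invoke Lemma \ref{lem:E[|fxL-Fx|<alpha]} to obtain a coupling of $f$ and $\gfr_\alpha$ together with a threshold $T_0 = T_0(R,\upsilon) = T_0(R,\beta,\delta)$ such that for every $T \ge T_0$,
\begin{equation*}
\E\bigl[\|f_{x;T} - \gfr_\alpha\|_{C^1(\overline{B}(2R))}\bigr] < \beta\delta.
\end{equation*}
Applying Markov's inequality to the non-negative random variable $X := \|f_{x;T} - \gfr_\alpha\|_{C^1(\overline{B}(2R))}$ then gives, for $T \ge T_0$,
\begin{equation*}
\prob(\Delta_1(R,T;\beta)) = \prob(X \ge \beta) \le \frac{\E[X]}{\beta} < \frac{\beta\delta}{\beta} = \delta,
\end{equation*}
which is precisely the claimed bound \eqref{eq:Delta1 small}.

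There is no real obstacle here: all the work is hidden in Lemma \ref{lem:E[|fxL-Fx|<alpha]}, whose proof rests on the local Weyl asymptotics \eqref{eq:KT-KT' asymp}--\eqref{eq:KT-KT' asymp der} to compare the covariance of $f_{x;T}$ (and its first derivatives) to that of $\gfr_{n,\alpha}$ uniformly on $\overline{B}(2R)$. Given that input, the present lemma is only a routine Markov step, and the coupling inherited from Lemma \ref{lem:E[|fxL-Fx|<alpha]} is the same coupling that will be used in the proof of Proposition \ref{prop:bi(R-1,Fx)<=bi(R-1,Fx)<=bi(R+1,Fx)} to simultaneously control $\Delta_2$, $\Delta_3$ and $\Delta_4$.
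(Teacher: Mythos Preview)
Your proposal is correct and matches the paper's approach exactly: the paper states this lemma as ``a simple corollary from Lemma \ref{lem:E[|fxL-Fx|<alpha]}'' without giving a separate proof, and the intended argument is precisely the Markov inequality step you wrote out.
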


The following lemma yields a bound on the probability $\prob(\Delta_{2})$
and $\prob(\Delta_{3})$.

\begin{lemma}
\label{lem:Delta2-3 small}

\begin{enumerate}
\item
For every $R>0$ sufficiently big there exists $M=M(R)>0$ so that for all $\delta>0$
\begin{equation}
\label{eq:Delta3 small}
\prob(\Delta_{3}(R;\delta,M)) < \delta.
\end{equation}

\item
For $R>0$ there exists $M(R)>0$ and $T_{0}=T_{0}(R)$ so that for all $T>T_{0}$ and $\delta>0$
\begin{equation}
\label{eq:Delta2 small}
\prob(\Delta_{2}(R,T;\delta,M))< \delta.
\end{equation}

\end{enumerate}

\end{lemma}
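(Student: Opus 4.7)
My plan is to prove both parts by reducing the sup-norm estimate to a mean bound via Sobolev embedding, then applying Markov's inequality. The key observation is that for a Gaussian field $F$ which is a.s.\ $C^{3}$-smooth and whose covariance admits bounded mixed derivatives up to order at least $4$ on $\overline{B}(2R)\times \overline{B}(2R)$, the quantity $\E\bigl[\|F\|_{C^{2}(\overline{B}(2R))}\bigr]$ is finite and can be bounded in terms of $R$ and the covariance. Once such a bound $M_{0}=M_{0}(R)$ is in hand, Markov's inequality gives
\[
\prob\bigl(\|F\|_{C^{2}(\overline{B}(2R))}\ge \delta^{-1}M\bigr)\le \delta\, M^{-1}\, M_{0}(R),
\]
so choosing $M=M_{0}(R)$ yields probability $\le \delta$.

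For part $(i)$ concerning $\gfr_{\alpha}$, the plan is to invoke the Sobolev embedding $H^{m,2}(\overline{B}(2R))\hookrightarrow C^{2}(\overline{B}(2R))$ for some fixed $m=m(n)$, exactly as in the proof of Lemma~\ref{lem:Linf bound Sobolev}. This gives a constant $C_{1}=C_{1}(R,m)$ with
\[
\|\gfr_{\alpha}\|_{C^{2}(\overline{B}(2R))}^{2}\le C_{1}\sum_{|\upsilon|\le m}\int_{\overline{B}(2R)}|\partial^{\upsilon}\gfr_{\alpha}(x)|^{2}\,dx.
\]
By stationarity, $\E[|\partial^{\upsilon}\gfr_{\alpha}(x)|^{2}]=\int_{A_{\alpha}}(2\pi\xi)^{2\upsilon}\,d\nu_{\alpha}(\xi)$ is a finite constant independent of $x$ (finite since $A_{\alpha}$ is bounded, so all moments exist). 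Hence
\[
\E\bigl[\|\gfr_{\alpha}\|_{C^{2}(\overline{B}(2R))}^{2}\bigr]\le C_{2}(n,\alpha,R)<\infty,
\]
and Markov (applied to the square to match the $L^{2}$ bound) produces the desired $M(R)$.

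For part $(ii)$ concerning $f_{x;T}$, the same Sobolev reduction applies, so it suffices to control
\[
\E\bigl[|\partial^{\upsilon}f_{x;T}(u)|^{2}\bigr]=\partial_{u}^{\upsilon}\partial_{v}^{\upsilon}K_{x;T}(u,v)\big|_{u=v}
\]
uniformly for $u\in\overline{B}(2R)$ and $|\upsilon|\le m$. This is exactly what the derivative-uniform asymptotic \eqref{eq:KT-KT' asymp der} (in both the $\alpha<1$ and $\alpha=1$ cases) provides: for each fixed order of differentiation, the scaled kernel $K_{x;T}$ converges together with its mixed derivatives to the kernel $B_{n,\alpha}(u-v)$ of $\gfr_{\alpha}$, uniformly on compacta. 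Therefore there exists $T_{0}=T_{0}(R)$ such that for all $T\ge T_{0}$ the relevant diagonal moments for $f_{x;T}$ on $\overline{B}(2R)$ are bounded by, say, twice those for $\gfr_{\alpha}$. Integrating over $\overline{B}(2R)$ and summing over $|\upsilon|\le m$ gives
\[
\E\bigl[\|f_{x;T}\|_{C^{2}(\overline{B}(2R))}^{2}\bigr]\le 2C_{2}(n,\alpha,R),
\]
and Markov again furnishes the constant $M(R)$ with $\prob(\Delta_{2}(R,T;\delta,M))<\delta$.

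The only subtlety I anticipate is bookkeeping in the case $\alpha=1$, where \eqref{eq:KT-KT' asymp der} carries the extra factor $n\eta(T) T^{n-1}$ on the right-hand side. One must divide by the matching normalization $D_{1}(T)$ so that $f_{x;T}$, as defined, has covariance converging to $B_{n,1}$; this is the content of \eqref{eq:Kalpha asymp}, and is the standard monochromatic normalization used throughout. Once the correct normalization is in place, the $\alpha=1$ case is handled identically. No new ingredients beyond Sobolev embedding, the pointwise moment identities for Gaussian derivatives via the covariance, and the $C^{\infty}$-uniform kernel asymptotics from \cite{CH,CH2} are required.
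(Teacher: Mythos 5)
Your proof is correct, and it takes a route genuinely different from the paper's. The paper establishes part~(1) by citing Theorem~2.1.1 of Adler--Taylor for the a priori finiteness of $\E[\|\gfr_{\alpha}\|_{C^{2}(\overline{B}(2R))}]$ (a metric-entropy / Borell--TIS argument), and part~(2) by invoking the Sudakov--Fernique comparison inequality together with \eqref{eq:Kalpha asymp} to transfer the bound from $\gfr_{\alpha}$ to $f_{x;T}$ for large $T$; both parts then close with Chebyshev. You instead reduce the $C^{2}$-sup norm to an $H^{m,2}$ norm via Sobolev embedding and then bound diagonal second moments of the derivatives directly from the spectral measure (part~(1)) and from the $C^{\infty}$-uniform kernel asymptotics of Canzani--Hanin (part~(2)), closing with Markov. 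Your approach is more elementary and self-contained, avoids both of the cited Adler--Taylor theorems, and is stylistically consistent with the way Lemma~\ref{lem:Linf bound Sobolev} is proved in \S\ref{sec:aux lemmas}; the trade-off is that it needs the Sobolev constant for $\overline{B}(2R)$ and boundedness of the spectral support, both of which hold here. Two small bookkeeping points you handled correctly but are worth spelling out in a final write-up: (a) since you work with $\E[\|\cdot\|^{2}]$, Markov yields $\prob \le \delta^{2}C/M^{2}$, so the choice $M=\sqrt{C}$ gives $<\delta$ for $\delta<1$ and the claim is vacuous for $\delta\ge 1$; and (b) the Sobolev exponent $m$ must satisfy $m>2+n/2$ so that $H^{m,2}(\overline{B}(2R))\hookrightarrow C^{2}(\overline{B}(2R))$, and the CH/CH2 derivative asymptotics must be invoked up to order $m$, which they permit since they hold "together with any fixed number of derivatives."
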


\begin{proof}

For \eqref{eq:Delta3 small} we may choose $M$ to be $$M=\E [\| \gfr_{\alpha}\|_{C^{2}(\overline{B}(2R))} ]  < \infty,$$
finite by \cite{Adler-Taylor}, Theorem $2.1.1$. The estimate \eqref{eq:Delta3 small} with $M$ as here then
follows by Chebyshev's inequality.

In order to establish \eqref{eq:Delta2 small} we observe that
by \cite{Adler-Taylor}, Theorem $2.2.3$ (``Sudakov-Fernique comparison inequality")
and \eqref{eq:Kalpha asymp} applied to both $K_{x;T}$ and its derivatives
for all $M_{1}>M$ there exists $T_{0}=T_{0}(R,M_{1})$ such that for all $T>T_{0}$
$$\E \left[ \| f_{x;T}\|_{C^{2}(\overline{B}(2R))}\right] < M_{1}.$$ Hence
\eqref{eq:Delta2 small} follows from using Chebyshev's inequality as before.
Using $M_{1}$ instead of $M$ will also work with \eqref{eq:Delta3 small}.

\end{proof}

Finally, for the unstable event $\Delta_{4}$ we have the following bound:

\begin{lemma}[{\cite[Lemma $5$]{So}}]
\label{lem:Delta4 small}
For $R>0$ sufficiently big given, $M>0$ and $\delta>0$ there exist $\beta=\beta(R,M,\delta)>0$ and $T_{0}=T_{0}(R,M,\delta)>0$
so that for all $T>T_{0}$ outside $\Delta_{2}\cup\Delta_{3}$ the probability of $\Delta_{4}$ is
\begin{equation}
\label{eq:Delta4 small}
\prob\left(\Delta_{4}(R,T;\beta) \setminus (\Delta_{2}(R,T;\delta,M)\cup \Delta_{3}(R;\delta,M))\right)< \delta
\end{equation}
arbitrarily small.
\end{lemma}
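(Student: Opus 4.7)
The plan is to reduce the continuous unstable event on $\overline{B}(2R)$ to a union bound over a sufficiently fine discrete net, using the $C^2$ bound provided by the complement of $\Delta_3$ to pass from pointwise smallness on the net to continuous smallness on $\overline{B}(2R)$. Note that $\Delta_4$ depends only on $\gfr_\alpha$ and not on $f_{x;T}$, so the event $\Delta_2$ plays no substantive role in the estimate; $T_0$ appears only for uniformity with the other lemmas and can be chosen arbitrarily (or as $+\infty$).

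First I would carry out the discretization. Fix a mesh $\varepsilon = \beta\delta/M$ and let $\mathcal{N} \subseteq \overline{B}(2R)$ be an $\varepsilon$-net of cardinality $|\mathcal{N}| \le C_n (R/\varepsilon)^n = C_n (RM/(\beta\delta))^n$. On $\overline{B}(2R) \setminus \Delta_3$ we have $\|\gfr_\alpha\|_{C^2(\overline{B}(2R))} \le \delta^{-1}M$, so both $\gfr_\alpha$ and each component of $\nabla \gfr_\alpha$ are Lipschitz with constant $\le \delta^{-1}M$. Thus, for any $u \in \overline{B}(2R)$ and the closest $v \in \mathcal{N}$,
\begin{equation*}
\max(|\gfr_\alpha(v)|,|\nabla \gfr_\alpha(v)|) \le \max(|\gfr_\alpha(u)|,|\nabla \gfr_\alpha(u)|) + \delta^{-1}M\varepsilon = \max(|\gfr_\alpha(u)|,|\nabla \gfr_\alpha(u)|) + \beta.
\end{equation*}
In particular, on $\Delta_4 \setminus \Delta_3$ there exists $v \in \mathcal{N}$ with $\max(|\gfr_\alpha(v)|,|\nabla \gfr_\alpha(v)|) \le 3\beta$.

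Next I would establish the pointwise probability bound. By axiom $(\rho 3)$ the joint Gaussian vector $(\gfr_\alpha(v), \nabla \gfr_\alpha(v)) \in \R^{n+1}$ is non-degenerate, and by stationarity its covariance is independent of $v$. Its density is therefore uniformly bounded on $\R^{n+1}$, whence
\begin{equation*}
\prob\{|\gfr_\alpha(v)| \le 3\beta \text{ and } |\nabla \gfr_\alpha(v)| \le 3\beta\} \le C(\gfr_\alpha)\,\beta^{n+1}
\end{equation*}
uniformly in $v$. A union bound over $\mathcal{N}$ then gives
\begin{equation*}
\prob(\Delta_4 \setminus \Delta_3) \le C_n(RM/(\beta\delta))^n \cdot C(\gfr_\alpha)\,\beta^{n+1} = C'\,(RM/\delta)^n\,\beta,
\end{equation*}
which is $<\delta$ provided $\beta = \beta(R,M,\delta)$ is chosen smaller than $\delta^{n+1}/(C'(RM)^n)$; this delivers the required $\beta$ and completes the proof.

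The main (minor) obstacle is ensuring that the constant $C(\gfr_\alpha)$ in the pointwise density bound is truly uniform across $\R^n$; this is not automatic in general, but follows here directly from stationarity together with the non-degeneracy of $(\gfr_\alpha(0), \nabla \gfr_\alpha(0))$ guaranteed by axiom $(\rho 3)$. Everything else is bookkeeping: a Lipschitz net argument together with the simple volume bound $\prob\{|X|\le t\} = O(t^k)$ for a non-degenerate Gaussian vector $X \in \R^k$.
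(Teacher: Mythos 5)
Your proof is correct and takes the same route as the cited \cite[Lemma~5]{So} (which the paper references rather than reproves): observe $\Delta_2$ is irrelevant since $\Delta_4$ and $\Delta_3$ depend only on $\gfr_\alpha$, use the Lipschitz control afforded by the complement of $\Delta_3$ to discretize to an $\varepsilon$-net, and combine a union bound with the pointwise small-ball estimate for the non-degenerate Gaussian vector $(\gfr_\alpha(v),\nabla\gfr_\alpha(v))$ guaranteed by $(\rho 3)$ and stationarity. One cosmetic slip: take $T_0 = 0$ rather than $T_0 = +\infty$, since the latter makes the quantifier ``for all $T>T_0$'' vacuous.
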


\subsection{Proof of Proposition \ref{prop:bi(R-1,Fx)<=bi(R/L,fL)<=bi(R+1,Fx)}}

\label{sec:bi(R-1,Fx)<=bi(R,fxL)<=bi(R+1,Fx)}

\begin{proposition}[cf. {\cite[Lemma $6$]{So}}]
\label{prop:bi(R-1,Fx)<=bi(R,fxL)<=bi(R+1,Fx)}
Outside of $\Delta_{1}\cup \Delta_{4}$ we have
\begin{equation*}
\nod_{\cdot}(\gfr_{\alpha},\cdot;R-1) \le
\nod_{\cdot} \left(f_{x;T},\cdot; R\right) \le \nod_{\cdot} (\gfr_{\alpha},\cdot;R+1).
\end{equation*}
\end{proposition}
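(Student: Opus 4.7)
The argument is a straightforward stability-under-homotopy, tailored to keep track of both the topological type of each component and the corresponding subtree end. Fix a sample point outside $\Delta_{1}\cup\Delta_{4}$ and consider the straight-line homotopy
\begin{equation*}
h_{t} := (1-t)\gfr_{\alpha} + t\, f_{x;T}, \qquad t\in[0,1].
\end{equation*}
On $\overline{B}(2R)$ we have $\|h_{t}-\gfr_{\alpha}\|_{C^{1}} \le t\|f_{x;T}-\gfr_{\alpha}\|_{C^{1}} < \beta$ by the assumption $F\notin \Delta_{1}$, while $F\notin\Delta_{4}$ gives $\max\{|\gfr_{\alpha}(u)|,|\nabla\gfr_{\alpha}(u)|\} > 2\beta$ for every $u\in\overline{B}(2R)$. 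Combining these,
\begin{equation*}
\max\bigl\{\,|h_{t}(u)|,\ |\nabla h_{t}(u)|\,\bigr\} \;>\; \beta \qquad \text{for all } u\in\overline{B}(2R),\ t\in[0,1].
\end{equation*}
In particular, whenever $h_{t}(u)=0$ one has $|\nabla h_{t}(u)|>\beta$, so $0$ is a regular value of every $h_{t}$ on $\overline{B}(2R)$.

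Next I would apply the implicit function theorem to $H(u,t):=h_{t}(u)$ in the $u$-variable, producing a smooth foliation of the total zero set $H^{-1}(0)\subseteq\overline{B}(2R)\times[0,1]$ by the individual level sets $h_{t}^{-1}(0)$. A nodal point $u(t)$ evolves with normal speed bounded by
\begin{equation*}
|\dot u(t)| \;\le\; \frac{|\partial_{t}h_{t}(u(t))|}{|\nabla h_{t}(u(t))|} \;=\; \frac{|(f_{x;T}-\gfr_{\alpha})(u(t))|}{|\nabla h_{t}(u(t))|} \;\le\; \frac{\beta}{\beta} \;=\; 1,
\end{equation*}
so every nodal point travels Euclidean distance at most $1$ between $t=0$ and $t=1$. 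Extending this normal velocity field to a compactly supported smooth vector field on $\overline{B}(2R)$ (Thom's first isotopy lemma, or an explicit partition of unity extension) and integrating, I obtain an ambient isotopy $\Phi_{t}\colon \overline{B}(2R)\to\overline{B}(2R)$, with $\Phi_{0}=\mathrm{Id}$ and $\Phi_{t}(\gfr_{\alpha}^{-1}(0)) = h_{t}^{-1}(0)$, each $\Phi_{t}$ shifting points by at most $t$.

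With $\Phi:=\Phi_{1}$, both inclusions drop out. For the upper bound, each $c\in\Cc(f_{x;T})$ lying in $B(R)$ is $\Phi(\tilde c)$ for a unique $\tilde c\in\Cc(\gfr_{\alpha})$, diffeomorphic to $c$ and contained in the $1$-neighbourhood of $c$, hence in $B(R+1)$; this gives an injection that preserves diffeomorphism type, so the $\nod_{\Cc}$ bound follows. For the lower bound, each $\tilde c\in\Cc(\gfr_{\alpha})$ lying in $B(R-1)$ has $\Phi(\tilde c)\in\Cc(f_{x;T})$ inside $B(R)$, by the same $1$-neighbourhood estimate applied in the forward direction. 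The nesting bound follows because $\Phi$ is a global diffeomorphism of $\overline{B}(2R)$: it carries connected components of the complement of the nodal set bijectively and preserves adjacency, so the induced map on nesting trees is a rooted isomorphism. In particular, the subtree end $e(c)$ attached to the edge $c$ (the smaller of the two components obtained by cutting at $c$) is preserved under $\Phi$, giving $\nod_{X}(\gfr_{\alpha},G;R-1)\le \nod_{X}(f_{x;T},G;R)\le \nod_{X}(\gfr_{\alpha},G;R+1)$.

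\textbf{Main obstacle.} The only delicate point is the bookkeeping for the nesting tree: one must check that the isotopy $\Phi$, defined on the ambient ball $\overline{B}(2R)$, really does transport the \emph{rooted} subtree $e(c)$ correctly, including for components whose interior nodal domains are close to $\partial B(R)$. This is where one uses the fact that $\Phi$ is defined globally on a buffer region $\overline{B}(2R) \supset \overline{B}(R+1)$ (rather than just near individual components) and that the ``inside'' of a separating hypersurface in $\Sc^{n}$ (or in $\R^{n}$, after suitable compactification for the scale invariant model) is a topological invariant preserved by $\Phi$; combined with the $1$-neighbourhood estimate, this absorbs any ambiguity coming from components straddling $\partial B(R)$, which is exactly what the $R\pm 1$ buffer in the statement is designed for.
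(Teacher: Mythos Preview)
Your proof is correct and rests on the same core idea as the paper: the linear homotopy $h_t$ between $\gfr_\alpha$ and $f_{x;T}$ has no critical zeros in $\overline{B}(2R)$ (because $\max\{|h_t|,|\nabla h_t|\}>\beta$ there), and the normal velocity of the zero set is bounded by $\beta/\beta=1$, giving the $\pm 1$ buffer. The packaging differs: the paper tracks each component $\gamma\subseteq B(R-1)$ individually inside the annulus $\{|\gfr_\alpha|<\epsilon\}$ (citing \cite{So}, Lemmas~6--7) and argues directly that no components are born, merged, or split along the homotopy, whereas you integrate the extended normal field into a single ambient isotopy $\Phi$ and read off preservation of diffeomorphism type and nesting from the fact that $\Phi$ is a diffeomorphism. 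Your route is cleaner conceptually (the nesting-tree preservation is automatic once $\Phi$ exists) at the cost of invoking the isotopy extension theorem; the paper's route is more elementary but requires the separate ``no new ovals are created inside'' check. One small slip: in the scale-invariant setting (Notation~\ref{not:counting vars}) the tree end $e(c)$ is the subtree of domains lying \emph{inside} $c$, not the smaller of the two pieces; your argument is unaffected, since $\Phi$ preserves the inside/outside relation.
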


\begin{proof}[Proof of Proposition \ref{prop:bi(R-1,Fx)<=bi(R/L,fL)<=bi(R+1,Fx)} assuming Proposition
\ref{prop:bi(R-1,Fx)<=bi(R,fxL)<=bi(R+1,Fx)}]

Since the probability of the \\ events $\Delta_{i}$, $i=1\ldots 4$ is $<4\delta$ for $T>T_{0}(R,\beta,\delta)$ by lemmas
\ref{lem:Delta1 small}-\ref{lem:Delta4 small}, the statement of Proposition \ref{prop:bi(R-1,Fx)<=bi(R/L,fL)<=bi(R+1,Fx)}
follows from Proposition
\ref{prop:bi(R-1,Fx)<=bi(R,fxL)<=bi(R+1,Fx)} at once upon replacing $\delta$ by $\frac{\delta}{4}$,
bearing in mind \eqref{eq:bi(x,r,fL)=bi(rL,fxL)}.

\end{proof}

\begin{proof}[Proof of Proposition \ref{prop:bi(R-1,Fx)<=bi(R,fxL)<=bi(R+1,Fx)}]

Modifying the proofs of \cite[Lemmas $6,7$]{So}, here we only prove the somewhat more complicated
case of tree ends: outside of $\Delta_{1}\cup \Delta_{4}$ for $G\in \Tc$ one has
\begin{equation}
\label{eq:nod Fx, R-1 <= nod fxL, R}
\nod_{X} (\gfr_{\alpha},G;R-1) \le \nod_{X} (f_{x;T},G;R),
\end{equation}
with the inequality
$$\nod_{X} \left(f_{x;T},G; R\right) \le \nod_{X} (\gfr_{\alpha},G;R+1),$$
and their analogues for nodal components topology following along the same lines.
Outside $\Delta_{1}\cup \Delta_{4}$ we have
\begin{equation}
\label{eq:||fxL-Fx||<beta}
\|f_{x;T}-\gfr_{\alpha}\|_{C^{1}(\overline{B}(2R))} <\beta,
\end{equation}
and also
\begin{equation}
\label{eq:maxfxL,nabla>beta}
\min\limits_{\overline{B}(2R)}\max\{ |f_{x;T}|,|\nabla f_{x;T} | \}>\beta ; \;
\min\limits_{\overline{B}(2R)}\max\{ |\gfr_{\alpha}|,|\nabla \gfr_{\alpha} | \} > \beta.
\end{equation}

For $t\in [0,\beta]$ consider $$g_{t} := \gfr_{\alpha}+\frac{t}{\beta} \cdot (f_{x;T}-\gfr_{\alpha}).$$
We claim that for all $t\in [0,\beta]$, $g_{t}$ has no critical zeros in $B(2R)$, i.e.
points $y\in B(2R)$ such that $g_{t}(y) =0$ and $\nabla g_{t}(y) = 0$.
Otherwise let $t_{0}\in [0,\beta]$ and $y_{0}\in B(2R)$ such that
$g_{t_{0}}(y_{0}) =0$, $\nabla g_{t_{0}}(y_{0}) = 0$. This contradicts \eqref{eq:maxfxL,nabla>beta},
as then
\begin{equation*}
|\gfr_{\alpha}(y_{0})| = \left| g_{t_{0}}(y_{0}) -  \frac{t_{0}}{\beta} \cdot (f_{x;T}(y_{0})-\gfr_{\alpha}(y_{0}))\right|
=\frac{t_{0}}{\beta} |f_{x;T}(y_{0})-\gfr_{\alpha}(y_{0})| < \beta
\end{equation*}
by \eqref{eq:||fxL-Fx||<beta}, and
\begin{equation*}
\begin{split}
|\nabla \gfr_{\alpha}(y_{0})| &= \left|\nabla g_{t_{0}}(y_{0}) -  \frac{t_{0}}{\beta} \cdot \nabla (f_{x;T}-\gfr_{\alpha})(y_{0})\right|
\\&=\frac{t_{0}}{\beta}\cdot |  \nabla (f_{x;T}-\gfr_{\alpha})(y_{0})| < \beta,
\end{split}
\end{equation*}
again, by \eqref{eq:||fxL-Fx||<beta}. That concludes the proof of the non-existence of
critical zeros of $g_{t}$, $t\in [0,\beta]$ in $B(2R)$.

Now, since under the assumptions of Proposition \ref{prop:bi(R-1,Fx)<=bi(R,fxL)<=bi(R+1,Fx)},
we excluded the event $\Delta_{1}\cup \Delta_{4}$, that implies that the various components $\gamma$ of $\gfr_{\alpha}^{-1}(0)$
are regular and bounded away from each other. Moreover Nazarov-Sodin \cite[Lemmas $6,7$]{So} showed that each
component $\gamma$ lies in an ``annulus" inside $$\gamma_{1}=\{y\in \R^{n}:d(y,\gamma)< 1\},$$ bounded by the two
hypersurfaces $\gfr_{\alpha}(x)=\pm \beta$, where $\beta>0$ is assumed to be sufficiently small; different components
$\gamma$ correspond to different, pairwise disjoint annuli.

Since $\Delta_{1}$ was excluded, for every $t\in [0,\beta]$, $g_{t}(y)>0$ is positive on $\gfr_{\alpha}^{-1}(\beta)$
and $g_{t}(y)<0$ is negative on $\gfr_{\alpha}^{-1}(-\beta)$. Therefore for every $\gamma$ component of
$\gfr_{\alpha}$ lying in $B(R)$ and $t\in [0,\beta]$, $g_{t}^{-1}(0)$ contains at least one component lying in $\gamma_{1}$.
A standard result from the Differential Topology asserts that a $1$-parameter family $g_{t}:\gamma_{1}\rightarrow\R$, $t\in [0,\beta]$ defined
on the open bounded annulus $\gamma_{1}$, so that for all $t\in [0,a]$, the function $g_{t}$ admits no non-degenerate critical points (i.e. it is a Morse function), the zero sets $\{g_{t}^{-1}(0)\}$ are diffeomorphic; note that, by the above, there is no nodal intersection with the boundary
$\partial \gamma_{1}$, as here $g_{t}$ is strictly positive or negative.

By the above, for every $\gamma$ component of $\gfr_{\alpha}^{-1}(0)$ lying in $B(R)$ and $t\in [0,\beta]$ there exists a unique component $\gamma^{t}$ of $g_{t}^{-1}(0)$ lying in $\gamma_{1}\subseteq B(R+1)$
(by the above, components cannot merge or split in $\gamma_{1}$; new components would also generate a critical point). In particular,
the correspondence $\gamma \mapsto \gamma^{t}$ is well-defined and injective between components of $\gfr_{\alpha}^{-1}(0)$ lying in $B(R)$
and the components of $g_{t}^{-1}(0)$ lying in $B(R+1)$, and moreover $\gamma$ and $\gamma^{t}$ are diffeomorphic.

Furthermore, the nesting tree of $g_{t}$ is preserved: there exists
an injective map $\phi^{t}:\Omega_{0} \rightarrow \Omega_{t}$ between the vertices of the nesting trees of
$g_{0}=\gfr_{\alpha}$ and $g_{t}$ respectively
such that $\omega,\omega'\in \Omega_{0}$ are connected by an oval $\gamma$ of $\gfr_{\alpha}^{-1}(0)$, if and
only if $\phi^{t}(\omega), \phi^{t}(\omega') \in \Omega_{t}$ are connected by the oval $\gamma^{t}$ of $g_{t}^{-1}(0)$.
Equivalently, if $\omega$ is the domain lying inside $\gamma_{1}$ and outside
$\gamma_{2}$, then $\phi^{t}(\omega)$ is the domain lying inside $\gamma_{1}^{t}$ and
outside $\gamma_{2}^{t}$ (by Jordan's Theorem in this setting, see section \ref{sec:bas conventions}).
No new ovals are created inside ovals corresponding to $\gamma\subseteq B(R-1)$, as otherwise there would
exist critical zeros, that were already ruled out.

Now let $c\in \Cc(\gfr_{\alpha})$ be a nodal component of $\gfr_{\alpha}$ lying in $B(R-1)$,
with $e(c)$ isomorphic a given rooted tree $G$.
By the above, for every $t\in [0,\beta]$ the nesting tree end $e_{g_{t}^{-1}(0)}(c^{t})$ is isomorphic to
$e(c)$, and hence to $G$. As $$c^{t}\subseteq \{y:d(y,c)\le 1\} \subseteq B(R),$$ it implies
\eqref{eq:nod Fx, R-1 <= nod fxL, R} as claimed.
\end{proof}

\section{Proof of Theorem \ref{thm:main thm lim meas}: gluing local results on $\M$}

\label{sec:glob res Riem}

\subsection{Proof of Theorem \ref{thm:main thm lim meas}}

In this section we finally prove Theorem \ref{thm:main thm lim meas} with the measures
\begin{equation}
\label{eq:muCnalpha=muCCgfr}
\mu_{\Cc,n,\alpha} = \mu_{\Cc(\gfr_{\alpha})}
\end{equation}
and
\begin{equation}
\label{eq:muXnalpha=muCCgfr}
\mu_{X,n,\alpha} = \mu_{X(\gfr_{\alpha})},
\end{equation}
as in Notation \ref{not:scal invar lim top nest meas}; both $\mu_{\Cc(\gfr_{\alpha})}$ and $\mu_{X(\gfr_{\alpha})}$
are probability measures
by the virtue of Theorem \ref{thm:scal invar no leak}, parts \eqref{it:top not leaking} and \eqref{it:nest not leaking} respectively.
First we formulate the following theorem that will imply Theorem \ref{thm:main thm lim meas} at once;
for $H\in H(n-1)$ (resp. $G\in \Tc$) let $\nod_{\Cc}(f,H)$ (resp. $\nod_{X}(f,H)$) the
the total number of components $c\in\Cc(f)$ of topological class $H$ (resp. such that $e(c)$ isomorphic
to $G$).

\begin{theorem}
\label{thm:tot cnt asymp}
For every $H\in H(n-1)$, $G\in \Tc$ we have
\begin{equation*}
\E\left[\left|\frac{\nod_{\Cc}(f,H)}{T^{n}} - \vol\M\cdot c_{\Cc;\gfr_{\alpha}}(H)\right|\right] \rightarrow 0
\end{equation*}
and
\begin{equation*}
\E\left[\left|\frac{\nod_{X}(f,G)}{T^{n}} - \vol\M\cdot c_{X;\gfr_{\alpha}}(G)\right|\right] \rightarrow 0.
\end{equation*}

\end{theorem}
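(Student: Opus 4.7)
The plan is to lift the scale-invariant $L^1$ convergence of Theorem \ref{thm:scal invar count} to $\M$ by combining the semi-local comparison of Theorem \ref{thm:loc scal} with a Riemannian analogue of the integral-geometric sandwich of Lemma \ref{lem:int geom sandwitch euclidian}. First I would set $r = R/T$ with $R$ a large auxiliary parameter to be sent to infinity after $T$, and note that since $\M$ is compact and smooth, the geodesic ball volume satisfies $\vol B(x,r) = \omega_n r^n (1+o_{r\to 0}(1))$ uniformly in $x\in\M$. Integrating indicators of the events $\{c\subseteq B(x,r)\}$ and $\{c\cap B(x,r)\ne\emptyset\}$ against $d\vol(x)$ yields the sandwich
\begin{equation*}
\frac{1-o(1)}{\omega_n r^n}\int_\M \nod_{\cdot}(f,\cdot;x,r)\,d\vol(x) \le \nod_{\cdot}(f,\cdot) \le \frac{1+o(1)}{\omega_n r^n}\int_\M \nod_{\cdot}^*(f,\cdot;x,r)\,d\vol(x),
\end{equation*}
and we bound $\nod_{\cdot}^* \le \nod_{\cdot} + \mathcal{I}$ where $\mathcal{I}(f;x,r)$ counts components of $f^{-1}(0)$ meeting $\partial B(x,r)$, which in turn is dominated by the number of critical points of $f|_{\partial B(x,r)}$.

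Taking expectations and using Fubini reduces everything to two semi-local estimates. For the main term, Theorem \ref{thm:loc scal} yields
\begin{equation*}
\frac{\nod_{\cdot}(f,\cdot;x,R/T)}{\vol B(R)} \xrightarrow{\ \prob\ } c_{\cdot;\gfr_\alpha}(\cdot)
\end{equation*}
as $T\to\infty$ then $R\to\infty$. To upgrade this to $L^1$ convergence, uniformly in $x\in\M$, I would invoke the Kac-Rice upper bound of Corollary \ref{cor:Kac Rice crit ball sphere} applied to the rescaled field $f_{x;T}$: by the uniform semi-classical estimate \eqref{eq:KT-KT' asymp der}, the relevant covariance data of $f_{x;T}$ on $B(2R)$ converge uniformly in $x$ to those of $\gfr_\alpha$, so the second-moment bound $\E[\nod_\Cc(f;x,R/T)^2] = O(\vol B(R)^2)$ holds uniformly, giving the required uniform integrability. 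This lets us pass $\E[\cdot]$ through the limit in $x$ under the integral to obtain
\begin{equation*}
\frac{1}{\omega_n r^n}\int_\M \E[\nod_{\cdot}(f,\cdot;x,R/T)]\,d\vol(x) = \bigl(c_{\cdot;\gfr_\alpha}(\cdot) + o_{T\to\infty}(1)+ o_{R\to\infty}(1)\bigr)\cdot \vol(\M)\cdot T^n.
\end{equation*}

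For the boundary term $\mathcal{I}$, the second part of Corollary \ref{cor:Kac Rice crit ball sphere} applied to the rescaled field $f_{x;T}$ on $\partial B(R)$ gives $\E[\mathcal{I}(f;x,R/T)] = O(R^{n-1})$ uniformly in $x\in\M$; after dividing by $\omega_n r^n = \omega_n R^n/T^n$ and integrating over $\M$, its contribution to $\E[\nod_{\cdot}(f,\cdot)]/T^n$ is $O(R^{-1})$, negligible as $R\to\infty$. Combining upper and lower bounds, we obtain
\begin{equation*}
\limsup_{T\to\infty}\E\left[\left|\frac{\nod_{\cdot}(f,\cdot)}{T^n} - \vol(\M)\cdot c_{\cdot;\gfr_\alpha}(\cdot)\right|\right] = O(R^{-1}),
\end{equation*}
and sending $R\to\infty$ finishes the proof. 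Finally, Theorem \ref{thm:main thm lim meas} follows immediately by normalizing: dividing $\nod_\cdot(f,\cdot)/T^n$ by the Nazarov-Sodin total $|\Cc(f)|/T^n\to \beta_{n,\alpha}\omega_n(2\pi)^{-n}\vol(\M)$ (a known $L^1$-limit) gives $\mu_{\Cc(f)}(H)\to \mu_{\Cc,n,\alpha}(H)$ and similarly for $\mu_{X(f)}$, and the $\sup_A|\cdot|$ in the discrepancy $D$ is then handled by the tightness from Theorem \ref{thm:scal invar no leak} parts \eqref{it:top not leaking}, \eqref{it:nest not leaking}.

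The main obstacle is the uniformity in $x\in\M$ of both the semi-local convergence and the second-moment bounds: Theorem \ref{thm:loc scal} is stated pointwise in $x$, but the Fubini step requires that the exceptional probabilities and $L^1$-tails be controlled uniformly. This uniformity is ultimately a consequence of the uniformity of the Weyl asymptotics \eqref{eq:KT-KT' asymp} and \eqref{eq:KT-KT' asymp der}, which provide uniform-in-$x$ $C^1$-closeness of the covariance of $f_{x;T}$ to that of $\gfr_\alpha$; a compactness-plus-continuous-dependence argument on the law of the limiting field then yields the required uniform bounds.
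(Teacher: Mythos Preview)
Your overall architecture (Riemannian integral-geometric sandwich, then pass to the semi-local limit via Theorem~\ref{thm:loc scal}) matches the paper's, but there is a genuine gap at the uniform-integrability step.

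You assert that Corollary~\ref{cor:Kac Rice crit ball sphere} gives a second-moment bound $\E[\nod_\Cc(f;x,R/T)^2]=O(\vol B(R)^2)$, and use this to upgrade the convergence in probability of Theorem~\ref{thm:loc scal} to $L^1$ convergence under the $dx$-integral. But Corollary~\ref{cor:Kac Rice crit ball sphere} (and Lemma~\ref{lem:Kac-Rice upper bnd gen}) only bound the \emph{first} moment of the critical-point count. A second-moment bound would require a two-point Kac--Rice formula for critical points, i.e.\ control of the joint density of $(\nabla F(x),\nabla F(y))$ and of the conditional Hessians, uniformly as $y\to x$; this is not established anywhere in the paper and is a substantial separate estimate. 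Without it, on the exceptional event of Theorem~\ref{thm:loc scal} you have no handle on $\E[\nod_\Cc(f;x,R/T)\cdot\mathbf{1}_{\text{bad}}]$, and the passage from convergence in probability to $L^1$ breaks down. The displayed conclusion $\limsup_T \E[|\cdots|]=O(R^{-1})$ is therefore unjustified.

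The paper avoids this problem not by moments but by truncation. It first discards $\xi$-small and $D$-long components (Lemmas~\ref{lem:E[Nxi-sm/L^{n}]->0} and~\ref{lem:E[ND-long/L^{n}]->0}), so that every remaining ``normal'' component borders a domain of volume at least $\xi T^{-n}$ and has diameter at most $D/T$. This yields a \emph{deterministic} pointwise bound $\nod_{X;\mathrm{norm}}(f,G;x,r)\le \xi^{-1}T^{n}\vol_\M B(x,r)$ (equation~\eqref{eq:Nnorm<=delta^{-1}vol(B)}), which is exactly what replaces your missing uniform integrability: on the bad event of probability $\Pc(\Delta_{T,G;x,R})$ the integrand is bounded by $(1+\epsilon)\xi^{-1}$, so its contribution is $\le (1+\epsilon)\xi^{-1}\Pc(\Delta_{T,G;x,R})$. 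The remaining issue, uniformity in $x$ of $\Pc(\Delta_{T,G;x,R})\to 0$, is handled by an Egorov-type argument rather than by continuous dependence on the covariance. Your ``compactness-plus-continuous-dependence'' remark points in a plausible direction, but note that the paper never proves uniform-in-$x$ semi-local convergence; it only needs almost-uniform convergence, and the deterministic bound above absorbs the small exceptional set in $\M$.
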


\begin{proof}[Proof of Theorem \ref{thm:main thm lim meas} assuming Theorem \ref{thm:tot cnt asymp}]

As it was mentioned above we take the postulated limiting measures
$\mu_{\Cc,n,\alpha}$ and $\mu_{X,n,\alpha}$ to be given by \eqref{eq:muCnalpha=muCCgfr} and \eqref{eq:muXnalpha=muCCgfr}
respectively. As these are probability measures having the full support $H(n-1)$ and $\Tc$ respectively by the virtue
of theorems \ref{thm:scal invar no leak} and \ref{thm:top nest meas supp}, the only thing
remaining to be proven is \eqref{eq:lim muf exist}.

Here we only prove \eqref{eq:lim muf exist} for $\mu_{\Cc}$,
i.e. that for all $\epsilon>0$
\begin{equation}
\label{eq:D(mu(f)-mulim >epsilon ->0}
\prob\left\{f\in\Ec_{\M,\alpha}(T):\: D(\mu_{\Cc(f)},\mu_{\Cc,n,\alpha})>\epsilon \right\}\rightarrow 0,
\end{equation}
the proof for $\mu_{X(f)}$ being identical. Let $\epsilon>0$ be given.
Bearing in mind the definition \eqref{eq:mu C n alpha def scal invar} of $\mu_{\Cc,n,\alpha} = \mu_{\Cc(\gfr_{\alpha})}$
and \eqref{eq:NS scal invar},
Theorem \ref{thm:tot cnt asymp} implies that for every $H\in H(n-1)$ we have
\begin{equation}
\label{eq:mu(H)-muC(H)->0}
\prob\left\{ \left|\mu_{\Cc(f)}(H) - \mu_{\Cc,n,\alpha}(H)\right|>\epsilon \right\} \rightarrow 0,
\end{equation}
as $T\rightarrow\infty$.
Let $$H(n-1)=\{H_{k} \}_{k\ge 1}$$ be any enumeration of the
(countable) family $H(n-1)$ of $(n-1)$-dimensional topological classes embeddable in $\Sc^{n}$ (equivalently,
embeddable in $\R^{n}$), and for $K\ge 1$ let  $$H(n-1;K) = \{H_{k} : k\le K \}.$$

Now, since $\mu_{\Cc,n,\alpha}$ is a probability measure, there exists $K=K(\epsilon)$ sufficiently big so that
\begin{equation}
\label{eq:muC>K1<eps/2}
\mu_{\Cc,n,\alpha}(H(n-1)\setminus H(n-1;K)) < \frac{\epsilon}{4}.
\end{equation}
For every $\delta>0$ and $$H_{k}\in H(n-1;K)$$ we employ \eqref{eq:mu(H)-muC(H)->0} with $\epsilon$ replaced by $\epsilon/2K$
to obtain $$\widetilde{T_{0}}(H_{k})=\widetilde{T_{0}}(H_{k},K,\epsilon,\delta)$$ such that
\begin{equation}
\label{eq:mu(H)-muC(H)>eps/2K<delta/2K}
\prob\left\{ \left|\mu_{\Cc(f)}(H) - \mu_{\Cc,n,\alpha}(H)\right|>\frac{\epsilon}{4K} \right\} <\frac{\delta}{2K}.
\end{equation}
Let $$T_{0}=T_{0}(\epsilon,\delta)=\max\limits_{k\le K} \widetilde{T_{0}}(H_{k}),$$ and $$C\subseteq H(n-1;K)$$ a sub-collection
of topology classes.
Summing up \eqref{eq:mu(H)-muC(H)>eps/2K<delta/2K} for $H_{k}\in C$
and on using the triangle inequality we obtain
\begin{equation}
\label{eq:muC(C)-mun(C) > eps <delta/2}
\prob\left\{ \left|\mu_{\Cc(f)}(C) - \mu_{\Cc,n,\alpha}(C)\right|>\frac{\epsilon}{4} \right\} <\frac{\delta}{2}
\end{equation}
holding for every $T>T_{0}$, with the exceptional event of probability $<\frac{\delta}{2}$ independent of $C$.
In particular, for $C=H(n-1,K)$, \eqref{eq:muC(C)-mun(C) > eps <delta/2} is
\begin{equation}
\label{eq:muC(B)-mun(B) > eps <delta/2}
\prob\left\{ \left|\mu_{\Cc(f)}(H(n-1,K)) - \mu_{\Cc,n,\alpha}(H(n-1,K))\right|>\frac{\epsilon}{4} \right\} <\frac{\delta}{2},
\end{equation}
and since both $\mu_{\Cc(f)}$ and $\mu_{\Cc,n,\alpha}$ are probability measures, by taking the complement $$H(n-1)\setminus H(n-1;K),$$
\eqref{eq:muC(B)-mun(B) > eps <delta/2} is equivalent to
\begin{equation}
\label{eq:muC(H-B)-mun(H-B) > eps <delta/2}
\begin{split}
\prob\bigg\{ \bigg|&\mu_{\Cc(f)}(H(n-1)\setminus H(n-1;K)) \\&-
\mu_{\Cc,n,\alpha}(H(n-1)\setminus H(n-1;K))\bigg|>\frac{\epsilon}{4} \bigg\} <\frac{\delta}{2}.
\end{split}
\end{equation}
Taking into account \eqref{eq:muC>K1<eps/2}, \eqref{eq:muC(B)-mun(B) > eps <delta/2} implies that
\begin{equation}
\label{eq:muC(B) > eps <delta/2}
\prob\left\{ \mu_{\Cc(f)}(H(n-1)\setminus H(n-1;K))>\frac{\epsilon}{2} \right\} <\frac{\delta}{2}.
\end{equation}

In light of all the above we may use \eqref{eq:muC(C)-mun(C) > eps <delta/2}, \eqref{eq:muC(B) > eps <delta/2} and \eqref{eq:muC>K1<eps/2} to finally write for every $$A\subseteq H(n-1)$$ and $T>T_{0}(\epsilon,\delta)$,
\begin{equation}
\label{eq:muCf(A)-muCnalpha(A) bnd}
\begin{split}
\prob\big\{ \big|\mu_{\Cc(f)}(A) - &\mu_{\Cc,n,\alpha}(A)\big|>\epsilon \big\} \le \\\le
\prob\big\{ \big|\mu_{\Cc(f)}&(A\cap H(n-1;K)) - \mu_{\Cc,n,\alpha}(A\cap H(n-1;K))\big|>\epsilon/4 \big\}
\\+ \prob\big\{ &\mu_{\Cc(f)}(A\setminus H(n-1;K)) > \epsilon/2\big\}
< \frac{\delta}{2}+\frac{\delta}{2} = \delta,
\end{split}
\end{equation}
where the exceptional event of probability $<\delta$ independent of $A$.
Since $\delta>0$ was arbitrary, and $T_{0}$ is independent of $A$, recalling the definition
\eqref{eq:D dist prob meas def} of the distance $D(\cdot,\cdot)$ between probability measures on $H(n-1)$,
\eqref{eq:muCf(A)-muCnalpha(A) bnd} implies the convergence result
claimed in \eqref{eq:D(mu(f)-mulim >epsilon ->0}, which, as it was mentioned above, was the only thing missing detail
for completing the proof of Theorem \ref{thm:main thm lim meas}.

\end{proof}

\subsection{Proof of Theorem \ref{thm:tot cnt asymp}}

\begin{proof}
We write the absolute value as
\begin{equation*}
|\cdot| = |\cdot|_{+}+|\cdot|_{-}
\end{equation*}
with $|b|_{+}=\max(b,0)$ and $|b|_{-}=\max(-b,0)$.
To prove Theorem \ref{thm:tot cnt asymp} it is then sufficient to bound each of
$$\E\left[\left| \frac{\nod_{\cdot}(f,\cdot)}{T^{n}} - \vol\M\cdot c_{\cdot ;\gfr_{\alpha}}(\cdot) \right|_{\pm}\right].$$
The latter is the content of Proposition \ref{prop:glob upper bound} to follow immediately.
\end{proof}

\begin{proposition}
\label{prop:glob upper bound}

For every $H\in H(n-1)$ and $G\in \Tc$
\begin{equation}
\label{eq:E[nod+-]->0}
\E\left[\left| \frac{\nod_{\cdot}(f,\cdot)}{T^{n}} - \vol\M\cdot c_{\cdot ;\gfr_{\alpha}}(\cdot) \right|_{\pm} \right]\rightarrow  0.
\end{equation}

\end{proposition}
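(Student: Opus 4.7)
The plan is to set up an integral-geometric sandwich on $\M$ mirroring Lemma~\ref{lem:int geom sandwitch euclidian}, reducing $\nod_{\cdot}(f,\cdot)$ to an average over $x\in\M$ of local counts $\nod_{\cdot}(f,\cdot;x,R/T)$ on geodesic balls of radius $R/T$. Applying the semi-local convergence Theorem~\ref{thm:loc scal} pointwise in $x$, upgrading it to $L^{1}$ via uniform Kac-Rice bounds, and integrating over $\M$ then produces the target asymptotic $\vol(\M)\cdot c_{\cdot;\gfr_{\alpha}}(\cdot)$.

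More concretely, fix a large auxiliary parameter $R>0$. For $T$ large enough that $R/T$ is smaller than the injectivity radius of $\M$, the exponential map distorts Riemannian volumes by $1+o_{T\to\infty}(1)$, and the argument of Lemma~\ref{lem:int geom sandwitch euclidian} carried over to $\M$ (with translations replaced by integration against $d\vol$) yields
\begin{equation*}
\frac{1}{\vol_{\M}(B(R/T))}\int_{\M}\nod_{\cdot}(f,\cdot;x,R/T)\,d\vol(x) \le \nod_{\cdot}(f,\cdot) \le \frac{1}{\vol_{\M}(B(R/T))}\int_{\M}\nod_{\cdot}^{*}(f,\cdot;x,R/T)\,d\vol(x).
\end{equation*}
The excess $\nod_{\cdot}^{*}-\nod_{\cdot}$ is bounded by the number of components meeting $\partial B(x,R/T)$, which is at most $\widetilde{\Ac}(f;x,R/T)$; a scaled version of Corollary~\ref{cor:Kac Rice crit ball sphere} applied to $f_{x;T}$, whose covariance derivatives are uniformly controlled via \eqref{eq:KT-KT' asymp der}, gives $\E[\widetilde{\Ac}(f;x,R/T)]=O(R^{n-1}T^{-(n-1)})$ uniformly in $x\in\M$, so this defect contributes an error $O(R^{-1})\cdot T^{n}$ on the upper side.

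Dividing by $T^{n}$, using $\vol_{\M}(B(R/T))=(1+o(1))\vol(B(R))T^{-n}$, and taking expectations, Theorem~\ref{thm:loc scal} gives convergence in probability of $\nod_{\cdot}(f,\cdot;x,R/T)/\vol(B(R))$ at each $x$ to a constant $c^{(R)}_{\cdot}(\cdot):=\E[\nod_{\cdot}(\gfr_{\alpha},\cdot;R)]/\vol(B(R))$, with $c^{(R)}_{\cdot}(\cdot)\to c_{\cdot;\gfr_{\alpha}}(\cdot)$ as $R\to\infty$ by Theorem~\ref{thm:scal invar count}. This is upgraded to $L^{1}$ uniformly in $x$ via the pointwise bound $\nod_{\cdot}(f,\cdot;x,R/T)\le\Ac(f_{x;T};R)$ and the Kac-Rice estimate \eqref{eq:Kac-Rice upper bnd stat} applied to the scaled field. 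A dominated-convergence argument over $\M$ (with dominant a uniform Kac-Rice constant) then produces
\begin{equation*}
\frac{1}{\vol(B(R))}\int_{\M}\E[\nod_{\cdot}(f,\cdot;x,R/T)]\,d\vol(x) \xrightarrow{T\to\infty} \vol(\M)\cdot c^{(R)}_{\cdot}(\cdot),
\end{equation*}
and combining with the $O(R^{-1})$ defect yields convergence of $\nod_{\cdot}(f,\cdot)/T^{n}$ to $\vol(\M)\cdot c_{\cdot;\gfr_{\alpha}}(\cdot)$ in probability.

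To promote this to the $L^{1}$ convergence of both $|\cdot|_{\pm}$ asserted by \eqref{eq:E[nod+-]->0}, note that $|\cdot|_{-}$ is bounded above by the constant $\vol(\M)\cdot c_{\cdot;\gfr_{\alpha}}(\cdot)$, so dominated convergence suffices. For $|\cdot|_{+}$, uniform integrability of $\nod_{\cdot}(f,\cdot)/T^{n}$ is needed; this follows from $\nod_{\cdot}(f,\cdot)\le|\Cc(f)|$ together with the $L^{1}$ version of the Nazarov-Sodin asymptotic \eqref{eq:|C(f)| beta c vol T^n}, itself obtainable by the same gluing applied to the total component count (as in \cite{So}). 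The main obstacle is the uniformity in $x\in\M$: both Theorem~\ref{thm:loc scal} and the Kac-Rice tail bounds must hold uniformly so that $\lim_{T\to\infty}$ commutes with $\int_{\M}d\vol$. This is enabled by the uniform Weyl asymptotics \eqref{eq:KT-KT' asymp} and \eqref{eq:KT-KT' asymp der} together with compactness of $\M$, but the technical novelty over the total-count argument of \cite{So} is that for the refined counts one must additionally invoke the tightness of Theorem~\ref{thm:scal invar no leak} to rule out topology or nesting concentrating near the sphere boundaries $\partial B(x,R/T)$ as $R\to\infty$.
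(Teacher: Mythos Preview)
Your architecture---integral-geometric sandwich on $\M$, local convergence via Theorem~\ref{thm:loc scal}, then integrate over $x$---matches the paper's. The gap is at the step where you ``upgrade to $L^{1}$ uniformly in $x$ via the pointwise bound $\nod_{\cdot}(f,\cdot;x,R/T)\le\Ac(f_{x;T};R)$ and the Kac-Rice estimate.'' The Kac-Rice bound \eqref{eq:Kac-Rice upper bnd stat} controls only the \emph{first moment} $\E[\Ac(f_{x;T};R)]=O(R^{n})$; a uniform first-moment bound does not give uniform integrability, so convergence in probability (which is all Theorem~\ref{thm:loc scal} provides) cannot be promoted to $L^{1}$ this way. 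Consequently your dominated-convergence step only produces convergence of $\int_{\M}\E[\nod_{\cdot}(f,\cdot;x,R/T)]\,dx$, i.e.\ of \emph{expectations}, and not convergence in probability of the random integral $\int_{\M}\nod_{\cdot}(f,\cdot;x,R/T)\,dx$ itself---and the latter is what the sandwich needs in order to yield convergence in probability of the global count. Your final appeal to the global Nazarov--Sodin $L^{1}$ asymptotic for uniform integrability of $\nod_{\cdot}(f,\cdot)/T^{n}$ is valid in principle, but it only becomes applicable once convergence in probability of $\nod_{\cdot}(f,\cdot)/T^{n}$ has been established, which is precisely the missing step.

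The paper fills this gap with a device you omit: it first excises components that are ``$\xi$-small'' (adjacent to a domain of scaled volume $<\xi$) or ``$D$-long'' (scaled diameter $>D$), bounding their expected numbers by $O(\xi^{c_{0}})T^{n}$ and $O(D^{-1})T^{n}$ via Lemmas~\ref{lem:E[ND-long/L^{n}]->0} and~\ref{lem:E[Nxi-sm/L^{n}]->0}. For the remaining ``normal'' components one has the \emph{deterministic} volume bound $\nod_{X;norm}(f,G;x,r)\le \xi^{-1}T^{n}\vol_{\M}B(x,r)$ (equation~\eqref{eq:Nnorm<=delta^{-1}vol(B)}), which serves as a pointwise dominating function and lets one control the contribution of the bad event $\Delta_{T,G;x,R}$ directly, without any uniform-integrability input. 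The uniformity in $x$ that you attribute to ``compactness of $\M$'' is also not automatic from Theorem~\ref{thm:loc scal} as stated (it is pointwise in $x$); the paper handles this by a double Egorov argument on $x\mapsto\Pc(\Delta_{T,G;x,R+D})$. Finally, the tightness of Theorem~\ref{thm:scal invar no leak} plays no role in the paper's proof of Proposition~\ref{prop:glob upper bound}; it is used only to show that the limiting measures are probability measures.
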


Proposition \ref{prop:glob upper bound} will be proved in \S\ref{sec:glob upper bound}, only in the (most subtle) case
\begin{equation}
\label{eq:E[nodX+]->0}
\E\left[\left| \frac{\nod_{\cdot}(f,\cdot)}{T^{n}} - \vol\M\cdot c_{X ;\gfr_{\alpha}}(G) \right|_{+} \right]\rightarrow  0.
\end{equation}
the other $3$ cases (choosing $X$ or $\Tc$, $|\cdot |_{+}$ or $|\cdot |_{-}$) being proved along similar (but somewhat easier) lines.

\subsection{Some preparatory lemmas towards the proof of Proposition \ref{prop:glob upper bound}}

\subsubsection{Excising very small and very long domains}

\begin{definition}

Let $\xi, D>0$ be parameters.

\begin{enumerate}

\item A component $c\in \Cc(f)$ is $\xi$-{\em small} if it is a boundary of
a nodal domain whose volume in $\M$ is $<\xi T^{-n}$.
Let $\nod_{\Cc;\xi-sm}(f)$ be the total number of $\xi$-small components of $f$ on $\M$.

\item For $D>0$ a nodal component $c\in\Cc(f)$ is $D$-{\em long} if its diameter is $>D/T$.
Let $\nod_{\Cc;D-long}(f)$ be their total number.

\item Given parameters $D,\xi > 0$ a nodal component $c\in\Cc(f)$ is ($D,\xi$)-{\em normal},
if it is not $\xi$-small nor $D$-long.

\item For $G\in \Tc$ let $\nod_{X;norm}(f,G)$ be the total number of
$(\xi,D)$-normal components $c\in \Cc(f)$ of $f$ such that $e(c)$ is isomorphic to $G$.

\item For $x\in\M$, $r>0$ let $\nod_{X;norm}(f,G;x,r)$ (resp. $\nod_{X;norm}^{*}(f,G;x,r)$) be the number of
those $c$ contained in the geodesic ball $B(x,r)\subseteq \M$
(resp. intersecting $\overline{{B}(x,r)}$).
(Here we use Jordan's Theorem on the sphere to choose those vertices of $e(c)$ lying inside $B(x,r)$, or intersect
it respectively).

\end{enumerate}

\end{definition}

By the definition of normal ovals, we have
\begin{equation}
\label{eq:Nnorm*(r)<=N(r+D/L)}
\nod_{X;norm}^{*}(f,G;x,r) \le \nod_{X;norm}\left(f,G;x,r+\frac{D}{T}\right)
\end{equation}
(as we discarded the very long ovals), and uniformly
\begin{equation}
\label{eq:Nnorm<=delta^{-1}vol(B)}
\nod_{X;norm}(f,G;x,r) \le \xi^{-1}T^{n} \vol_{\M}B(x,r),
\end{equation}
by a volume estimate (as we discarded the very small domains).

\begin{lemma}[Cf. {\cite[Lemma $8$]{So}}]
\label{lem:E[ND-long/L^{n}]->0}
There exists a constant
$C_{0}>0$ such that the following bound holds for the number of $D$-long components:
\begin{equation*}
\limsup\limits_{T\rightarrow\infty}\frac{\E[\nod_{\Cc;D-long}(f)]}{T^{n}} \le C_{0}\cdot \frac{1}{D}
\end{equation*}
\end{lemma}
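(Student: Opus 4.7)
The plan is to use a covering argument: a $D$-long component has diameter $>D/T$ and therefore cannot fit inside any ball of diameter $\le D/T$; whenever it meets such a ball it must cross the boundary sphere, and Kac--Rice on that sphere will control how many components can cross.

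First, I would cover $\M$ by geodesic balls $\{B(x_i,D/(2T))\}_{i=1}^I$ with bounded covering multiplicity $\kappa=\kappa(n)$; since $\M$ is compact of fixed volume, $I = O(T^n/D^n)$, with implied constant depending only on $\M$. A $D$-long component $c\in\Cc(f)$ satisfies $\diam(c)>D/T$ and so is not contained in any single $B(x_i,D/(2T))$. In particular, whenever $c$ meets such a ball, it must intersect $S_i:=\partial B(x_i,D/(2T))$. Since the balls cover $\M$, every $D$-long $c$ intersects at least one $S_i$, whence
\begin{equation*}
\nod_{\Cc;D-long}(f) \;\le\; \sum_{i=1}^{I}\#\{c\in\Cc(f):\,c\cap S_i\ne\emptyset\}.
\end{equation*}

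Next, each $c\in\Cc(f)$ intersecting $S_i$ contains at least one connected component of $f^{-1}(0)\cap S_i$, and distinct $c$'s contribute disjoint contributions; by standard Morse theory on the $(n-1)$-sphere $S_i$, the number of components of $f|_{S_i}^{-1}(0)$ is bounded above by the number of critical points of $f|_{S_i}$. Rescaling via $f_{x_i;T}(u)=f(\exp_{x_i}(u/T))$, critical points of $f|_{S_i}$ correspond to critical points of $f_{x_i;T}|_{\partial B(D/2)}$, so it suffices to bound the expected number of these uniformly in $x_i$. By \eqref{eq:KT-KT' asymp der}, the covariance of $f_{x_i;T}$ together with its first two derivatives converges, uniformly in $x_i\in\M$, to the covariance of $\gfr_{n,\alpha}$, which satisfies the non-degeneracy axiom $(\rho 3)$. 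Decomposing $\partial B(D/2)$ into universally finitely many coordinate patches, as in the proof of Corollary \ref{cor:Kac Rice crit ball sphere}, and applying Lemma \ref{lem:Kac-Rice upper bnd gen} to the restriction of $\nabla(f_{x_i;T}|_{\partial B(D/2)})$ on each patch, gives
\begin{equation*}
\E\bigl[\#\{c\in\Cc(f):\,c\cap S_i\ne\emptyset\}\bigr] \;\le\; C\cdot D^{n-1}
\end{equation*}
for some $C=C(\M,\alpha,n)$ independent of $x_i$ and of $T\ge T_0$.

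Summing over $i$,
\begin{equation*}
\E[\nod_{\Cc;D-long}(f)] \;\le\; I\cdot C\cdot D^{n-1} \;=\; O\!\left(\frac{T^n}{D^n}\right)\cdot D^{n-1}
\;=\; O\!\left(\frac{T^n}{D}\right),
\end{equation*}
which gives the claimed bound after dividing by $T^n$ and taking $\limsup_{T\to\infty}$.

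The main obstacle is ensuring that the Kac--Rice upper bound on each sphere $S_i$ holds with a constant independent of the centre $x_i$ and of $T$. This uniformity relies on the $C^2$-uniform local Weyl asymptotics \eqref{eq:KT-KT' asymp der} combined with uniform non-degeneracy of $\nabla f_{x_i;T}$ restricted to $\partial B(D/2)$ --- essentially the same semi-local uniformity already developed in Section \ref{sec:loc res Riem}. Once this is in place, the Morse-theoretic bookkeeping and the covering estimate are routine.
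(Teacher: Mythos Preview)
Your proof is correct and follows essentially the same approach as the paper (and Sodin's Lemma~8): cover $\M$ at scale $D/T$, observe that $D$-long components must cross the boundary spheres, and bound the number of crossings by the number of critical points of $f$ restricted to each sphere via the Kac--Rice upper bound (Lemma~\ref{lem:Kac-Rice upper bnd gen} and Corollary~\ref{cor:Kac Rice crit ball sphere}). The paper omits the details, simply writing that the proof proceeds ``by taking a $D/T$-net on $\M$ and using the Kac--Rice estimate~\eqref{eq:Kac-Rice upper bnd gen}'', which is exactly what you have spelled out.
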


A proof, given in this generality in ~\cite{So} and omitted here, is by
taking a $D/T$-net on $\M$ and using the Kac-Rice estimate \eqref{eq:Kac-Rice upper bnd gen}.

\begin{lemma}[Cf. {\cite[Lemma $9$]{So}},
and Lemma \ref{lem:xi-small bound} in the scale-invariant case]
\label{lem:E[Nxi-sm/L^{n}]->0}
Then there exist constants $c_{0},C_{0}>0$ so that the following estimate on the number of $\xi$-small components:
\begin{equation*}
\limsup\limits_{T\rightarrow\infty}\frac{\E[\nod_{\Cc;\xi-sm}(f)]}{T^{n}} \le C_{0}\cdot \xi^{c_{0}}.
\end{equation*}

\end{lemma}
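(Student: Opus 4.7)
The plan is to bootstrap the scale-invariant estimate of Lemma~\ref{lem:xi-small bound} to the Riemannian setting, using the semi-local scaling around each point $x\in\M$ together with the covariance asymptotics \eqref{eq:Kalpha asymp} and \eqref{eq:KT-KT' asymp der}. First I would fix a small geodesic ball $B(x,\epsilon_{0})\subseteq\M$, with $\epsilon_{0}>0$ chosen independently of $x$ by compactness of $\M$, and consider the rescaled Gaussian field
$$
f_{x;T}(u)\;:=\;f\!\left(\exp_{x}(u/T)\right),\qquad u\in B(\epsilon_{0}T)\subset\R^{n}.
$$
A nodal component $c\in\Cc(f)$ adjacent to a nodal domain of $\M$-volume $<\xi T^{-n}$ pulls back, under $u=T\exp_{x}^{-1}$, to a component of $f_{x;T}^{-1}(0)$ adjacent to a Euclidean domain of volume $<C_{1}\xi$ (with $C_{1}$ absorbing the metric distortion of $\exp_{x}$). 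In other words, $\xi$-smallness is preserved by the blow-up (up to a universal factor we may absorb by shrinking $\xi$).

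Second, I would cover $\M$ at scale $R/T$: by compactness we find balls $\{B(x_{i},R/T)\}_{i\le I}$ with $I=O\bigl(T^{n}\vol(\M)/R^{n}\bigr)$ and uniformly bounded overlap, where $R$ is a large but fixed parameter. Every $\xi$-small component of $f$ which is \emph{not} $R$-long lies entirely in some $B(x_{i},R/T)$, so by Lemma~\ref{lem:E[ND-long/L^{n}]->0},
$$
\E[\nod_{\Cc;\xi-sm}(f)]\;\le\;\sum_{i=1}^{I}\E\bigl[\nod_{\xi-sm}(f_{x_{i};T};R)\bigr]\;+\;C\cdot\frac{T^{n}}{R}.
$$
The reduction therefore comes down to proving the uniform local bound
$$
\limsup_{T\to\infty}\;\sup_{x\in\M}\;\E\bigl[\nod_{\xi-sm}(f_{x;T};R)\bigr]\;\le\;C_{0}'\,\xi^{c_{0}}R^{n},
$$
and then first taking $T\to\infty$, then $R\to\infty$.

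Third, I would apply the deterministic Sobolev-type inequality of Lemma~\ref{eq:Nsm det} to the random function $h=f_{x;T}$ on $X=B(R)$, and take the expectation exactly as in the proof of Lemma~\ref{lem:xi-small bound}: H\"{o}lder's inequality turns the right hand side into
$$
C\,\xi^{c_{0}}\cdot\E\!\left[\int_{B(R)}|\partial^{2}f_{x;T}|^{q}\right]^{\!s/(s+1)}\!\cdot\E\!\left[\int_{B(R)}|f_{x;T}|^{1-\epsilon}\|\nabla f_{x;T}\|^{-(n-\epsilon)}\right]^{\!1/(s+1)}\!,
$$
and the integrand in the second factor factorizes into $\E[|f_{x;T}(u)|^{1-\epsilon}]\cdot\E[\|\nabla f_{x;T}(u)\|^{-(n-\epsilon)}]$ at each point because the value and the gradient of a stationary Gaussian field are independent at a single point (this independence is inherited, at leading order in $T$, from the fact that the limiting covariance $B_{n,\alpha}$ is an even function of $u-v$, so the mixed first derivative of the covariance vanishes on the diagonal; the $O(T^{-1})$ error in \eqref{eq:Kalpha asymp}, which survives after differentiation by \eqref{eq:KT-KT' asymp der}, does not destroy the non-degeneracy).

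The main obstacle is precisely the uniform control of the negative moment $\E[\|\nabla f_{x;T}(u)\|^{-(n-\epsilon)}]$ in both $x\in\M$ and $T$ large. For each fixed $T$ this moment is finite because axiom $(\rho 3)$ passes from $\mathfrak{g}_{n,\alpha}$ to $f_{x;T}$ through the semi-classical expansion \eqref{eq:KT-KT' asymp der} of the covariance and its derivatives; however, to get a uniform bound we must check that the determinant of the covariance matrix of $\nabla f_{x;T}(u)$ is bounded below by a positive constant independent of $x\in\M$, $u\in B(R)$ and $T\ge T_{0}(R)$. This is delivered by \eqref{eq:KT-KT' asymp der}: the covariance of $\nabla f_{x;T}$ converges, uniformly in $x$ and in $u\in B(R)$ together with finitely many derivatives, to that of $\nabla \mathfrak{g}_{n,\alpha}$, which is uniformly non-degenerate by isotropy. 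All remaining Gaussian moments ($\partial^{2}f_{x;T}$, $f_{x;T}$) are finite and uniformly bounded by the same argument. Substituting these uniform moment bounds into the H\"{o}lder estimate above and integrating over $B(R)$ gives the required $C_{0}'\xi^{c_{0}}R^{n}$, and summing over the cover yields $\limsup_{T\to\infty}\E[\nod_{\Cc;\xi-sm}(f)]/T^{n}\le C_{0}\xi^{c_{0}}$ as claimed.
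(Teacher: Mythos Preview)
Your proposal is essentially correct and follows the strategy the paper intends: the paper omits this proof entirely, saying only that it is ``very similar to the proof of Lemma~\ref{lem:xi-small bound}'', i.e.\ one applies the deterministic inequality of Lemma~\ref{eq:Nsm det} to (a blow-up of) $f$, takes expectations with H\"older, and controls the resulting Gaussian moments via the covariance asymptotics~\eqref{eq:Kalpha asymp}--\eqref{eq:KT-KT' asymp der}. You carry out exactly this, and you correctly identify the one genuinely new point, namely the uniform-in-$(x,T)$ lower bound on $\det\operatorname{Cov}(\nabla f_{x;T}(u))$, which follows from the $C^{2}$-convergence of the rescaled covariance to that of $\gfr_{n,\alpha}$.

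Two remarks. First, the intermediate cover at scale $R/T$ together with the long-component correction via Lemma~\ref{lem:E[ND-long/L^{n}]->0} is an extra layer the paper most likely does not intend: one can work on a single fixed finite atlas of $\M$ (equivalently, take $R$ of order $\epsilon_{0}T$), because the pointwise moments $\E[|\partial^{2}f_{x;T}(u)|^{q}]$ and $\E[|f_{x;T}(u)|^{1-\epsilon}\|\nabla f_{x;T}(u)\|^{-(n-\epsilon)}]$ are controlled uniformly over the whole chart by the diagonal asymptotics of $K_{\alpha}$, and the little balls produced in the proof of Lemma~\ref{eq:Nsm det} (of volume $<C_{1}\xi$) automatically stay inside a fixed chart once $T$ is large. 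Your route is not wrong, just longer. Second, the factorisation $\E[|f_{x;T}|^{1-\epsilon}\|\nabla f_{x;T}\|^{-(n-\epsilon)}]=\E[|f_{x;T}|^{1-\epsilon}]\cdot\E[\|\nabla f_{x;T}\|^{-(n-\epsilon)}]$ does not hold for finite $T$, since $f_{x;T}(u)$ and $\nabla f_{x;T}(u)$ are only asymptotically independent. This is harmless: the joint law of $(f_{x;T}(u),\nabla f_{x;T}(u))$ is a non-degenerate Gaussian on $\R^{n+1}$ whose covariance converges uniformly to that of $(\gfr_{n,\alpha}(0),\nabla\gfr_{n,\alpha}(0))$, and a direct computation in spherical coordinates (bounding the density by a multiple of $(\det\Sigma)^{-1/2}$) shows the mixed moment is uniformly finite without any factorisation.
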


The proof of Lemma \ref{lem:E[Nxi-sm/L^{n}]->0}, omitted here,
is very similar to the proof of Lemma \ref{lem:xi-small bound}, given in
\S\ref{sec:xi-small bound proof} of the present manuscript.

\subsubsection{Integral geometric sandwich on $\M$ for normal components}

\begin{lemma}[Integral-geometric sandwich on a Riemannian manifold,
cf. {\cite[Lemma $1$]{So}}, and Lemma \ref{lem:int geom sandwitch euclidian}
in the scale-invariant case]
\label{lem:int geom sand Riemann}

Let $G\in \Tc$. Given $\epsilon > 0$, there exists $\eta>0$ such that for every $r<\eta$,
\begin{equation}
\label{eq:Nnorm<=(1+eps)*int loc + add}
\begin{split}
\nod_{X;norm}(f,G) &\le (1+\epsilon)\int\limits_{\M}\frac{\nod_{X;norm}^{*}(f,G;x,r)}{\vol(B(r))}dx
\\&+\xi^{-1}T^{n}\vol_{\M}(B_{x}(r))+O_{G}(1).
\end{split}
\end{equation}

\end{lemma}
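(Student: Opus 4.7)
The plan is to adapt the proof of the Euclidean Integral-Geometric Sandwich (Lemma~\ref{lem:int geom sandwitch euclidian}) to the compact Riemannian manifold $\M$. For each normal component $c \in \Cc(f)$ with $e(c) \cong G$, I will associate the ``thickened'' set
\[
G^*(c) := \{x \in \M : c \cap \overline{B(x,r)} \neq \emptyset\} = \bigcup_{y \in c} \overline{B(y,r)},
\]
where $B(y, r) \subseteq \M$ is the geodesic ball. The strategy is to bound $\vol_\M(G^*(c))$ from below in terms of $\vol(B(r)) := \omega_n r^n$, and then use Fubini to pass from $\sum_c \vol_\M(G^*(c))$ to the integral of the local count $\nod^*_{X;norm}(f, G; x, r)$, tracking the boundary discrepancies carefully.

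The first step is a uniform volume comparison for geodesic balls on $\M$. By the standard Riemannian expansion $\vol_\M(B(y, r)) = \omega_n r^n (1 + O_\M(r^2))$, there exists $\eta = \eta(\M, \epsilon) > 0$ such that $\vol_\M(B(y, r)) \ge (1 - \epsilon/2) \omega_n r^n$ for every $y \in \M$ and every $r < \eta$. Since $B(y_c, r) \subseteq G^*(c)$ for any $y_c \in c$, this yields $\vol_\M(G^*(c)) \ge (1 - \epsilon/2) \vol(B(r))$ uniformly in $c$. Summing over all normal $c$ with $e(c) \cong G$,
\[
\sum_c \vol_\M(G^*(c)) \ge (1 - \epsilon/2) \vol(B(r)) \cdot \nod_{X;norm}(f, G).
\]

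By Fubini the left-hand side equals $\int_\M N(x, r) \, dx$, where $N(x, r)$ counts normal components $c$ whose \emph{global} nesting tree end $e(c) \cong G$ meets $\overline{B(x, r)}$. The main obstacle will be to relate $N(x, r)$ with $\nod^*_{X;norm}(f, G; x, r)$, which uses the tree end of $c$ restricted to vertices of $e(c)$ lying in $B(x, r)$ (selected via Jordan's theorem on $\Sc^n$, as in Notation~\ref{not:counting var manifold}). Since $c$ is normal with $\diam(c) \le D/T$, the ``inside'' of $c$ on $\Sc^n$ is confined to the $D/T$-neighborhood of $c$, so the local and global tree ends coincide whenever $c$ lies at distance $> D/T$ from $\partial B(x, r)$. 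The ``bad'' components are therefore contained in an $O(D/T)$-tubular neighborhood of $\partial B(x, r)$, and by \eqref{eq:Nnorm<=delta^{-1}vol(B)} their count is at most $\xi^{-1} T^n$ times the volume of that tubular neighborhood, which is absorbed into the $\xi^{-1} T^n \vol_\M(B_x(r))$ correction. The $O_G(1)$ residual handles a bounded number of exceptional components whose global $e(c) \cong G$ is not faithfully witnessed by any single ball (for instance when both sides of $c$ are of comparable global size, so only $O_G(1)$ such $c$ can exist). Combining the lower bound on $\sum_c \vol_\M(G^*(c))$ with this accounting and using $1/(1 - \epsilon/2) \le 1 + \epsilon$ for $\epsilon$ small yields the claimed inequality.
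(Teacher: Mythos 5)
The paper does not actually prove this lemma — it says ``the proof ... is very similar to the one of Lemma~\ref{lem:int geom sandwitch euclidian}, and we omit it here,'' followed by a short remark explaining the three correction features. So the comparison is against that remark plus what the Euclidean proof forces.

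Your structural setup — the thickened sets $G^*(c)$, Fubini, and the $(1+\epsilon)$ factor from the uniform expansion $\vol_\M(B(y,r))=\omega_n r^n(1+O_\M(r^2))$ — is correct and is exactly the paper's stated source of the $(1+\epsilon)$. Where the proposal goes wrong is in the two additive correction terms, which is precisely the part the paper flags as new relative to the Euclidean case.

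Your explanation of the $O_G(1)$ term (``when both sides of $c$ are of comparable global size'') cannot be the right mechanism: if both pieces of $\Sc^n\setminus c$ have comparable volume then $\diam(c)$ is of order $\diam(\M)$, so $c$ is $D$-long and already excised from the normal count. The genuine new phenomenon on $\M=\Sc^n$, absent from Lemma~\ref{lem:int geom sandwitch euclidian}, is that $e(c)$ is defined in \S\ref{sec:intro} as the \emph{smaller of the two rooted subtrees} cut by $c$, and this tree-smaller side can sit on the geometrically \emph{larger} piece of $\Sc^n\setminus c$. Such a $c$ is $D$-short and hence contained in many balls $B_x(r)$, yet $e(c)$ is never contained in any of them, so the sandwich fails for it entirely. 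A nesting argument shows two distinct such components would force $|\Omega(f)|\le 2|G|$, which bounds the number of components outright by $O_G(1)$; this is what the paper means by ``at most the vertex number of $G$''. In the Euclidean Notation~\ref{not:counting vars}, ``inside'' is the bounded component, always the geometrically small side, so this issue simply does not arise. Your writeup does not isolate this wrong-side case, which is the heart of the $O_G(1)$ term.

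For the middle term $\xi^{-1}T^n\vol_\M(B_x(r))$: the paper's remark says it bounds, via the trivial bound \eqref{eq:Nnorm<=delta^{-1}vol(B)}, the count of components contained in $B_x(r)$ whose tree end is not. Your tubular-neighborhood count targets a different discrepancy (components straddling $\partial B(x,r)$ whose local tree end is truncated). Carried through carefully — integrating the per-$x$ bound $\xi^{-1}T^n\cdot O(r^{n-1}D/T)$ over $\M$ and dividing by $\vol(B(r))$ — your version produces a correction of order $\xi^{-1}T^n D/R$ (for $r=R/T$), not the stated $\xi^{-1}T^n\vol_\M(B_x(r))=O_R(1)$. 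Both vanish after dividing by $T^n$ in the limits taken in Proposition~\ref{prop:glob upper bound}, so your variant would still serve the downstream application, but it does not give the inequality as stated, and in any case the tube estimate cannot absorb the wrong-side components discussed above.
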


The proof of Lemma \ref{lem:int geom sand Riemann} is very similar to the one
of Lemma \ref{lem:int geom sandwitch euclidian}, and we omit it here.

\begin{remark}

\begin{enumerate}

\item Note the differences between \eqref{eq:Nnorm<=(1+eps)*int loc + add} and the analogous statement
\eqref{eq:geom int scal invar} in the scale invariant case. First, the $(1+\epsilon)$ factor
in \eqref{eq:Nnorm<=(1+eps)*int loc + add} manifests the perturbed volumes of small geodesic balls on
$\M$ as compared to the Euclidian balls. We then need to account for a (exclusive to $\nod_{X}$)
situation where a component $c\in \Cc(f_{L})$ is contained in some geodesic ball $B_{x}(r)$, but the corresponding
tree end is not, which may be bounded by the volume estimate \eqref{eq:Nnorm<=delta^{-1}vol(B)} responsible
for the first extra term in \eqref{eq:Nnorm<=(1+eps)*int loc + add}.
The other peculiar $O_{G}(1)$ term in \eqref{eq:Nnorm<=(1+eps)*int loc + add}
may be taken as at most the vertex number of $G$.

\item For the lower bound the following inequality holds outside an event of arbitrarily small
probability only (The reason that this may not always hold is again, that in the non-Euclidean
case the graph end $e(c)$ may fail to be lying in a geodesic ball, even if $c$ is)
\begin{equation*}
(1-\epsilon)\int\limits_{\M}\frac{\nod_{X;norm}(f,G;x,r)}{\vol(B(r))}dx  \le \nod_{X;norm}(f,G).
\end{equation*}

\end{enumerate}

\end{remark}

\subsubsection{Uniform bound for the number of components in small balls}

\begin{lemma}
\label{lem:Kac-Rice exp nod bnd}
For $r$ sufficiently small, depending only on $\M$ we have the following uniform bound
\begin{equation*}
\E[\nod_{\Cc}(f;x,r)]= O(r^{n}\cdot T^{n}),
\end{equation*}
with constant involved in the $`O'$-notation depending on $\M$ only.
\end{lemma}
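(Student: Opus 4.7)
The plan is to bound $\nod_{\Cc}(f;x,r)$ by $\Ac(f;B(x,r))$, the number of critical points of $f$ in $B(x,r)$, and then estimate the latter via Lemma \ref{lem:Kac-Rice upper bnd gen} applied to the gradient field $H=\nabla f$, invoking the uniform local Weyl asymptotics \eqref{eq:Kalpha asymp} and their derivative version \eqref{eq:KT-KT' asymp der} to control the covariance structure uniformly in the base point.

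First I would establish the combinatorial reduction $\nod_{\Cc}(f;x,r) \le \Ac(f;B(x,r))$. For $r$ smaller than the injectivity radius of $\M$ the complement $\Sc^{n}\setminus B(x,r)$ is connected, so for each nodal component $c\subset B(x,r)$ exactly one of the two sides of $c$ in $\Sc^{n}$ (guaranteed by Jordan's theorem on the sphere \cite{E.Lima}) lies entirely in $B(x,r)$; denote this inside by $U_{c}$. Let $\omega_{c}\subset U_{c}$ be the nodal domain of $f$ adjacent to $c$ on the $U_{c}$-side. Since $f$ vanishes on $\partial\omega_{c}$ but is not identically zero on $\omega_{c}$, it attains a nonzero extremum at an interior critical point of $\omega_{c}$. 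The assignment $c\mapsto \omega_{c}$ is injective: if $\omega_{c_{1}}=\omega_{c_{2}}$ with $c_{1}\ne c_{2}$ and WLOG $U_{c_{2}}\subsetneq U_{c_{1}}$, then the chain $c_{1}\subset\partial\omega_{c_{1}}=\partial\omega_{c_{2}}\subset\overline{U_{c_{2}}}=U_{c_{2}}\sqcup c_{2}$ together with $c_{1}\cap c_{2}=\emptyset$ (distinct components of $V(f)$ are disjoint) forces $c_{1}\subset U_{c_{2}}$, and hence $U_{c_{1}}\subset U_{c_{2}}$, contradicting $U_{c_{2}}\subsetneq U_{c_{1}}$. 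Distinct $\omega_{c}$'s are thus disjoint and yield distinct critical points, giving the desired inequality.

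Finally, in geodesic normal coordinates around $x$ (in which $B(x,r)$ corresponds to a Euclidean ball of comparable radius) I apply Lemma \ref{lem:Kac-Rice upper bnd gen} to $H=\nabla f$ to obtain
\[
\E[\Ac(f;B(x,r))] \le C\sup_{y\in B(x,r)}\frac{\E[|\nabla^{2} f(y)|^{2}]^{n/2}}{\det\E[\nabla f(y)\,\nabla f(y)^{t}]^{1/2}}\cdot\vol(B(x,r)).
\]
The Weyl asymptotics \eqref{eq:Kalpha asymp} together with their derivative version \eqref{eq:KT-KT' asymp der} imply that after the natural rescaling by $T$ the covariance of $(\nabla f(y),\nabla^{2}f(y))$ converges uniformly in $y\in\M$ to that of $(\nabla\gfr_{\alpha},\nabla^{2}\gfr_{\alpha})$, which is non-degenerate by axiom $(\rho 3)$. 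Consequently $\E[|\nabla^{2}f|^{2}]\asymp T^{n+4}$ and $\det\E[\nabla f\,\nabla f^{t}]\asymp T^{n(n+2)}$, so the ratio above is $O(T^{n})$ uniformly in $y\in\M$; multiplying by $\vol(B(x,r))\asymp r^{n}$ yields $O(r^{n}T^{n})$ as claimed. The main obstacle is the uniform-in-$y$ lower bound on $\det\E[\nabla f(y)\,\nabla f(y)^{t}]$; this rests on the compactness of $\M$ and the uniformity of the remainders in \eqref{eq:KT-KT' asymp der} (established in \cite{CH}, \cite{CH2}), combined with the non-degeneracy of the limiting isotropic covariance $B_{n,\alpha}$.
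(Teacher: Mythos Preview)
Your proposal is correct and follows essentially the same route as the paper: bound the component count by the number of critical points, then apply the Kac--Rice upper bound (Lemma~\ref{lem:Kac-Rice upper bnd gen}) to $\nabla f$ in local coordinates, and control the resulting ratio via the local Weyl asymptotics \eqref{eq:Kalpha asymp}, \eqref{eq:KT-KT' asymp der}. Your justification of the inequality $\nod_{\Cc}(f;x,r)\le \Ac(f;B(x,r))$ via the injective assignment $c\mapsto\omega_{c}$ is more explicit than the paper's one-line remark, and your homogeneity computation for the ratio is spelled out in more detail; one small caveat is that the specific exponents $T^{n+4}$ and $T^{n(n+2)}$ presuppose the normalization $\E[f^{2}]\asymp T^{n}$ valid for $0\le\alpha<1$, whereas for $\alpha=1$ the overall scale involves $\eta(T)T^{n-1}$ --- but since the normalization cancels in the ratio, the conclusion $O(T^{n})$ is unaffected.
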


\begin{proof}
First, the nodal components count is bounded
\begin{equation*}
\Cc(f) \le \E[\Ac(f)]
\end{equation*}
by the number of critical points of $f$. To bound the latter we use Lemma \ref{lem:Kac-Rice upper bnd gen} on $$G=\nabla f,$$
understood in a coordinate patch around $x$.
We claim that \eqref{eq:Kalpha asymp}, applied on $K_{\alpha}(T;x,y)$ and its derivatives,
implies that the expression
\begin{equation}
\label{eq:Kac-Rice weak upper G=nabla f}
\frac{\E[|\nabla G(x)|^{2}]^{n/2}}{\left(\det\E[G(x)\cdot G(x)^{t}]\right)^{1/2}}
= \frac{\E[|(f_{ij}(x))_{i,j}|^{2}]^{n/2}}{\left(\det\E[\nabla f(x)\cdot \nabla f(x)^{t}]\right)^{1/2}} = O(T^{n}),
\end{equation}
on the rhs of \eqref{eq:Kac-Rice upper bnd gen} is uniformly bounded, with constant involved in the $`O'$-notation
depending only on $n$.

First, we observe that the denominator of \eqref{eq:Kac-Rice weak upper G=nabla f}
is the determinant of the covariance matrix $\Sigma(x)$ of $\nabla f(x)$. We know from
\eqref{eq:Kalpha asymp} that, after scaling by $T$, the entries of $\Sigma(x)$
converge uniformly to the entries of the non-degenerate covariance matrix of $\nabla \gfr$. Hence, properly scaled,
$$\det\E[\nabla f(x)\cdot \nabla f(x)^{t}]$$ is bounded away from $0$. Concerning the numerator
$\E[|f_{ij}(x)|]$ of \eqref{eq:Kac-Rice weak upper G=nabla f}, we may use the triangle inequality
to bound $$\E[|(f_{ij}(x))|^{2}] \le  \sum\limits_{i,j} \E\left[f_{ij}(x)^{2}\right];$$ after scaling by $T$
(consistent with the scaling of the denominator), the latter is uniformly bounded, due to
\eqref{eq:Kalpha asymp} applied to the corresponding $4$-th order mixed derivative.
Hence \eqref{eq:Kac-Rice upper bnd gen} implies the statement of the present lemma.

\end{proof}

\subsection{Proof of Proposition \ref{prop:glob upper bound}}
\label{sec:glob upper bound}

\begin{proof}
Throughout the course of this proof we will assume with no loss of generality that $\M$ is unit volume $\vol\M=1$,
and for a given $G\in\Tc$ we will use the shorthand
\begin{equation}
\label{eq:cX=grfalpha}
c_{X}:=c_{X ;\gfr_{\alpha}}(G);
\end{equation}
and as it was mentioned above we only show \eqref{eq:E[nodX+]->0} here.
Let $R>0$ be a large number, so that $D/R$ is sufficiently small, and $R/T<\eta$ as in Lemma \ref{lem:int geom sand Riemann},
sufficiently small, so that
\begin{equation}
\label{eq:volMB(r)/volB(r)<1+eps}
\begin{split}
&\left|\frac{\vol_{\M}(B_{x}(R/T))}{\vol(B(R/T))} -1\right| < \epsilon,
\\  &\frac{\vol(B(R+D))}{\vol(B(R))} < 1+\epsilon
\end{split}
\end{equation}
uniformly for $x\in\M$.

Apply Lemma
\ref{lem:int geom sand Riemann} with $r=\frac{R}{T}$;
by the triangle inequality for $|\cdot |_{+}$ we have (recall \eqref{eq:cX=grfalpha})
\begin{equation}
\label{eq:sand apply nodT}
\begin{split}
&\E\left[\left| \frac{\nod_{\Tc;norm}(f,G)}{T^{n}} - \vol\M\cdot c_{X} \right|_{+} \right]
\\&\le \E\left[ \int\limits_{\M}\left|(1+2\epsilon)\frac{\nod_{X;norm}^{*}(f,G;x,R/T)}{\vol B(R+D)}-c_{X}\right|_{+}dx \right]
\\&+O(T^{-n}+\xi^{-1}\vol_{\M}(B_{x}(R/T)))
\\&\le
\E\left[ \int\limits_{\M}\left|\frac{\nod_{X;norm}(f,G;x,(R+D)/T)}{\vol B(R+D)}-c_{X}\right|_{+}dx \right]
\\&+O\left(\epsilon\cdot \int\limits_{\M}\frac{\E[\nod_{X;norm}(f,G;x,(R+D)/T)]}{\vol B(R+D)}dx \right)
\\&+O(T^{-n}+\xi^{-1}\vol_{\M}(B_{x}(R/T))),
\end{split}
\end{equation}
by \eqref{eq:Nnorm*(r)<=N(r+D/L)} and \eqref{eq:volMB(r)/volB(r)<1+eps}.
Observe that the integrand $$\frac{\E[\nod_{X;norm}(f,G;x,(R+D)/T)]}{\vol B(R+D)}$$ is uniformly bounded by
Lemma \ref{lem:Kac-Rice exp nod bnd}. Hence \eqref{eq:sand apply nodT} is
\begin{equation*}
\begin{split}
&\E\left[\left| \frac{\nod_{X;norm}(f,G)}{T^{n}} - \vol\M\cdot c_{X} \right|_{+} \right]
\\&\le\E\left[\int\limits_{\M} \left| \frac{\nod_{X;norm}(f,G;x,(R+D)/T)}{\vol B(R+D)}-c_{X}\right|_{+}dx\right]\\& +
O(\epsilon+T^{-n}+\xi^{-1}\vol_{\M}(B_{x}(R/T))),
\end{split}
\end{equation*}

Since the latter error term $O(\cdots)$ could be made arbitrarily small, it is then sufficient to prove that
\begin{equation}
\begin{split}
\label{eq:int Omega,M N(S,x)-cS(x)}
&\E\left[\int\limits_{\M}\left| \frac{\nod_{X;norm}(f,G;x,(R+D)/T)}{\vol B(R+D)}-c_{X}(G,x)\right|_{+}dx\right] \\&=
\int\limits_{\Delta}\int\limits_{\M} \left|\frac{\nod_{X;norm}(f,G;x,(R+D)/T)}{\vol B(R+D)}-c_{X}\right|_{+}dx d\Pc(\omega)
\rightarrow 0,
\end{split}
\end{equation}
where $\Delta$ is the underlying probability space, and $\Pc$ is the probability measure on $\Delta$.
Now consider the event
$$\Delta_{T,G;x,R} = \left\{  \left| \frac{\nod_{X}(f,G;x,R/T)}{\vol B(R+D)}-c_{X} \right|>\epsilon\right\};$$
note that though formally the values of $T$ can attain a continuum variety in $\R$, only countably many of them
would yield genuinely different functions $f$ in \eqref{eq:f = sum cj phij} and their by-products, such as $\Delta_{T,G;x,R}$,
so we may assume that any limit $T\rightarrow\infty$ is along this countable system; from this point on we will neglect
this difference, which will save us from dealing with various measurability issues.
Then by Theorem \ref{thm:loc scal}, for every $x\in\M$
\begin{equation}
\label{eq:P(Omega)->0}
\lim\limits_{R\rightarrow\infty}\limsup\limits_{T\rightarrow\infty} \Pc(\Delta_{T,G;x,R+D}) =0.
\end{equation}

We claim that there exists a sequence\footnote{As above, this will simplify our treatment of measurability;
with some more effort we could work with an arbitrary countable sequence, or even a
continuum of $\{R\}$} $\{R_{j}\}_{j\rightarrow\infty}$ (e.g. $R_{j}\in\Z$ integers) so that
the limit \eqref{eq:P(Omega)->0} is almost uniform w.r.t. $x\in X$, that is, for every $\eta>0$
there exists $\M_{\eta} \subseteq \M$ with $\vol \M_{\eta}>1-\eta$, such that
\begin{equation}
\label{eq:P(Omega) Egorov}
\varliminf\limits_{R_{j}\rightarrow\infty}\varlimsup\limits_{T\rightarrow\infty} \sup\limits_{x\in\M_{\eta}}\Pc(\Delta_{T,G;x,R_{j}+D}) =0.
\end{equation}
To see \eqref{eq:P(Omega) Egorov} we first apply an Egorov-type theorem on the limit in \eqref{eq:P(Omega)->0} w.r.t.
$R\rightarrow\infty$: working with the sets $$E_{n,k} =
\bigcup\limits_{R>n \text{ integer}}\left\{x\in \M:\: \Pc(\Delta_{T,G;x,R+D}) > \frac{1}{k} \text{ for } T \text{ sufficiently big} \right\}$$
yields that for some $\M_{\eta}$ with $\vol(\M_{\eta})>1-\frac{\eta}{2}$
\begin{equation*}
\lim\limits_{R_{j}\rightarrow\infty}\sup\limits_{x\in\M_{\eta}}\varlimsup\limits_{T\rightarrow\infty} \Pc(\Delta_{T,G;x,R_{j}+D}) =0;
\end{equation*}
this is not quite the same as the claimed result \eqref{eq:P(Omega) Egorov}, as the order of $\sup\limits_{x\in\M_{\eta}}$
and the $\limsup$ w.r.t. $T\rightarrow\infty$ is wrong.
We use an Egorov-type argument once again, w.r.t. the limit $\varlimsup\limits_{T\rightarrow\infty}$
to mollify this.
Fix an integer $r>0$, and let $R_{j}=R_{j(r)}>0$ sufficiently big
so that
\begin{equation}
\label{eq:sup(limsup)<1/r}
\sup\limits_{x\in\M_{\eta}}\varlimsup\limits_{T\rightarrow\infty} \Pc(\Delta_{T,G;x,R_{j}+D})<\frac{1}{r}.
\end{equation}
Define the monotone decreasing sequence of sets
$$F_{m} = \bigcup\limits_{T>m}\left\{x\in \M_{\eta}:\: \Pc(\Delta_{T,G;x,R_{j}+D}) > \frac{2}{r} \right\}.$$
Since, by \eqref{eq:sup(limsup)<1/r}, $$\bigcap\limits_{m\ge 1} F_{m}=\emptyset,$$
we may find $m=m(r)$ sufficiently
big so that $\vol(F_{m(r)})<\frac{\eta}{2^{r+1}}$. Therefore the claimed result \eqref{eq:P(Omega) Egorov} holds on
$$M_{\eta}\setminus \bigcup\limits_{r\ge 1}F_{m(r)},$$ i.e.
further excising the set $\bigcup\limits_{r\ge 1}F_{m(r)} $ of volume $<\frac{\eta}{2}$ from $\M_{\eta}$.

We then write the integral \eqref{eq:int Omega,M N(S,x)-cS(x)} as
\begin{equation}
\label{eq:int OmegaM [NV-eta]+}
\begin{split}
&\int\limits_{\Delta}\int\limits_{\M} \left|\frac{\nod_{X;norm}(f,G;x,(R+D)/T)}{\vol B((R+D))}-c_{X}\right|_{+}dx d\Pc (\omega)\\&=\int\limits_{\M}\int\limits_{\Delta_{T,G;x,R+D}} +\int\limits_{\M}\int\limits_{\Delta\setminus\Delta_{T,G;x,R+D}}.
\end{split}
\end{equation}
First, on $\Delta\setminus\Delta_{T,G;x,R+D}$, the integrand of \eqref{eq:int OmegaM [NV-eta]+} is
\begin{equation*}
\begin{split}
&\left|\frac{\nod_{X;norm}(f,G;x,(R+D)/T)}{\vol B((R+D)/T)}-c_{X}(G,x)\right|_{+} \\&\le
\left|\frac{\nod_{X}(f_{L},G;x,(R+D)/T)}{\vol B((R+D)/T)}-c_{X}(G,x)\right|   \le \epsilon,
\end{split}
\end{equation*}
and hence the contribution of this range is
\begin{equation}
\label{eq:int Nnorm /Omega}
\begin{split}
&\int\limits_{\M}\int\limits_{\Delta\setminus\Delta_{T,G;x,R+D}}
\left|\frac{\nod_{X;norm}(f,G;x,(R+D)/T)}{\vol B((R+D)/T)}-c_{X}\right|_{+}dx d\Pc (\omega)
\\&\le \int\limits_{\M}\int\limits_{\Delta\setminus\Delta_{T,G;x,R+D}}\epsilon dxd\Pc(\omega) \le \epsilon.
\end{split}
\end{equation}

On $\Delta_{T,G;x,R+D}$ we use the volume estimate \eqref{eq:Nnorm<=delta^{-1}vol(B)} yielding uniformly on $x\in\M$
\begin{equation}
\label{eq:NV triv estimate Omega}
\begin{split}
&\int\limits_{\Delta_{T,G;x,R+D}}
\left|\frac{\nod_{X;norm}(f,G;x,(R+D)/T)}{\vol B(R+D)}-c_{X}\right|_{+} d\Pc (\omega)
\\&\le \int\limits_{\Delta_{T,G;x,R+D}}
\left|\frac{\nod_{X;norm}(f,G;x,(R+D)/T)}{\vol B(R+D)}\right|_{+}d\Pc (\omega)
\\&\le \int\limits_{\Delta_{T,G;x,R+D}}\xi^{-1}T^{n}\frac{\vol_{\M}(B_{x}((R+D)/T))}{\vol B(R+D)}d\Pc (\omega)  \\&\le (1+\epsilon)\xi^{-1}\Pc(\Delta_{T,G;x,R+D}).
\end{split}
\end{equation}
Similarly to the above, uniformly on $\omega\in\Delta$
\begin{equation}
\label{eq:NV triv estimate M/Meta}
\int\limits_{\M\setminus \M_{\eta}}
\left|\frac{\nod_{X;norm}(f,G;x,(R+D)/T)}{\vol B(R+D)}-c_{X}\right|_{+} dx \le (1+\epsilon)\xi^{-1}\eta.
\end{equation}

The uniform estimates \eqref{eq:NV triv estimate Omega} and \eqref{eq:NV triv estimate M/Meta} imply that
\begin{equation*}
\begin{split}
&\int\limits_{\M}\int\limits_{\Delta_{T,G;x,R+D}}
\left|\frac{\nod_{X;norm}(f,G;x,(R+D)/T)}{\vol B(R+D)}-c_{X}\right|_{+}
dx\Pc(\omega) \\&\le (1+\epsilon)\xi^{-1}
(\sup\limits_{x\in \M_{\eta}}\Pc(\Delta_{T,G;x,R+D}) + \eta),
\end{split}
\end{equation*}
and upon substituting the latter estimate and \eqref{eq:int Nnorm /Omega} into \eqref{eq:int OmegaM [NV-eta]+},
and then to the integral \eqref{eq:int Omega,M N(S,x)-cS(x)}, we finally obtain
\begin{equation*}
\begin{split}
&\E\left|\int\limits_{\M} \frac{\nod_{X;norm}(f,G;x,(R+D)/T)}{\vol B(R+D)}-c_{X}\right|_{+}dx
\\&\le \epsilon + (1+\epsilon)\xi^{-1}
(\sup\limits_{x\in \M_{\eta}}\Pc(\Delta_{T,G;x,R+D}) + \eta),
\end{split}
\end{equation*}
which could be made arbitrarily small for each sufficiently small choice of $\xi$ excising the very small
components, and using \eqref{eq:P(Omega) Egorov}. This concludes the proof of \eqref{eq:int Omega,M N(S,x)-cS(x)},
sufficient to yield the conclusion of the present proposition.
\end{proof}

\appendix

\section{Measurability of nodal counts}

\label{apx:measurability}

\begin{proof}[Proof of Lemma \ref{lem:measurability}]

We address briefly the issue of the measurability of various counts, such as $\nod_{\Cc}(F,H;r)$ as functions of the Gaussian field $F$.
These functions are refinements of the counting functions $\nod(G,\cdot)$ in ~\cite{NS2015}, for which the measurability is discussed
on pages 31, 43, and one can extend their arguments to deal with our $\nod$'s. Rather than doing that we give a direct analysis for our Gaussian
fields $\gfr_{n,\alpha}$ as defined in \S\ref{sec:scal invar gfr}, and we take this opportunity to explicate their meaning.
We are only going to prove the measurability statements for $\nod_{\Cc}$ w.r.t. $\omega\in\Delta$,
the proof of the measurability of $\nod_{X}$ being identical.
First we establish part (1) of Lemma \ref{lem:measurability}.

The functions on $\R^{n}$ are given as ``$a$-linear functions"
\begin{equation}
\label{eq:F Karl exp}
F_{\omega}(x) = \sum\limits_{j=1}^{\infty} a_{j}\xi_{j}(x),
\end{equation}
where the $\xi_{j}(x)$ are the Fourier transforms of $\psi_{j}$ (as in \S\ref{sec:scal invar gfr}) and $\omega=(a_{1},a_{2},\ldots)\in \R^{\N}$
with $a_{j}$'s i.i.d. $N(0,1)$ Gaussian variables on $\R$ (see \cite{Adler-Taylor}). The $\xi_{j}$'s are smooth (even analytic) functions on $\R^{n}$ which together with their derivatives are rapidly decreasing in $j$ for $x$ in compact subdomains of $\R^{n}$. The series \eqref{eq:F Karl exp} converges for almost all $\omega$ and defines a function on $\R^{n}$ which is our Gaussian field.

In more detail, we equip $\R^{\N}=\{\omega:\: \omega=(a_{1},a_{2},\ldots )\}$ with the $\sigma$-algebra $\mathscr{A}$ generated by
the cylinder sets $\{\omega:\: a_{j}\in A_{j},\, j=1,\ldots, k\}$, where $A_{j}$ are subintervals of $\R$. We form the probability space
$\mathbb{P}=(\R^{\N},\mathscr{A},\mu)$, where $\mu$ is the product Gaussian; $\mu=\mu_{1}\times \mu_{2}\times\ldots$ and $\mu_{j}$ is the
standard $N(0,1)$ Gaussian on each factor. It is clear that for a fixed $x\in \R^{n}$, $F_{\omega}(x)$ is a Gaussian on $\R$
with mean $0$ and variance
\begin{equation*}
\E_{\omega}\left[F_{\omega}(x)^{2}\right] = \sum\limits_{j=1}^{\infty}\xi_{j}(x)^{2}.
\end{equation*}

More generally, for distinct $x_{1},x_{2},\ldots, x_{k}\in \R^{n}$ the $k$-dimensional vector \\
$(F_{\omega}(x_{1}),\ldots, F_{\omega}(x_{k}))$ is
a Gaussian with mean $(0,0,\ldots, 0)$ and covariance
\begin{equation*}
\E_{\omega}[F_{\omega}(x)\cdot F_{\omega}(y)] = \sum\limits_{j=1}^{\infty}\xi_{j}(x)\cdot \xi_{j}(y) = \widehat{\nu_{\alpha}}(x-y)
\end{equation*}
as in \eqref{eq:reprod kern covar} for our $\gfr_{n,\alpha}$'s.
This yields a concrete realization through $F_{\omega}(x)$, $\omega\in\mathbb{P}$ of the mean zero stationary (isotropic) Gaussian field
with covariance $\widehat{\nu_{\alpha}}(x-y)$.

In order to examine the typical $F_{\omega}(x)$ and to define the functions $\nod$ we remove various $\mu$-null sets. To this end
let $W=\{w_{j}\}_{j\ge 1}$ be the weights $w_{j}=j^{-A}$ (with $A>0$ fixed). The function $f:\R^{\N}\rightarrow [0,\infty]$
given by $f(\omega) = \sum\limits_{j=1}^{\infty}|a_{j}|^{2} w_{j}$ is defined and $\mathscr{A}$-measurable, and by the monotone
convergence theorem
\begin{equation*}
\E_{\omega}[f(w)] = \sum\limits_{j=1}^{\infty}w_{j}<\infty.
\end{equation*}
Hence $f$ is finite except on a $\mu$-null set, and we restrict to this almost full set of $\omega$'s
\begin{equation*}
l^{2}(W) := \left\{\omega:\: \sum\limits_{j=1}^{\infty}|a_{j}|^{2}w_{j}<\infty\right\}.
\end{equation*}
The set $l^{2}(W)$ is also a Hilbert space, being a $l^{2}$ sequence space; whether we view $l^{2}(W)$ as a measure
space $\mathbb{P}\cap l^{2}(W)$ with measure $\mu$ or a linear Hilbert space will be made clear.

Let $\Omega\subseteq \R^{n}$ be a domain with $\overline{\Omega}$ compact, then for $r\ge 1$, $B>\frac{A+1}{2}$ there is
a number $C_{r,\Omega,B}<\infty$ such that
\begin{equation*}
\sup\limits_{\substack{k\le r \\ x\in\Omega}}|D^{k}\xi_{j}(x)| \le C_{r,\Omega,B}j^{-B}.
\end{equation*}
Hence for $x\in\Omega$, we have
\begin{equation*}
\begin{split}
&|D^{k}F(x)| \le \sum\limits_{j=1}^{\infty} |a_{j}| C_{r,\Omega,B}j^{-B}
\\&\le C_{r,\Omega,B}\cdot \left( \sum\limits_{j=1}^{\infty}|a_{j}|^{2}w_{j} \right)^{1/2} \cdot
\left(\sum\limits_{j=1}^{\infty}  j^{-2B+A}\right)^{1/2}.
\end{split}
\end{equation*}
Hence the linear map
\begin{equation}
\label{eq:T bounded l2->Ct}
T:l^{2}(W)\rightarrow C^{t}(\overline{\Omega})
\end{equation}
with $t\ge 0$, given by \eqref{eq:F Karl exp}, is bounded
(i.e. is a continuous map between these Banach spaces).
Now the open sets in $l^{2}(W)$ (with its topology) are measurable as subsets of $\R^{\N}$
(i.e. they are in $\mathscr{A}$), hence we conclude that
\begin{equation}
\label{eq:Fomega(x) meas}
\omega\mapsto F_{\omega}(x)
\text{ is measurable as a map }l^{2}(W) \rightarrow C^{t}(\overline{\Omega}).
\end{equation}
Since $\Omega$ is arbitrary,
we conclude that for $\omega\in l^{2}(W)$, we have $F_{\omega}(x)\in C^{\infty}(\R^{n})$.

In order to study (or even define) our functions $\nod_{\Cc}(F_{\omega},H;r)$ we first examine
$\nod$ as a function from $C^{t}(\overline{\Omega})$ to $\R$. Let $\Omega=B(0,r)$ be the ball
centred at $0$ of radius $r$ in $\R^{n}$. The sets
\begin{equation*}
B_{1}=\{f\in C^{1}(\overline{\Omega}):\: f(x)=0,\, \nabla f(x)=0 \text{ for some } x\in \overline{\Omega}\}
\end{equation*}
and
\begin{equation*}
B_{2}=\{f\in C^{1}(\overline{\Omega}): f(x)=0,\, \text{ and } \nabla f(x)\perp T_{x}(\partial \Omega)
\text{ for some } x\in\partial\Omega\}
\end{equation*}
(i.e. $B_{2}$ consists of $f$ so that there exists some zero $x\in \partial\Omega$ of $f$ on the boundary, such that
$\nabla f(x)$ is orthogonal to the tangent space to $\partial \Omega$ at $x$),
are closed subsets of $C^{1}(\overline{\Omega})$. For $f$ in the open subset
$$\Theta = C^{1}(\overline{\Omega})\setminus (B_{1}\cup B_{2}),$$ the zero set $ V(f)$ has finitely many connected components
that are fully contained in $\Omega$, and these are nonsingular. Of these, let $\nod_{\Cc}(f,H;r)$ be the number of such
components diffeomorphic to $H$. Having removed $B_{1}$ and $B_{2}$ it follows
by the stability argument presented within the proof of Proposition \ref{prop:bi(R-1,Fx)<=bi(R,fxL)<=bi(R+1,Fx)}
in \S\ref{sec:bi(R-1,Fx)<=bi(R,fxL)<=bi(R+1,Fx)}, and the boundedness of the map \eqref{eq:T bounded l2->Ct},
that each of the topologies of the fully contained connected components of $V(f)$ are unchanged for $g$ in a small enough neighbourhood of $f$ in
$\Theta$.

We are ready to define $\nod_{\Cc}(F_{\omega},H;r)$ for almost all $\omega$. According to \eqref{eq:Fomega(x) meas}
we have a measurable decomposition
\begin{equation*}
l^{2}(W) = T^{-1}(B_{1}\cup B_{2}) \sqcup T^{-1}(\Theta).
\end{equation*}
By the generalization of Bulinskaya's Lemma for higher dimensions (see e.g. ~\cite[Proposition 6.11]{AW}),
$T^{-1}(B_{1}\cup B_{2})$ is $\mu$-null. We define $\nod_{\Cc}(F_{\omega},H;r)$ on the full set $T^{-1}(\Theta)$
by the composition
\begin{equation*}
\nod_{\Cc}: T^{-1}(\Theta)\xrightarrow{T} \Theta \xrightarrow{\nod_{\Cc}(\cdot, H;\Omega)}\R.
\end{equation*}
The second map is continuous and the first measurable, hence $\nod_{\Cc}(F_{\omega},H;r)$ is defined for almost
all $\omega$ and is measurable.

\vspace{1mm}

As a final remark, note that although $\nod_{\Cc}$ is measurable, its determination on a particular good $\omega$
is in general undecidable. The reason is that in high dimensions there is no decision procedure to decide if a given component
of $V(f)$ is a given $H$ in $H(n-1)$ (see \cite{Nab}).

\vspace{2mm}

For part (2) note that almost surely $F$ is smooth on $B(R)$. Therefore for almost all
$\omega\in\Delta$, $\nod_{\Cc}(F_{\omega},H;x,r)$ is locally constant w.r.t. $x\in B(R)$ outside of
a measure zero set, and, in particular, measurable.
Finally, in order to establish part (3) of Lemma \ref{lem:measurability} we combine both parts (1) and (2). Namely, since, by part (2),
for almost all $\omega\in\Delta$, $x\mapsto \nod_{\Cc}(F_{\omega},H;x,r)$ is locally constant on a set of full measure in $B(R)$,
we may write for almost all $(\omega,x)\in \Delta\times B(R)$:
$$\nod_{\Cc}(F_{\omega},H;x,r)=\lim\limits_{n\rightarrow\infty}\nod_{\Cc}(F_{\omega},H;\lfloor nx\rfloor/n,r),$$ measurable as a limit of measurable functions; here for a vector $x\in\R^{n}$ we denote $\lfloor x \rfloor := (\lfloor x_{1}\rfloor ,\ldots ,\lfloor x_{n}\rfloor)$.

\end{proof}

\section{An alternative proof for topology not leaking}

\label{apx:top not leaking}

Here we present an alternative, shorter, proof of Theorem \ref{thm:scal invar no leak} part \eqref{it:top not leaking}
invoking some abstract tools not employing the uniform stability of Proposition \ref{prop:grad bounded away}.
A similar argument yields a proof
of Theorem \ref{thm:scal invar no leak} part \eqref{it:nest not leaking}.

\begin{proof}[Proof of Theorem \ref{thm:scal invar no leak} part \eqref{it:top not leaking}]
We reuse all the notation of \S\ref{sec:proof top not leak}, and, as in the proof presented in \S\ref{sec:proof top not leak},
we aim at proving tightness: for every $\delta>0$
there exists a {\em finite} $$A_{0}=A_{0}(\delta)\subseteq H(n-1)$$ so that for $R>0$ sufficiently big we have
\begin{equation}
\label{eq:tightness reprise}
\E[\nod_{\Cc}(F,H(n-1)\setminus A_{0};R)] < \delta\cdot R^{n};
\end{equation}
instead of constructing $A_{0}$ explicitly we will merely show its existence using some abstract tools.
Likewise, we apply \eqref{eq:geom int sand Cc upper*} the Integral-Geometric Sandwich, and take
the expectation of both sides to yield that
\begin{equation}
\label{eq:geom int sand Cc upper reprise}
\E[\nod_{\Cc}(F,A;R)] \le \left(\frac{R}{r}+1\right)^{n}\cdot \E[\nod_{\Cc}(F,A;r)] + \frac{\delta}{2}\cdot R^{n},
\end{equation}
valid for $r>r_{0}(\delta)$ sufficiently big.
From \eqref{eq:geom int sand Cc upper reprise} it follows that in order to prove the tightness
\eqref{eq:tightness reprise} it is sufficient to find a finite $A_{0}\subseteq H(n-1)$
so that
\begin{equation}
\label{eq:E[F;H/A0] < del/4 r^n}
\E[\nod_{\Cc}(F,H(n-1)\setminus A_{0};r)]<\frac{\delta}{4}r^{n}
\end{equation}
is arbitrarily small for $r$ fixed (though arbitrarily big).

Now, the expectation
\begin{equation*}
E:=\E[\nod_{\Cc}(F;r)] < \infty
\end{equation*}
of the total number of nodal components (of unrestricted topology), is finite.
Using the fact that the collection $H(n-1)$ of diffeomorphism types is countable, let $$H(n-1)=\{H_{j}\}_{j\ge 1}$$
be an enumeration of $H(n-1)$. We then have
\begin{equation*}
\nod_{\Cc}(F;r) = \sum\limits_{j\ge 1} \nod_{\Cc}(F,H_{j};r),
\end{equation*}
where all of $\{\nod_{\Cc}(F,H_{j};r)\}_{j\ge 1}$ are non-negative random variables (i.e. measurable on
functions on the sample space). By the Monotone Convergence Theorem we then have
\begin{equation*}
E=\E[\nod_{\Cc}(F;r)] = \sum\limits_{j\ge 1}\E[\nod_{\Cc}(F,H_{j};r)],
\end{equation*}
so that the series $$\sum\limits_{j\ge 1}\E[\nod_{\Cc}(F,H_{j};r)] < \infty$$ is convergent.

Hence, by taking a tail of a convergent series,
there exists a number $j_{0} = j_{0}(\delta,r) = j_{0}(r)$ sufficiently big so that
\begin{equation}
\label{eq:sum exp j>j0<delta/4 r^n}
\sum\limits_{j> j_{0}}\E[\nod_{\Cc}(F,H_{j};r)] < \frac{\delta}{4}r^{n}
\end{equation}
(recall that $r>r_{0}$ is fixed but sufficiently big).
Now we choose $$A_{0} = \{H_{j}\}_{j\le j_{0}};$$ we then have
\begin{equation*}
\nod_{\Cc}(F,H(n-1)\setminus A_{0};r) = \sum\limits_{j>j_{0}} \nod_{\Cc}(F,H_{j};r),
\end{equation*}
so that, upon applying the Monotone Convergence in a fashion similar to the above (again, using the measurability
of all $\nod_{\Cc}(F,H_{j};r)$ on the sample space), we obtain
\begin{equation*}
\E[\nod_{\Cc}(F,H(n-1)\setminus A_{0};r)] = \sum\limits_{j>j_{0}} \E[\nod_{\Cc}(F,H_{j};r)] < \frac{\delta}{4}r^{n},
\end{equation*}
by \eqref{eq:sum exp j>j0<delta/4 r^n}, which is precisely \eqref{eq:E[F;H/A0] < del/4 r^n}, that is readily proven
to be sufficient for the tightness \eqref{eq:tightness reprise} via \eqref{eq:geom int sand Cc upper reprise}.

\end{proof}

\end{document}